\theoremstyle{definition}
\newtheorem*{theorem*}{Theorem}
\newtheorem{theorem}{Theorem}
\newtheorem{prop}[theorem]{Proposition}
\newtheorem{ex}[theorem]{Example}
\newtheorem{lem}[theorem]{Lemma}
\newtheorem*{remark}{Remark}
\newtheorem*{conventions}{Conventions}
\newtheorem{dfn}[theorem]{Definition}
\newtheorem{cor}[theorem]{Corollary}
\numberwithin{theorem}{section}
\newcommand{\lcm}{\textrm{LCM}}
\newcommand{\N}{\mathbb{N}}
\newcommand{\Z}{\mathbb{Z}}
\newcommand{\Q}{\mathbb{Q}}
\newcommand{\B}{\mathcal{B}}
\newcommand{\D}{\mathbf{D}}
\newcommand{\mD}{\mathcal{D}}
\newcommand{\W}{\mathcal{W}}
\newcommand{\F}{\mathbb{F}}
\newcommand{\M}{\mathcal{M}}
\newcommand{\mR}{\mathcal{R}}
\newcommand{\mF}{\mathcal{F}}
\newcommand{\mL}{\mathcal{L}}
\newcommand{\G}{\mathcal{G}}
\newcommand{\mT}{\mathcal{T}}
\newcommand{\mP}{\mathcal{P}}
\newcommand{\cS}{\mathcal{S}}
\newcommand{\C}{\mathcal{C}}
\newcommand{\A}{\mathbf{A}}
\newcommand{\FF}{\mathbf{F}}
\newcommand{\DD}{\mathbf{D}}
\newcommand{\RR}{\mathbf{R}}
\newcommand{\TT}{\mathbf{T}}
\newcommand{\BB}{\mathbf{B}}
\newcommand{\ff}{\mathfrak{f}}
\newcommand{\gr}{\textrm{gr}}
\newcommand{\HS}{\textrm{HS}}
\newcommand{\inn}{\textrm{in}}
\newcommand{\Syz}{\textrm{Syz}}
\newcommand{\Coker}{\textrm{Coker}}
\newcommand{\Ker}{\textrm{Ker}}
\newcommand{\mf}[1]{\mathfrak{#1}}
\newcommand{\cb}[1]{{\color{blue}{#1}}}
\newcommand{\smprod}{\textstyle\prod}
\newcommand{\ncF}{\langle F\rangle}
\newcommand{\ord}{\text{ord}_f}
\newcommand{\ordf}{\text{ord}_f}
\newcommand{\ordff}{\text{ord}_{\ff}}
\def\vect#1#2{{#1}_1, \, \ldots, \, {#1}_{#2}}
\def\bdiag {\begin{center}\begin{tikzcd}}
\def\bmatdiag {\begin{center}\begin{tikzcd}[ampersand replacement = \&]}
\def\ediag {\end{tikzcd}\end{center}}
\def\bmat {\begin{bmatrix}}
\def\emat {\end{bmatrix}}
\def\ben {\begin{enumerate}[(i)]}
\def\een {\end{enumerate}}
\def\parsept#1#2#3{%
    \def\nospace##1{\zap@space##1 \@empty}%
    \def\rawparsept(##1,##2){%
        \edef#1{\nospace{##1}}%
        \edef#2{\nospace{##2}}%
    }
    \expandafter\rawparsept#3%
}
\title{F-Holonomic F-Modules}
\author{Monica Lewis}\thanks{This material is based upon work supported by the National Science Foundation under Award No. DMS \#2103207}
\address{Department of Mathematics, University of Minnesota, Minneapolis, MN 55455, USA}
\email{lewi1714@umn.edu}
\subjclass[2010]{Primary 13A35; Secondary 13D45.}
\date{}
\begin{document}
\thispagestyle{firstpage}
\maketitle
 \begin{abstract}
Let $R=\F_p[x_1,\ldots,x_n]$ and let $\FF$ be the ring of Frobenius operators over $R$. We introduce a notion of Bernstein dimension and multiplicity for the class of finitely generated $\FF$-modules whose structure morphism has a finite length kernel. We show that an $\FF$-module belongs to this class if and only if it admits a ``great'' filtration with respect to the Bernstein filtration on $\FF$. We describe the Hilbert series of these great filtrations, and prove that the dimension and multiplicity defined in terms of this Hilbert series are independent of the choice of filtration. We refer to the $\FF$-modules of Bernstein dimension $0$ as \textit{$F$-holonomic}. We show that $F$-holonomic $\FF$-modules are a full abelian subcategory of $\text{Mod}_\FF$, closed under taking extensions, on which multiplicity is an additive function. We show that Lyubeznik's finitely generated unit $\FF$-modules are $F$-holonomic.
 \end{abstract}

 \section{Introduction}
Fix a prime integer $p>0$ and let $R=\F_p[\vect x n]$. The \textit{ring of Frobenius operators} over $R$ is 
\[
\FF:=\frac{R\{F\}}{(Fr-r^pF\,|\,r\in R)} = \frac{\F_p\{\vect x n,F\}}{(x_ix_j-x_jx_i,\, x_i^pF-Fx_i\,|\,1\leq i,j\leq n)}
\]
 For $n\geq 1$, the ring $\FF$ is noncommutative and neither left nor right Noetherian. A \textit{word} in $\FF$ is any finite string involving the symbols $\vect x n, F$. Over $R=\F_2[x]$, the words $xxxxxFF$, $xFxxF$, and $xFFx$ all represent the same monomial in $\FF$. The \textit{complexity} of a monomial, is the length of the shortest word representing it. For example, if $p=2$, the complexity of the monomial $x^5F^2$ is $4$. 

\begin{dfn}
  The \textit{Bernstein filtration} on $\FF$ is the chain of $\F_p$-vector spaces $\B_0\subseteq\B_1\subseteq\B_2\subseteq\cdots$ in which $\B_i$ denotes the $\F_p$-linear span of all monomials of complexity at most $i$. Let $\B_i=0$ if $i<0$.
\end{dfn}

We shall be especially interested in the associated graded ring of $(\FF,\B)$, 
\[\A:=\gr_{\B}(\FF) = \bigoplus_i \B_i/\B_{i-1}\]
For $g\in \B_i$, let $[g]_i$ denote the image of $g$ in $\B_i/\B_{i-1}$. Multiplication in $\A$ is defined on homogeneous elements by $[g]_i[h]_j=[gh]_{i+j}$. For $n\geq 1$, the ring $\A$ is still noncommutative ($[x]_1[F]_1 \neq [F]_1[x]_1$) and neither left nor right Noetherian. It is also not a domain: $[x]_1[x^{p-1}F]_p=[x^pF]_{p+1}=[Fx]_{p+1}=0$. Presented as an abstract graded $\F_p$-algebra, we have
\[
\A =  \frac{\F_p\{\vect x n,f\}}{(x_ix_j-x_jx_i,\, x_i^pf\,|\,1\leq i,j\leq n)}
\]
where $\deg(x_i)=\deg(f)=1$ for all $i$, where $R$ may be regarded as a graded subalgebra of $\A$.

\begin{ex} Let $\FF=\F_2[x]\ncF$. One may directly compute the first few terms of the Hilbert series of $\A=\gr_{\B}(\FF)$, obtaining
\[
\HS_{\A}(t)= 1+2t+4t^2+7t^3+12t^4+20t^5+33t^6+54t^7+88t^8+143t^9+\cdots
\]
\begin{figure}[h]
  \begin{center}
    \begin{tikzpicture}[scale=0.9, every node/.style={scale=1.0}]
\makeatletter

      \draw[step=1.0,thin,white,opacity=0.4] (-7,-7) grid (9,-1);
      \node at (-7+0.5,-7+0.5) {\footnotesize$1$};
      \foreach \p in {(0,0)}{
        \parsept{\x}{\y}{\p};
        \fill[green,opacity=0.4] (-7+\x,-7+\y)--(-6+\x,-7+\y)--(-6+\x,-6+\y)--(-7+\x,-6+\y)--(-7+\x,-7+\y);
        };
      \foreach \p in {(0,1),(1,0)}{
        \parsept{\x}{\y}{\p};
        \fill[teal,opacity=0.4] (-7+\x,-7+\y)--(-6+\x,-7+\y)--(-6+\x,-6+\y)--(-7+\x,-6+\y)--(-7+\x,-7+\y);
        };
      \foreach \p in {(0,2),(1,1),(2,1),(2,0)}{
        \parsept{\x}{\y}{\p};
        \fill[red,opacity=0.4] (-7+\x,-7+\y)--(-6+\x,-7+\y)--(-6+\x,-6+\y)--(-7+\x,-6+\y)--(-7+\x,-7+\y);
        };
      \foreach \p in {(0,3),(1,2),(2,2),(4,2),(3,1),(4,1),(3,0)}{
        \parsept{\x}{\y}{\p};
        \fill[magenta,opacity=0.4] (-7+\x,-7+\y)--(-6+\x,-7+\y)--(-6+\x,-6+\y)--(-7+\x,-6+\y)--(-7+\x,-7+\y);
        };
      \foreach \p in {(0,4),(1,3),(2,3),(4,3),(8,3),(3,2),(5,2),(6,2),(8,2),(5,1),(6,1),(4,0)}{
        \parsept{\x}{\y}{\p};
        \fill[blue,opacity=0.4] (-7+\x,-7+\y)--(-6+\x,-7+\y)--(-6+\x,-6+\y)--(-7+\x,-6+\y)--(-7+\x,-7+\y);
        };
      \foreach \p in {(0,5),(1,4),(2,4),(4,4),(8,4),(16,4),(3,3),(5,3),(6,3),(9,3),(10,3),(12,3),(16,3),(7,2),(9,2),(10,2),(12,2),(7,1),(8,1),(5,0)}{
        \parsept{\x}{\y}{\p};
        \fill[Dandelion,opacity=0.4] (-7+\x,-7+\y)--(-6+\x,-7+\y)--(-6+\x,-6+\y)--(-7+\x,-6+\y)--(-7+\x,-7+\y);
      };

            \foreach \x in {1,2,...,16}{
        \foreach \y in {1,2,...,5}{
          \node at (-7+\x+0.5,-7+\y+0.5) {\footnotesize$x^{\x}F^{\y}$};
        };
      };

      \foreach \x in {1,2,...,16}{
        \node at (-7+\x+0.5,-7+0.5) {\footnotesize$x^{\x}$};
      };
      \foreach \x in {1,2,...,5}{
        \node at (-7+0.5,-7+\x+0.5) {\footnotesize$F^{\x}$};
      };     
      \foreach \x in {0,1,...,6}{
        \draw[thick] (-7,-7+\x)--(10,-7+\x);
      };
      \foreach \x in {0,1,...,17}{
        \draw[thick] (-7+\x,-7)--(-7+\x,-1);
      };
    \end{tikzpicture}    
  \end{center}

  \caption{Various monomials $x^aF^b$ in $\FF=\F_2[x]\ncF$. All monomials with complexity ranging from $0$ to $5$ appear above, and are colored accordingly.}
\end{figure}

Let $[\A]_i$ denote the $i$th graded component of $\A$. We will show in the sequel that the linear recurrence relation $\dim_{\F_p}[\A]_i=2\cdot \dim_{\F_p}[\A]_{i-1}-\dim_{\F_p}[\A]_{i-3}$ holds for all $i\geq 1$. In other words, the coefficient of $t^i$ in $(1-2t+t^3)\HS_{(\FF,\B)}(t)$ vanishes for $i\geq 1$, and it follows at once that
\[
\HS_{\A}(t)=\frac{1}{t^3-2t+1}
\]
The denominator factors as $(1-t)(1-t-t^2)$.
\end{ex}

\begin{ex}\phantom{.} Below we give the Hilbert series of the associated graded ring $\A$ for various small values of $p$ and $n$.
\ben
\item The Hilbert series of $\gr_\B(\F_3[x]\ncF)$ is
  \[1 +2t+ 4t^2+ 8t^3+ 15t^4+ 28t^5+ 52t^6+ 96t^7+ 177t^8+ 326t^9+\cdots=\frac{1}{(1-t)(1-t(1+t+t^2))}
  \]
\item The Hilbert series of $\gr_\B(\F_5[x]\ncF)$ is
  \[1+ 2t+ 4t^2+ 8t^3+ 16t^4+ 32t^5+ 63t^6+ 124t^7+ \cdots=\frac{1}{(1-t)(1-t(1+t+t^2+t^3+t^4))}
  \]
\item The Hilbert series of $\gr_\B(\F_2[x,y]\ncF)$ is
  \begin{align*}
    1+ 3t+ 8t^2+ 19t^3+ 43t^4+ 95t^6+ 207t^7+ 448t^8+966t^9+\cdots=\frac{1}{(1-t)^2(1-t(1+t)^2)}
  \end{align*}

\item The Hilbert Series of $\gr_\B(\F_3[x,y]\ncF)$ is
  \[
  1+ 3t+ 8t^2+ 21t^3+ 53t^4+ 132t^3+ 327t^4+ 808t^5+ \cdots=\frac{1}{(1-t)^2(1-t(1+t+t^2)^2)}
  \]
\item The Hilbert series of $\gr_\B(\F_2[x,y,z]\ncF)$ is
  \[
  1+ 4t+ 13t^2+ 38t^3+ 105t^4+ 283t^5+ 753t^6+ 1991t^7+\cdots=\frac{1}{(1-t)^3(1-t(1+t)^3)}
  \]
\item The Hilbert series of $\gr_\B(\F_3[x,y,z]\ncF)$ is
  \[1+ 4t+ 13t^2+ 41t^3+ 126t^4+ 382t^5+1152t^6+  \cdots=\frac{1}{(1-t)^3(1-t(1+t+t^2)^3)}
  \]
  \een
    \end{ex}

That a linear recurrence relation holds on the number of Frobenius monomials of a fixed complexity was first observed by Dills and Enescu \cite{dillsenescu}. We state their result in the language of Hilbert series below.

\begin{theorem*}[Dills, Enescu; 2021]\label{deresult}
    Let $R=\F_p[\vect x n]$, let $\FF$ be the ring of Frobenius operators over $R$, let $\B$ be the Bernstein filtration on $\FF$, and let $\A=\gr_{\B}(\F)$. The Hilbert function of $\A$ has the form
    \[
    \HS_\A(t)=\frac{1}{(1-t)^ng_{p,n}(t)}
    \]
    where $g_{p,n}(t)$ is a polynomial of degree $n(p-1)+1$ depending only on $p$ and $n$. Specifically,
    \[
    g_{p,n}(t)=1-\textstyle\sum_{v\in\N_{<p}^n}t^{|v|+1} = 1-t(1+t+\cdots+t^{p-1})^n
    \]
\end{theorem*}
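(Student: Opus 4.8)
The plan is to translate the statement into a bijective count of Frobenius monomials by complexity and then evaluate a generating function. Since the $m$-th graded piece $[\A]_m=\B_m/\B_{m-1}$ has, as an $\F_p$-basis, the images of the Frobenius monomials of complexity exactly $m$, we have $\HS_\A(t)=\sum_m c_m t^m$ where $c_m$ counts the monomials $x^aF^b$ (with $a\in\N^n$, $b\ge 0$) of complexity $m$; so the whole proof reduces to (i) computing $\mathrm{cx}(x^aF^b)$ and (ii) summing.

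For (i), first observe that any word representing $x^aF^b$ has exactly $b$ occurrences of $F$, since it is standard that $\FF=\bigoplus_b RF^b$ as a left $R$-module, so a word with $k$ symbols $F$ represents an element of $RF^k$ and the number of $F$'s is an invariant of the monomial. Thus such a word has the shape $x^{c_0}Fx^{c_1}F\cdots Fx^{c_b}$ with $c_j\in\N^n$, and repeatedly applying $Fx_i=x_i^pF$ to push all the $F$'s to the right shows it represents $x^{c_0+pc_1+\cdots+p^bc_b}F^b$; its length is $\sum_{j=0}^b|c_j|+b$. Call the word \emph{reduced} if $c_0,\dots,c_{b-1}\in\N_{<p}^n$. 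Because $x^cF=x^{c-pe_i}Fx_i$ strictly shortens a word whenever $c_i\ge p$ (here we use $p\ge 2$), every length-minimizing word is reduced; and since $c_0,\dots,c_{b-1}<p$, uniqueness of base-$p$ expansions forces $c_j$ to be the $j$-th base-$p$ digit vector of $a$ for $j<b$ and $c_b=\lfloor a/p^b\rfloor$. Hence each monomial $x^aF^b$ has exactly one reduced representative, reduced words biject with Frobenius monomials, and $\mathrm{cx}(x^aF^b)=\sum_{j=0}^b|c_j|+b$ for this reduced data. (Equivalently one can show that the words $x^{a_0}fx^{a_1}f\cdots fx^{a_k}$ with $a_0,\dots,a_{k-1}\in\N_{<p}^n$ and $a_k\in\N^n$ form an $\F_p$-basis of $\A$, verified by letting $\A$ act on the free $\F_p$-space spanned by these words; this is the same computation.)

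For (ii), organize the count by $b$. For fixed $b$, the map $a\mapsto(c_0,\dots,c_{b-1},c_b)$ is a bijection $\N^n\xrightarrow{\sim}(\N_{<p}^n)^b\times\N^n$ under which $t^{\mathrm{cx}(x^aF^b)}=t^b\bigl(\prod_{j<b}t^{|c_j|}\bigr)t^{|c_b|}$, so
\[
\HS_\A(t)=\sum_{b\ge0}t^b\Bigl(\sum_{c\in\N_{<p}^n}t^{|c|}\Bigr)^{b}\sum_{c\in\N^n}t^{|c|}=\frac{1}{(1-t)^n}\sum_{b\ge0}\bigl(t(1+t+\cdots+t^{p-1})^n\bigr)^b,
\]
using $\sum_{c\in\N_{<p}^n}t^{|c|}=(1+t+\cdots+t^{p-1})^n$ and $\sum_{c\in\N^n}t^{|c|}=(1-t)^{-n}$. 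Summing the geometric series (legitimate since $t(1+t+\cdots+t^{p-1})^n$ has no constant term) yields $\HS_\A(t)=\bigl((1-t)^ng_{p,n}(t)\bigr)^{-1}$ with $g_{p,n}(t)=1-t(1+t+\cdots+t^{p-1})^n$, a polynomial of degree $n(p-1)+1$ depending only on $p$ and $n$.

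The only subtle point is the bijection in step (i): its existence half is the termination of the shortening move $x^cF=x^{c-pe_i}Fx_i$, and its uniqueness half is precisely the uniqueness of base-$p$ digit expansions applied coordinatewise. Everything after that is formal manipulation of generating functions, so I expect the combinatorial normal-form claim to be where the real content lies.
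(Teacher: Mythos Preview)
Your argument is correct. The paper takes a different route: it sets up, for each degree $i$, the short exact sequence
\[
0 \to \textstyle\bigoplus_{v\in\N_{<p}^n}[\A]_{i-|v|-1}\xrightarrow{\Theta_i}[\A]_i\to[R]_i\to 0,
\]
where $\Theta_i$ is the sum of the left-multiplication maps by $x^vf$, and reads off the Hilbert series from the resulting linear recurrence $g_{p,n}(t)\HS_\A(t)=\HS_R(t)$. You instead iterate the same ``strip off the leading $x^vf$'' move to obtain a global normal form, parametrize monomials of $F$-order $b$ by $(\N_{<p}^n)^b\times\N^n$, and sum the generating function in closed form. The underlying combinatorial fact is identical---each positive-$f$-order monomial factors uniquely as $x^vf\cdot m'$ with $v\in\N_{<p}^n$---but the paper packages it recursively while you unroll it completely. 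The advantage of the paper's formulation is that the exact-sequence argument extends verbatim to quotients $\A/I$ by finitely generated monomial ideals (their Theorem~\ref{monomialhilbertseries}), which is the case that actually matters for the rest of the paper; your direct enumeration is cleaner for $\A$ itself and makes the base-$p$ digit interpretation of complexity explicit.
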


We will prove that all finitely presented graded $\A$-modules have a Hilbert series of the same general form as that of $\A$.

\begin{theorem*}\textbf{\ref{amodhilbert}}
  Let $M$ be a finitely presented graded left $\A$-module. The Hilbert series of $M$ is a rational function of the form
  \[
  \text{HS}_M(t) = \frac{a(t)}{(1-t)^dg_{p,n}(t)}  
  \]
  where $g_{p,n}(t)$ is the degree $n(p-1)+1$ polynomial of Dills and Enescu depending only on $p$ and $n$, and $0\leq d\leq n$.
\end{theorem*}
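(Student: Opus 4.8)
The plan is to reduce to the case where $M$ is a cyclic module $\A/I$ for a homogeneous left ideal $I$, and ultimately to the case of monomial ideals in the associated graded structure, where the Hilbert series can be computed combinatorially. The first step is a standard dévissage: since $M$ is finitely generated, it has a finite filtration $0 = M_0 \subseteq M_1 \subseteq \cdots \subseteq M_k = M$ with each quotient $M_j/M_{j-1}$ a cyclic graded module $\A/I_j(\ell_j)$ (a shift of a cyclic module). Hilbert series are additive on short exact sequences of graded modules, and multiplying a module by a shift $(\ell)$ multiplies its Hilbert series by $t^{-\ell}$ (or $t^{\ell}$, depending on convention); this changes the numerator $a(t)$ but preserves the shape of the denominator. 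So it suffices to treat $M = \A/I$. One subtlety to be careful about: $\A$ is not Noetherian, so a ``finitely presented'' $\A$-module need not have all submodules finitely generated, and the filtration by cyclic quotients must be extracted from a chosen finite presentation rather than by Noetherian induction. I would argue that a finite presentation $\A^a \to \A^b \to M \to 0$ lets one build such a filtration with each $I_j$ the (homogeneous) annihilator ideal arising from the image of successive basis vectors, and the relevant point is only that each such $\A/I_j$ is again finitely presented — which follows because the columns of the presentation matrix for $M$ generate each $I_j$.

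The heart of the argument is then to compute $\HS_{\A/I}(t)$ for a homogeneous left ideal $I$ generated by finitely many elements. Here I would pass to a monomial/initial structure: equip $\A$ with a suitable term order (the paper's notation $\pgs$, $\pls$, and the grevlex-type orders suggests a Gröbner-basis formalism on $\A$ is developed earlier or in the sequel), form the initial ideal $\inn(I)$, and use the fact that $\A/I$ and $\A/\!\inn(I)$ have the same Hilbert function — this is the graded-vector-space statement that a Gröbner basis gives a monomial basis of the quotient. Thus it suffices to prove the claim for $\A/J$ with $J$ a (finitely generated) monomial left ideal. Now the monomials of $\A$ not lying in $J$ form an explicit combinatorial set: recall from the Dills–Enescu description that $\A = \F_p\{x_1,\dots,x_n,f\}/(x_ix_j - x_jx_i,\ x_i^p f)$, so a $\F_p$-basis of $\A$ is given by words $x^{v_0} f x^{v_1} f \cdots f x^{v_s}$ with $v_1,\dots,v_s \in \N_{<p}^n$ and $v_0 \in \N^n$ arbitrary, graded by total degree. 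Removing a finitely generated monomial ideal from this basis and summing $t^{\deg}$ over what remains is a finite bookkeeping problem whose generating function visibly has the form (polynomial)$/((1-t)^d g_{p,n}(t))$: the factor $g_{p,n}(t) = 1 - t(1+t+\cdots+t^{p-1})^n$ records the ``free'' appending of $f x^{v}$ tails exactly as in the Dills–Enescu computation, the $(1-t)^{d}$ with $d \le n$ records the remaining commutative polynomial freedom in the leading $x^{v_0}$ block after cutting down by $J$, and the numerator absorbs all the finite truncation.

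The main obstacle I expect is the passage through Gröbner theory over the non-Noetherian ring $\A$: one must know that a finitely generated homogeneous ideal has a \emph{finite} Gröbner basis with respect to the chosen order (or at least that the initial ideal, though possibly infinitely generated, still has a quotient with the predicted Hilbert series), and that the degree-by-degree dimension count is unaffected — the usual proof that $\dim_{\F_p}[\A/I]_i = \dim_{\F_p}[\A/\!\inn(I)]_i$ is purely linear-algebraic in each graded degree and should go through, but the finiteness of the Gröbner basis in degrees up to any bound is what requires the graded/complexity structure of $\A$ to be exploited carefully. A secondary obstacle is making sure the exponent $d$ is genuinely bounded by $n$ and not merely by $n+1$: intuitively $f$ is ``nilpotent after multiplication by any $x_i^p$,'' so $f$ cannot contribute a $(1-t)$ factor to the denominator, and this is exactly what forces $d \le n$ rather than $d \le n+1$. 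I would isolate this as a lemma about monomial quotients of $\A$ before assembling the general case. Once the monomial case is in hand, combining it with the dévissage and the Gröbner reduction yields the stated form, with $a(t) \in \Z[t]$ (possibly with negative coefficients, coming from the alternating sum in the filtration), $g_{p,n}$ exactly the Dills–Enescu polynomial, and $0 \le d \le n$.
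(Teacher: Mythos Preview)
Your overall strategy---Gr\"obner deformation to a monomial submodule, then a combinatorial count---is valid and is in fact acknowledged in the paper as one route: combining Theorem \ref{INfg} (finite generation of initial submodules), Theorem \ref{MBThilbert} (Macaulay's basis theorem for $\A^S$), and Theorem \ref{monomialhilbertseries} (the monomial Hilbert series) already yields the claim. The paper's own proof, however, takes a more direct path. It defines a structure morphism $\Theta_N:\mF_\A(N)\to N$ (the sum of left multiplications by $x^vf$ over $v\in\N_{<p}^n$), shows that for finitely presented $N$ the kernel $K_N$ has finite length and the cokernel $C_N$ is finitely generated over $R$, and then reads off the Hilbert series from the exact sequence $0\leftarrow C_N\leftarrow N\leftarrow \mF_\A(N)\leftarrow K_N\leftarrow 0$, obtaining $g_{p,n}(t)\HS_N(t)=\HS_{C_N}(t)-\HS_{K_N}(t)$ and hence the stated form with $d=\dim C_N$. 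Both routes rest on the same hard technical core (Theorem \ref{INfg}); the structure-map argument buys an intrinsic identification of $d$ and avoids d\'evissage entirely.

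Two points in your sketch need repair. First, the d\'evissage step has a genuine gap: your claim that each annihilator ideal $I_j$ is generated by the $j$-th entries of the presentation columns is false in general (those entries lie in $I_j$ but need not generate it), and over the non-Noetherian ring $\A$ you cannot simply assert $I_j$ is finitely generated. Fortunately the d\'evissage is unnecessary: apply Macaulay's basis theorem directly to $N\subseteq\A^t$, note that the monomial submodule $\inn(N)$ decomposes as $I_1\oplus\cdots\oplus I_t$ with each $I_i$ a finitely generated monomial ideal of $\A$ (by Theorem \ref{INfg}), and sum the resulting Hilbert series. Second, your monomial basis of $\A$ is stated backwards: since the relation is $x_i^p f=0$, the nonzero monomials are $x^{v_0}f\,x^{v_1}f\cdots f\,x^{v_s}$ with $v_0,\ldots,v_{s-1}\in\N_{<p}^n$ and the \emph{trailing} exponent $v_s\in\N^n$ unrestricted. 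Correspondingly the factor $(1-t)^d$ arises from the contracted ideal $J=I\cap R$ (the $f$-order-zero part), not from a leading $x^{v_0}$ block.
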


Our proof is based on Gr\"obner deformation in the ring $\A$. In Section 2, we study finitely generated left monomial ideals of $\A$ in sufficient detail to understand their Hilbert series, and equip the monoid of monomials with a term order. It is not hard to show that the Hilbert series of a graded submodule of a finite-rank free $\A$-module agrees with the Hilbert series of its initial submodule with respect to the term order. Much more difficult is establishing the following in absence of left Noetherianity.

\begin{theorem*}\textbf{\ref{INfg}}
  If $M$ is a finitely generated graded left submodule of a finite rank free $\A$-module, then the initial submodule $\inn(M)$ is finitely generated.
\end{theorem*}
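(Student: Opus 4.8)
The plan is to reduce the theorem to a bound on the degrees of the minimal monomial generators of $\inn(M)$, and then to prove that bound by pinpointing where non-finiteness could come from and ruling it out with the Noetherian polynomial ring $R\subseteq\A$.

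Since $\A$ is generated in degree $1$ and $\inn(M)$ is a graded left submodule, it suffices to show that the minimal monomial generators of $\inn(M)$ have bounded degree; then $\inn(M)$ is generated by its finitely many monomials in those degrees. For bookkeeping I would use the structure of $\A$ from Section~2 as a free right $R$-module on the set of ``Frobenius words'' $x^{a_0}fx^{a_1}f\cdots x^{a_{k-1}}f$ with each $a_j\in\N_{<p}^n$, so that a monomial of $\A^r$ is a triple $(v,c,\ell)$: a Frobenius word $v$, a tail exponent $c\in\N^n$, and a basis index $\ell$. The key point of this bookkeeping is that left multiplication by $f$ prepends a zero block to $v$, left multiplication by $x_\nu$ adds $e_\nu$ to the leading block of $v$ (or kills the monomial), and in both cases the tail $c$ is left unchanged --- \emph{except} at the empty word $v$, which indexes precisely the monomials in the copy $R^r\subseteq\A^r$, on which left multiplication by $x_\nu$ is ordinary multiplication in $R$. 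Consequently $\inn(M)$ is recorded by a family of subsets $S_{v,\ell}\subseteq\N^n$ (one per word $v$ and index $\ell$), of which those with $v$ empty are upward-closed and assemble into the $R$-submodule $\inn(M)\cap R^r$ of $R^r$; since $R$ is Noetherian this $R$-submodule is finitely generated, so only finitely many minimal generators of $\inn(M)$ sit at the empty word, in bounded degree.

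The main work is to bound the minimal generators at nonempty Frobenius words. Because each $g_i$ has finite support, every monomial occurring in an element of $M=\A g_1+\dots+\A g_s$ has the form $u\cdot w$ for a monomial $u\in\A$ and a monomial $w$ of some $g_i$; and the relation $x_i^pf=0$ forces the block at which the last block of $u$ meets the first block of $w$ to have all entries $<p$, so it ranges over a fixed finite set. Hence the nonempty-word monomials available to $\inn(M)$ lie in finitely many families of controlled shape --- roughly ``an arbitrary Frobenius word, then a bounded middle block, then a fixed block-pattern, carrying a fixed or bounded-below tail.'' For such a monomial whose arbitrary initial segment is long, I would argue --- using that the chosen term order is compatible with left multiplication by the degree-one part of $\A$ --- that the monomial obtained by shortening that initial segment again lies in $\inn(M)$, so the long one is a left multiple of a shorter one and hence is not a minimal generator. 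This would bound the length of the relevant words and the size of the relevant tails, and so the degrees of all minimal generators.

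The step I expect to be the genuine obstacle is justifying the ``shortening'' move in the presence of cancellation: for $h=\sum_i c_ig_i$ the initial monomial of $h$ need not be the largest of the $\inn(c_i)\,\inn(g_i)$, so the shortening has to be carried out on the element $h$ itself in Buchberger fashion, and one must verify that reduction by the finitely many overlap relations among $g_1,\dots,g_s$ --- whose overlaps in $\A$ are themselves controlled by the same finite-block constraint --- never produces initial monomials of unbounded degree. A related delicacy is ``removing a leading $f$'' from an element of $M$ all of whose terms begin with $f$, which requires controlling $M$ under the injective, non-surjective operator of left multiplication by $f$. I would try to organize the whole argument as an induction --- on the number of generators $s$, or on $n$ (with base case $n=0$, where $\A=\F_p[f]$ is a principal ideal domain and the statement is trivial) --- reducing to the situation where the prefix-shortening argument applies directly, and closing the induction with the Noetherianity of $R$ used above.
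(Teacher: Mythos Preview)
Your structural setup is sound: decomposing monomials of $\A^r$ into a Frobenius word, a tail, and a basis index is exactly right, and handling the empty-word part via Noetherianity of $R$ matches the endgame of the paper's argument. But the core of your plan --- the ``shortening'' move --- does not go through, and you yourself flag this. The problem is not merely cancellation; it is that $\inn(M)$ has no reason to be closed under removing a left prefix. If $m\in\inn(M)$ has a long initial Frobenius segment, there is no mechanism producing an element of $M$ whose initial term is the shortened monomial. Your fallback inductions are too vague to repair this: induction on the number of generators $s$ gives no usable relation between $\inn(\sum_{i<s}\A g_i)$ and $\inn(\sum_{i\le s}\A g_i)$, and induction on $n$ lacks any evident reduction step.

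The missing idea is a numerical invariant that is \emph{non-increasing along Buchberger's algorithm} rather than a structural closure property of $\inn(M)$. The paper runs Buchberger's algorithm starting from the finite generating set and, using the explicit classification of monomial syzygies in $\A$, tracks for each new $S$-polynomial $h_z$ the pair $(\text{rb}(h_z),\deg(h_z))$, where the \emph{robustness} $\text{rb}(g)$ is the least $d$ with $x^\alpha g\in R^r$ for all $|\alpha|\ge d$ --- i.e.\ a measure of how much room remains in the \emph{leading} block before left $x$-multiplication kills it. The case analysis on the three syzygy types shows that either $\text{rb}(h_z)=0$ or $\text{rb}(h_z)+\deg(h_z)$ is bounded by the same quantity for some earlier element; hence only finitely many monomials of positive $f$-order can ever appear as initial terms, and the $f$-order-$0$ initial terms are handled by Dickson's lemma in $R^r$. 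Note that robustness depends on the \emph{head} of the Frobenius word, not the tail you isolated; your decomposition looks at the wrong end of the monomial for the invariant that actually controls the process.
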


To prove this, we need to directly investigate how the initial submodule can be constructed given a finite generating set for $M$. In Section 3, we prove that a version of Buchberger's algorithm holds for a ring like $\A$, and this algorithm is sufficient to establish the necessary finiteness claim.

Theorem \ref{amodhilbert} suggests a natural ``minimal'' class of graded $\A$-modules, namely, those with a Hilbert series of the form $a(t)/g_{p,n}(t)$. Of particular interest to us are those $\FF$-modules whose associated graded module belongs to the minimal class.

\begin{dfn}
A \textit{$\B$-compatible filtration} on a left $\FF$-module shall mean a chain $\Omega_0\subseteq \Omega_1\subseteq \Omega_2\subseteq\cdots$ of finite-dimensional $\F_p$-vector spaces $\Omega_i\subseteq M$ such that $\bigcup_i\Omega_i = M$ and $\B_i\cdot \Omega_j\subseteq \Omega_{i+j}$ for all $i$ and $j$. An \textit{$(\FF,\B)$-module} is a pair $(M,\Omega)$ consisting of an $\FF$-module $M$ equipped with a $\B$-compatible filtration $\Omega$. The \textit{associated graded $\A$-module} of $(M,\Omega)$ is
\[
\gr_{\Omega}(M)=\bigoplus_i \Omega_i/\Omega_{i-1}
\]
By the Hilbert series of $(M,\Omega)$, we shall always mean the Hilbert series of $\gr_{\Omega}(M)$, namely,
\[
\HS_{(M,\Omega)}(t)=\sum_{i=0}^\infty \dim_{\F_p}(\Omega_i/\Omega_{i-1})\,t^i
\]
\end{dfn}

Call a $\B$-compatible filtration $\Omega$ on and $\FF$-module $M$ \textit{great} if $\gr_\Omega(M)$ is a finitely presented $\A$-module. Suppose that a left $\FF$-module $M$ admits a great filtration $\Omega$. By Theorem \ref{amodhilbert}, the Hilbert series $\HS_{(M,\Omega)}(t)$ has the form $a(t)/(1-t)^dg_{p,n}(t)$. One may quite reasonably ask whether, given another great filtration $\Sigma$, the Hilbert series $\HS_{(M,\Sigma)}(t)$ has anything at all to do with $\HS_{(M,\Omega)}(t)$.

For a rational function $h(t)$, let $\delta_1(h)$ be the least nonnegative integer $d$ such that $(1-t)^dh(t)$ has no pole at $t=1$. We prove the following.
\begin{theorem*}\textbf{\ref{welldefined}}
If $\Sigma$ and $\Omega$ are two great filtrations on an $\FF$-module $M$, then $\delta_1(\HS_{(M,\Omega)})=\delta_1(\HS_{(M,\Sigma)})$. Letting $a_\Omega(t)=(1-t)^{\delta_1(\HS_{(M,\Omega)})}g_{p,n}(t)\HS_{(M,\Omega)}(t)$, with $a_\Sigma(t)$ defined analogously, we have $a_\Omega(1)=a_\Sigma(1)$.
\end{theorem*}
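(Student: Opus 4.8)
The plan is to adapt Bernstein's comparison of good filtrations for $D$-modules, making the two modifications forced on us by the non-Noetherianity of $\FF$ and by the universal denominator factor $g_{p,n}$: subobjects will not be finitely generated for free, and the invariant we must track at $t=1$ is masked by the other poles of the Hilbert series. \textbf{Step 1 (comparability).} First I would show that any two great filtrations $\Omega,\Sigma$ on $M$ are comparable: there is $c\ge 0$ with $\Omega_i\subseteq\Sigma_{i+c}$ and $\Sigma_i\subseteq\Omega_{i+c}$ for all $i$. Since $\gr_\Omega(M)$ is a finitely generated $\A$-module, say generated in degrees $\le i_0$, an induction on $m$ — using that every monomial of complexity $\le m-k$ factors as a monomial of complexity $\le m-i_0$ times one of complexity $\le i_0-k$ — gives $\Omega_m=\B_{m-i_0}\cdot\Omega_{i_0}$ for $m\ge i_0$; as $\Omega_{i_0}$ is finite dimensional it lies in some $\Sigma_N$, so $\Omega_m\subseteq\B_{m-i_0}\Sigma_N\subseteq\Sigma_{m+N}$, and symmetrically.

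\textbf{Step 2 (reduction to a nested pair).} Replace $\Sigma$ by its downward shift $\Sigma[-c]_i:=\Sigma_{i-c}$ (with $\Sigma_j=0$ for $j<0$). This is again a great filtration, $\gr_{\Sigma[-c]}(M)\cong\gr_{\Sigma}(M)(-c)$, and $\HS_{\Sigma[-c]}(t)=t^c\HS_\Sigma(t)$, so $\delta_1$ and the leading Laurent coefficient at $t=1$ are unchanged; and by comparability $\Omega':=\Sigma[-c]$ satisfies $\Omega'_i\subseteq\Omega_i\subseteq\Omega'_{i+e}$ for all $i$, where $e:=2c$. Thus it suffices to treat a nested pair $\Omega'\subseteq\Omega$ of great filtrations with $\Omega_i\subseteq\Omega'_{i+e}$ for all $i$.

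\textbf{Step 3 (telescoping).} For a great filtration $\Gamma$ let $\gr^{(e)}_\Gamma(M):=\bigoplus_i\Gamma_i/\Gamma_{i-e}$, a graded $\A$-module. The submodules $F^k$ with $F^k_i=\Gamma_{i-e+k}/\Gamma_{i-e}$ give a chain $0=F^0\subseteq\cdots\subseteq F^e=\gr^{(e)}_\Gamma(M)$ with $F^k/F^{k-1}\cong\gr_\Gamma(M)(k-e)$; since downward shifts and extensions of finitely presented modules are finitely presented, $\gr^{(e)}_\Gamma(M)$ is finitely presented, and $\HS_{\gr^{(e)}_\Gamma(M)}(t)=(1+t+\cdots+t^{e-1})\HS_\Gamma(t)$, whence $\delta_1(\HS_{\gr^{(e)}_\Gamma(M)})=\delta_1(\HS_\Gamma)$. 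Now put $P:=\bigoplus_i\Omega_i/\Omega'_i$, a graded $\A$-module; counting dimensions through $\Omega'_i\subseteq\Omega_i$ gives
\[
\HS_\Omega(t)=\HS_{\Omega'}(t)+(1-t)\HS_P(t).
\]
Since $\Omega_{i-e}\subseteq\Omega'_i$, there is a surjection of graded $\A$-modules $\gr^{(e)}_\Omega(M)\twoheadrightarrow P$, in degree $i$ the projection $\Omega_i/\Omega_{i-e}\twoheadrightarrow\Omega_i/\Omega'_i$, whose kernel $\bigoplus_i\Omega'_i/\Omega_{i-e}$ is a quotient of the finitely presented module $\gr^{(e)}_{\Omega'}(M)$ and hence finitely generated. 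Therefore $P$ is a finitely presented quotient of $\gr^{(e)}_\Omega(M)$ by a finitely generated submodule, and by Theorem \ref{amodhilbert} its Hilbert series has the standard shape, so $\delta_1(\HS_P)$ is defined.

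\textbf{Step 4 (monotonicity of $\delta_1$, and conclusion).} The crux is the monotonicity statement: if $B\twoheadrightarrow C$ is a surjection of finitely presented graded $\A$-modules, then $\delta_1(\HS_C)\le\delta_1(\HS_B)$. Granting this (applied to $\gr^{(e)}_\Omega(M)\twoheadrightarrow P$) we get $\delta_1(\HS_P)\le\delta_1(\HS_\Omega)$, so $(1-t)\HS_P(t)$ has a pole at $t=1$ of order strictly below $\delta_1(\HS_\Omega)$, or is regular and vanishing at $t=1$ when $\delta_1(\HS_P)=0$. Reading this off from $\HS_{\Omega'}(t)=\HS_\Omega(t)-(1-t)\HS_P(t)$ and using $g_{p,n}(1)\ne 0$ forces $\delta_1(\HS_{\Omega'})=\delta_1(\HS_\Omega)$ and $a_{\Omega'}(1)=a_\Omega(1)$, which — after undoing the shift of Step 2 — is the theorem. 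The monotonicity lemma is the main obstacle; it is exactly the kind of statement Noetherianity would hand us for nothing, and a crude positivity argument is unavailable because the ``multiplicities'' $a_N(1)$ may be negative (for instance $a_{\F_p}(1)=g_{p,n}(1)=1-p^n$). I would prove it by Gr\"obner deformation: present $B=\A^r/B_0$ and $C=\A^r/C_0$ with $B_0\subseteq C_0$ finitely generated, use Theorem \ref{INfg} and the identities $\HS_{\A^r/B_0}=\HS_{\A^r/\inn(B_0)}$, $\HS_{\A^r/C_0}=\HS_{\A^r/\inn(C_0)}$ together with $\inn(B_0)\subseteq\inn(C_0)$ to reduce to the case of monomial submodules, and there compute $\delta_1$ combinatorially from the exponent data analysed in Section 2, where it is visibly nonincreasing as the monomial submodule grows.
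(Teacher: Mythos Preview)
Your plan contains a genuine gap in Step 3: for $e\ge 2$, neither $\gr^{(e)}_\Gamma(M)=\bigoplus_i\Gamma_i/\Gamma_{i-e}$ nor $P=\bigoplus_i\Omega_i/\Omega'_i$ carries a graded $\A$-module structure. To let $[b]_k\in[\A]_k=\B_k/\B_{k-1}$ act on $[m]_i\in\Omega_i/\Omega'_i$ by $[bm]_{i+k}$, you need the result to be independent of the lift $b$; but replacing $b$ by $b+b'$ with $b'\in\B_{k-1}$ moves $bm$ only into $\Omega_{i+k-1}$, and the nesting $\Omega_j\subseteq\Omega'_{j+e}$ gives $\Omega_{i+k-1}\subseteq\Omega'_{i+k-1+e}$, not $\Omega'_{i+k}$. (Concretely: with $M=\FF$, $\Omega=\B$, $\Omega'=\B(-2)$, the two lifts $x$ and $x+1$ of $[x]_1$ send $[F]_1\in\B_1/\B_{-1}$ to $[xF]$ and $[xF+F]$ in $\B_2/\B_0$, which differ.) The same obstruction kills the $\A$-action on $\gr^{(e)}_\Gamma(M)$, so your filtration $F^k$ is a chain of vector subspaces, not of $\A$-submodules, and you cannot invoke Theorem~\ref{amodhilbert} for $\HS_P$. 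The identity $\HS_{\Omega'}=\HS_\Omega-(1-t)\HS_P$ is then a tautology on formal power series and carries no information about the pole order of $\HS_P$.

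When $e=1$ your argument is fine: then $\Omega_{i+k-1}\subseteq\Omega'_{i+k}$ holds, $P$ is exactly the paper's $\gr^0_{\Omega',\Omega}(M)$, and your surjection is $\gr_\Omega(M)\twoheadrightarrow P$. Reducing from general $e$ to $e=1$ is precisely what the paper accomplishes with the interpolating filtrations $T^k_i=\Omega'_{i+k}\cap\Omega_i$; the substantive work (Theorem~\ref{intergreat}) is proving each $T^k$ is great, by an induction on $e$ that bootstraps off the structural kernel/cokernel characterization of greatness. Once that is done, the gap-$1$ comparison is immediate. As a side remark, your monotonicity lemma in Step 4 is much easier than you suggest: by Corollary~\ref{dimfromhs}, $\delta_1(\HS_N)=\dim C_N$, and $N\mapsto C_N=\Coker(\Theta_N)$ is right exact, so a surjection $B\twoheadrightarrow C$ of finitely presented graded $\A$-modules gives $C_B\twoheadrightarrow C_C$ and hence $\dim C_C\le\dim C_B$; no Gr\"obner deformation is needed.
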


We call the values $\delta_1(\HS_{(M,\Omega)})$ and $a_\Omega(1)$ defined for some (equivalently, any) great filtration $\Omega$ the \textit{Bernstein dimension} and \textit{multiplicity} of $M$, respectively. The multiplicity of an $\FF$-module with positive Bernstein dimension is positive, but an $\FF$-module of Bernstein dimension $0$ can have either zero or negative multiplicity (see Example \ref{negmult}).

It is worth asking which $\FF$-modules actually admit a great filtration in the first place. We show that this class of modules actually has a rather simple description in terms of Lyubeznik's structure morphism $\theta_M$ (\cite{lyufmod}, see also \cite{emertonkisin}, \cite{blicklethesis}).

\begin{theorem*}\textbf{\ref{greatclass}}
  Let $M$ be an $\FF$-module with structure morphism $\theta_M$. The module $M$ admits a great $\B$-compatible filtration if and only if $M$ is finitely generated and $\Ker(\theta_M)$ is a finite length $R$-module. If this is the case, then $\Coker(\theta_M)$ is finitely generated over $R$, and for any great $\B$-compatible filtration $\Omega$ on $M$, we have $\delta_1(\HS_{(M,\Omega)})=\dim\Coker(\theta_M)$.
\end{theorem*}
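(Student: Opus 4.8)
The plan is to push everything through the associated graded module $N=\gr_\Omega(M)$ over $\A$ and to track the action of $f\in\A_1$, which on $N$ is the one induced by the Frobenius operator $F$ of $M$. The guiding picture is a tower of Frobenius pushforwards of $\Coker(\theta_M)$. For any $\FF$-module $M$ the Frobenius produces an $R$-linear chain of maps whose successive cokernels are, up to a dimension-preserving Frobenius twist, the module $\Coker(\theta_M)$, while the kernels that occur along the chain are all subquotients of $\Ker(\theta_M)$. Thus as soon as $\Ker(\theta_M)$ has finite length, every cokernel in the chain has the same $R$-dimension $d_M:=\dim_R\Coker(\theta_M)$; and when $M$ is finitely generated over $\FF$, the module $\Coker(\theta_M)$ is finitely generated over $R$ — this is one of the asserted conclusions, and it is one place the hypothesis is spent. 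Since multiplication by $f$ on $N$ realizes exactly this chain at the level of associated graded modules, one expects $N$ to be glued together from shifted Frobenius twists of a single finitely generated graded $R$-module of dimension $d_M$, which is what should force $\HS_N$ to have the shape of Theorem~\ref{amodhilbert} with $\delta_1=d_M$.

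For the implication ``$\Longleftarrow$'' one must actually produce a great filtration. Given generators $m_1,\dots,m_r$ of $M$ over $\FF$, set $\Omega_0=Rm_1+\cdots+Rm_r$ and $\Omega_i=\B_i\Omega_0$; using $\B_i\B_j=\B_{i+j}$ one sees immediately that $\Omega$ is a $\B$-compatible filtration with $\bigcup_i\Omega_i=M$ and that $N=\gr_\Omega(M)$ is generated in degree $0$ by the classes of the $m_i$, hence is a finitely generated graded $\A$-module. The crux — and the main obstacle of the whole theorem — is to upgrade ``finitely generated'' to ``finitely presented'': because $\A$ is not left Noetherian, the relation module of $N$ need not be finitely generated for formal reasons. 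My plan here is to realize $N$ as a quotient of a finite-rank free $\A$-module and to use the finite-length hypothesis on $\Ker(\theta_M)$ to show that only finitely many $S$-pairs survive when one runs the Buchberger-type procedure of Section~3 on a lift of a finite generating set of the relation module: finite length of $\Ker(\theta_M)$ makes the tower of Frobenius pushforwards stabilize after finitely many steps, which caps the ``Frobenius depth'' at which genuinely new relations can be produced, and the finiteness statement of Theorem~\ref{INfg} then ensures the surviving data is finite. Granting this, $N$ is finitely presented, so $\Omega$ is great.

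Conversely, ``$\Longrightarrow$'': if $\Omega$ is great then $N$ is a finitely generated $\A$-module, and lifting its generators to $M$ and running the standard filtered argument shows $M$ is finitely generated over $\FF$. To see that $\Ker(\theta_M)$ has finite length, observe that the transition kernels in the Frobenius chain inject, via the filtration, into the $f$-power-torsion submodule of $N$; since $\A$ itself has no $f$-power-torsion — no monomial is annihilated by a power of $f$, as one reads off from the Dills–Enescu complexity formula — a finitely presented $\A$-module has $f$-power-torsion of finite length, and a short descent along the chain bounds $\Ker(\theta_M)$. Finally, for the dimension formula, Theorem~\ref{amodhilbert} gives $\HS_N(t)=a(t)/\bigl((1-t)^d g_{p,n}(t)\bigr)$ with $d=\delta_1(\HS_N)$ because $g_{p,n}(1)=1-p^n\neq 0$; on the other hand, filtering $N$ by the submodules $\A f^k N$ displays $\HS_N$ as a locally finite sum of shifts of Hilbert series of finitely generated graded $R$-modules, each of dimension $d_M$ by the first paragraph (this is where the now-established finite length of $\Ker(\theta_M)$ is used), and since such series have eventually nonnegative coefficients no cancellation of the leading pole at $t=1$ can occur, whence $\delta_1(\HS_N)=d_M=\dim_R\Coker(\theta_M)$. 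Theorem~\ref{welldefined} makes this independent of the chosen great filtration, which completes the proof.
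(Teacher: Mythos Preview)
Your architecture is right --- compare $\theta_M$ with left multiplication by $f$ on $N=\gr_\Omega(M)$ --- but the ``$\Leftarrow$'' direction has a genuine gap, and the other parts are imprecise in ways that matter.

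For ``$\Leftarrow$'' you propose to show $N$ is finitely presented by running Buchberger on ``a lift of a finite generating set of the relation module'' and invoking Theorem~\ref{INfg}. This is circular: Theorem~\ref{INfg} presupposes the submodule is already finitely generated --- it says $\inn(H)$ is finitely generated \emph{when} $H$ is --- so it cannot bootstrap finite generation of the relation module, and there is no finite set of relations to feed into the procedure in the first place. The paper takes a different route that you are missing entirely. It first proves an intrinsic criterion (Theorems~\ref{thetaaction} and \ref{thetafg}): a graded $\A$-module $N$ with finite-dimensional pieces is finitely presented if and only if both $K_N=\Ker(\Theta_N)$ and $C_N=\Coker(\Theta_N)$ are finitely generated over $R$, where $\Theta_N:\mF_\A(N)\to N$, $n\varepsilon_v\mapsto x^vfn$, is the graded structure map of Definition~\ref{grstruct}. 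Since $C_N$ is automatically finitely generated once $N$ is, the content is that $K_N$ has finite length. This is Theorem~\ref{greatconverse}: after identifying $\Theta_N$ with $\gr(\theta_M)$ via the Schreyer pullback filtration $\Omega^*$ on $\mF_R(M)$ (Proposition~\ref{grfrobcomp}), one shows that every high-degree homogeneous element of $K_N$ lifts, modulo lower filtration, into $\Ker(\theta_M)$. The crucial technical input is the degree-contraction estimate $\mD_d\cdot\Omega^*_i\subseteq\Omega^*_{i+\lfloor d/p\rfloor+n(p+1)}$ of Lemma~\ref{domegastar}: multiplying by a degree-$d$ element of $R$ raises $\Omega^*$-degree by only about $d/p$, not $d$, and this is what lets the finitely many degrees occupied by $\Ker(\theta_M)$ absorb the correction terms. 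Your ``the tower of Frobenius pushforwards stabilizes after finitely many steps'' does not describe this; no such stabilization occurs, and the $p$-adic shrinking of degrees is essential.

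For ``$\Rightarrow$'' and the dimension formula you are on track but the details are off. The associated graded of $\Ker(\theta_M)$ lands in $\Ker(\Theta_N)\subseteq\mF_\A(N)$, not in the $f$-power-torsion of $N$, and $\Ker(\Theta_N)$ has finite length because $N$ is finitely presented (Theorem~\ref{thetaaction}(ii)) --- the bare fact that $\A$ has no $f$-torsion is a non-sequitur here (over $k[x,y]$ the module $k[y]$ is finitely presented and entirely $x$-torsion of infinite length). The precise comparison is Lemma~\ref{cokercompare}, which packages both directions at once: it exhibits $\gr(\Ker\theta_M)\hookrightarrow K_N$ and shows $C_N$ and $\gr(\Coker\theta_M)$ differ by an $\mf m$-torsion module, so $\dim\Coker(\theta_M)=\dim C_N$. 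Combined with Corollary~\ref{dimfromhs}, which reads $\delta_1(\HS_N)=\dim C_N$ directly off the exact sequence for $\Theta_N$, this gives the dimension formula without your $\A f^kN$ filtration and without any worry about cancellation of poles.
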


A natural class that stands out to us are those finitely generated $\FF$-modules $M$ with a Bernstein dimension $0$. We refer to these modules as \textit{$F$-holonomic}, due to their characterization in terms of a minimality condition on the denominator of their Hilbert series. We can give a number of equivalent characterizations. All filtrations below are assumed to be $\B$-compatible.

\begin{theorem*}\textbf{\ref{hololist}}
  Let $M$ be a finitely generated $\FF$-module with structure morphism $\theta_M$. The following are equivalent.
  \begin{enumerate}[(i)]
      \item Both $\Ker(\theta_M)$ and $\Coker(\theta_M)$ are finite length $R$-modules.
  \item $M$ admits a great filtration with Bernstein dimension $0$.
  \item There exists a filtration $\Omega$ on $M$ such that both $\Ker(\gr_\Omega(\theta_M))$ and $\Coker(\gr_\Omega(\theta_M))$, suitably defined, are finite length $R$-modules.
  \item There exists a filtration $\Omega$ on $M$ such that $\gr_\Omega(M)$ has a finite Gr\"obner basis $\G=(g_1,\ldots,g_t)$ with $\inn(\Syz(\G))=I_1\oplus\cdots\oplus I_t$ finitely generated, and such that each contracted monomial ideal $I_i\cap R$ contains a power of $\mf{m}$.
  \item There exists a great filtration $\Omega$ on $M$ such that the Hilbert series of $\gr_\Omega(M)$ has the form $a(t)/g_{p,n}(t)$, for some polynomial $a(t)$.
  \end{enumerate}
  In every instance above, ``there exists a [...] filtration'' can be replaced with ``for all [...] filtrations, of which there is at least one.''
\end{theorem*}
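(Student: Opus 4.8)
The plan is to establish the equivalences (i)$\Leftrightarrow$(ii)$\Leftrightarrow$(v) directly, then the two side equivalences (iv)$\Leftrightarrow$(v) and (iii)$\Leftrightarrow$(i), and to check along the way that each filtration exhibited may be taken to be an arbitrary great (resp.\ $\B$-compatible) one, so that the ``for all'' strengthenings fall out automatically. Most of the analytic content has already been isolated: Theorem \ref{greatclass} identifies which $\FF$-modules carry great filtrations and computes $\delta_1$ of their Hilbert series, Theorem \ref{amodhilbert} pins down the shape of that Hilbert series, Theorem \ref{welldefined} makes the relevant numerical data filtration-independent, and Theorem \ref{INfg} (together with the Buchberger algorithm of Section 3) supplies the finiteness needed for the Gr\"obner-theoretic statement; the work here is to line these up and to connect the denominator $g_{p,n}(t)$ with the elementary conditions in (iii) and (iv).

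The block (i)$\Leftrightarrow$(ii)$\Leftrightarrow$(v) is essentially bookkeeping. Since $M$ is finitely generated, Theorem \ref{greatclass} says $M$ admits a great filtration iff $\Ker(\theta_M)$ has finite length, in which case $\Coker(\theta_M)$ is finitely generated over $R$ and $\delta_1(\HS_{(M,\Omega)})=\dim\Coker(\theta_M)$ for every great $\Omega$. A finitely generated $R$-module has dimension $0$ exactly when it has finite length, so (i) says precisely that $M$ has a great filtration of Bernstein dimension $0$, which is (ii). For (ii)$\Leftrightarrow$(v): a great filtration has $\HS_{(M,\Omega)}(t)=a(t)/((1-t)^dg_{p,n}(t))$ by Theorem \ref{amodhilbert}, and since $g_{p,n}(1)=1-p^n\neq 0$, this series has no pole at $t=1$ (equivalently $\delta_1=0$) iff $(1-t)^d$ divides $a(t)$, i.e.\ iff $\HS_{(M,\Omega)}(t)=a'(t)/g_{p,n}(t)$ for a polynomial $a'$. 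Theorem \ref{welldefined} then upgrades ``some great filtration'' to ``every great filtration'' in both (ii) and (v) simultaneously.

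For (iv)$\Leftrightarrow$(v) fix a great filtration $\Omega$ (one exists once (v) holds, and the hypotheses of (iv) force $\gr_\Omega(M)$ to be finitely presented, hence $\Omega$ great). Present $\gr_\Omega(M)=\A^t/\Syz(\G)$ for a finite generating tuple $\G=(g_1,\dots,g_t)$; finite presentation makes $\Syz(\G)$ finitely generated (a generalized Schanuel argument, which needs no Noetherianity), and Theorem \ref{INfg} with Section 3 provides a finite Gr\"obner basis and the finite generation of $\inn(\Syz(\G))$. A monomial submodule of $\A^t$ splits automatically along the free generators as $I_1\oplus\cdots\oplus I_t$, and since the Hilbert series of a graded submodule of a free $\A$-module agrees with that of its initial submodule, $\HS_{\gr_\Omega(M)}(t)=\sum_{i=1}^{t}t^{\deg g_i}\HS_{\A/I_i}(t)$. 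From the description of monomial-ideal Hilbert series in Section 2, each $\HS_{\A/I_i}(t)$ has a pole at $t=1$ of order exactly $\dim R/(I_i\cap R)$ with leading Laurent coefficient (the multiplicity of $\A/I_i$) of a fixed sign, so no cancellation at $t=1$ can occur in the sum and $\delta_1(\HS_{\gr_\Omega(M)})=\max_i\dim R/(I_i\cap R)$. This vanishes iff every $R/(I_i\cap R)$ is $0$-dimensional, and since $I_i\cap R$ is a monomial ideal this is equivalent to each $I_i\cap R$ containing a power of $\mf{m}$. In view of (v)$\Leftrightarrow$(ii) this is exactly (iv), and the computation applies verbatim to any great $\Omega$.

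For (iii)$\Leftrightarrow$(i): for any $\B$-compatible $\Omega$ the induced filtrations on the source and target of $\theta_M$ give $\gr_\Omega(\theta_M)$ with $\gr_\Omega(\Ker\theta_M)\subseteq\Ker(\gr_\Omega\theta_M)$ and a surjection $\Coker(\gr_\Omega\theta_M)\twoheadrightarrow\gr_\Omega(\Coker\theta_M)$; since a module with an exhaustive $\B$-compatible filtration has finite length iff its associated graded does, finiteness of the kernel and cokernel of $\gr_\Omega(\theta_M)$ forces finiteness of $\Ker(\theta_M)$ and $\Coker(\theta_M)$, which is (iii)$\Rightarrow$(i). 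Conversely, given (i), choose a great $\Omega$; after unwinding the proof of Theorem \ref{greatclass} the finite presentation of $\gr_\Omega(M)$ is precisely the statement that $\Ker(\gr_\Omega\theta_M)$ has finite length, while finiteness of $\Coker(\theta_M)$ makes the Bernstein dimension $0$ and hence, by the Hilbert series computation above, forces $\Coker(\gr_\Omega\theta_M)$ to have finite length; this works for every great $\Omega$, and at least one exists, so the ``for all'' clause is covered. The step I expect to require the most care is the no-cancellation claim in (iv)$\Leftrightarrow$(v): reducing $\delta_1$ of $\sum_i t^{\deg g_i}\HS_{\A/I_i}$ to the maximum of the individual pole orders rests on the positivity (fixed sign) of the leading Laurent coefficient of $\HS_{\A/I}$ at $t=1$ for every monomial ideal $I$, a multiplicity-positivity statement that has to be harvested from the Section 2 formulas; a secondary delicate point is matching the ``suitably defined'' $\gr_\Omega(\theta_M)$ in (iii) against the internal mechanics of Theorem \ref{greatclass} so that finite presentation of $\gr_\Omega(M)$ reads as finiteness of $\Ker(\gr_\Omega\theta_M)$.
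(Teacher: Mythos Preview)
Your proposal is correct and takes essentially the same approach as the paper: the core equivalences (i)$\Leftrightarrow$(ii)$\Leftrightarrow$(v) are exactly what the paper derives from Theorem~\ref{greatclass}, Corollary~\ref{dimfromhs}, and Theorem~\ref{welldefined}, while (iii) is handled via Lemma~\ref{cokercompare} and Theorems~\ref{thetaaction}/\ref{thetafg}; the paper's own proof is simply a list of these citations, so you have in fact supplied more detail than the paper does. Your no-cancellation step in (iv)$\Leftrightarrow$(v) is justified precisely by the positivity clause $a(1)>0$ when $d>0$ in Theorem~\ref{monomialhilbertseries}, so that concern is already resolved by the paper's Section~2 results.

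One small caveat: your ``for all'' upgrades in (iii) and (iv) are carried out only for \emph{great} filtrations, whereas the literal reading of the statement asks for all $\B$-compatible filtrations. The paper is equally silent on this point (its proof is just the citation list), so you are not missing anything the paper provides; but if you want to close this fully you would need to argue that on an $F$-holonomic module every $\B$-compatible filtration with finite-dimensional pieces is in fact great, or else interpret the ``for all'' clause as ranging over great filtrations.
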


It is evident from condition (i) that Lyubeznik's finitely generated unit $\FF$-modules are $F$-holonomic. In particular, any local cohomology module $H^i_I(R)$ is $F$-holonomic. Furthermore, $F$-holonomic $\FF$-modules are a full abelian subcategory of the category of all $\FF$-modules, closed under taking extensions.

\begin{theorem*}\textbf{\ref{extcat}}
Let $M$ and $N$ be $F$-holonomic $\FF$-modules. For any $\FF$-linear map $\varphi:M\to N$, the kernel, cokernel, and image of $\varphi$ are $F$-holonomic. An extension of two $F$-holonomic $\FF$-modules is $F$-holonomic. The multiplicity $M\mapsto e(M)$ is an additive function on the category of $F$-holonomic $\FF$-modules.
\end{theorem*}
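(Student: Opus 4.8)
The whole statement reduces to showing that the $F$-holonomic $\FF$-modules form a full subcategory of $\mathrm{Mod}_{\FF}$ closed under submodules, quotients, and extensions: the assertions about $\Ker\varphi$, $\mathrm{im}\,\varphi$, $\Coker\varphi$ are then immediate, and additivity of $e$ will follow from additivity of Hilbert series together with Theorem~\textbf{\ref{welldefined}}. I will work throughout with the criterion of Theorem~\textbf{\ref{hololist}}(i): a finitely generated $\FF$-module $M$ is $F$-holonomic iff $\Ker(\theta_M)$ and $\Coker(\theta_M)$ are finite length $R$-modules. Two facts drive everything. Since $R$ is regular, the Frobenius functor on $R$-modules is exact (Kunz) and sends finite length modules to finite length modules (reduce to $R/\mf{m}$, whose image is $R/\mf{m}^{[p]}$). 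And $\theta$ is functorial, so applying the (exact) Frobenius functor to a short exact sequence $0\to A\to B\to C\to 0$ of $\FF$-modules and invoking the snake lemma yields an exact sequence
\[
0\to\Ker\theta_A\to\Ker\theta_B\to\Ker\theta_C\to\Coker\theta_A\to\Coker\theta_B\to\Coker\theta_C\to 0;
\]
in the degenerate cases, for an $\FF$-submodule $P\subseteq N$ the module $\Ker\theta_P$ is a submodule of $\Ker\theta_N$, and for a quotient $Q$ of $N$ the module $\Coker\theta_Q$ is a quotient of $\Coker\theta_N$ (both immediate from functoriality of $\theta$ and exactness of Frobenius). Finally, quotients of, and extensions of, finitely generated $\FF$-modules are finitely generated, and finite length $R$-modules form a Serre subcategory.

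\textbf{Extensions, image, cokernel.} If $0\to M'\to M\to M''\to 0$ with $M',M''$ $F$-holonomic, then $M$ is finitely generated, and the six-term sequence exhibits $\Ker\theta_M$ as an extension of a submodule of $\Ker\theta_{M''}$ by $\Ker\theta_{M'}$, and similarly for $\Coker\theta_M$; both are finite length, so $M$ is $F$-holonomic. Now let $\varphi\colon M\to N$ with $M,N$ $F$-holonomic, and set $K=\Ker\varphi$, $I=\mathrm{im}\,\varphi$, $C=\Coker\varphi$. As a quotient of $M$, the module $I$ is finitely generated with $\Coker\theta_I$ a quotient of $\Coker\theta_M$; as a submodule of $N$, it has $\Ker\theta_I$ a submodule of $\Ker\theta_N$; hence $I$ is $F$-holonomic \emph{with no information about $K$}. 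Then $C=N/I$ is finitely generated with $\Coker\theta_C$ a quotient of $\Coker\theta_N$, and the six-term sequence for $0\to I\to N\to C\to 0$ — now with $I$ and $N$ both $F$-holonomic — squeezes $\Ker\theta_C$ between a quotient of $\Ker\theta_N$ and a submodule of $\Coker\theta_I$, so $\Ker\theta_C$ is finite length and $C$ is $F$-holonomic.

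\textbf{The kernel, and the main obstacle.} Applying the six-term sequence to $0\to K\to M\to I\to 0$ (with $M,I$ $F$-holonomic) shows $\Ker\theta_K\subseteq\Ker\theta_M$ and that $\Coker\theta_K$ is squeezed between a quotient of $\Ker\theta_I$ and a submodule of $\Coker\theta_M$; hence $\Ker\theta_K$ and $\Coker\theta_K$ are both finite length, and by Theorem~\textbf{\ref{hololist}}(i) it remains only to prove $K$ is finitely generated over $\FF$. This is the step that is not formal: $\FF$ is not Noetherian, so a submodule of a finitely generated $\FF$-module need not be finitely generated, and the Bernstein-dimension-$0$ hypothesis on $M$ has to enter. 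Fix a great filtration $\Omega$ on $M$ and let $\bar\Omega$ be the quotient filtration on $M/K\cong I$. Then $\gr_{\Omega\cap K}(K)$ is a graded $\A$-submodule of the finitely presented module $\gr_\Omega(M)$, and from the exact sequence $0\to\gr_{\Omega\cap K}(K)\to\gr_\Omega(M)\to\gr_{\bar\Omega}(M/K)\to 0$, the fact that $\HS_{\gr_\Omega(M)}(t)$ has denominator $g_{p,n}(t)$ ($M$ being $F$-holonomic), and the fact just proved that $I$ is $F$-holonomic, the Hilbert series of $\gr_{\Omega\cap K}(K)$ also has denominator $g_{p,n}(t)$. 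Now the machinery of Sections~2–3 — termination of Buchberger's algorithm and finite generation of initial submodules (Theorem~\textbf{\ref{INfg}}), which control a graded $\A$-submodule through its monomial initial submodule (this is exactly the situation of condition (iv) of Theorem~\textbf{\ref{hololist}}, whose hypotheses on the contracted ideals $I_i\cap R$ are what ``denominator $g_{p,n}$'' supplies) — forces $\gr_{\Omega\cap K}(K)$ to be finitely generated, hence finitely presented; so $\Omega\cap K$ is a great filtration, $K$ is finitely generated over $\FF$, and by Theorem~\textbf{\ref{hololist}}(i) $K$ is $F$-holonomic. I expect this finite-generation argument to be the crux of the proof — it is precisely where non-Noetherianity of $\FF$ and $\A$ is confronted and where Bernstein dimension $0$ is essential.

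\textbf{Additivity of multiplicity.} Given a short exact sequence $0\to M'\to M\to M''\to 0$ of $F$-holonomic modules, a great filtration $\Omega$ on $M$ induces great filtrations $\Omega\cap M'$ on $M'$ and $\bar\Omega$ on $M''$ (the first by the preceding paragraph, the second as a quotient of a finitely presented module by a finitely generated one), and $0\to\gr_{\Omega\cap M'}(M')\to\gr_\Omega(M)\to\gr_{\bar\Omega}(M'')\to 0$ is exact, so $\HS_{(M,\Omega)}(t)=\HS_{(M',\Omega\cap M')}(t)+\HS_{(M'',\bar\Omega)}(t)$. All three have the form $a_\bullet(t)/g_{p,n}(t)$ by Bernstein dimension $0$, hence $a_M=a_{M'}+a_{M''}$; evaluating at $t=1$ gives $e(M)=e(M')+e(M'')$, independent of the choices by Theorem~\textbf{\ref{welldefined}}. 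Applied to $0\to K\to M\to I\to 0$ and $0\to I\to N\to C\to 0$ this shows $e$ is an additive function on the (now abelian) category of $F$-holonomic $\FF$-modules, completing the proof.
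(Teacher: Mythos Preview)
Your overall strategy matches the paper's: snake lemma on the structure morphisms, handle $I=\mathrm{im}\,\varphi$ directly (quotient of $M$ controls $\Coker\theta_I$, submodule of $N$ controls $\Ker\theta_I$), then two-out-of-three for $\Ker\varphi$ and $\Coker\varphi$, and additivity of $e$ via Theorem~\ref{edcalc}. You are right that finite generation of $K=\Ker\varphi$ is the nontrivial point, and the paper's proof glosses over it.

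However, your resolution of this point has a gap. You claim $\HS_{\gr_{\bar\Omega}(I)}$ has denominator $g_{p,n}(t)$ because $I$ is $F$-holonomic, but $F$-holonomicity only guarantees that \emph{some} great filtration on $I$ has that Hilbert series; the quotient filtration $\bar\Omega$ is merely good, and nothing proved so far says every good filtration on an $F$-holonomic module is great. The subsequent appeal to ``the machinery of Sections~2--3'' is not a proof: Theorem~\ref{INfg} takes a \emph{finitely generated} submodule as input, so it cannot be used to establish finite generation, and condition~(iv) of Corollary~\ref{hololist} likewise presupposes a finite Gr\"obner basis. The argument is circular.

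A much shorter route is available. Since $M$ and $I$ are both $F$-holonomic, both are great, hence both finitely \emph{presented} over $\FF$ by Lemma~\ref{greatfp}. The elementary fact (valid over any ring) that in a short exact sequence $0\to K\to M\to I\to 0$ with $M$ finitely generated and $I$ finitely presented the kernel $K$ is finitely generated then finishes the job: combined with the finite-length conditions on $\Ker\theta_K$ and $\Coker\theta_K$ already obtained from the snake lemma, Corollary~\ref{hololist}(ii) shows $K$ is $F$-holonomic. Your additivity paragraph inherits the same circularity (it asserts $\Omega\cap M'$ is great ``by the preceding paragraph'') and needs the same repair before Theorem~\ref{edcalc} can be invoked on the associated graded sequence.
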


\begin{remark}
We refer to our modules as $F$-holonomic in order to draw an analogy to the classical setting of holonomic $\D$-modules, which are defined in terms of a minimality condition on the Hilbert series of one (equivalently, all) great filtrations (which, over $\DD$, are the same as good filtrations) compatible with the Bernstein filtration on $\D$. A very different notion of \textit{holonomic} Frobenius modules, however, does exists in the literature, due to Bhatt and Lurie \cite{bhattlurie}. As their holonomic modules are perfect, and our $F$-holonomic modules are never perfect outside of trivial cases (e.g., $\F_p$ itself), we hope that there is little danger of confusion.
\end{remark}
\begin{remark}
  We sacrifice some amount of generality for the sake of cleaner exposition by restricting to the case of polynomial rings over $\F_p$. This allows us to talk about linear maps between graded components of $\A$-modules instead of $F$-sesquilinear maps. Compared to the case of polynomial rings over a general perfect field $K$, in which case the ring $K\ncF$ is still left Noetherian, this is primarily a matter of convenience than a serious technical obstruction. If $K$ is an imperfect field, especially an imperfect field that is not $F$-finite, the distinction is less trivial
\end{remark}
\begin{conventions}
  All rings are unital, and all ring homomorphisms preserve unity. By default, all ideals are left ideals, all modules are left modules, and Noetherianity refers to the ascending chain condition on left submodules. We will use boldface (e.g., $\A$, $\BB$, $\RR$) to denote rings that are not necessarily commutative, with standard Roman font (e.g., $A$, $B$, $R$) reserved for commutative rings. If $\TT$ is a graded ring and $M$ is a graded $\TT$-module, we write $[M]_i$ for the $i$th graded component of $M$. The module $M(j)$ denotes the Serre twist of $M$, in which $[M(j)]_i=M_{i+j}$. If $\RR$ is a ring, we let $\RR\{X\}$ denote the noncommutative polynomial ring in which a symbol $X$ is adjoined as freely as possible, satisfying no requirements beyond than those strictly necessary to make $\RR\{X\}$ into a ring.
\end{conventions}

\section{Frobenius monomials}
A product written as $(\smprod_{i=1}^n r_is)$ will always mean $r_1sr_2s\cdots r_ns$. We would instead write $(\smprod_{i=1}^n r_i)s$ if we wished to describe $r_1r_2\cdots r_n s$.

Let $R=\F_p[\vect x n]$. Consider the $R$-algebra obtained by adjoining a symbol $\ff$ as freely as possible.
\[
\BB = R\{\ff\} = \frac{\F_p\{x_1,\ldots,x_n,\ff\}}{(x_ix_j-x_jx_i\,|\,1\leq i,j\leq n)}
\]
We regard $\BB$ as an $\N$-graded ring with $\deg(x_i)=\deg(\ff)=1$ for all $i$. An element of $\BB$ having the form $m=(\smprod_{i=0}^{e-1} x^{v_i}\ff) x^{v_e}$ for $v_0,\ldots,v_e\in \N^n$ is a \textit{monomial} of $\BB$. The \textit{$\ff$-order} of $m$ is $\ordff(m):=e$.

Let $\M_n$ (or just $\M$, if $n$ is clear) denote the multiplicative monoid of monomials in $\BB$. This monoid is both left and right cancellative. It also admits several straightforward term orders. We will default to using the following.
\begin{dfn}\label{termorder}
 Let $m_1=(\smprod_{i=0}^{e-1} x^{v_i}\ff) x^{v_e}$ and $m_2=(\smprod_{i=0}^{d-1} x^{u_i}\ff) x^{u_d}$ be monomials in $\M_n$. The \textit{graded reverse lexicographic} term order declares $m_1<m_2$ if and only if one of the following conditions holds:
\begin{itemize}
\item $\deg(m_1)<\deg(m_2)$.
\item $\deg(m_1)=\deg(m_2)$ and $\ordff(m_1)<\ordff(m_2)$.
\item $\deg(m_1)=\deg(m_2)$, $\ordff(m_1)=\ordff(m_2)$, and $v_i=u_i$ for all $i<j$, with $v_j<u_j$ in the standard reverse lexicographic order on $\N^n$.
\end{itemize}
\end{dfn}
It is not hard to see that (i) $<$ is a well-order, (ii) $1$ is the least element of $\M_n$, (iii) for any monomials $m_1,m_2,m^\prime \in \M_n$, we have $m_1<m_2$ if and only if $m^\prime m_1<m^\prime m_2$ if and only if $m_1m^\prime < m_1m^\prime$, and (iv) for any $m_1$ and $m_2$, there are at most finitely many monomials $m^\prime$ such that $m_1<m^\prime<m_2$.

For $i\in \N$, let $\mP_i$ denote the $\F_p$-span of all monomials of degree at most $i$ in $\BB$, and for $m\in \M_n$, let $\mT_m$ denote the $\F_p$-span of all monomials $m^\prime$ that satisfy $m^\prime \leq m$. Both are exhaustive filtrations of $\BB$ by finite-dimensional vector spaces. It is clear that $\mP_i \mP_j\subseteq \mP_{i+j}$ and $\mT_m\mT_{m^\prime}\subseteq \mT_{mm^\prime}$ for all $i,j\in \N$ and all $m,m^\prime\in \M_n$. If $\mT_{<m}$ denotes the union $\bigcup_{m^\prime<m} \mT_{m^\prime}$, then $\mT_m/\mT_{<m}$ can be identified with the $\F_p$ span of the monomial $m$.

We will use $\BB$ primarily as a vehicle for studying the $R$-algebras
\[
\FF := \frac{R\{F\}}{(x_i^pF-Fx_i\,|\,1\leq i\leq n)}\hspace{1.0em}\text{and}\hspace{1.0em}
\A := \frac{R\{f\}}{(x_i^pf\,|\,1\leq i\leq n)}
\]
There are obvious $R$-algebra surjections $\varphi:\BB\twoheadrightarrow \FF$ via $\ff\mapsto F$ and $\psi:\BB\twoheadrightarrow \A$ via $\ff\mapsto f$. As the kernel of $\psi$ is homogeneous, the ring $\A$ naturally inherits a grading from $\BB$, while the ring $\FF$ does not. The $F$-order (resp. $f$-order) on $\FF$ (resp. $\A$), at least, is clearly well-defined. The \textit{Bernstein filtration} on $\FF$ is the image of the $\N$-indexed filtration $\mP$ under $\varphi$. We will postpone our study of the filtrations $\varphi(\mT)$ and $\psi(\mT)$ indexed by the monoid $\M_n$ until the next section.

A \textit{monomial} of $\FF$ or $\A$ is the image of a monomial of $\BB$ under $\varphi$ or $\psi$, respectively. While a monomial in $\FF$ may have many monomial preimages under $\varphi$, a monomial in $\A$ always has exactly one monomial preimage under $\psi$, and this allows us to place a kind of term order on $\A$.
\begin{dfn}\label{aatermorder}
  Let $a$ and $b$ be monomials in $\A$. Let $m_a$ and $m_b$ denote their unique monomial preimages under $\psi:\BB\twoheadrightarrow \A$. We define $a<b$ if and only if $m_a<m_b$ in the graded reverse lexicographic order on $\M_n$.
\end{dfn}
If $a<b$ in $\A$, and $c\in \A$ is any monomial, it is clear that whenever $ca\neq 0$ and $cb\neq 0$, we have $a<b$ if and only if $ca<cb$ if and only if $ac<bc$.

\begin{ex}
If $R=\F_2[x,y]$, the graded reverse lexicographic term order on $\A$ begins with
\[
1<x<y<f<x^2<xy<y^2<xf<yf<fx<fy<f^2
\]
\end{ex}

\begin{ex}
  In the case $R=\F_2[x]$, the ordering on the set of monomials of degree $6$ and $f$-order $3$ in $\A$ is:
  \[
  xfxfxf<xfxffx<xffxfx<xfffxx<fxfxfx<fxffxx<ffxfxx<fffxxx
  \]
  These monomials in $\A$ are the images of monomials in $\FF$ under $\B_6\twoheadrightarrow \B_6/\B_5$. Letting $[m]_6$ denote the image of $m\in \B_6$ under this map, we have 
 \[
\hspace{-0.7em}    [x^{\cb{111}}F^3]_6<[x^{1\cb{011}}F^3]_6<[x^{1\cb{101}}F^3]_6<[x^{10\cb{001}}F^3]_6<[x^{1\cb{110}}F^3]_6<[x^{10\cb{010}}F^3]_6<[x^{10\cb{100}}F^3]_6<[x^{11\cb{000}}F^3]_6
\]
where all exponents are expressed in base $p=2$. The order is based on the three least significant bits of the exponent of $x$. In decimal, the ordering would read
\[
    [x^7F^3]_6<[x^{11}F^3]_6<[x^{13}F^3]_6<[x^{17}F^3]_6<[x^{14}F^3]_6<[x^{18}F^3]_6<[x^{20}F^3]_6<[x^{48}F^3]_6
    \]
In general, for fixed $e$, the images under $\B_i\twoheadrightarrow \B_i/\B_{i-1}$ of the monomials $x^aF^e$ of degree $i$ in $\F_p[x]\ncF$ will be ordered reverse lexicographically according to the $e$ least significant digits in the base $p$ expansion of $a$.
\end{ex}

A key ingredient in Gr\"obner degeneration is classifying monomial syzygies. A first observation to make is that it is not uncommon for a pair of monomials in $\BB$ to have no left syzygies whatsoever. We need not look any further than $\F_p[x]\{\ff\}$, where the monomials $x$ and $\ff$ have no common left multiple. The existence of a common left multiple for monomials of unequal $\ff$-order is actually quite restrictive.
\begin{lem}\label{monomialsyz}
Let $a$ and $b$ be two monomials in $\BB$. Suppose that $(\M\cdot a)\cap (\M\cdot b)\neq \varnothing$, say with $ca-db=0$ for $c,d\in \M$.
\ben
\item If $\ordff(a)\neq\ordff(b)$, then either $a =mb$ or $b=ma$ for some $m\in \M$. The vector $(c,-d)$ belongs to $\M\cdot (1,-m)$ or $\M\cdot (m,-1)$, respectively.
  \item If $\ordff(a)=\ordff(b)$, then there is a monomial $m$ such that $a=x^\alpha m$ and $b=x^\beta m$. In this case, the vector $(c,-d)$ belongs to $\M\cdot (x^{\ell-\alpha},-x^{\ell-\beta})$, where $x^\ell$ is the LCM of $x^{\alpha}$ and $x^\beta$.
\een
\end{lem}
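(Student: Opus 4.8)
The plan is to work entirely with the normal form of monomials in $\BB$: every element of $\M$ is uniquely an expression $x^{v_0}\ff x^{v_1}\cdots \ff x^{v_e}$ with $v_i\in\N^n$, and two monomials coincide if and only if they have the same \emph{exponent sequence} $(v_0,\ldots,v_e)$. Concatenation is transparent in this language: if $c$ has exponent sequence $(c_0,\ldots,c_k)$ and $a$ has exponent sequence $(a_0,\ldots,a_r)$, then $ca$ has $\ff$-order $k+r$ and exponent sequence $(c_0,\ldots,c_{k-1},\,c_k+a_0,\,a_1,\ldots,a_r)$; in particular $\ordff$ is additive on products. So I would set $w:=ca=db$, write its exponent sequence as $(w_0,\ldots,w_N)$ with $N=\ordff(w)$, and observe that because $\ordff(c)=N-\ordff(a)$, the monomial $a$ is forced to be the ``terminal piece'' of $w$: one reads off $a_0=w_{N-\ordff(a)}-c_{N-\ordff(a)}$ with $c_{N-\ordff(a)}\le w_{N-\ordff(a)}$, and $a_j=w_{N-\ordff(a)+j}$ for $j\ge 1$; symmetrically for $b$ relative to $d$. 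Everything then reduces to comparing the locations of the two ``splice points'' $N-\ordff(a)$ and $N-\ordff(b)$.

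For part (i), assume without loss of generality $r:=\ordff(a)<s:=\ordff(b)$ (the case $r>s$ is symmetric, with the roles of $a,b$ and of $c,d$ interchanged). Then the splice point for $a$ lies strictly to the right of that for $b$, so the exponent sequence of $b$ has $(a_1,\ldots,a_r)$ as its last $r$ entries and has the entry $w_{N-r}=c_{N-r}+a_0\ge a_0$ in position $s-r$. Lowering that one entry by $a_0$ produces a monomial $m$ with exponent sequence $(b_0,\ldots,b_{s-r-1},\,w_{N-r}-a_0)$, hence $\ordff(m)=s-r$, and a direct multiplication check gives $ma=b$. Then $ca=db=(dm)a$, and right cancellativity of $\M$ yields $c=dm$, so $(c,-d)=d\cdot(m,-1)$. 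In the case $r>s$ one gets $a=mb$ and, after right-cancelling $b$ from $ca=(cm)b$... $cmb=db$, namely $d=cm$, so $(c,-d)=c\cdot(1,-m)$.

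For part (ii), $\ordff(a)=\ordff(b)=r$ forces $\ordff(c)=\ordff(d)$ and a common splice point, so $a$ and $b$ share their entire terminal block $m:=\ff x^{w_{N-r+1}}\cdots\ff x^{w_N}$; writing $\alpha=a_0$, $\beta=b_0$, we get $a=x^\alpha m$ and $b=x^\beta m$, while $c$ and $d$ have identical exponent sequences except for their last entries, which satisfy $c_{N-r}+\alpha=d_{N-r}+\beta=w_{N-r}$. Put $x^\ell=\lcm(x^\alpha,x^\beta)$, i.e.\ $\ell$ the coordinatewise maximum of $\alpha$ and $\beta$. The key point is that $g:=w_{N-r}-\ell$ lies in $\N^n$: in a coordinate where $\alpha$ dominates this is $w_{N-r}-\alpha=c_{N-r}\ge 0$, and where $\beta$ dominates it is $w_{N-r}-\beta=d_{N-r}\ge 0$. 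Let $e'$ be the monomial obtained from $c$ (equivalently from $d$) by replacing its last exponent with $g$; then $e'x^{\ell-\alpha}=c$ and $e'x^{\ell-\beta}=d$, so $(c,-d)=e'\cdot(x^{\ell-\alpha},-x^{\ell-\beta})$.

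The only step demanding real care is the index bookkeeping in part (i): one must verify that once $r<s$ the longer monomial $b$ genuinely ends with the word of $a$ (up to absorbing $a_0$ into a single exponent). This is precisely the rigidity phenomenon that makes common left multiples of monomials of unequal $\ff$-order so restrictive, and once it is pinned down the rest is a mechanical unwinding of the normal form together with right cancellativity in $\M$.
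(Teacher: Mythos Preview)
Your proof is correct and follows essentially the same approach as the paper's own proof: both analyze the normal form of the common multiple $w=ca=db$ and read off the relationship between $a$ and $b$ from the tails of $w$, using right cancellativity of $\M$ to identify the syzygy vector. Your version is simply more explicit about the exponent-sequence bookkeeping, whereas the paper introduces the tails $m_1,m_2$ directly and argues that one is a right factor of the other (in case~(i)) or that they coincide (in case~(ii)); the content is identical.
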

\begin{proof}
  Write out
  \[
  ca = db = (\smprod_{i=0}^{e-1} x^{v_i}\ff)x^{v_e}
  \]
  The products $m_1=(\smprod_{i=e-\ordff(a)}^{e-1} x^{v_i}\ff)x^{v_e}$ and $m_2=(\smprod_{i=e-\ordff(b)}^{e-1} x^{v_i}\ff)x^{v_e}$ must contain $a$ and $b$, respectively, as right factors via $m_1=x^ra$ and $m_2=x^sb$ for some $r,s\in \N^n$. It follows that every right factor of $m_1$ (resp. $m_2$) of strictly lower $\ff$-order is a right factor of $a$ (resp. $b$). (i) If $\ordff(a)\neq \ordff(b)$, it is easily seen that one of $m_1$ or $m_2$ is a right factor of the other, and thus, one of $a$ or $b$ is a right factor of the other. The claim about the vector $(c,-d)$ follows at once. (ii) If $\ordff(a)= \ordff(b)=0$, the claim is obvious. Otherwise, let $m_0=(\smprod_{i=e-\ordff(a)+1}^{e-1} x^{v_i}\ff ) x^{v_e}$, and let $v\in\N^n$ be such that $m_1=m_2=x^v\ff m_0$. We have $x^ra=x^v\ff m_0=x^sb$, and hence, $a=x^{v-r}\ff m_0$ and $b=x^{v-s}\ff m_0$ where it is clear that $v_i\geq r_i,s_i$ for all $i$. The claim about $(c,-d)$ follows.
\end{proof}

Let $a$ and $b$ be monomials in $\A$. If they admit a nonzero left common multiple $ca=db$, then by taking the unique monomial lifts of $a,b,c,d$ to $\BB$, we can readily classify $(c,-d)$ using the above lemma. In addition to syzygies of this form, it is possible in $\A$ to have either $ca=0$ or $db=0$, giving $(c,0)$ or $(0,d)$ as a syzygy, respectively. If $ca=0$, then $a$ must have $f$-order at least $1$, say $a = x_1^{\alpha_1}\cdots x_n^{\alpha_n}f m$ for some monomial $m$. In this situation, syzygies of the form $(c,0)$ are multiples of $(x_i^{p-\alpha_i},0)$ for $1\leq i\leq n$, with an analogous statement for $(0,d)$.

We want to understand monomial Hilbert functions. In order to do so, it is helpful to first study the left-multiplication maps by monomials of the form $x^vf$. We write $\N_{<p}^n$ to denote all vectors $(v_1,\ldots,v_n)\in\N^n$ with $0\leq v_i<p$ for all $i$. A \textit{monomial ideal} of $\A$ will always mean a left ideal generated by monomials.

\begin{theorem}\label{prefixcapture}
  Let $I$ be a finitely generated monomial left ideal of $\A$. Fix an exponent vector $v\in \N_{<p}^n$. There exists a constant $k$ depending only on $I$ and $v$ such that, for any monomial $m$ of degree $\geq k$, having $x^{v}f m\in I$ implies that $m\in I$.
\end{theorem}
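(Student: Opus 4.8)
The plan is to reduce the condition $x^{v}fm\in I$ to the statement that some fixed monomial generator of $I$ right-divides $x^{v}fm$, and then to check that in all but one ``overlap'' configuration that same generator already right-divides $m$. In the single exceptional configuration $m$ is forced to be one of finitely many monomials of bounded degree, and that bound determines $k$.

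\emph{Set-up.} Since $I$ is a finitely generated monomial ideal, it has a finite monomial generating set $g_{1},\dots,g_{s}$: every monomial appearing with nonzero coefficient in a member of a monomial ideal of $\A$ again lies in that ideal (such a monomial equals $c\mu$ for a monomial $c$ and a monomial generator $\mu$, hence lies in $\A\mu$), so the monomials appearing in any finite generating set already generate. For a nonzero monomial $\mu$ of $\A$ one has $\mu\in I=\sum_{j}\A g_{j}$ if and only if $\mu=c\,g_{j}$ for some $j$ and some monomial $c$ of $\A$: expanding a representation $\mu=\sum_{j}a_{j}g_{j}$ into monomials, no cancellation can identify two distinct contributions $cg_{j}\neq c'g_{j}$, since lifting along $\psi\colon\BB\twoheadrightarrow\A$ (a nonzero monomial of $\A$ has a unique monomial preimage) and using right-cancellativity of $\M_{n}$, an equality $cg_{j}=c'g_{j}\neq 0$ forces $c=c'$; hence $\mu$ must literally equal one of the terms $cg_{j}$. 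Writing $m=(\smprod_{i=0}^{d-1}x^{u_{i}}f)x^{u_{d}}$ with $u_{i}\in\N_{<p}^{n}$ for $i<d$, the element $\mu:=x^{v}fm$ is a nonzero monomial (its $x$-blocks preceding an $f$ are $v,u_{0},\dots,u_{d-1}$, all in $\N_{<p}^{n}$) with $\ordf(\mu)=d+1$, so any factorization $\mu=cg_{j}$ satisfies $\ordf(c)+\ordf(g_{j})=d+1$.

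\emph{Case analysis.} Fix a factorization $\mu=cg_{j}$, set $\ell=\ordf(c)$ and $e=\ordf(g_{j})$, and compare the words of $cg_{j}$ and of $\mu$ block by block, as in the proof of Lemma \ref{monomialsyz}; the only delicate point is the seam where the trailing $x$-block of $c$ merges with the leading $x$-power $x^{a_{0}}$ of $g_{j}$. (i) If $e=0$, then $g_{j}=x^{a}$ for some $a\in\N^{n}$; the trailing $x$-block of $\mu$ (which is $u_{d}$) dominates $a$ and is also the trailing $x$-block of $m$, so $m\in\A x^{a}=\A g_{j}\subseteq I$. (ii) If $\ell\geq 1$ and $e\geq 1$, block matching gives $a_{0}\leq u_{\ell-1}$ and shows that the right factor of $m$ beginning with the block $x^{u_{\ell-1}}$ equals $x^{\,u_{\ell-1}-a_{0}}\,g_{j}$; again $m\in\A g_{j}\subseteq I$. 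In (i) and (ii) the vector $v$ only fixes the leading $x$-block of $c$ and is otherwise irrelevant. (iii) The remaining case is $\ell=0$, $e\geq 1$: here $c=x^{b}$ is a pure power, $x^{b}g_{j}=x^{v}fm$, and matching blocks gives $b+a_{0}=v$ and $g_{j}=x^{a_{0}}f\,m$; thus $m$ must equal the monomial $h_{j}$ obtained from $g_{j}$ by deleting its leading $x$-power and the following $f$ (and $a_{0}\leq v$). Each $h_{j}$ satisfies $\deg h_{j}<\deg g_{j}$, and there are at most $s$ of them.

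\emph{Conclusion.} Set $k:=1+\max\{\deg h_{j}:1\leq j\leq s,\ \ordf(g_{j})\geq 1\}$, and $k:=0$ if no generator has positive $f$-order. If $\deg m\geq k$, then $m\neq h_{j}$ for every $j$, so case (iii) cannot occur, whence $m\in I$ by (i) and (ii). (The resulting $k$ in fact depends only on $I$.) The one step demanding genuine verification is the block arithmetic of case (ii): once the merge at the seam is handled correctly, the identification of the relevant right factor of $m$ with $x^{u_{\ell-1}-a_{0}}g_{j}$, and hence $m\in\A g_{j}$, is forced.
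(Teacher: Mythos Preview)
Your proof is correct and takes a route that is close in spirit to the paper's but organized differently. Both arguments start from the same reduction: $x^v f m\in I$ means $x^v f m = c\,g_j$ for some monomial $c$ and some monomial generator $g_j$. From here the paper splits on $\ordf(g_j)$: when $\ordf(g_j)\geq 1$, it uses a degree bound to force $\deg(c)>2n(p-1)+1$, hence $\ordf(c)\geq 2$ and $\ordf(m)>\ordf(g_j)$, and then appeals to Lemma~\ref{monomialsyz} to conclude that $m$ is a left multiple of $g_j$. You instead split on $\ordf(c)$ and read off the conclusion directly from the block structure of $x^v f m$: when $\ordf(c)\geq 1$ the seam arithmetic exhibits $m$ explicitly as a left multiple of $g_j$, and the only case where this fails is $\ordf(c)=0$, which pins $m$ down to a single tail $h_j$ of $g_j$. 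Your argument is more elementary (it bypasses Lemma~\ref{monomialsyz}) and yields the sharper constant $k=1+\max_j\deg h_j\leq\max_j\deg g_j$, independent of $v$; the paper's constant is roughly $2n(p-1)+1+\max_j\deg g_j$, also independent of $v$ though the statement allows dependence. One cosmetic point: in your set-up, the justification that a nonzero monomial $\mu\in\sum_j\A g_j$ must equal some $cg_j$ does not actually require the right-cancellativity observation you give (which only rules out same-$j$ collisions); it follows immediately because expanding $\mu=\sum_j a_j g_j$ into monomial terms and collecting by monomial value, the coefficient of $\mu$ on the right must be nonzero, so some term $cg_j$ equals $\mu$.
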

\begin{proof}
  First, it is clear that $x^{v}fm\neq 0$ for any nonzero monomial $m\in \A$. Let $a_1,\ldots,a_t$ be a set of monomial generators for $I$. If $x^{v}fm\in I$, then there is an $a_i$ such that $x^v fm= ca_i$ for some monomial $c$. From here, there are two meaningful cases, depending on $\ordf(a_i)$. The case $\ordf(a_i)=0$ is easy. If $a_i=x^r$ for some $r\in\N^n$, then $x^vfm=cx^r$ implies that $c=x^vfc_0$ for some $c_0$, giving $m=c_0x^r$, and thus, $m\in I$.

  Suppose now that $\ordf(a_i)\geq 1$. For a nonzero multiple $ca_i$ to have $\ordf(ca_i)\leq\ordf(a_i)+1$, it is necessary for $c$ to take the form $x^\alpha fx^\beta$ with $0\leq \alpha_i, \beta_i<p$ for all $i$. In particular, if $\deg(c)>2(p-1)n+1$, then we must have $\ordf(ca_i)>\ordf(a_i)+1$. Suppose $m$ has degree at least $2(p-1)n+1+\deg(a_i)$. Then from $x^v f m=xa_i$, we get
  \[
  \deg(c)=(|v|+1+\deg(m))-\deg(a_i)\geq 2(p-1)n+2
  \]
  Given $\ordf(c a_i)>\ordf(a_i)+1$ and $\ordf(m)=\ordf(ca_i)-1$, we have $\ordf(m)>\ordf(a_i)$. By Lemma \ref{monomialsyz}, if $\ordf(m)>\ordf(a_i)$ and $m$ and $a_i$ have a nonzero left common multiple, then $m$ is a left multiple of $a_i$, and thus, $m\in I$.
\end{proof}

If $I$ is a monomial ideal, then each graded component of the module $\A/I$ has an obvious choice of monomial basis over $\F_p$. Left multiplication by $x^vf$ is a graded map of degree $|v|+1$ over $\A$, allowing us to reinterpret the above theorem as follows.

\begin{cor}\label{bigtheta}
  Let $I$ be a finitely generated monomial ideal of $\A$. Let $\Theta_i$ be the map
  \[
  [\A/I]_i \xleftarrow{\Theta_i} \bigoplus_{v\in \N_{<p}^n} [\A/I]_{i-|v|-1}
  \]
  defined as the sum over all $v\in\N_{<p}^n$ of the component maps
  \[
    [\A/I]_i \xleftarrow{x^{v}f\cdot} [\A/I]_{i-|v|-1}.
    \]
    Each $\Theta_i$ is injective for $i\gg 0$. The image of $\Theta_i$ is the $\F_p$-span of (the images mod $I$ of) all degree $i$ monomials of $\A$ not belonging to $I$ that have $f$-order $\geq 1$. If we let $J=I\cap R$, then the cokernel of $\Theta_i$ coincides precisely with $[R/J]_i$. In other words, for $i\gg 0$, there is a short exact sequence
      \[
0\leftarrow [R/J]_i \leftarrow [\A/I]_i \xleftarrow{\Theta_i} \bigoplus_{v_0\in \N_{<p}^n} [\A/I]_{i-|v_0|-1}\leftarrow 0
  \]
\end{cor}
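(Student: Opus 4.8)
The plan is to read both assertions off of Theorem \ref{prefixcapture} together with a normal-form description of monomials in $\A$. First I would record the normal form: since the only relations defining $\A = R\{f\}/(x_i^p f)$ are $x_i^p f = 0$, any nonzero monomial $a$ of $\A$ with $\ordf(a) = e \geq 1$ factors \emph{uniquely} as $a = x^{v} f\, m$, where $v \in \N_{<p}^n$ (a coordinate of the leading block reaching $p$ would make $a=0$, since the $x_j$ commute and $x_i^p f = 0$) and $m$ is a nonzero monomial of $\A$ with $\ordf(m) = e-1$; uniqueness follows by passing to the unique monomial preimage in $\BB = R\{\ff\}$, where monomials have a genuine normal form. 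Hence the degree-$i$ monomials of $\A$ with $f$-order $\geq 1$ are in bijection with pairs $(v,m)$, $v \in \N_{<p}^n$ and $m$ a monomial of degree $i-|v|-1$, via $(v,m)\mapsto x^v f m$; and because $I$ is a left ideal, $m\in I$ always forces $x^v f m\in I$.

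Next I would fix the obvious monomial $\F_p$-bases: $[\A/I]_j$ has basis the images of the nonzero degree-$j$ monomials of $\A$ not lying in $I$, and $\bigoplus_{v\in\N_{<p}^n}[\A/I]_{i-|v|-1}$ has basis the symbols $(v,\overline m)$ with $m$ a nonzero degree-$(i-|v|-1)$ monomial not in $I$, with $\Theta_i(v,\overline m) = \overline{x^v f m}$. Put $i_0 = \max_{v\in\N_{<p}^n}(k_v + |v| + 1)$, where $k_v$ is the constant of Theorem \ref{prefixcapture} for the pair $(I,v)$ — finite, since there are finitely many $v\in\N_{<p}^n$. For $i\geq i_0$ and any basis symbol $(v,\overline m)$ we have $\deg m \geq k_v$, so the contrapositive of Theorem \ref{prefixcapture} gives that $m\notin I$ implies $x^v f m\notin I$; as also $x^v f m\neq 0$, $\Theta_i$ sends each basis symbol to a basis element of $[\A/I]_i$. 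Moreover distinct symbols map to distinct basis elements: if $\overline{x^v f m} = \overline{x^{v'}f m'}$ with both products outside $I$, then they agree \emph{as monomials}, and the uniqueness of the normal form forces $(v,m)=(v',m')$. Thus $\Theta_i$ carries a basis injectively into a basis, hence is injective; this proves assertion (1).

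For the image, $\mathrm{im}(\Theta_i)$ is spanned by the elements $\overline{x^v f m}$ as $v$ ranges over $\N_{<p}^n$ and $m$ over all degree-$(i-|v|-1)$ monomials (including $m\in I$, which contribute $0$); discarding the zeros, this is the span of $\{\overline a : a\text{ a nonzero degree-}i\text{ monomial with }\ordf(a)\geq 1,\ a\notin I\}$ — the inclusion $\subseteq$ is immediate, and $\supseteq$ holds because any such $a$ factors as $x^v f m$ by the normal form with $m\notin I$ (else $a\in I$), so $\overline a$ is the $v$-component of $\Theta_i(v,\overline m)$. Note this image computation needs no largeness of $i$. Combining with the basis of $[\A/I]_i$: the degree-$i$ monomials of $\A$ not in $I$ split into those of $f$-order $\geq 1$, whose images span $\mathrm{im}(\Theta_i)$, and those of $f$-order $0$, which are exactly the $x^r$ with $|r|=i$ and $x^r\notin J := I\cap R$; hence $\Coker(\Theta_i) = [\A/I]_i/\mathrm{im}(\Theta_i) \cong [R/J]_i$, realized by the composite $R\hookrightarrow \A\twoheadrightarrow \A/I\twoheadrightarrow \Coker(\Theta_i)$. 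Assembling injectivity of $\Theta_i$ for $i\geq i_0$ with the image and cokernel identifications gives the short exact sequence.

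I expect the only genuine subtlety to be the uniformity of the largeness bound — one must push $i$ past $k_v+|v|+1$ \emph{simultaneously} over all $v\in\N_{<p}^n$, which is legitimate only because that index set is finite — together with the care in the injectivity step, where one needs both the contrapositive of Theorem \ref{prefixcapture} (so that the images are honest basis vectors of $[\A/I]_i$) and the uniqueness of the normal form (so that they are pairwise distinct). Everything else is routine bookkeeping with monomial bases.
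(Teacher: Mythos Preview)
Your proof is correct and follows essentially the same route as the paper's own argument: injectivity via the contrapositive of Theorem \ref{prefixcapture}, the image identification via the factorization $a=x^vfm_0$ of any monomial of positive $f$-order, and the cokernel as the span of the $f$-order-$0$ monomials not in $J$. The paper is terser---it says injectivity is ``immediate'' from Theorem \ref{prefixcapture} and leaves the uniformity bound and the normal-form uniqueness implicit---but you have merely made those implicit steps explicit, not taken a different approach.
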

\begin{proof}
  Injectivity is immediate from Theorem \ref{prefixcapture}. The claim about the image of $\Theta_i$ is just the statement that if $m$ has $f$-order $\geq 1$, then $m=x^{v}fm_0$ for some $v\in\N_{<p}^n$ and some monomial $m_0$, where $m\not\in I$ obviously implies $m_0\not\in I$. That is to say, $m$ is the image of $m_0\in [\A/I]_{i-|v|-1}$ under the map
    \[
    [\A/I]_i \xleftarrow{x^{v}f\cdot} [\A/I]_{i-|v|-1}.
    \]
The definition of $\Theta_i$ makes it clear that no monomials of $f$-order $0$ can possibly lie in $\text{Im}(\Theta_i)$. A basis for the cokernel of $\Theta_i$ is therefore given by the set of degree $i$ monomials of $R=\F_p[x_1,\ldots,x_n]$ that do not belong to $J=I\cap R$.
\end{proof}

  \begin{theorem}\label{monomialhilbertseries}
    Let $I$ be a finitely generated monomial ideal of $\A$. The Hilbert series of $\A/I$ is a rational function of the form
    \[
    \HS_{\A/I}(t) =\frac{a(t)}{(1-t)^dg_{p,n}(t)}
    \]
    where $d$ is the Krull dimension of the monomial ideal $J=I\cap R$ of $R$ (in particular, $0\leq d\leq n$), and $g_{p,n}(t)=1-\sum_{v\in\N_{<p}^n}t^{|v|+1}$. If $d>0$, then $a(1)>0$.
    \end{theorem}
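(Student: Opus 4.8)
The plan is to induct on the number of monomial generators of $I$ (equivalently, to peel off one generator at a time and compare Hilbert series), using Corollary \ref{bigtheta} as the crucial bridge between $\A/I$ and the commutative ring $R/J$ with $J = I \cap R$. First I would record the base case $I = 0$: here $J = 0$, so $d = n$, and the short exact sequence of Corollary \ref{bigtheta} (valid for $i \gg 0$) gives the recurrence $\dim_{\F_p}[\A]_i = \sum_{v \in \N_{<p}^n} \dim_{\F_p}[\A]_{i-|v|-1} + \dim_{\F_p}[R]_i$ for $i \gg 0$. Translating into generating functions, this says $g_{p,n}(t)\,\HS_{\A}(t) - \HS_R(t)$ is a polynomial, i.e. $\HS_\A(t) = (\text{poly} + \HS_R(t))/g_{p,n}(t) = a_0(t)/((1-t)^n g_{p,n}(t))$ after clearing the $(1-t)^n$ from $\HS_R(t) = 1/(1-t)^n$; since $\HS_R(1^-) = +\infty$, the numerator $a_0(t)$ satisfies $a_0(1) > 0$. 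This is exactly the Dills--Enescu formula, and it is the engine for everything else.

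For the inductive step, let $I = I' + \A a$ for a monomial $a$, where $I'$ has fewer generators, and consider the exact sequence of graded $\A$-modules
\[
0 \to \A/(I' : a)(-\deg a) \xrightarrow{\;\cdot a\;} \A/I' \to \A/I \to 0,
\]
where $(I' : a) = \{c \in \A : ca \in I'\}$ — I would first check this colon ideal is again a finitely generated monomial ideal (it is generated by the monomials $c$ with $ca \in I'$, and finiteness follows from the syzygy analysis after Lemma \ref{monomialsyz}, exactly the same case split on $\ordf$ that proves Theorem \ref{INfg}-type statements, or more cheaply from Theorem \ref{prefixcapture}). This gives $\HS_{\A/I}(t) = \HS_{\A/I'}(t) - t^{\deg a}\,\HS_{\A/(I':a)}(t)$. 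By induction both subtracted terms have the asserted form with denominators $(1-t)^{d'} g_{p,n}(t)$ and $(1-t)^{d''} g_{p,n}(t)$, so $\HS_{\A/I}(t) = b(t)/((1-t)^{\max(d',d'')} g_{p,n}(t))$ for some polynomial $b$; the point is then to identify $\max(d',d'')$ with $d = \dim J$ and to control $b(1)$.

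The cleanest way to pin down both the exponent $d$ and the positivity at $t=1$ is to bypass the messy cancellations in the recursion and instead extract them directly from Corollary \ref{bigtheta}: the short exact sequence there gives, for $i \gg 0$,
\[
\dim_{\F_p}[\A/I]_i = \sum_{v \in \N_{<p}^n} \dim_{\F_p}[\A/I]_{i-|v|-1} + \dim_{\F_p}[R/J]_i,
\]
which at the level of Hilbert series says $g_{p,n}(t)\,\HS_{\A/I}(t) - \HS_{R/J}(t)$ is a polynomial. Hence $\HS_{\A/I}(t) = \big(q(t) + \HS_{R/J}(t)\big)/g_{p,n}(t)$ for a polynomial $q$. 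Now invoke the classical commutative fact that $\HS_{R/J}(t) = h(t)/(1-t)^d$ with $d = \dim R/J = \dim J$ and $h(1) > 0$ (positivity of the multiplicity of a nonzero graded ring; when $d = 0$ this degenerates and $\HS_{R/J}$ is itself a polynomial, consistent with the weaker claim $a(1) \geq 0$ allowed in that case). Writing $q(t) + h(t)/(1-t)^d = a(t)/(1-t)^d$ with $a(t) = (1-t)^d q(t) + h(t)$ gives the stated form, and when $d > 0$ we get $a(1) = h(1) > 0$ since the $(1-t)^d q(t)$ term vanishes at $t=1$. So the induction is really only needed to know that $\HS_{\A/I}(t)$ is a rational function at all with denominator dividing a power of $(1-t)$ times $g_{p,n}(t)$ — or one can even skip the induction entirely and quote Theorem \ref{amodhilbert}, though logically that theorem is presumably proved later using this one, so I would keep the self-contained two-generator recursion as the source of rationality and use Corollary \ref{bigtheta} only to read off $d$ and the sign of $a(1)$.

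The main obstacle is the bookkeeping that $\max(d', d'')$ in the recursive step actually equals $\dim J$ rather than overshooting — i.e. ruling out spurious poles at $t = 1$ introduced by the subtraction. This is precisely what the Corollary \ref{bigtheta} computation settles for free, since it expresses the pole order of $\HS_{\A/I}$ at $t=1$ as exactly the pole order of $\HS_{R/J}$ there (the factor $g_{p,n}(t)$ has $g_{p,n}(1) = 1 - p^n \neq 0$, so it contributes no pole at $t=1$), and the latter is $\dim R/J$ by the commutative theory. So in the final writeup I would lead with the Corollary \ref{bigtheta} identity, deduce the form and the positivity of $a(1)$ from it together with the known structure of $\HS_{R/J}$, and relegate the "$\HS_{\A/I}$ is rational of this shape" point to a short induction on generators via the colon-ideal sequence (checking along the way that colon ideals of finitely generated monomial ideals are finitely generated monomial ideals).
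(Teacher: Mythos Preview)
Your direct argument via Corollary \ref{bigtheta} is exactly the paper's proof: from the four-term exact sequence (with finite-length kernel $K$) one gets the formal identity $g_{p,n}(t)\,\HS_{\A/I}(t) = \HS_{R/J}(t) - \HS_K(t)$, and writing $\HS_{R/J}(t)=c(t)/(1-t)^d$ with $c(1)>0$ gives the form and the positivity immediately. The induction on generators via colon ideals is entirely superfluous, even for rationality: since $g_{p,n}(0)=1$, the polynomial $g_{p,n}(t)$ is a unit in $\Q[[t]]$, so the displayed identity already expresses $\HS_{\A/I}(t)$ as a rational function with the claimed denominator---you can drop that whole strand from the writeup.
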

  \begin{proof}
For each $i$, we have an exact sequence
        \[
0\leftarrow [R/J]_i \leftarrow [\A/I]_i \xleftarrow{\Theta_i} \bigoplus_{v\in \N_{<p}^n} [\A/I]_{i-|v|-1}\leftarrow [K]_i\leftarrow 0
\]
in which $[K]_i=\Ker(\Theta_i)=0$ for $i\gg 0$. We obtain
\[
\HS_{\A/I}(t) = -b(t)+\HS_{R/J}(t)+\sum_{v\in\N_{<p}^n}t^{|v|+1}\HS_{\A/I}(t) 
\]
where $b(t)=\HS_{K}(t)$ is a polynomial whose coefficients are nonnegative integers. Of course, $\HS_{R/J}(t)$ has the form $c(t)/(1-t)^d$ where $d=\dim(R/J)$ and $c(t)$ is a polynomial such that $c(1)>0$. Let
\[
g_{p,n}(t) = 1-\sum_{v\in\N_{<p}^n} t^{|v|+1}
\]
and we see that
\[
\HS_{A/I}(t)=\frac{-b(t)(1-t)^d+c(t)}{(1-t)^dg_{p,n}(t)}
\]
Let $a(t)=-b(t)(1-t)^d+c(t)$. If $d\geq 1$, then $a(1)=c(1)>0$. If $d=0$, then $a(1)=-b(1)+c(1)$, and we have no such guarantee.
  \end{proof}
  \begin{dfn}\label{gpndef}
    Fix a prime $p$ and a natural number $n\geq 1$. Let $g_{p,n}(t)$ be the polynomial
    \[
    g_{p,n}(t) = 1-\textstyle\sum_{v\in\N_{<p}^n}t^{|v|+1}=1-t(1+t+\cdots+t^{p-1})^n
    \]
\end{dfn}  
  We can use Theorem \ref{monomialhilbertseries} to prove the following result of Dills-Enescu \cite{dillsenescu}.
    \begin{cor}
      Let $R=\F_p[x_1,\ldots,x_n]$ and $\A=R\{f\}/(x_i^pf\,|\,1\leq i\leq n)$. Then
      \[
      \HS_\A(t) = \frac{1}{(1-t)^ng_{p,n}(t)}
        \]
    \end{cor}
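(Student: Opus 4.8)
The plan is to apply the machinery of Corollary \ref{bigtheta} and Theorem \ref{monomialhilbertseries} to the ideal $I = 0$ and observe that, in this one case, the ``error term'' $b(t)$ that measures the failure of injectivity of $\Theta_i$ in low degrees vanishes identically. With $I = 0$ we have $\A/I = \A$ and $J = I \cap R = 0$, so $R/J = R$, $\HS_R(t) = 1/(1-t)^n$, and $\dim(R/J) = n$. By Corollary \ref{bigtheta}, for $i \gg 0$ there is an exact sequence
\[
0 \leftarrow [R]_i \leftarrow [\A]_i \xleftarrow{\Theta_i} \bigoplus_{v \in \N_{<p}^n} [\A]_{i-|v|-1} \leftarrow 0.
\]

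The first step is to upgrade this so that it holds for \emph{all} $i \geq 0$, i.e., to show that $\Theta_i$ is injective in every degree when $I = 0$. This is where the special structure of $\A$ --- as opposed to a general quotient $\A/I$ --- is used. In Theorem \ref{prefixcapture} with $I = 0$ the hypothesis ``$x^v f m \in I$'' is vacuous, since $x^v f m \neq 0$ for every nonzero monomial $m$, so the constant $k$ may be taken to be $0$. Concretely: every nonzero monomial of $\A$ of $f$-order $\geq 1$ factors uniquely as $x^v f \cdot m_0$ with $v \in \N_{<p}^n$ and $m_0$ a monomial --- the exponent of $x$ immediately to the left of an $f$ must lie in $\N_{<p}^n$ because $x_i^p f = 0$ --- and this factorization is nonzero. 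Hence for fixed $v$ the map $x^v f\, \cdot : [\A]_{i-|v|-1} \to [\A]_i$ carries distinct monomials to distinct monomials, and as $v$ ranges over $\N_{<p}^n$ these images are pairwise disjoint; so $\Theta_i$, being the sum of these maps, is injective on a monomial basis and therefore injective.

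The remaining step is bookkeeping. Taking the alternating sum of dimensions in the exact sequence (now valid for all $i$) and assembling generating functions gives
\[
\HS_\A(t) = \HS_R(t) + \Bigl(\textstyle\sum_{v \in \N_{<p}^n} t^{|v|+1}\Bigr)\HS_\A(t),
\]
with no error term, so $g_{p,n}(t)\,\HS_\A(t) = \HS_R(t) = 1/(1-t)^n$ since $g_{p,n}(t) = 1 - \sum_{v \in \N_{<p}^n} t^{|v|+1}$. Rearranging yields the claim. There is no real obstacle here: the entire difficulty was already absorbed into Theorem \ref{prefixcapture}, whose purpose was precisely to control $\ker \Theta_i$, and the only point worth stating carefully is that with $I = 0$ this kernel vanishes in every degree, which forces the numerator to be exactly $1$.
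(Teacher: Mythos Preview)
Your proof is correct and follows essentially the same approach as the paper: both argue that for $I=0$ the map $\Theta_i$ is injective in \emph{every} degree (not merely $i\gg 0$), so the short exact sequence $0\leftarrow [R]_i \leftarrow [\A]_i \leftarrow \bigoplus_{v\in\N_{<p}^n}[\A]_{i-|v|-1}\leftarrow 0$ holds for all $i$ and the error term $b(t)$ vanishes. The paper simply asserts the injectivity and moves on; your added justification via the unique factorization $x^vfm_0$ of positive-$f$-order monomials is correct and makes explicit why the component images are pairwise disjoint.
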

    \begin{proof}
      Left multiplication by $x^v f$ on $\A$ is injective in all degrees, and the contraction of the $0$ ideal from $\A$ to $R$ is $0$. The result follows from the homogeneous short exact sequence
      \[
      0\leftarrow [R]_i \leftarrow [\A]_i \leftarrow \bigoplus_{v\in \N_{<p}^n} [\A]_{i-|v|-1}\leftarrow 0
      \]
    \end{proof}

As the following examples show, when the Hilbert series of $I$ has the form $a(t)/g_{p,n}(t)$, it is possible to have $a(1)=0$ or even $a(1)<0$.

    \begin{ex}\label{negmult}
      Let $R=\F_2[x]$ and $\A=R\{f\}/(x^2f)$. No pair among the monomials $x,xf,xf^2,\ldots,xf^k$ in $\A$ admits any common left multiples, so the ideal $I=\A(x,xf,xf^2,\ldots,xf^k)$ decomposes as a direct sum $\A(x)\oplus \A(xf)\oplus \A(xf^2)\oplus\cdots\oplus\A(xf^k)$. From this decomposition, we have
      \[
      \HS_{\A/I}(t) =\HS_\A(t)-\sum_{i=0}^k \HS_{\A(xf^i)}(t)
      \]
     where we already know that $\HS_\A(t)=1/(1-t)(1-t-t^2)$. Let us compute the Hilbert series of $\A(xf^i)$. In the case $i=0$, right multiplication $g\mapsto gx$ is injective, and the homogeneous exact sequence
      \[
      0\to \A(-1) \xrightarrow{\cdot x} I_0\to 0
      \]
      gives $\HS_{I_0}(t) = t/(1-t)(1-t-t^2)$. For $i\geq 1$, the map $g\mapsto gxf^i$ has kernel precisely $\A(x)$, so
      \[
      0\to I_0(-k-1)\to \A(-k-1) \xrightarrow{\cdot xf^i} I_i\to 0
      \]
      is exact, giving $\HS_{I_i}(t) = t^{k+1}(\HS_\A(t)-\HS_{I_0}(t)) = t^{k+1}/(1-t-t^2)$. In total, we have
            \[
      \HS_{\A/I}(t) =\frac{1}{(1-t)(1-t-t^2)} -\frac{t}{(1-t)(1-t-t^2)}-\sum_{i=1}^k\frac{t^{k+1}}{(1-t-t^2)} = \frac{1-\sum_{i=1}^k t^{k+1}}{(1-t-t^2)}.
      \]
Letting $a_k(t)=1-\sum_{i=1}^k t^{k+1}$, we get $a_k(1)=1-k$. Choosing $k$ appropriately, we have a Hilbert series $a_k(t)/g_{2,1}(t)$ in which $a_k(1)$ can be either $0$ or arbitrarily negative.
    \end{ex}

    \section{Filtrations and Buchberger's algorithm}
We will use multiplicative notation when working with potentially noncommutative monoids.
\begin{dfn}\label{nicemonoid}
Let $Q$ be a monoid with identity element $1$. We will call a pair $(Q,<)$ consisting of $Q$ and a total order $<$ a \textit{nice ordered monoid} if the following criteria are satisfied.
\ben
\item $Q$ is cancellative.
\item $<$ is a well order.
\item $1\leq a$ for all $a\in Q$.
\item For all $a,b,c \in Q$, $a<b$ if and only if $ca<cb$ if and only if $ac<bc$.
\een
\end{dfn}

The following are the main examples of nice ordered monoids for our purposes.
\begin{ex} $\N$ with the standard order. This is the only monoid in which we will use additive notation.
  \end{ex}
\begin{ex} The monoid $\mathcal{C}_n$ of monomials in $\F_p[x_1,\ldots,x_n]$ with, for example, the graded reverse lexicographic term order.
  \end{ex}
\begin{ex} The monoid $\mathcal{M}_n$ of monomials in $\mathbf{B}=\F_p[x_1,\ldots,x_n]\{\ff\}$ with the ordering given in Definition \ref{termorder}.
  \end{ex}

Let $K$ be a field, and let $\RR$ be a $K$-algebra, and let $U$ and $V$ be two $K$-subspaces of $\RR$. We denote by $UV$ the subspace spanned by all products $uv$ for $u\in U$ and $v\in V$.

\begin{dfn}
  Let $K$ be a field, let $Q$ be a nice ordered monoid, and let $\RR$ be a possibly noncommutative $K$-algebra. Let $\W$ be a collection of $K$-subspaces of $\RR$ indexed by the elements of $Q$, $\W=\{\W_a\}_{a\in Q}$. We call $\W$ a $Q$-\textit{filtration} of $\RR$ if (i) $\W_a\subseteq \W_b$ whenever $a<b$, (ii) $\bigcup_{a\in Q}\W_a=\RR$, and (iii) $\W_a\W_b\subseteq \W_{ab}$ for all $a,b\in Q$. The pair $(\RR,\W)$ is a $Q$-\textit{filtered $K$-algebra} (or just a \textit{filtered ring} if $Q$ and $K$ are clear from context). The \textit{associated graded ring} of a $Q$-filtered $K$-algebra $(\RR,\W)$ is the $Q$-graded $K$-algebra
  \[
  \gr_\W(\RR)=\bigoplus_{a\in Q} \W_a/(\textstyle\bigcup_{b<a} \W_b)
  \]
We will use $\W_{<a}$ as shorthand for $\bigcup_{b<a}\W_b$.
\end{dfn}

The following are the main examples of filtered rings in which we are interested.
\begin{ex}\label{dfilt} The ring $R=\F_p[x_1,\ldots,x_n]$ with the standard $\N$-filtration by degree. For $i\in \N$, we let $\mathcal{D}_i$ be the $\F_p$-span of monomials of degree $\leq i$. Here, $R$ is naturally identified with its associated graded ring.
  \end{ex}
\begin{ex}\label{lfilt} The ring $R=\F_p[x_1,\ldots,x_n]$ with the following $\mathcal{C}_n$-filtration. For $a\in \C_n$, we let $\mL_a$ be the $\F_p$-span of all monomials $b\leq a$ in the term order of $\C_n$. Again, $R$ is naturally identified with its associated graded ring.
  \end{ex}
\begin{ex}\label{bfilt} The ring $\FF = \F_p[x_1,\ldots,x_n]\{F\}/(x_i^pF-Fx_i\,|\,1\leq i\leq n)$ with the Bernstein $\N$-filtration. That is, $\B_i$ is the $\F_p$-span of the images of all degree $\leq i$ monomials of $\BB$ under the $R$-algebra map $\varphi:\BB\twoheadrightarrow \FF$ that sends $\ff\to F$. The associated graded ring $\gr_\B(\FF)$ is $\A = \F_p[x_1,\ldots,x_n]\{f\}/(x_i^pf\,|\,1\leq i\leq n)$.
  \end{ex}
\begin{ex}\label{rfilt} The ring $\A$ as above with the following $\mathcal{M}_n$-filtration. For $a\in \M_n$, let $\mR_m$ denote the $\F_p$-span of the images of all monomials $b\leq a$ in the term order of $\M_n$ under the $R$-algebra map $\psi:\BB\twoheadrightarrow \A$ that sends $\ff\to f$. The associated graded ring $\gr_\mR(\A)$ is isomorphic to $\A$ as an $R$-algebra, with an $\M_n$-grading in which $[\A]_a=\F_p\cdot\psi(a)$ for all $a$. In particular, the graded components of $\gr_\mR(\A)$ are either $0$ or $1$ dimensional vector spaces.
  \end{ex}

\begin{dfn}
Let $Q$ be a nice ordered monoid. A \textit{nice ordered $Q$-module} is a pair $(P,<)$ consisting of a set $P$ equipped with an action\footnote{By which we mean $1r=r$ and $a(br)=(ab)r$ for all $a,b\in Q$ and all $r\in P$.} $Q\times P\to P$ along with a total order $<$ such that
\ben
\item $P$ is cancellative in the sense that $ar=as$ implies $r=s$ and $ar=b r$ implies $a=b$ for all $a,b\in Q$ and $r,s\in P$.
\item $<$ is a well order.
\item For all $r,s\in P$ and $a\in Q$, it holds that $r<s$ if and only if $ar<as$.
\item For all $r\in P$ and $a,b\in Q$, it holds that $a<b$ if and only if $ar<br$.
\een
\end{dfn}

For our purposes, the only $Q$-modules we care about are coproducts $\coprod_{i\in S} Q\varepsilon_i$ indexed by a well-ordered set $S$, equipped with the total order in which $a\varepsilon_i < b\varepsilon_j$ if and only if either $a<b$ in $Q$, or $a=b$ and $i<j$ in $S$.

\begin{dfn}
Let $K$ be a field, let $Q$ be a nice ordered monoid, and let $(\RR,\W)$ be a $Q$-filtered $K$-algebra. For a left $\RR$-module $M$ and a nice ordered $Q$-module $P$, we define a \textit{$\W$-compatible $P$-filtration} (or just \textit{filtration} if $\W$ and $P$ are clear) on $M$ to be a collection $\{\Omega_r\}_{r\in P}$ of $K$-vector subspaces $\Omega_r$ of $M$ such that (i) $\Omega_r\subseteq\Omega_{s}$ whenever $r<s$, (ii) $\bigcup_{r\in P}\Omega_r=M$, and (iii) $\W_a\Omega_r\subseteq \Omega_{ar}$ for all $a\in Q$ and $r\in P$. We will call $(M,\Omega)$ a \textit{$P$-filtered $(\RR,\W)$-module} (or just an \textit{$(\RR,\W)$-module} if $P$ is understood). The \textit{associated graded module of $(M,\Omega)$}, as a $P$-graded $K$-vector space, is given by
  \[
  \gr_\Omega(M):=\bigoplus_{r\in P}\Omega_r/(\textstyle\bigcup_{s<r}\Omega_s).
  \]
We will write $\Omega_{<r}$ as shorthand for $\bigcup_{s<r}\Omega_s$. For $m\in \Omega_r$, let $[m]^\Omega_r$ denote the image of $m$ in $\Omega_r/\Omega_{<r}$. If $r$ is the least element of $P$ such that $m\in \Omega_r$, then we call $[m]^\Omega_r$ the \textit{symbol} of $m$, denoted $\sigma^\Omega(m)$, and we will call $r$ the \textit{$\Omega$-degree} of $m$, denoted $\deg_\Omega(m)$. We make $\gr_\Omega(M)$ into a $P$-graded left $\gr_\W(\RR)$-module by defining $[r]^\W_a\cdot [m]^\Omega_r := [rm]^\Omega_{ar}$ for all $a\in Q$, $r\in P$, $r\in \W_a$, and $m\in \Omega_r$.
\end{dfn}

Let $(\RR,\W)$ be a filtered ring, and $(M,\Omega)$ be an $(\RR,\W)$-module. Suppose we are given a generating set $(g_i)_{i\in\Upsilon}$ for $M$ over $\RR$. We should have no expectation that the symbols $(\sigma^\Omega(g_i))_{i\in\Upsilon}$ will generate $\gr_\Omega(M)$ over $\gr_\W(\RR)$. We are interested in the generating sets that do have this property.

\begin{dfn}
  Let $(\RR,\W)$ be a filtered ring and $(M,\Omega)$ be an $(\RR,\W)$-module. A sequence of elements $\G=(g_i)_{i\in\Upsilon}$ of $M$ whose symbols $\sigma^\Omega(\G)=(\sigma_\Omega(g_i))_{i\in\Upsilon}$ generate the associated graded module $\gr_\Omega(M)$ over the ring $\gr_\W(\RR)$ is called an $\Omega$-\textit{Gr\"obner basis} for $M$, or just a \textit{Gr\"obner basis} if $\Omega$ is clear.
\end{dfn}

\begin{ex} Take $R=\F_p[x_1,\ldots,x_n]$ with the $\N$-filtration $\mathcal{D}$ of Example \ref{dfilt}. The associated graded ring $\gr_\mathcal{D}(R)$ is identified with $R$ itself. The symbol of a polynomial $f\in R$ is the leading form of $f$, i.e., the homogeneous polynomial whose terms are exactly those terms of $f$ with degree $\deg(f)$. If $I$ is an ideal of $R$, the associated graded ideal $\gr_\mathcal{D}(I)$ is the ideal generated by the leading forms of all elements of $I$. A $\mD$-Gr\"obner basis is usually called a \textit{Macaulay} basis. Clearly not all generating sets are Macaulay bases, but the Noetherianity of $\gr_{\mathcal{D}}(R)$ makes it obvious that finite Macaulay bases exist.
  \end{ex}
\begin{ex} Take $R=\F_p[x_1,\ldots,x_n]$ with the $\mathcal{C}_n$-filtration $\mL$ of Example \ref{lfilt}. Once again, the associated graded ring $\gr_\mL(R)$ is naturally identified with $R$. The symbol of a polynomial $f\in R$ with respect to $\mL$ is the initial term of $f$ under the monomial order we chose for $\mathcal{C}_n$. If $I$ is an ideal of $R$, the associated graded $\gr_\mL(I)$ is the ideal generated by the initial terms of all elements of $I$. Clearly not all generating sets are Gr\"obner bases, though the Noetherianity of $\gr_{\mL}(R)$ makes it obvious that finite $\mL$-Gr\"obner bases exist.
  \end{ex}
\begin{ex} Take the ring
  \[
  \D=\Q[x_1,\ldots,x_n]\langle\partial_1,\ldots,\partial_n\rangle=\frac{\Q\{x_1,\ldots,x_n,\partial_1,\ldots,\partial_n\}}{(x_ix_j-x_jx_i,\,\partial_i\partial_j-\partial_j\partial_i,\,\partial_jx_i-x_i\partial_j-\delta_{ij}\,|\,1\leq i\leq j)}
  \]
  (where $\delta_{ij}$ is the Kronecker symbol) with the Bernstein $\N$-filtration $\B$ by total degree in both the $x$ and $\partial$ variables. The associated graded ring $\gr_\B(\D)$ can be identified with the polynomial ring $\Q[x_1,\ldots,x_n,\xi_1,\ldots,\xi_n]$. The symbol of the operator $x_1\partial_2^2+x_2\partial_1^2+\partial_1\partial_2+1$, for example, is $x_1\xi_2^2+x_2\xi_1^2$. If $I$ is any left ideal of $\D$, the Noetherianity of $\gr_\B(\D)$ makes it clear that $\gr_\D(I)$ is generated by the symbols of finitely many elements of $I$.
  \end{ex}
\begin{ex} Let $\FF$ be equipped with the Bernstein $\N$-filtration $\B$. The associated graded ring of $(\FF,\B)$ is $\A$, and the symbol of a Frobenius operator $g\in \FF$ is the polynomial in $\A$ whose terms correspond only to the highest-complexity terms of $g$. The ring $\A$ is not left-Noetherian and it is entirely possible for a finitely generated left ideal $I$ of $\FF$ to have a non-finitely generated ideal of symbols $\gr_\B(I)$.
  \end{ex}
\begin{ex} Let $\A$ be equipped with the $\M_n$-filtration $\mR=\psi(\mT)$ inherited from the presentation $\psi:\BB\twoheadrightarrow \A$. The associated graded ring $\gr_\mR(\A)$ can be identified with $\A$ itself. The symbol of a polynomial $g\in \A$ is its initial term under the term order of Definition \ref{aatermorder}. Because $\A$ is not left Noetherian, it is not at all obvious whether a finitely generated ideal of $\A$ needs to have a finite $\mR$-Gr\"obner basis. This exact issue is the subject of Theorem \ref{INfg}.
  \end{ex}

We do not have the option of appealing to the Noetherianity of the associated graded ring, so if we wish to understand whether a finitely generated modules admits a finite Gr\"obner bases, we will need to understand the means by which a Gr\"obner basis can be constructed from an arbitrary generating set. This is precisely the subject of Buchberger's criterion. Our treatment of the subject here will fairly closely align with the perspective in Kreuzer and Robbiano \cite[Section 2.3]{kreuzrob}, working in somewhat greater generality to encompass the types of filtrations encountered in the context of Frobenius modules.

Let $\RR$ be a ring, and $M$ be a left $\RR$-module. Let $\G=(g_i)_{i\in\Upsilon}$ be a sequence of elements of $M$. By a \textit{syzygy} of $\G$, we will always mean \textit{left syzygy}, i.e., a vector $\sum_{i\in\Upsilon}\tau_i \varepsilon_i$ in $\bigoplus_{i\in\Upsilon}\RR\varepsilon_i$ such that $\sum_i\tau_ig_i=0$. We denote by $\Syz(\G)$ the left submodule of $\bigoplus_{i\in\Upsilon}\RR\varepsilon_i$ consisting of the syzygies on $\G$.

Given a sequence of elements $\G=(g_i)_{i\in\Upsilon}$ of $M$, by a harmless abuse of notation, we will also let $\G$ denote the left $\RR$-linear map $\G:\bigoplus_{i\in\Upsilon} \RR\varepsilon_i\to M$ that sends $\sum_i \tau_i\varepsilon_i$ to $\sum_i\tau_ig_i$. We have an exact sequence of left $\RR$-modules
\[
0\to\Syz(\G)\to\bigoplus_{i\in\Upsilon}\RR\varepsilon_i\xrightarrow{\G} M
\]
We would like to compare the submodule of $M$ generated by $\G$ to the submodule of $\gr_\Omega(M)$ generated by the sequence of symbols $\sigma^\Omega(\G):=(\sigma^\Omega(g_i))_{i\in\Upsilon}$. It will help to place a filtration on $\bigoplus_{i\in\Upsilon}\RR\varepsilon_i$ that is compatible with both the map $\G$ and the filtration $\Omega$ on $M$. Let $r=(\deg_{\Omega}(g_i))_{i\in\Upsilon}$.

\begin{dfn}
  Let $K$ be a field. Let $Q$ be a nice ordered monoid, let $(\RR,\W)$ be a $Q$-filtered $K$-algebra, and let $P$ be a nice ordered $Q$-module. Let $r=(r_i)_{i\in\Upsilon}$ be a sequence of elements of $P$. The \textit{Schreyer pullback filtration on $\bigoplus_{i\in\Upsilon}\RR\varepsilon_i$ with respect to $r$} is the $P$-filtration $\Lambda$ on $\bigoplus_{i\in \Upsilon} \RR\varepsilon_i$ defined by $\deg_\Lambda( \tau \varepsilon_i) = \deg_\W(\tau)r_i$. That is, for all $s\in P$,
\[
\Lambda_{s} := \text{Span}_{K}\{\tau \varepsilon_i\,|\, i\in\Upsilon,\,\,\deg_\W(\tau) r_i=s\}
\]
\end{dfn}
\noindent The Schreyer filtration with respect to $r$ may be expressed as 
\[
\Lambda_s = \bigoplus_{i\in\Upsilon} \W(\div r_i)_s\,\varepsilon_i
\]
where $\W(\div r_i)$ is the filtration on $\RR$ defined by
\[
\W(\div r_i)_s:=\bigcup_{a\in Q\colon ar_i\leq s}\W_a
\]
\noindent The quotient
\[
\W(\div r_i)_{s}/\textstyle\bigcup_{t<s}\W(\div r_i)_t
\]
can only be nonzero if $s=ar_i$ for some $a\in Q$. If $a=1$ is the identity of $Q$, this quotient is just $\W_1$. In general, the union $\bigcup_{t<s}\W(\div r_i)_t$ is unaltered if we were to drop all indices $t$ that are not left multiples of $r_i$, and the elements of $P$ less than $ar_i$ that happen to be left multiples of $r_i$ correspond bijectively with the elements of $Q$ less than $a$. As a result, for $s=ar_i$,
\[
\W(\div r_i)_{s}/\textstyle\bigcup_{t<s}\W(\div r_i)_t = \W(\div r_i)_{r a_i}/\textstyle\bigcup_{b<a}\W(\div r_i)_{b r_i} = \W_{a}/\textstyle\bigcup_{b<a}\W_{b}.
\]
To summarize,
\[
  [\gr_{\W(\div r_i)}(\RR)]_s=\begin{cases}
  [\gr_{\W}(\RR)]_{a} & \text{ if } s=ar_i\\
  0 & \text{otherwise}\\
  \end{cases}
  \]
The action of $\gr_{\W}(\RR)$ on $\gr_{\W(\div r_i)}(\RR)$ is simply
  \[
  [\tau]^{\W}_{a}[\rho]^{\W(\div r_i)}_{b r_i}=[\tau\rho]^{\W(\div r_i)}_{abr_i}
  \]
and $\gr_{\W(\div r_i)}(\RR)$ is generated over $\gr_{\W}(\RR)$ by the element $[1]_{r_i}^{W(\div r_i)}$. If $\TT =\gr_\W(\RR)$, we will write $\TT(\div r_i)$ as shorthand for $\gr_{\W(\div r_i)}(\RR)$, which we may think of as a Serre twist of the $P$-graded ring $\TT$ backwards by $r_i$. Regarding the Schreyer filtration $\Lambda$ on $\bigoplus_{i\in\Upsilon} \RR \varepsilon_i$ with respect to $r$, we have
  \[
  \gr_\Lambda\left(\bigoplus_{i\in\Upsilon} \RR \varepsilon_i\right) = \bigoplus_{i\in\Upsilon} \TT(\div r_i)\varepsilon_i
  \]
  
\begin{lem}\label{syzmap}
  Let $(\RR,\W)$ be a $Q$-filtered ring, let $\TT=\gr_\W(\RR)$, and let $(M,\Omega)$ be a $P$-filtered $(\RR,\W)$-module. Let $\G=(g_i)_{i\in\Upsilon}$ be a sequence of elements in $M$ with degrees $r=(r_i)_{i\in\Upsilon}$, and let $\Lambda$ be the Schreyer pullback filtration with respect to $r$. The symbol map
  \[
  \sigma^{\Lambda}:\bigoplus_{i\in\Upsilon}\RR\varepsilon_i\to \bigoplus_{i\in\Upsilon} \TT(\div r_i)\varepsilon_i
  \]
  sends syzygies on $\G$ to syzygies on $\sigma^\Omega(\G)$.
\end{lem}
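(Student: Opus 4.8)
The plan is to recognize $\G$ as a morphism of filtered modules and to read the lemma off from the map it induces on associated gradeds. The three steps are: (1) check that $\G$ is compatible with the filtrations $\Lambda$ on $\bigoplus_{i\in\Upsilon}\RR\varepsilon_i$ and $\Omega$ on $M$; (2) identify the induced graded map with the $\TT$-linear map attached to the sequence of symbols $\sigma^\Omega(\G)$; and (3) deduce that a syzygy on $\G$ has symbol lying in the kernel of that induced map.

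For step (1) I would check directly that $\G(\Lambda_s)\subseteq\Omega_s$ for every $s\in P$. Since $\Lambda_s=\bigoplus_{i\in\Upsilon}\W(\div r_i)_s\varepsilon_i$, it suffices to see that $\tau\in\W(\div r_i)_s$ forces $\G(\tau\varepsilon_i)=\tau g_i\in\Omega_s$. But $\tau\in\W(\div r_i)_s$ means $\deg_\W(\tau)r_i\leq s$, so $\tau g_i\in\W_{\deg_\W(\tau)}\Omega_{r_i}\subseteq\Omega_{\deg_\W(\tau)r_i}\subseteq\Omega_s$, where $g_i\in\Omega_{r_i}$ precisely because $r_i=\deg_\Omega(g_i)$. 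This inclusion at all $s$ also gives $\G(\Lambda_{<s})\subseteq\Omega_{<s}$, so $\G$ descends to a graded $\TT$-linear map
\[
\gr_\Lambda(\G)\colon \bigoplus_{i\in\Upsilon}\TT(\div r_i)\varepsilon_i=\gr_\Lambda\Bigl(\bigoplus_{i\in\Upsilon}\RR\varepsilon_i\Bigr)\longrightarrow\gr_\Omega(M),
\]
the identification of the source being the one recorded just before the lemma.

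For step (2), since $\gr_\Lambda(\G)$ is $\TT$-linear it is determined by its values on the distinguished generators $[1]^{\W(\div r_i)}_{r_i}\varepsilon_i$ of the summands. By construction it sends this generator to $[\G(\varepsilon_i)]^\Omega_{r_i}=[g_i]^\Omega_{r_i}$, and this equals $\sigma^\Omega(g_i)$ exactly because $r_i=\deg_\Omega(g_i)$; hence $\gr_\Lambda(\G)$ coincides with the map $\sum_i\xi_i\varepsilon_i\mapsto\sum_i\xi_i\,\sigma^\Omega(g_i)$, i.e.\ with the linear map attached to $\sigma^\Omega(\G)$. For step (3), let $\tau=\sum_i\tau_i\varepsilon_i\in\Syz(\G)$, so $\G(\tau)=0$; the case $\tau=0$ is trivial, so take $\tau\neq0$ and let $s=\deg_\Lambda(\tau)$, the maximum of the finitely many $\deg_\W(\tau_i)r_i$ with $\tau_i\neq0$. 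Then $\sigma^\Lambda(\tau)=[\tau]^\Lambda_s$ and $\gr_\Lambda(\G)(\sigma^\Lambda(\tau))=[\G(\tau)]^\Omega_s=[0]^\Omega_s=0$, so $\sigma^\Lambda(\tau)\in\Syz(\sigma^\Omega(\G))$, as claimed. I do not expect a genuine obstacle here: the whole argument is formal once the filtered-morphism picture is set up. The one point worth care is step (2), where it is the hypothesis $r_i=\deg_\Omega(g_i)$ — rather than merely $g_i\in\Omega_{r_i}$ — that forces the generator $[1]_{r_i}\varepsilon_i$ to map to the genuine symbol $\sigma^\Omega(g_i)$ instead of to $0$; and one should resist evaluating $\gr_\Lambda(\G)(\sigma^\Lambda(\tau))$ termwise, since $\TT$ need not be a domain and the symbol of a product can differ from the product of symbols.
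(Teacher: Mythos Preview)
Your proof is correct and amounts to the same computation as the paper's, just organized more conceptually: the paper directly verifies that $[\sum_i\tau_ig_i]^\Omega_s=\sum_i[\tau_i]^\W_{t_i}[g_i]^\Omega_{r_i}$ for $s=\deg_\Lambda(\tau)$, while you recognize this identity as the statement that $\G$ is a filtered morphism whose associated graded is $\sigma^\Omega(\G)$. Both arguments hinge on the same observation, and your framing makes the role of the Schreyer filtration a bit more transparent.
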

\begin{proof}
  Let $\tau=(\tau_i)_{i\in\Upsilon}$ be a syzygy on $\G=(g_i)_{i\in\Upsilon}$, and let us say that $\deg_{\Lambda}(\tau)=s$, i.e., for all $i$, $\deg_\W(\tau_i)=t_i$ such that $t_ir_i\leq s$ with equality for at least one $i$. Then for each $i$, we have $\tau_i g_i\in \Omega_s$, and
  \[
  0 = \left[\textstyle\sum_i \tau_i g_i\right]^\Omega_s = \sum_i [\tau_i]^\W_{t_i}[g_i]^\Omega_{r_i}.
  \]
  In other words, the vector
  \[
  \sigma^{\Lambda}(\tau)=[\tau]^{\Lambda}_s=([\tau_i]^{\W(\div r_i)}_s)_{i\in\Upsilon}=([\tau_i]^\W_{t_i})_{i\in\Upsilon}
  \]
  is a syzygy on $\sigma^\Omega(\G)$.
\end{proof}
From the above, we obtain what Kreuzer and Robbiano call the ``fundamental diagram'' \cite[Sec. 2.3.6]{kreuzrob} for $\G$ and $\sigma^\Omega(\G)$.
  \begin{center}
\begin{tikzcd}
0\ar[r] & \Syz(\G)\ar[d,"\sigma^{\Lambda}"] \ar[r] & \bigoplus_{i\in\Upsilon}\RR\varepsilon_i\ar[d,"\sigma^{\Lambda}"] \ar[r,"{\G}"] & M\ar[r]\ar[d,"\sigma^{\Omega}"] & 0\\
0\ar[r] & \Syz(\sigma^\Omega(\G)) \ar[r] & \bigoplus_{i\in\Upsilon}\TT(\div r_i)\varepsilon_i \ar[r,"{\sigma^\Omega(\G)}"] & \gr_{\Omega}(M)
\end{tikzcd}
  \end{center}
  Importantly, the right square of the diagram fails to commute. The following lemmas will be of some importance in understanding the lifting properties of this diagram. First, recall the notion of a strict morphism of filtered modules (see, e.g., McConnell and Robson \cite[Sec. 7.6.12/13]{mcconnellrobson}).
\begin{dfn}
  Let $(\RR,\W)$ be a $Q$-filtered ring, and let $(M,\Omega)$ and $(N,\Sigma)$ be $P$-filtered $(\RR,\W)$-modules. A $\RR$-linear map $\varphi:M\to N$ such that $\varphi(\Omega_r)=\Sigma_r\cap \varphi(M)$ for all $r\in P$ is \textit{strict}.
\end{dfn}
\begin{lem}\label{strictmaps}
  Let $\varphi:(M,\Omega)\to(N,\Sigma)$ be a strict map. Let $K=\Ker(\varphi)$ and $C=\Coker(\varphi)$. Then the induced sequence of homogeneous maps between the graded left $\gr_\W(\RR)$-modules below is exact:
  \[
  0\to \gr_{\Omega\cap K}(K)\to\gr_\Omega(M)\to\gr_\Sigma(N)\to \gr_{\overline{\Sigma}}(C)\to 0
  \]
  where $(\Omega\cap K)_r:=\Omega_r\cap K$ and $\overline{\Sigma}_r := \Sigma_r/(\Sigma_r\cap\text{Im}(\varphi))$.
\end{lem}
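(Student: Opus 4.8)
The plan is to reduce the four-term exact sequence to two short exact sequences and verify exactness of each term by term in a fixed degree $r \in P$, using the strictness hypothesis $\varphi(\Omega_r) = \Sigma_r \cap \varphi(M)$ crucially. First I would recall that for any filtered module $(M,\Omega)$, the $r$-th graded component of $\gr_\Omega(M)$ is $\Omega_r/\Omega_{<r}$; so the claim is a statement about a diagram of $K$-vector spaces indexed by each $r$, and the $\gr_\W(\RR)$-linearity of the maps is routine to check from the definitions of the module structures, leaving only $\F_p$-vector space exactness to establish. Writing $\iota : K \hookrightarrow M$, $\pi : N \twoheadrightarrow C$, the induced maps are $\bar\iota_r : \Omega_r\cap K / \Omega_{<r}\cap K \to \Omega_r/\Omega_{<r}$, then $\bar\varphi_r : \Omega_r/\Omega_{<r} \to \Sigma_r/\Sigma_{<r}$, then $\bar\pi_r : \Sigma_r/\Sigma_{<r} \to \Sigma_r/(\Sigma_{<r} + (\Sigma_r\cap\mathrm{Im}\,\varphi))$.

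Next I would check the three exactness conditions in turn. \textbf{Injectivity of $\bar\iota_r$:} if $k \in \Omega_r\cap K$ maps into $\Omega_{<r}$, then $k \in \Omega_{<r}\cap K$, which is immediate from the definition of the subspace filtration on $K$; no strictness needed here. \textbf{Exactness at $\gr_\Omega(M)$:} clearly $\bar\varphi_r\bar\iota_r = 0$ since $\varphi\iota = 0$. Conversely, if $m \in \Omega_r$ has $\varphi(m) \in \Sigma_{<r}$, I want to correct $m$ by an element of $\Omega_{<r}$ to land in $K$. Since $\varphi(m) \in \Sigma_{<r}\cap\mathrm{Im}\,\varphi$, and by strictness applied at degrees $s < r$ we have $\Sigma_s\cap\mathrm{Im}\,\varphi = \varphi(\Omega_s)$, it follows (the filtration $\Sigma_{<r}$ being the union of the $\Sigma_s$ over $s<r$, and $P$ being well-ordered so the union stabilizes appropriately, or simply because $\varphi(m)$ lies in some single $\Sigma_s$) that $\varphi(m) = \varphi(m')$ for some $m' \in \Omega_{<r}$; then $m - m' \in \Omega_r\cap K$ and $[m]^\Omega_r = \bar\iota_r([m-m']^{\Omega\cap K}_r)$. \textbf{Exactness at $\gr_\Sigma(N)$:} surjectivity of $\bar\pi_r$ onto $\gr_{\overline\Sigma}(C)$ is a direct diagram chase (every element of $C$ in degree $r$ lifts to $\Sigma_r$), and the kernel of $\bar\pi_r$ consists of classes $[n]^\Sigma_r$ with $n \in \Sigma_{<r} + (\Sigma_r\cap\mathrm{Im}\,\varphi)$, i.e. $n \equiv \varphi(m) \pmod{\Sigma_{<r}}$ for some $m\in M$ with $\varphi(m)\in\Sigma_r$; by strictness at degree $r$, $\Sigma_r\cap\mathrm{Im}\,\varphi = \varphi(\Omega_r)$, so $m$ can be taken in $\Omega_r$ and $[n]^\Sigma_r = \bar\varphi_r([m]^\Omega_r)$. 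Finally \textbf{surjectivity of $\bar\pi_r$} and the identification of $\gr_{\overline\Sigma}(C)$ with $N/\mathrm{Im}\,\varphi$ filtered by the images of $\Sigma$ should be noted explicitly, since $\overline\Sigma$ is by definition exactly the quotient filtration.

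The main obstacle I anticipate is the step in exactness at $\gr_\Omega(M)$ where I must promote ``$\varphi(m) \in \Sigma_{<r}$'' to ``$\varphi(m) = \varphi(m')$ with $m' \in \Omega_{<r}$'': this is precisely where strictness of $\varphi$ is indispensable, and one must be slightly careful about whether $\Sigma_{<r} = \bigcup_{s<r}\Sigma_s$ interacts well with $\mathrm{Im}\,\varphi$. The clean way is to observe that $\Sigma_{<r}\cap\mathrm{Im}\,\varphi = \bigcup_{s<r}(\Sigma_s\cap\mathrm{Im}\,\varphi) = \bigcup_{s<r}\varphi(\Omega_s) = \varphi(\Omega_{<r})$, the first equality because $\mathrm{Im}\,\varphi$ is a fixed subspace and the union is directed (as $P$ is totally ordered), the second by strictness. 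Everything else is bookkeeping with quotients of vector spaces, and I would present it compactly rather than belaboring each inclusion.
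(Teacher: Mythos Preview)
Your proposal is correct and takes essentially the same approach as the paper: work degree by degree, split the four-term sequence into two short exact sequences, and use the strictness identity $\Sigma_r\cap\mathrm{Im}(\varphi)=\varphi(\Omega_r)$ (and its union over $s<r$) to glue them. The paper's proof is slightly more compact---it writes down the two obvious short exact sequences for $\Omega_r/\Omega_{<r}$ and $\Sigma_r/\Sigma_{<r}$ and then identifies the cokernel of the first with the kernel of the second via strictness---but your element-chasing version is the same argument unpacked.
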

\begin{proof}
For each $r\in P$, we have natural short exact sequences
  \[
  0\to\frac{\Omega_r\cap K}{\Omega_{<r}\cap K}\to \frac{\Omega_r}{\Omega_{<r}}\to \frac{\Omega_r}{\Omega_{<r}+\Omega_r\cap K}\to 0
  \]
  and
  \[
    0\to\frac{\Sigma_r\cap \text{Im}(\varphi)}{\Sigma_{<r}\cap \text{Im}(\varphi)}\to \frac{\Sigma_r}{\Sigma_{<r}}\to \frac{\Sigma_r}{\Sigma_{<r}+\Sigma_r\cap \text{Im}(\varphi)}\to 0
    \]
    Strictness, i.e., the equality $\Sigma_r\cap\text{Im}(\varphi)=\varphi(\Omega_r)$, gives the result at once upon identifying $\varphi(\Omega_r)$ with $\Omega_r/(\Omega_r\cap\Ker(\varphi))$, so that
    \[
    \frac{\Sigma_r\cap\text{Im}(\varphi)}{\Sigma_{<r}\cap\text{Im}(\varphi)}
    =\frac{\varphi(\Omega_r)}{\varphi(\Omega_{<r})} = \frac{\Omega_r}{\Omega_{<r}+\Omega_r\cap\Ker(\varphi)}.
    \]
\end{proof}

\begin{lem}\label{strictsymbol}
  Let $(\RR,\W)$ be a $Q$-filtered ring, and let $(M,\Omega)$ be a $P$-filtered $(\RR,\W)$-module. Let $\G=(g_i)_{i\in\Upsilon}$ be a Gr\"obner basis for $M$, with degrees $r=(r_i)_{i\in\Upsilon}$. For all $s\in P$, the image of the $s$th term of the Schreyer pullback filtration $\Lambda_s$ under $\G$ is exactly $\Omega_s$. That is, the map of filtered modules
  \[
  \G:\left(\textstyle\bigoplus_{i\in\Upsilon}\RR\varepsilon_i,\,\Lambda\right)\to(M,\Omega)
  \]
  is strict. In particular, the sequence
  \[
  0\to \gr_{\Lambda\cap\Syz(\G)}(\Syz(\G))\to \textstyle\bigoplus_{i\in\Upsilon} \TT(\div r_i)\varepsilon_i \xrightarrow{\sigma^\Omega(\G)} \gr_\Omega(M)\to 0
  \]
  is exact, and $\gr_{\Lambda\cap\Syz(\G)}(\Syz(\G))=\Syz(\sigma^\Omega(\G))$.
\end{lem}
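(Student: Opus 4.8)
The plan is to isolate the one substantive point, namely that $\G\colon\bigl(\bigoplus_{i\in\Upsilon}\RR\varepsilon_i,\Lambda\bigr)\to(M,\Omega)$ is a strict morphism of filtered modules; granted this, the stated exact sequence and the identification of $\gr_{\Lambda\cap\Syz(\G)}(\Syz(\G))$ with $\Syz(\sigma^\Omega(\G))$ follow formally from Lemma~\ref{strictmaps}, Lemma~\ref{syzmap}, and the computation $\gr_\Lambda\bigl(\bigoplus_{i\in\Upsilon}\RR\varepsilon_i\bigr)=\bigoplus_{i\in\Upsilon}\TT(\div r_i)\varepsilon_i$ carried out just before the statement.

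To prove strictness I must show $\G(\Lambda_s)=\Omega_s\cap\mathrm{Im}(\G)$ for every $s\in P$. The inclusion $\G(\Lambda_s)\subseteq\Omega_s$ is immediate from the axioms: a general element of $\Lambda_s$ is a finite sum $\sum_i\tau_i\varepsilon_i$ with $\tau_i\in\W(\div r_i)_s$, i.e.\ $\tau_i\in\W_{a_i}$ for some $a_i\in Q$ with $a_ir_i\le s$, so $\tau_ig_i\in\W_{a_i}\Omega_{r_i}\subseteq\Omega_{a_ir_i}\subseteq\Omega_s$. For the reverse inclusion I will prove, by well-founded induction on $s$ (legitimate because $<$ is a well order on $P$), that $\Omega_s\subseteq\G(\Lambda_s)$; summing over $s$ this simultaneously shows $\G$ is surjective, so $\mathrm{Im}(\G)=M$ and the claimed equality follows. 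For the inductive step, take $m\in\Omega_s$, $m\neq0$, and let $s'=\deg_\Omega(m)\le s$, so that $\sigma^\Omega(m)=[m]^\Omega_{s'}$ is a nonzero homogeneous element of degree $s'$ in $\gr_\Omega(M)$. Since the symbols $\sigma^\Omega(g_i)$ generate $\gr_\Omega(M)$ over $\gr_\W(\RR)$, and both are graded (over $P$, resp.\ $Q$) with $[\gr_\W(\RR)]_a[\gr_\Omega(M)]_r\subseteq[\gr_\Omega(M)]_{ar}$, I may extract the degree-$s'$ part and write $\sigma^\Omega(m)=\sum_i c_i\,\sigma^\Omega(g_i)$ where each $c_i$ is homogeneous of the \emph{unique} degree $a_i\in Q$ with $a_ir_i=s'$ (uniqueness by cancellativity of the $Q$-action on $P$), or $c_i=0$. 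Lift each $c_i$ to $\tau_i\in\W_{a_i}$ with $[\tau_i]^\W_{a_i}=c_i$; then $\sum_i\tau_i\varepsilon_i\in\Lambda_{s'}\subseteq\Lambda_s$, and $m':=m-\sum_i\tau_ig_i$ maps to $0$ in $\Omega_{s'}/\Omega_{<s'}$, hence $m'=0$ or $\deg_\Omega(m')<s'\le s$. In the latter case $m'\in\Omega_t$ for some $t<s$, so $m'\in\G(\Lambda_t)\subseteq\G(\Lambda_s)$ by the inductive hypothesis; either way $m=m'+\sum_i\tau_ig_i\in\G(\Lambda_s)$.

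With strictness in hand, I apply Lemma~\ref{strictmaps} to $\varphi=\G$: the kernel is $\Syz(\G)$ with induced filtration $\Lambda\cap\Syz(\G)$, and the cokernel is $0$ since $\G$ is onto, giving exactness of $0\to\gr_{\Lambda\cap\Syz(\G)}(\Syz(\G))\to\gr_\Lambda\bigl(\bigoplus_{i\in\Upsilon}\RR\varepsilon_i\bigr)\to\gr_\Omega(M)\to0$. Substituting $\gr_\Lambda\bigl(\bigoplus_{i\in\Upsilon}\RR\varepsilon_i\bigr)=\bigoplus_{i\in\Upsilon}\TT(\div r_i)\varepsilon_i$, I check that the induced middle map is exactly $\sigma^\Omega(\G)$: it is $\gr_\W(\RR)$-linear and sends the generator $[1]^{\W(\div r_i)}_{r_i}\varepsilon_i$ of the $i$th summand to the class of $\G(\varepsilon_i)=g_i$ in $\Omega_{r_i}/\Omega_{<r_i}$, which is $\sigma^\Omega(g_i)$ precisely because $r_i=\deg_\Omega(g_i)$. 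Thus the sequence becomes $0\to\gr_{\Lambda\cap\Syz(\G)}(\Syz(\G))\to\bigoplus_{i\in\Upsilon}\TT(\div r_i)\varepsilon_i\xrightarrow{\sigma^\Omega(\G)}\gr_\Omega(M)\to0$, and exactness at the middle identifies $\gr_{\Lambda\cap\Syz(\G)}(\Syz(\G))$ with $\Ker\bigl(\sigma^\Omega(\G)\bigr)=\Syz(\sigma^\Omega(\G))$; since the inclusion realizing this identification is the symbol map $\sigma^\Lambda$, it is the one of Lemma~\ref{syzmap}, so in particular every syzygy on $\sigma^\Omega(\G)$ is the symbol of a syzygy on $\G$.

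The main obstacle is the strictness step, and within it the bookkeeping that the lift $\tau_i$ of a homogeneous coefficient $c_i$ of degree $a_i$ really lands in $\W(\div r_i)_{s'}\varepsilon_i$ and that the reduction $m\mapsto m-\sum_i\tau_ig_i$ strictly lowers $\Omega$-degree. Both rely on exactly the defining properties of a nice ordered $Q$-module: cancellativity of the action (to pin down each $a_i$) and the well-ordering of $P$ (to run and terminate the induction). Everything after strictness is diagram chasing layered on Lemma~\ref{strictmaps}, and requires no further ideas.
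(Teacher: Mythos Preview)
Your proof is correct and follows essentially the same approach as the paper's: reduce everything to strictness of $\G$, then establish strictness by using the Gr\"obner basis property to peel off the $\Omega$-symbol of $m$ and inducting on the well-order of $P$. The paper phrases the induction as a minimal-counterexample argument on $\deg_\Omega(m_0)$ rather than a direct well-founded induction on $s$, but the content is identical; if anything your version is slightly cleaner in insisting that the homogeneous coefficients $c_i$ have degree exactly $a_i$ with $a_ir_i=s'$ (the paper writes $t_ir_i\le s$, which is correct but looser than necessary).
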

\begin{proof}
  Once we have proven strictness, the claimed exact sequence is immediate from Lemma \ref{strictmaps}. So, let $m$ be any element of $M$ with $\deg_\Omega(m)=s$. Since $\sigma^\Omega(\G)$ generates $\gr_\Omega(M)$, there exist $\tau_i\in \W_{t_i}$ with $t_i$ satisfying $t_ir_i\leq s$ for all $i$ such that $[m]^\Omega_s = \sum_{i\in\Upsilon}[\tau_i]^\W_{t_i}[g_i]^\Omega_{r_i}$. Lifting this expression from $[\gr_\Omega(M)]_s$ to $\Omega_r$, we get $m=m_0+\sum_i\tau_ig_i$ with $m_0\in\Omega_{<s}$. We claim that an element of this form belongs to the image of $\Lambda_s$ under $\G$. If not, since $P$ is well-ordered, we may assume that $m=m_0+\sum_i\tau_ig_i$ is a counterexample for which $t=\deg_\Omega(m_0)$ is minimal. Write $m_0=m_1 + \sum_i\rho_i g_i$ for $\rho_i\in\W(\div r_i)_t\subseteq \W(\div r_i)_s$ and $\deg_\Omega(m_1)<t$. By minimality, $m_0$ cannot be a counterexample, so it lies in the image of $\Lambda_s$ under $\G$. But now the same can be said of $m=m_0+\sum_i\tau_ig_i$, which is a contradiction.
\end{proof}
\begin{lem}\label{grobgen}
  Let $(\RR,\W)$ be a $Q$-filtered ring, and let $(M,\Omega)$ be a $P$-filtered $(\RR,\W)$-module. A Gr\"obner basis for $M$ is a generating set for $M$ over $\RR$. In particular, for any homogeneous generating set $(a_i)_{i\in\Upsilon}$ for $\gr_\Omega(M)$ over $\gr_\W(\RR)$, there exists a lift $(g_i)_{i\in\Upsilon}$ (in the sense that $\sigma^\Omega(g_i)=a_i$ for all $i$) that generates $M$ over $\RR$.
\end{lem}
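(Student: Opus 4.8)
The plan is to deduce the statement from Lemma~\ref{strictsymbol}, with a self-contained transfinite induction as a backup route.

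First I would prove the main clause: that a Gr\"obner basis $\G=(g_i)_{i\in\Upsilon}$ for $M$, with degrees $r=(r_i)$, generates $M$ over $\RR$. Let $\Lambda$ be the Schreyer pullback filtration on $\bigoplus_{i\in\Upsilon}\RR\varepsilon_i$ with respect to $r$. Lemma~\ref{strictsymbol} already gives $\G(\Lambda_s)=\Omega_s$ for every $s\in P$. I would then observe that $\Lambda$ is exhaustive --- each $\W(\div r_i)$ is, since $\bigcup_s\W(\div r_i)_s=\bigcup_{a\in Q}\W_a=\RR$ --- so that $\bigcup_s\Lambda_s=\bigoplus_{i\in\Upsilon}\RR\varepsilon_i$, while $\Omega$ is exhaustive by definition; taking the union over $s\in P$ of the equalities $\G(\Lambda_s)=\Omega_s$ then yields $\G\bigl(\bigoplus_{i\in\Upsilon}\RR\varepsilon_i\bigr)=M$, which is exactly the assertion that $\G$ generates $M$.

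For the ``in particular'' clause I would start from a homogeneous generating set $(a_i)_{i\in\Upsilon}$ of $\gr_\Omega(M)$ over $\gr_\W(\RR)$. After discarding the indices with $a_i=0$ (the remaining symbols still generate), each $a_i$ is a nonzero homogeneous element, say of degree $r_i\in P$, so $a_i\in\Omega_{r_i}/\Omega_{<r_i}$ is nonzero; choosing any $g_i\in\Omega_{r_i}$ representing $a_i$, the fact that $a_i\neq 0$ forces $g_i\notin\Omega_{<r_i}$, hence $r_i=\deg_\Omega(g_i)$ and $\sigma^\Omega(g_i)=a_i$. Thus $(g_i)_{i\in\Upsilon}$ is a Gr\"obner basis for $M$, and the first part shows it generates $M$ over $\RR$. (If one wishes to avoid Lemma~\ref{strictsymbol}, the same well-ordered induction that powers its proof works here verbatim: assuming the $\RR$-submodule $N$ generated by $\G$ is proper, pick $m\in M\setminus N$ of minimal $\Omega$-degree $s$, write $[m]^\Omega_s=\sum_i[\tau_i]^\W_{t_i}[g_i]^\Omega_{r_i}$ since the symbols generate, lift to $m=m_0+\sum_i\tau_ig_i$ with $m_0\in\Omega_{<s}$, and conclude $m_0\in N$ by minimality, hence $m\in N$, a contradiction.)

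I do not expect any real obstacle. The only technical care is in the well-ordered induction bookkeeping --- making sure $\deg_\Omega$ is well defined on nonzero elements so that ``minimal counterexample'' makes sense, and that remainders of strictly smaller $\Omega$-degree can be absorbed --- and this is already carried out inside the proof of Lemma~\ref{strictsymbol}. A secondary point worth a sentence is that one may assume the homogeneous generators $a_i$ are all nonzero before lifting them.
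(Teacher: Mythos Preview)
Your proposal is correct and follows essentially the same route as the paper: both deduce that a Gr\"obner basis generates $M$ directly from the strictness established in Lemma~\ref{strictsymbol} (you by taking the union of $\G(\Lambda_s)=\Omega_s$ over $s$, the paper by picking a single $m$ and observing $m\in\Omega_s=\G(\Lambda_s)$). Your treatment is more thorough---explicitly verifying that $\Lambda$ is exhaustive, handling the possibility $a_i=0$ in the lifting step, and sketching the direct well-ordered induction as an alternative---but none of this departs from the paper's approach.
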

\begin{proof}
Let $\G$ be a Gr\"obner basis for $M$, and take any $m\in M$, with $r=\deg_\Omega(m)$. By Lemma \ref{strictsymbol}, we can write $m=\sum_i \tau_ig_i$ with $\tau_i\in\W(\div r_i)_s$. In particular, $m$ is in the $\RR$-span of $\G$.
\end{proof}

\begin{theorem}\label{buchcrit}
  (Buchberger's Criterion) Let $(\RR,\W)$ be a $Q$-filtered ring, let $\TT=\gr_\W(\RR)$, and let $(M,\Omega)$ be a $P$-filtered $(\RR,\W)$-module. Let $\G=(g_i)_{i\in\Upsilon}$ be a (not necessarily finite) generating set for $M$, with $\sigma^\Omega(\G)=(\sigma^\Omega(g_i))_{i\in\Upsilon}$ the corresponding sequence of symbols. Let $\Lambda$ denote the Schreyer pullback filtration with respect to $r=(\deg_\Omega(g_i))_{i\in\Upsilon}$. The following are equivalent.
  \begin{enumerate}[(i)]
  \item The sequence $\G$ is an $\Omega$-Gr\"obner basis for $M$. That is, $\sigma^\Omega(\G)$ generates $\gr_\Omega(M)$ over $\TT$.
  \item $\gr_{\Lambda\cap\Syz(\G)}(\Syz(\G))=\Syz(\sigma^\Omega(\G))$.
  \item The lifts of any homogeneous generating set for $\Syz(\sigma^\Omega(\G))$ form a $\Lambda$-Gr\"obner basis for $\Syz(\G)$.
  \item Every homogeneous syzygy on $\sigma^\Omega(\G)$ lifts to a syzygy on $\G$.
    \end{enumerate}
\end{theorem}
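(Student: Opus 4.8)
The plan is to prove the cycle of implications $(i)\Rightarrow(ii)\Rightarrow(iii)\Rightarrow(iv)\Rightarrow(i)$, leaning on the ``fundamental diagram'' and the strictness results already established. For $(i)\Rightarrow(ii)$: this is precisely the content of Lemma~\ref{strictsymbol}, which says that if $\G$ is a Gr\"obner basis then $\G:(\bigoplus\RR\varepsilon_i,\Lambda)\to(M,\Omega)$ is strict, and the exact sequence in that lemma identifies $\gr_{\Lambda\cap\Syz(\G)}(\Syz(\G))$ with $\Syz(\sigma^\Omega(\G))$. So $(i)\Rightarrow(ii)$ is essentially immediate.

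For $(ii)\Rightarrow(iii)$: suppose $\gr_{\Lambda\cap\Syz(\G)}(\Syz(\G))=\Syz(\sigma^\Omega(\G))$ as submodules of $\bigoplus_i\TT(\div r_i)\varepsilon_i$. Then by definition a homogeneous generating set $(b_j)_j$ for $\Syz(\sigma^\Omega(\G))$ is a homogeneous generating set for $\gr_{\Lambda\cap\Syz(\G)}(\Syz(\G))$; choosing lifts $\beta_j\in\Syz(\G)$ with $\sigma^\Lambda(\beta_j)=b_j$, these lifts have symbols generating the associated graded module, hence form a $\Lambda$-Gr\"obner basis for $\Syz(\G)$ by the very definition of Gr\"obner basis. (We should check the $\Lambda\cap\Syz(\G)$-degree of $\beta_j$ agrees with the grading degree of $b_j$, which holds because $\sigma^\Lambda$ restricted to $\Syz(\G)$ computes the symbol with respect to the subspace filtration $\Lambda\cap\Syz(\G)$ — this is the standard compatibility of symbols with submodule filtrations.) For $(iii)\Rightarrow(iv)$: if the lifts of a homogeneous generating set for $\Syz(\sigma^\Omega(\G))$ form a $\Lambda$-Gr\"obner basis for $\Syz(\G)$, then in particular they generate $\Syz(\G)$ over $\RR$ by Lemma~\ref{grobgen}. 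Now take an arbitrary homogeneous syzygy $b$ on $\sigma^\Omega(\G)$; write $b=\sum_j\gamma_j b_j$ as a $\TT$-combination of the chosen generators, lift each term appropriately using the fact that $\sigma^\Lambda$ of the lift $\beta_j$ is $b_j$ and that $\RR\to\TT$ lifts homogeneous elements, and observe that $\sum_j c_j\beta_j$ (with $c_j$ a lift of $\gamma_j$ of the correct $\W$-degree) is a genuine syzygy on $\G$ whose $\Lambda$-symbol is $b$. A minor subtlety: one must arrange that no degree drop occurs, i.e. that $\deg_\Lambda(\sum_j c_j\beta_j)$ is actually $\deg(b)$ rather than something smaller; this is handled exactly as in the proof of Lemma~\ref{strictsymbol}, by a well-ordering/minimal-counterexample argument correcting lower-order terms. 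Actually, the cleanest route is: since the $\beta_j$ form a $\Lambda$-Gr\"obner basis for $\Syz(\G)$, the map $(\bigoplus_j\RR\varepsilon_j',\Lambda')\to(\Syz(\G),\Lambda\cap\Syz(\G))$ sending $\varepsilon_j'\mapsto\beta_j$ is strict by Lemma~\ref{strictsymbol} applied to $\Syz(\G)$, so every homogeneous element of $\gr_{\Lambda\cap\Syz(\G)}(\Syz(\G))=\Syz(\sigma^\Omega(\G))$, in particular $b$, is the symbol of an element of $\Syz(\G)$ — which is exactly the statement that $b$ lifts to a syzygy on $\G$.

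For the final implication $(iv)\Rightarrow(i)$, which I expect to be the main obstacle: assume every homogeneous syzygy on $\sigma^\Omega(\G)$ lifts to a syzygy on $\G$, and we must show $\sigma^\Omega(\G)$ generates $\gr_\Omega(M)$. Take any $m\in M$ with $\deg_\Omega(m)=s$ and consider its symbol $\sigma^\Omega(m)=[m]^\Omega_s$. We want to write $[m]^\Omega_s=\sum_i[\tau_i]^\W_{t_i}[g_i]^\Omega_{r_i}$ with $t_ir_i\le s$. Since $\G$ generates $M$, we have $m=\sum_i\tau_i g_i$ for some $\tau_i\in\RR$; set $u=\max_i\deg_\W(\tau_i)\deg_\Omega$-contribution, i.e. let $s'$ be the largest element of $P$ of the form $\deg_\W(\tau_i)r_i$ among the $i$ with $\tau_i\ne0$. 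Necessarily $s\le s'$ since $\sum_i\tau_i g_i\in\Omega_{s'}$. If $s=s'$, then passing to $\gr_\Omega(M)$ kills any term with $\deg_\W(\tau_i)r_i<s$ and yields $[m]^\Omega_s=\sum_{i:\,t_ir_i=s}[\tau_i]^\W_{t_i}[g_i]^\Omega_{r_i}$ — but one must worry that the surviving top-degree symbols might sum to zero in $\gr_\Omega(M)$ even though $m\ne0$ there, i.e. $[m]^\Omega_s$ could equal $0$; that only helps. The real issue is the case $s<s'$: then $\sum_{i:\,\deg_\W(\tau_i)r_i=s'}[\tau_i]^\W\,[g_i]^\Omega_{r_i}=0$ in $[\gr_\Omega(M)]_{s'}$, i.e. the vector $([\tau_i]^\W)_i$ (with lower-order entries zeroed out) is a homogeneous syzygy on $\sigma^\Omega(\G)$ of $\Lambda$-degree $s'$. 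By hypothesis $(iv)$ it lifts to a genuine syzygy $(\tau_i')_i$ on $\G$ with $\sigma^\Lambda(\tau')=\sigma^\Lambda(\tau)$ at degree $s'$; replacing $\tau_i$ by $\tau_i-\tau_i'$ strictly lowers the quantity $s'$ (the top-order parts cancel), and since $P$ is well-ordered this descent must terminate, eventually reaching an expression $m=\sum_i\tilde\tau_i g_i$ with $\max_i\deg_\W(\tilde\tau_i)r_i=s$, giving the desired symbol expression. This descending-chain argument on the well-order $P$ is the heart of the proof and the step most prone to bookkeeping errors — one has to be careful that subtracting the lifted syzygy does not accidentally raise some other term's degree (it cannot, since a syzygy of $\Lambda$-degree $s'$ has all entries of $\W(\div r_i)$-degree $\le s'$) and that the process genuinely decreases $s'\in P$ at each step, so that well-ordering of $P$ forces termination.
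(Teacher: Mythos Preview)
Your proof is correct and follows the same cycle $(i)\Rightarrow(ii)\Rightarrow(iii)\Rightarrow(iv)\Rightarrow(i)$ as the paper, with essentially identical arguments for $(i)\Rightarrow(ii)$, $(ii)\Rightarrow(iii)$, and $(iv)\Rightarrow(i)$ (your descent on $s'$ is just the contrapositive phrasing of the paper's minimal-counterexample argument). The one place you work harder than necessary is $(iii)\Rightarrow(iv)$: rather than routing through an implicit proof of $(ii)$ and invoking strictness again, the paper simply observes that given any homogeneous syzygy $\zeta$ on $\sigma^\Omega(\G)$, one can extend an existing homogeneous generating set for $\Syz(\sigma^\Omega(\G))$ by throwing in $\zeta$, and then $(iii)$ applied to this enlarged generating set directly hands you a lift of $\zeta$.
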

\begin{proof}
 (i)$\implies$(ii) is a consequence of Lemma \ref{strictsymbol}.

  (ii)$\implies$(iii) Take any homogeneous generating set $(\zeta_i)_{i\in \Xi}$ for $\Syz(\sigma^\Omega(\G))=\gr_{\Lambda\cap\Syz(\G)}(\Syz(\G))$. A sequence of homogeneous elements in $\gr_{\Lambda\cap\Syz(\G)}(\Syz(\G))$ lifts to a sequence $\mathcal{Z}=(z_i)_{i\in\Xi}$ in $\Syz(\G)$ with $\sigma^\Lambda(z_i)=\zeta_i$ for all $i$. Since $\sigma^\Lambda(\mathcal{Z})=(\zeta_i)_{i\in\Xi}$ generates $\gr_{\Lambda\cap\Syz(\G)}(\Syz(\G))$, our sequence $\mathcal{Z}$ is by definition a Gr\"obner basis for $\Syz(\G)$.
  
  (iii)$\implies$(iv) Given any homogeneous syzygy $\zeta$ on $\sigma^\Omega(\G)$, we can trivially extend a homogeneous generating set for $\Syz(\sigma^\Omega(\G))$ to include $\zeta$, and therefore $\zeta$ admits a lift to $\Syz(\G)$.
  
  (iv)$\implies$(i) Take any homogeneous element $\xi\in \gr_\Omega(M)$. Find some $m\in M$ such that $\sigma^\Omega(m)=\xi$, and then write $m$ as $\tau\G$ for some vector $\tau\in\bigoplus_{i\in\Upsilon}\RR\varepsilon_i$. Let $s=\deg_\Lambda(\tau)$, with $t_i=\deg_\W(\tau_i)$ satisfying $t_ir_i\leq s$ for all $i$. Suppose $\xi$ does not belong to the $\TT$-span of $\sigma^\Omega(\G)$, and assume that $s=\deg_\Lambda(\tau)$ is chosen minimally among all elements of $\gr_\Omega(M)$ not lying in $\TT\cdot \sigma^\Omega(M)$. We have $[m]^\Omega_s=[\tau\G]^\Omega_s=\sum_i [\tau_i]^\W_{t_i} [g_i]^\Omega_{r_i}=\sigma^\Lambda(\tau)\sigma^\Omega(\G)$. Since $[m]^\Omega_s$ transparently belongs to the $\TT\cdot \sigma^\Omega(\G)$, it cannot be nonzero, as this would give $[m]^\Omega_s=\sigma^\Omega(m)=\xi$. So, $\sigma^\Lambda(\tau)$ is a syzygy on $\sigma^\Omega(\G)$. By hypothesis, we can lift $\sigma^\Lambda(\tau)$ to a (necessarily $\Lambda$-degree $s$) syzygy $\zeta$ on $\G$ itself. Since $\tau$ and $\zeta$ have the same $\Lambda$-symbol, we have $\deg_\Lambda(\tau-\zeta)<s$, and thus, by minimality, $\sigma^\Omega((\tau-\zeta)\G)$ belongs to $\TT\cdot \sigma^\Omega(\G)$. But $(\tau-\zeta)\G=\tau\G=m$, so this is a contradiction.
  \end{proof}

\begin{cor}\label{buchalg}
  (Buchberger's Algorithm) Let $(\RR,\W)$ be a $Q$-filtered ring, and let $(M,\Omega)$ be a $P$-filtered $(\RR,\W)$-module. Let $\G$ be a (not necessarily finite) sequence of elements generating $M$. Let $\G_0=\G$, and set $i=0$. Consider the following procedure\footnote{Which, as we've described it here, will run forever.}
  \begin{enumerate}[(1)]
  \item Compute the sequence of symbols $\sigma^\Omega(\G_i)=(\sigma^\Omega(g)\,|\,g\in \G_i)$.
  \item Compute a homogeneous generating set $\mathcal{Z}_i$ for $\Syz(\sigma^\Omega(\G_i))$.
  \item For each syzygy $\zeta=\sum_{g\in\G_i}\zeta_g \varepsilon_g\in \mathcal{Z}_i$ choose any lift $\sum_{g\in\G_i}z_g\varepsilon_g$, where $\sigma^\Omega(z_g)=\zeta_g$ for all $g$. Compute $h_\zeta = \sum_{g\in\G_i} z_g g$.
  \item Let $\G_{i+1} = \G_i\cup(h_\zeta\,|\,\zeta\in\mathcal{Z}_i)$.
  \item Increment $i$ by $1$, and return to step (1).
  \end{enumerate}    
Let $\G_\infty = \bigcup_i \G_i$ and $\mathcal{Z}_\infty=\bigcup_i\mathcal{Z}_i$. The sequence $\G_\infty$ is an $\Omega$-Gr\"obner basis for $M$. Let $\Upsilon=|\mathcal{G}_\infty|$. If $\Lambda$ denotes the Schreyer pullback filtration on $\bigoplus_{i\in\Upsilon}\RR \varepsilon_i$ induced by $\mathcal{G}_\infty$, then any lift of $\mathcal{Z}_\infty$ to $\bigoplus_{i\in\Upsilon}\RR \varepsilon_i$ will form a $\Lambda$-Gr\"obner basis for $\Syz(\G_\infty)$.
\end{cor}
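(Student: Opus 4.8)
The plan is to apply Buchberger's Criterion (Theorem \ref{buchcrit}) in the form (iv)$\Rightarrow$(i) to the generating set $\G_\infty$. Since $\G_\infty\supseteq\G_0=\G$ and $\G$ generates $M$, the set $\G_\infty$ generates $M$, so it suffices to show that every homogeneous syzygy on $\sigma^\Omega(\G_\infty)$ lifts to a syzygy on $\G_\infty$. Everything hinges on one structural observation about step (3) of the algorithm: if $\zeta\in\mathcal{Z}_i$ is homogeneous of $\Lambda$-degree $s$, then the element $h_\zeta=\sum_g z_g g$ it produces lies in $\Omega_{<s}$. Indeed each $z_g\in\W(\div r_g)_s$, so $z_g g\in\Omega_s$ and hence $h_\zeta\in\Omega_s$; and its symbol there is $[h_\zeta]^\Omega_s=\sum_g\sigma^\Lambda(z_g)\,\sigma^\Omega(g)=\zeta\cdot\sigma^\Omega(\G_i)=0$ because $\zeta$ is a syzygy on $\sigma^\Omega(\G_i)$, so the $\Omega$-degree of $h_\zeta$ strictly drops below $s$. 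Equivalently, in the Schreyer pullback filtration attached to $\G_{i+1}$ the new coordinate satisfies $\deg_\Lambda(\varepsilon_{h_\zeta})=\deg_\Omega(h_\zeta)<s=\deg_\Lambda(\zeta)$.

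From this I would first extract the key consequence: for each $i$ and each $\eta\in\mathcal{Z}_i$, the vector $\widehat z_\eta:=z_\eta-\varepsilon_{h_\eta}\in\bigoplus_{g\in\G_{i+1}}\RR\varepsilon_g$ (with $z_\eta$ the lift chosen in step (3)) is a syzygy on $\G_{i+1}$, since $\widehat z_\eta\cdot\G_{i+1}=h_\eta-h_\eta=0$, and moreover $\sigma^\Lambda(\widehat z_\eta)=\eta$ because the $\varepsilon_{h_\eta}$-term sits in strictly lower $\Lambda$-degree than $\eta$. In other words, enlarging $\G_i$ to $\G_{i+1}$ makes every generator of $\Syz(\sigma^\Omega(\G_i))$ liftable to a syzygy on $\G_{i+1}$.

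Now I would take an arbitrary homogeneous syzygy $\zeta$ on $\sigma^\Omega(\G_\infty)$ of $\Lambda$-degree $s$. Elements of a direct sum have finite support, and $\G_\infty=\bigcup_i\G_i$ is an increasing union, so $\zeta$ is supported on the coordinates coming from some $\G_i$ and is therefore a homogeneous syzygy on $\sigma^\Omega(\G_i)$; writing it over the generators $\mathcal{Z}_i$ gives $\zeta=\sum_{\eta\in\mathcal{Z}_i}c_\eta\eta$ with homogeneous $c_\eta\in\TT$ satisfying $\deg_\W(c_\eta)\deg_\Lambda(\eta)=s$. Choosing lifts $c_\eta'\in\RR$ of the same $\W$-degree and setting $z:=\sum_\eta c_\eta'\,\widehat z_\eta$, one checks that every summand lies in $\Lambda_s$ and that $\sigma^\Lambda(z)=\sum_\eta c_\eta\eta=\zeta\ne 0$; hence $z\in\Syz(\G_{i+1})\subseteq\Syz(\G_\infty)$ is the desired lift of $\zeta$. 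By Theorem \ref{buchcrit} it follows that $\G_\infty$ is an $\Omega$-Gr\"obner basis for $M$. For the last assertion, the same finite-support reasoning shows $\mathcal{Z}_\infty=\bigcup_i\mathcal{Z}_i$ is a homogeneous generating set for $\Syz(\sigma^\Omega(\G_\infty))$: any homogeneous syzygy on $\sigma^\Omega(\G_\infty)$ is a syzygy on $\sigma^\Omega(\G_i)$ for some $i$, hence lies in the $\TT$-span of $\mathcal{Z}_i\subseteq\mathcal{Z}_\infty$. Since $\G_\infty$ is now known to be a Gr\"obner basis, Theorem \ref{buchcrit}(ii) identifies $\gr_{\Lambda\cap\Syz(\G_\infty)}(\Syz(\G_\infty))$ with $\Syz(\sigma^\Omega(\G_\infty))$, so the elements of $\mathcal{Z}_\infty$ genuinely admit lifts to $\Syz(\G_\infty)$, and Theorem \ref{buchcrit}(iii) says that any such collection of lifts is a $\Lambda$-Gr\"obner basis for $\Syz(\G_\infty)$.

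The hard part here is not conceptual but organizational. One must keep careful track of the fact that the index set $\Upsilon$ grows at every step, so the free modules $\bigoplus_{g\in\G_i}\RR\varepsilon_g$ and their Schreyer pullback filtrations form a directed system whose colimit carries $\G_\infty$, and a syzygy on $\G_\infty$ is always detected at a finite stage. The one genuine inequality that must be verified is $\deg_\Lambda(\varepsilon_{h_\zeta})<\deg_\Lambda(\zeta)$; this is exactly the place where it matters that the algorithm \emph{records} the failed lifting $h_\zeta$ as a new generator rather than discarding it, and it is what lets the attached coordinate $\varepsilon_{h_\zeta}$ be invisible to the $\Lambda$-symbol.
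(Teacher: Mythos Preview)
Your proof is correct and follows essentially the same route as the paper's: reduce to a finite stage $\G_i$ by finite support, observe that $\deg_\Omega(h_\zeta)<s$ so that the tautological relation $z_\zeta-\varepsilon_{h_\zeta}$ is a syzygy on $\G_{i+1}$ with $\Lambda$-symbol $\zeta$, and invoke Theorem~\ref{buchcrit}. If anything, your version is slightly more careful than the paper's at one point: the paper writes as though an \emph{arbitrary} homogeneous syzygy $\zeta$ on $\sigma^\Omega(\G_i)$ has its $h_\zeta$ placed into $\G_{i+1}$, whereas in fact only the chosen generators $\eta\in\mathcal{Z}_i$ are so recorded; you make explicit the extra step of writing $\zeta=\sum_\eta c_\eta\eta$ and assembling the lift $\sum_\eta c_\eta'\,\widehat z_\eta$ from the lifts of the generators.
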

\begin{proof}
  A homogeneous syzygy $\zeta$ on the symbols of $\G_\infty$ can only involve finitely many nonzero terms, so all the elements implicated in that syzygy must appear in $\G_i$ for some finite value of $i$. Given a homogeneous syzygy $\zeta$ of $\Lambda$-degree $s$ on the symbols of $\G_i$, choose a lift of $\zeta$ to a vector $z$, and form the element $h_\zeta=\sum_{g\in\G_i} z_g g$ to be placed into $\G_{i+1}$. The component $z_g$ appears in some $\W_{a_g}$ such that $a_g\deg_\Omega(g)\leq s$, and since $\zeta$ is a syzygy, we must have $t_z:=\deg_\Omega(h_z)<s$. The expression $0=(-1)h_z + \sum_{g\in\G_i} Z_g g$ gives a syzygy on $\G_{i+1}$, and hence, on $\G_\infty$. When we take the symbol of this syzygy under the Schreyer pullback filtration, we exactly recover the original $\zeta$. This is because the only new component we introduced, $-1$ on $h_z$, has degree $1\in Q$, with $1\cdot \deg_\Omega(h_z)=t_z<s$, and thus, maps to $0$ in $\TT(\div t_z)_s=\W(\div t_z)_s/\bigcup_{s^\prime<s}\W(\div t_z)_{s^\prime}$. So, every syzygy $\zeta$ on $\sigma(\G_\infty)$ has a lift to a syzygy on $\G_\infty$, i.e., $\G_\infty$ is an $\Omega$-Gr\"obner basis.

  A syzygy on $\sigma^\Omega(\G_\infty)$ must belong to $\sigma^\Omega(\G_i)$ for some finite value of $i$, and thus, is in the $\TT$-span of the generators $\mathcal{Z}_i$. It follows that $\mathcal{Z}_\infty=\bigcup_i\mathcal{Z}_i$ generates $\Syz(\sigma^\Omega(\G_\infty))=\gr_{\Lambda\cap\Syz(\G_\infty)}(\Syz(\G_\infty))$, and any lift of $\mathcal{Z}_\infty$ to $\bigoplus_{i\in\Upsilon}\RR \varepsilon_i$ is therefore a Gr\"obner basis for $\Syz(\G_\infty)$.
\end{proof}

We end this section with some remarks about ``good'' and ``great'' filtrations.

\begin{dfn}
  Let $(\RR,\W)$ be a $Q$-filtered $K$-algebra. Let $M$ be an $\RR$-module, and let $\Omega$ be a $\W$-compatible $P$-filtration on $M$. If every $\Omega_r$ is finite-dimensional over $K$ and the module $\gr_\Omega(M)$ is finitely generated over $\gr_\W(\RR)$, then we refer to $\Omega$ as a \textit{good filtration}. If every $\Omega_r$ is finite dimensional and $\gr_\Omega(M)$ is finitely presented over $\gr_\W(\RR)$, then we refer to $\Omega$ as a \textit{great filtration}.
\end{dfn}
The following is an easy consequence of Lemma \ref{grobgen}.
\begin{lem}\label{goodfg}
    Let $(\RR,\W)$ be a filtered $K$-algebra, and let $M$ be a $\RR$-module. If $M$ has a good filtration $\Omega$, then $M$ admits a finite $\Omega$-Gr\"obner basis, and in particular, $M$ is finitely generated over $\RR$.
\end{lem}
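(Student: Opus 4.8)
The plan is to extract a finite homogeneous generating set for $\gr_\Omega(M)$, lift it using Lemma \ref{grobgen}, and observe that the lift is automatically a finite $\Omega$-Gr\"obner basis.

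First I would recall the standard fact that a finitely generated graded module over a graded ring is generated by finitely many \emph{homogeneous} elements: starting from finitely many (not necessarily homogeneous) generators, decompose each into its finitely many nonzero homogeneous components; the resulting finite set of homogeneous elements still generates. Applying this to $\gr_\Omega(M)$, which by hypothesis is finitely generated over $\TT=\gr_\W(\RR)$, produces finitely many homogeneous elements $a_1,\ldots,a_t\in\gr_\Omega(M)$ generating it over $\TT$.

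Next I would invoke Lemma \ref{grobgen} with the homogeneous generating set $(a_1,\ldots,a_t)$: it furnishes a lift $(g_1,\ldots,g_t)$, i.e.\ elements of $M$ with $\sigma^\Omega(g_i)=a_i$ for all $i$, that generates $M$ over $\RR$. Since the symbols $\sigma^\Omega(g_i)=a_i$ generate $\gr_\Omega(M)$, the finite sequence $(g_1,\ldots,g_t)$ is by definition an $\Omega$-Gr\"obner basis for $M$, and because it generates $M$ over $\RR$, the module $M$ is finitely generated. (Equivalently: once any lift of the $a_i$ is fixed it is a Gr\"obner basis by definition, and the first assertion of Lemma \ref{grobgen}, that a Gr\"obner basis is a generating set, gives the finite generation of $M$ directly.)

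There is no genuine obstacle here; the content lies entirely in Lemma \ref{grobgen}, and the only auxiliary point is the elementary passage from ``finitely generated graded module'' to ``finitely many homogeneous generators.'' Note this argument does not use the finite-dimensionality of the $\Omega_r$ over $K$—that clause in the definition of a good filtration is needed for the Hilbert-series discussion rather than for this lemma.
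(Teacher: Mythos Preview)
Your proposal is correct and follows essentially the same approach as the paper: take a finite (homogeneous) generating set for $\gr_\Omega(M)$, lift it to $M$, observe that the lift is by definition an $\Omega$-Gr\"obner basis, and invoke Lemma \ref{grobgen} to conclude it generates $M$. Your added remarks about passing to homogeneous generators and the irrelevance of the finite-dimensionality of $\Omega_r$ are accurate but not needed for the argument.
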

\begin{proof}
  If $\Omega$ is a good filtration, take a finite generating set for $\gr_\Omega(M)$, and lift it to $M$. The lifts are an $\Omega$-Gr\"obner basis, which generate $M$ by Lemma \ref{grobgen}.
  \end{proof}
Thanks to Buchberger's criterion, we can also prove the analogue of this statement for great filtrations.
\begin{lem}\label{greatfp}
    Let $(\RR,\W)$ be a filtered $K$-algebra, and let $M$ be a left $\RR$-module. If $M$ admits a great filtration $\Omega$, then every finite $\Omega$-Gr\"obner basis for $M$ has a finitely generated syzygy module, and hence, $M$ is finitely presented over $\RR$.
\end{lem}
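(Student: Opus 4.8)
The plan is to combine Buchberger's criterion (Theorem \ref{buchcrit}) with a Noetherian-free form of Schanuel's lemma. First I would observe that a great filtration is in particular good (finitely presented implies finitely generated), so Lemma \ref{goodfg} supplies a finite $\Omega$-Gr\"obner basis; let $\G=(g_1,\ldots,g_t)$ be an arbitrary finite one, with degrees $r=(\deg_\Omega(g_i))_{i=1}^t$, and let $\Lambda$ be the associated Schreyer pullback filtration on $\bigoplus_{i=1}^t\RR\varepsilon_i$. Because $\G$ is a Gr\"obner basis, Theorem \ref{buchcrit} gives $\gr_{\Lambda\cap\Syz(\G)}(\Syz(\G))=\Syz(\sigma^\Omega(\G))$ and, crucially via part (iii), the fact that the lifts of \emph{any} homogeneous generating set of $\Syz(\sigma^\Omega(\G))$ form a $\Lambda$-Gr\"obner basis for $\Syz(\G)$. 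So the whole problem reduces to showing that $\Syz(\sigma^\Omega(\G))$ is finitely generated as a graded module over $\TT=\gr_\W(\RR)$.

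To do that, I would compare two presentations of $\gr_\Omega(M)$ as a $\TT$-module. Since $\sigma^\Omega(\G)$ generates $\gr_\Omega(M)$, the map $\sigma^\Omega(\G)\colon\bigoplus_{i=1}^t\TT(\div r_i)\varepsilon_i\to\gr_\Omega(M)$ is a degree-preserving surjection from a finite graded free module (each $\TT(\div r_i)$ being free of rank one over $\TT$) with kernel $\Syz(\sigma^\Omega(\G))$. On the other hand, $\Omega$ being a great filtration means $\gr_\Omega(M)$ is finitely presented, hence also sits in a short exact sequence $0\to K'\to F'\to\gr_\Omega(M)\to 0$ with $F'$ finite graded free and $K'$ finitely generated. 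Applying the graded Schanuel lemma to these two sequences yields $\Syz(\sigma^\Omega(\G))\oplus F'\cong K'\oplus\bigoplus_{i=1}^t\TT(\div r_i)\varepsilon_i$; the right-hand side is finitely generated, so its direct summand $\Syz(\sigma^\Omega(\G))$ is finitely generated as well.

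With that in hand, I would choose a finite homogeneous generating set $\zeta_1,\ldots,\zeta_m$ for $\Syz(\sigma^\Omega(\G))$, pick lifts $z_1,\ldots,z_m\in\Syz(\G)$ with $\sigma^\Lambda(z_j)=\zeta_j$, and invoke Theorem \ref{buchcrit}(iii) to conclude that $(z_1,\ldots,z_m)$ is a $\Lambda$-Gr\"obner basis for $\Syz(\G)$; then Lemma \ref{grobgen} says a Gr\"obner basis generates, so $z_1,\ldots,z_m$ generate $\Syz(\G)$ over $\RR$. The exact sequence $0\to\Syz(\G)\to\bigoplus_{i=1}^t\RR\varepsilon_i\xrightarrow{\G}M\to 0$ then exhibits $M$ as finitely presented over $\RR$, completing both assertions of the lemma.

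The step I expect to be the main obstacle is the middle one. Because $\TT$ need not be left Noetherian, knowing that $\gr_\Omega(M)$ is finitely presented does \emph{not} automatically force the syzygy module of the particular generating set $\sigma^\Omega(\G)$ to be finitely generated; that is precisely where the Noetherian-free Schanuel argument — together with the fact that a direct summand of a finitely generated module is finitely generated — is doing the real work. I would also take care to check that Schanuel's lemma passes verbatim to the graded category over $\TT$ and that $\bigoplus_{i=1}^t\TT(\div r_i)\varepsilon_i$ is genuinely graded free; both are routine but both are load-bearing.
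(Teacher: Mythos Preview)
Your proof is correct and essentially identical to the paper's: both reduce to showing $\Syz(\sigma^\Omega(\G))$ is finitely generated and then invoke Buchberger's criterion (ii)/(iii) together with Lemma \ref{goodfg}. The only difference is cosmetic---where you spell out Schanuel's lemma explicitly, the paper simply cites the standard fact that the kernel of a surjection from a finitely presented module onto a finitely presented module is finitely generated, which is exactly the Schanuel consequence you prove.
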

\begin{proof}
  Let $\Omega$ be a great filtration on $M$, and let $\G=(g_i)_{i=1}^t$ be a finite $\Omega$-Gr\"obner basis with degrees $r=(r_i)_{i=1}^t$. Surject a finite rank free module $\RR^{t}=\bigoplus_{i=1}^t\RR\varepsilon_i$ onto $M$ by $\varepsilon_i\mapsto g_i$ for all $i$, and let $\Lambda$ be the Schreyer pullback filtration on $\RR^t$ with respect to $r$. Let $\TT=\gr_\W(\RR)$. We have a short exact sequence of graded $\TT$-modules
  \[
  0\to \Syz(\sigma^\Omega(\G))\to \textstyle\bigoplus_{i=1}^t\TT(\div r_i)\varepsilon_i \xrightarrow{\sigma^\Omega(\G)}\gr_\Omega(M)\to 0
    \]
    Since both $\gr_\Omega(M)$ and $\bigoplus_{i=0}^t\TT(\div r_i)\varepsilon_i$ are finitely presented over $\TT$, the module $\text{Syz}(\sigma^\Omega(\G))$ is finitely generated. Buchberger's criterion implies that $\Syz(\sigma^\Omega(\G))=\gr_{\Lambda\cap\Syz(\G)}(\Syz(\G))$, so we see that $\Lambda\cap\Syz(\G)$ is a good filtration, and hence, $\Syz(\G)$ is finitely generated.
\end{proof}




\section{Finitely presented graded $\A$-modules}

Let $R=\F_p[x_1,\ldots,x_n]$, let $\BB=R\{\ff\}$, and let $\A=R\{f\}(x_i^pf\,|\,1\leq i\leq n)$. Under the $R$-algebra map $\psi:\BB\twoheadrightarrow \A$ that sends $\ff\to f$, the ring $\A$ inherits a grading from $\BB$ in which $\deg(x_i)=\deg(f)=1$ for all $i$. It also inherits a monomial filtration. Let $\M_n$ denote the monoid of monomials in $\BB$, with the term order of Definition \ref{termorder}. In the terminology of Definition \ref{nicemonoid}, $\M_n$ is a nice ordered monoid. There is an obvious $\M_n$-filtration $\mT$ on $\BB$ taking $\mT_a$ to be the $\F_p$ span of all monomials $b\leq a$. The image of $\mT$ under $\psi$ gives an $\M_n$-filtration on $\A$, which we'll denote by $\mR$. The associated graded ring $\gr_\RR(\A)$ is naturally identified with $\A$ itself as an $R$-algebra, with the grading $[\A]_a=\F_p\cdot \psi(a)$ for all $a\in\M_n$. 

We can straightforwardly extend this monomial filtration to free $\A$-modules. For any well ordered set ${S}$, let $\M_n^{S}=\coprod_{i\in{S}}\M_n\varepsilon_i$ be ordered by $a\varepsilon_i < b\varepsilon_j$ if and only if either $a<b$ or $a=b$ and $i<j$. The free module $\BB^{S}=\sum_{i\in{S}}\BB\varepsilon_i$ has an obvious $\M_n^{S}$-filtration $\mT^{S}$, where $\mT^{S}_{a\varepsilon_i}$ is the $\F_p$-span of all monomials $b\varepsilon_j<a\varepsilon_i$. Mapping $\BB^{S}\twoheadrightarrow \A^{S}$ induces an $\M_n^{S}$-filtration $\mR^{S}$ on $\A^{S}$. Again, $\gr_{\mR^{S}}(\A^{S})$ can be identified with a copy of $\A^{S}$ graded by $[\A^{S}]_{a\varepsilon_i} =\F_p\cdot \psi(a\varepsilon_i)$.

In a free module $\A^S$, we do not necessarily assume that all standard basis elements have degree $1$. If we wish to emphasize the degrees of standard basis elements, we will write $\A^S=\sum_{i\in S}\A(d_i)\varepsilon_i$ for $d_i\in \Z$, in which case $\deg(1\cdot \varepsilon_i)=d_i$.

The identification $\gr_{\mR^{S}}(\A^{S})$ with $\A^{S}$ allows us to talk about the symbol $\sigma(g)$ of an element $g\in\A^{S}$ as an element of $\A^{S}$ in its own right.
\begin{dfn}
Let $\A^{S}$ be a free $\A$-module. The \textit{initial term} of an element $g\in \A^{S}$, denoted $\inn(g)$, is the image of $\sigma^{\mR^{S}}(g)$ under the identification $\gr_{\mR^{S}}(\A^{S})=\A^{S}$.
\end{dfn}
An element $g\in\A^{S}$ can be written as an $\F_p$-linear combination $\sum_{a,i} c_{a,i} a\varepsilon_i$ of finitely many monomials $a\varepsilon_i$ for $a\in\M_n$, $i\in{S}$. Those monomials $a\varepsilon_i$ appearing with $c_{a,i}\neq 0$ are the \textit{monomial support} of $g$. The initial term $\inn(g)$ is precisely the term $c_{a,i}a\varepsilon_i$ where $a\varepsilon_i$ is maximum element the monomial support under the ordering of Definition \ref{aatermorder} (extended to $\A^{S}$ in the obvious way).
\begin{dfn}
Let $\A^{S}$ be a free $\A$-module, and let $N\subseteq \A^{S}$ be a submodule. The \textit{initial submodule} of $N$, denoted $\inn(N)$, is the image of $\gr_{\mR^{S}\cap N}(N)$ under the identification $\gr_{\mR^{S}}(\A^{S})=\A^{S}$.
\end{dfn}
The initial submodule $\inn(N)$ is the $\A$-submodule of $\A^{S}$ generated by the initial terms of all elements $g\in N$. As in the previous section, a collection of elements $\G=(g_i)$ in $N$ whose initial terms generate $\inn(N)$ is a \textit{Gr\"obner basis} for $N$.

\begin{ex}\label{noto}
A single element may not necessarily constitute a Gr\"obner basis for the ideal it generates. For example, over $R=\F_2[x]$, let $g=xfxf+f+x$. The initial ideal of $\A\cdot g$ is $\A(xfxf,\,xf,\,x^3)$, with Gr\"obner basis $\G=(xfxf+f+x,\, xf+x^2,\,x^3)$, i.e., $\G=(g,xg,x^2g)$.
\end{ex}

For the purpose of understanding Hilbert functions, the following straightforward lemma will be of some assistance.

\begin{lem}\label{innkeeper}
 Let $N$ be a graded submodule of a free module $\A^S$. Every monomial belonging to $\inn(N)$ is the initial term of a homogeneous element of $N$.
\end{lem}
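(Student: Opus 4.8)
The plan is to combine three facts: that $\inn(N)$ is a \emph{monomial} submodule, so any monomial it contains is a monomial multiple of the initial monomial of some element of $N$; that left multiplication by a monomial of $\A$ is compatible with taking initial terms; and that the graded reverse lexicographic order refines the grading, so passing to a top-degree homogeneous component does not disturb the initial term.

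First I would observe that, since $\F_p$ is a field, each initial term $\inn(g)=c_{a,i}\,a\varepsilon_i$ generates the same left submodule as the bare monomial $a\varepsilon_i$; hence $\inn(N)$ is generated over $\A$ by the set of monomials $\{\,a\varepsilon_i : a\varepsilon_i \text{ is the initial monomial of some } g\in N\,\}$. Now take a monomial $\mu\in\inn(N)$ and write $\mu=\sum_k h_k\nu_k$ with $h_k\in\A$ and each $\nu_k$ such a generator. Expanding each $h_k$ over its monomial support and using that a product $b\nu_k$ of monomials in $\A$ is either $0$ or again a monomial, the coefficient of $\mu$ on the right-hand side is the sum of the $\F_p$-coefficients over those pairs $(k,b)$ with $b\nu_k=\mu$; since this coefficient equals $1\neq 0$, that index set is nonempty. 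Therefore $\mu=b\,\inn(g)$ for some $g\in N$ and some monomial $b$ with $b\,\inn(g)\neq 0$.

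Next I would prove $\inn(bg)=b\,\inn(g)=\mu$ for such $b$ and $g$. Write $g=\sum_m c_m m$ over its monomial support with $m_0$ the support-maximum, so $\inn(g)=c_{m_0}m_0$ and $bm_0\neq 0$ by hypothesis. In $bg=\sum_m c_m(bm)$ the surviving terms are those with $bm\neq 0$. Two points must be checked: (a) distinct surviving monomials $bm,bm'$ stay distinct, so no cancellation occurs among them — this holds because a monomial of $\A$ has a unique monomial preimage in $\BB$ and $\M_n$ is left cancellative, which forces left multiplication by $b$ to be injective on the set of monomials $m$ with $bm\neq 0$; and (b) for a surviving $m<m_0$ we have $bm<bm_0$, which is precisely the order-compatibility of left multiplication by a monomial recorded immediately after Definition \ref{aatermorder}. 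Together these show the support of $bg$ has maximum $bm_0$, so $\inn(bg)=c_{m_0}\,bm_0=b\,\inn(g)=\mu$, and clearly $bg\in N$.

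Finally, because the term order is graded, the initial term of any element has the top degree occurring in that element; so if $d=\deg(\mu)$, the degree-$d$ homogeneous component $(bg)_d$ is the leading form of $bg$, satisfies $\inn((bg)_d)=\inn(bg)=\mu$, and lies in $N$ since $N$ is graded. Thus $\mu$ is the initial term of the homogeneous element $(bg)_d\in N$ (the case $\mu=0$ being trivial via $0\in N$). The one step demanding genuine care is the identity $\inn(bg)=b\,\inn(g)$: one must rule out the leading term being killed or cancelled by lower terms after multiplying by $b$, and it is the uniqueness of monomial preimages in $\BB$ together with cancellativity of $\M_n$ that supplies this; the rest is formal.
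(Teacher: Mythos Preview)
Your proof is correct, but it takes a longer route than the paper. The paper's argument is essentially two lines: since $\inn(N)$ is \emph{defined} as the image of the associated graded $\gr_{\mR^S\cap N}(N)$ under the identification $\gr_{\mR^S}(\A^S)=\A^S$, and since this grading is by $\M_n^S$ with each graded piece at most one-dimensional, a monomial $\mu\in\inn(N)$ already sits in the $\mu$-graded component $[\inn(N)]_\mu=(\mR^S_\mu\cap N)/(\mR^S_{<\mu}\cap N)$, which is nonzero precisely when some $g\in N$ has $\inn(g)=\mu$. Thus $\mu=\inn(g)$ for some $g\in N$ for free, and one passes to the leading form of $g$ exactly as you do in your final step.

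You instead work from the description of $\inn(N)$ as the submodule \emph{generated} by initial terms, and so must establish the intermediate lemma $\inn(bg)=b\,\inn(g)$ whenever $b\,\inn(g)\neq 0$. Your argument for this is sound---the injectivity of left multiplication by $b$ on surviving monomials via unique lifts to $\BB$ and cancellativity of $\M_n$, together with the order compatibility recorded after Definition~\ref{aatermorder}, is exactly what is needed. This lemma is a useful fact in its own right and makes the proof self-contained for a reader who has not internalized the $\M_n^S$-grading, but it is machinery the paper avoids by appealing directly to the graded definition.
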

\begin{proof}
  Since the monomial filtration on $\A^S$ refines the $\N$-filtration by degree, the initial term of $g$ is the same as the inital term of the homogeneous leading form of $g$. Since $N$ is graded, the leading form of $g\in N$ already belongs to $N$.
\end{proof}

The following argument is completely standard in the theory of term orders. We include its proof merely to reassure the reader that none of the pathologies of our current setting interfere with the standard argument.
\begin{theorem}\label{initprop}\label{MBThilbert}
 (Macaulay's Basis Theorem) Let $N$ be a graded submodule of a free module $\A^{S}$.
  \ben
\item The images mod $N$ of the degree $d$ monomials of $\A^{S}$ not belonging to $\inn(N)$ form an $\F_p$-vector space basis for $[\A^S/N]_{d}$.
  \item\label{mbt} The Hilbert functions of $\A^{S}/N$ and $\A^{S}/\text{in}(N)$ are identical.
  \een
\end{theorem}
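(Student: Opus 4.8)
The plan is to establish part (i) by the two classical steps — that the images mod $N$ of the degree-$d$ monomials outside $\inn(N)$ \emph{span} $[\A^S/N]_d$, and that they are \emph{$\F_p$-linearly independent} there — and then to deduce part (ii) as an immediate corollary. Indeed, once (i) is known, $\dim_{\F_p}[\A^S/N]_d$ equals the number of degree-$d$ monomials of $\A^S$ not lying in $\inn(N)$; since $\inn(N)$ is by construction a monomial submodule, those very same monomials visibly descend to a basis of $[\A^S/\inn(N)]_d$, so the two Hilbert functions agree in every degree.

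For spanning I would argue by a minimal-counterexample argument, using that the term order on $\M_n^S$ is a well order (criterion (iv) preceding Definition~\ref{aatermorder}, extended to $\A^S$). Suppose some homogeneous $g\in[\A^S]_d$ is not congruent mod $N$ to an $\F_p$-combination of degree-$d$ monomials lying outside $\inn(N)$; among all such counterexamples choose one whose initial term $\inn(g)=c\,a\varepsilon_i$ has the least possible leading monomial $a\varepsilon_i$. If $a\varepsilon_i\notin\inn(N)$, then $g-c\,a\varepsilon_i$ is again homogeneous of degree $d$ (the monomial filtration refines the grading by degree, so every monomial in the support of $g$ has degree $d$), is still not congruent to such a combination — otherwise $g$ would be, since $a\varepsilon_i$ is itself an admissible monomial — and has a strictly smaller leading monomial or is zero; either way this contradicts minimality. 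If instead $a\varepsilon_i\in\inn(N)$, then by Lemma~\ref{innkeeper} there is a homogeneous $h\in N$ with $\inn(h)=a\varepsilon_i$, necessarily of degree $d$; then $g-c\,h$ has the monomial $a\varepsilon_i$ cancelled and introduces nothing $\geq a\varepsilon_i$, so it is a homogeneous degree-$d$ element congruent to $g$ mod $N$, hence again a counterexample, but with strictly smaller leading monomial — again a contradiction. Therefore no counterexample exists, and the designated monomials span.

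Linear independence is essentially automatic: if $\sum_j c_j m_j\in N$ for distinct degree-$d$ monomials $m_j\notin\inn(N)$ with the $c_j\in\F_p$ not all zero, then $\sum_j c_j m_j$ is a nonzero homogeneous element of $N$ whose initial term is $c_j m_j$ for the largest $m_j$ appearing, forcing $m_j\in\inn(N)$, a contradiction. Together with spanning this gives (i), and (ii) follows as explained above.

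I do not expect a genuine obstacle here — as the paper notes, the statement is standard — so the only real task is to confirm that the pathologies of $\A$ (non-Noetherianity, zero divisors, monomials with no common left multiple) never intrude. They do not: the reduction step is pure $\F_p$-linear algebra on monomial supports, and the two facts that actually carry the argument are that the term order is a well order and that Lemma~\ref{innkeeper} supplies, for each monomial of $\inn(N)$, a homogeneous element of $N$ with that monomial as its initial term, which is precisely what keeps the whole induction confined to the single graded degree $d$.
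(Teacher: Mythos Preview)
Your proof is correct and follows essentially the same approach as the paper: linear independence via the observation that any nontrivial relation would force its leading monomial into $\inn(N)$, and spanning via a minimal-counterexample reduction using Lemma~\ref{innkeeper} to cancel leading terms. The only cosmetic difference is that the paper minimizes over the largest $\inn(N)$-monomial appearing in the support of a counterexample (having first observed that any counterexample must contain such a monomial), which lets it bypass your Case~1; your version instead minimizes the overall leading monomial and handles that case directly, which is equally valid.
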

\begin{proof}
  (i) The images of these monomials are clearly linearly independent over $\F_p$, since a nontrivial combination $\sum_m \alpha_m m$ equal to $0$ mod $N$ would need to have its initial term in $\inn(N)$. So, the claim could only fail if the monomials in $[\A^S]_d$ not in $[\inn(N)]_d$ fail to span $[\A^S/N]_d$. Call $g\in [\A^S]_d$ a counterexample if its image mod $N$ is not in the span of those monomials. Every counterexample contains at least one monomial in $[\inn(N)]_d$, and for a fixed counterexample, we may consider the largest monomial of $[\inn(N)]_d$ appearing in its support. Let $m$ be minimal among all monomials in $[\inn(N)]_d$ that appear as the maximal $[\inn(N)]_d$ monomial in the support of a counterexample. Let $g$ be a counterexample containing $m$ as its maximal $[\inn(N)]_d$ monomial, and let $h\in N$ be a homogeneous element with $\inn(h)=m$. For suitably chosen $\gamma\in\F_p$, if the element $g-\gamma h\in [\A^S]_d$ contains any terms in $[\inn(N)]_d$ at all, they must be strictly less than $m$. So, by minimality, $g-\gamma h$ is not a counterexample, i.e., the image mod $N$ of $g-\gamma h$ is in the span of monomials not in $[\inn(N)]_d$. But $h\in N$, so this image coincides with the image of $g$, a contradiction

  (ii) Immediate from (i), since $\inn(\inn(N))=\inn(N)$.
\end{proof}

Of primary importance to us are finitely generated submodules of finite rank free modules.

\begin{lem}\label{trumonsyz}
  Let $a$ and $b$ be monomials in $\A$. There exists a finite generating set for the module of left syzygies on $(a,b)$ whose elements can be described as follows, all of which are homogeneous in the Schreyer pullback grading.
  \begin{enumerate}[(i)]
  \item\label{syz0} If $\ord(a)=\ord(b)=0$, then let $x^\ell=\text{LCM}(a,b)$, and the syzygy $(x^\ell/a,-x^\ell,b)$ generates the module $\Syz(a,b)$.
  \item Otherwise, the following syzygies form a generating set.
    \begin{enumerate}
    \item\label{syz1} If $\ord(a)>0$, say $a=x^v f m_a$, include $(x_i^{p-v_i},0)$ for all $i$.
    \item\label{syz2} If $\ord(b)>0$, say $b=x^u f m_b$, include $(0,x_i^{p-u_i})$ for all $i$.   
    \item\label{syz3} If $\ord(a)=\ord(b)>0$ and there happens to exist a monomial $m$ such that $a=x^vf m$ and $b=x^v fm$, then let $x^\ell=\text{LCM}(x^u,x^v)$ and include the syzygy $(x^{\ell-u}, -x^{\ell-v})$.
    \item\label{syz4} If $\ord(a)\neq\ord(b)$ and we happen to have either $a=cb$ or $b=ca$ for some monomial $c$ of positive $f$-order, then include the syzygy $(1,-m)$ or $(m,-1)$, respectively.
    \end{enumerate}
  \end{enumerate}
\end{lem}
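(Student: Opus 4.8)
The plan is to reduce an arbitrary syzygy to a combination of ``fundamental'' monomial syzygies, and then to show that each fundamental syzygy is an $\A$-multiple of one of the listed generators, using Lemma \ref{monomialsyz} together with the rigidity of monomial multiplication in $\A$.

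First I would carry out the reduction. Let $(\tau_1,\tau_2)\in\Syz(a,b)$ and expand $\tau_1,\tau_2$ as $\F_p$-linear combinations of monomials. Since a nonzero monomial of $\A$ has a unique monomial preimage in $\BB$ and $\M_n$ is right cancellative, the nonzero products $ma$ (over $m$ in the support of $\tau_1$) are pairwise distinct, and likewise the nonzero products $m'b$. Hence there is no internal cancellation in the identity $\tau_1a+\tau_2b=0$: the monomials $m$ with $ma=0$ and $m'$ with $m'b=0$ drop out, and the remaining terms of $\tau_1a$ match bijectively with those of $\tau_2b$ with opposite coefficients. This exhibits $(\tau_1,\tau_2)$ as an $\F_p$-linear combination of syzygies of three kinds: $(m,0)$ with $ma=0$; $(0,m)$ with $mb=0$; and $(m_1,-m_2)$ with $m_1a=m_2b\neq 0$. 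So it is enough to place each of these in the $\A$-span of the listed generators.

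Next I would dispose of the annihilating syzygies. If $\ord(a)=0$ then $ma\neq 0$ for every monomial $m$, so no syzygy of the first kind occurs; this is why case (i) lists none. If $\ord(a)>0$, write $a=x^vfm_a$ with $v\in\N_{<p}^n$; inspecting monomial multiplication in $\A$ exactly as in the proof of Theorem \ref{prefixcapture} shows that, for a monomial $m$, one has $ma=0$ precisely when the trailing $x$-block of $m$ is divisible by some $x_i^{p-v_i}$, i.e.\ precisely when $m\in\sum_i\A x_i^{p-v_i}$. Thus every $(m,0)$ with $ma=0$ is an $\A$-combination of the vectors $(x_i^{p-v_i},0)$, and symmetrically for $(0,m)$ with $mb=0$ when $\ord(b)>0$ and $b=x^ufm_b$.

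The substantial part is the overlap syzygies $(m_1,-m_2)$ with $m_1a=m_2b\neq 0$. Lifting the common value to $\BB$, uniqueness of monomial preimages makes $(\tilde m_1,-\tilde m_2)$ a syzygy on the monomial lifts $\tilde a,\tilde b$, so Lemma \ref{monomialsyz} applies. If $\ord(a)\neq\ord(b)$, that lemma forces $\tilde a=\tilde m\tilde b$ or $\tilde b=\tilde m\tilde a$ for a monomial $\tilde m$; projecting to $\A$ gives $a=cb$ or $b=ca$ with $c=\psi(\tilde m)$, and since $f$-order is additive on products and the two orders differ, $c$ has positive $f$-order (this is case (d)), while the same lemma writes $(\tilde m_1,-\tilde m_2)$, hence $(m_1,-m_2)$, as a monomial multiple of $(1,-c)$ or $(c,-1)$. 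If $\ord(a)=\ord(b)$, the lemma produces a common right factor of $\tilde a$ and $\tilde b$; comparing $f$-orders identifies this factor as the portion from the first $\ff$ onward, so $\tilde a=x^v\ff m_0$ and $\tilde b=x^u\ff m_0$ for a common $m_0$ (the shared-tail hypothesis of case (c)), or $a=x^v,\ b=x^u$ when the common order is $0$ (case (i)); and $(m_1,-m_2)$ is then a monomial multiple of the Koszul syzygy $(x^{\ell-v},-x^{\ell-u})$ with $x^\ell=\lcm(x^u,x^v)$. When $\ord(a)=\ord(b)>0$ but $\tilde a,\tilde b$ share no such tail, Lemma \ref{monomialsyz} shows there is no overlap syzygy at all, matching the fact that none is listed. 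To finish, one checks that each listed vector is a syzygy, that each family is finite (at most $2n+1$ vectors in all), and that homogeneity in the Schreyer pullback grading follows from additivity of degree. I expect this last paragraph to be the main obstacle: one has to track $f$-orders carefully through the passage to $\BB$ and back, and in particular confirm that Lemma \ref{monomialsyz} in the equal-order case really yields the shared-tail form $x^v\ff m_0,\ x^u\ff m_0$ rather than merely some common right factor, since it is that form which makes the stated Koszul syzygy meaningful and rules out any further overlap syzygies.
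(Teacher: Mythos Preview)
Your proposal is correct and follows essentially the same approach as the paper: reduce to ``monomial'' syzygies of the three types $(m,0)$, $(0,m')$, $(m_1,-m_2)$, handle the annihilating ones via the left annihilator of $x^vf$, and classify the overlap ones by lifting to $\BB$ and invoking Lemma~\ref{monomialsyz}. The only difference is in the reduction step: the paper observes that since $a$ and $b$ are their own initial terms, Theorem~\ref{buchcrit} gives $\Syz(a,b)=\gr_{\Lambda\cap\Syz(a,b)}(\Syz(a,b))$, so it suffices to treat Schreyer-homogeneous syzygies, which automatically have at most one monomial in each component; you instead argue directly that there is no internal cancellation in $\tau_1a+\tau_2b=0$ by right cancellativity of $\M_n$, reaching the same decomposition without appealing to Buchberger's criterion. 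Your worry about the equal-order case is unfounded: the proof of Lemma~\ref{monomialsyz}(ii) explicitly produces the shared tail in the form $a=x^{v-r}\ff m_0$, $b=x^{v-s}\ff m_0$, which is exactly what you need.
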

\begin{proof}
  Since $a$ and $b$ are their own initial terms, Theorem \ref{buchcrit} gives an identification $\Syz(a,b)=\gr_{\Lambda\cap \Syz(a,b)}(\Syz(a,b))$ where $\Lambda$ is the Schreyer $\M_n$-filtration. In particular, $\Syz(a,b)$ is homogeneous in the Schreyer grading on $\A(\div a)\varepsilon_a\oplus\A(\div b)\varepsilon_b$, and we need only worry about having enough elements to generate all Schreyer-homogeneous syzygies on $(a,b)$. For $s\in \M_n$, let us ask which homogeneous elements of degree $s$ in $\A(\div a)\varepsilon_a\oplus\A(\div b)$ can be syzygies. An element of the form $c\varepsilon_a$ with $ca=s$ in $\BB$ is a syzygy if and only if $ca=0$ in $\A$, in which case $a=x^v f m_a$ for some $m_a$, and $c$ is a multiple of some $x_i^{p-v_i}$. The $x_i^{p-v_i}\varepsilon_a$ syzygies are covered by (\ref{syz1}). A similar situation holds for elements of the form $d\varepsilon_b$ and the syzygies in (\ref{syz2}).

  The potential syzygies that remain have the form $\alpha c\varepsilon_a+\beta d\varepsilon_b$ for $\alpha,\beta\in\F_p$, and $c,d$ monomials such that $ca=db=s$ when regarded as elements of $\BB$, where $s\neq 0$ in $\A$. In this case, the situation $ca-db=0$ is preserved when we lift each monomial from $\A$ to a monomial in $\BB$, and here we may use Lemma \ref{monomialsyz}, which classifies $(c,-d)$ as belonging to one of the classes (\ref{syz0}), (\ref{syz3}), or (\ref{syz4}) above.
 \end{proof}  
\begin{lem}\label{fgmonsyz}
  Let $a_1,\ldots,a_t$ be a sequence of monomials in $\A$. The module of syzygies $\Syz(a_1,\ldots,a_t)$ is generated by the following elements of $\bigoplus_{i=1}^t\A\varepsilon_i$, which are homogeneous with respect to the Schreyer pullback grading.
  \begin{enumerate}[(i)]
  \item\label{syzlong0} $x_i^{p-v_{ji}}\varepsilon_j$ for all $1\leq j\leq t$ such that $a_j=x^{v_j}fm_j$ for some $m_j$, and all $1\leq i\leq n$
  \item $x^{u_i}\varepsilon_i-x^{u_j}\varepsilon_j$ whenever either $a_i=x^{v_i}$ and $a_j=x^{v_j}$, or $a_i=x^{v_i}fm$ and $a_j=x^{v_j}fm$ for some $m$. In either case, $x^{u_i}=\lcm(x^{v_i},x^{v_j})/x^{v_i}$ and $x^{u_j}=\lcm(x^{v_i},x^{v_j})/x^{v_j}$.
  \item $c\varepsilon_i-\varepsilon_j$ with $\ordf(c)>0$ if we happen to have some $i,j$ with $ca_i=a_j$ and $\ordf(a_i)<\ordf(a_j)$. Otherwise, no syzygies containing a term of positive $f$-order are necessary.
  \end{enumerate}
\end{lem}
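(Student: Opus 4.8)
The plan is to follow the template of the proof of Lemma~\ref{trumonsyz}, replacing the two-element analysis there by an induction on the size of the support of a homogeneous syzygy. Assume each $a_i\neq 0$. Since the $a_i$ are monomials, each is its own initial term, so $(a_1,\ldots,a_t)$ is a Gr\"obner basis for the monomial submodule it generates, and Theorem~\ref{buchcrit} gives $\Syz(a_1,\ldots,a_t)=\gr_{\Lambda\cap\Syz(a_1,\ldots,a_t)}(\Syz(a_1,\ldots,a_t))$ exactly as in Lemma~\ref{trumonsyz}; in particular this module is homogeneous for the Schreyer $\M_n$-grading, and a homogeneous generating set for it lifts (Lemma~\ref{grobgen}) to a generating set for $\Syz(a_1,\ldots,a_t)$. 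One checks directly that each of the listed elements is a genuine syzygy on $(a_1,\ldots,a_t)$ which moreover coincides with its own Schreyer symbol --- for a type (ii) element $x^{u_i}\varepsilon_i-x^{u_j}\varepsilon_j$ this is because $x^{u_i}a_i=x^{u_j}a_j$, and for a type (iii) element $c\varepsilon_i-\varepsilon_j$ because $ca_i=a_j$. Hence it suffices to show that every Schreyer-homogeneous syzygy lies in the $\A$-submodule $L$ generated by the listed elements. Since $[\A(\div a_i)]_s$ is at most one dimensional, a homogeneous syzygy of Schreyer degree $s\in\M_n$ has the form $z=\sum_{i\in T}\alpha_i c_i\varepsilon_i$ with $\alpha_i\in\F_p$, where $T$ is the set of indices admitting a monomial $c_i\in\M_n$ (unique, by right cancellativity) with $c_ia_i=s$ in $\BB$ and $\psi(c_i)\neq 0$.

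Split on whether the monomial $s$ maps to $0$ in $\A$. If $\psi(s)=0$, then $c_ia_i=0$ in $\A$ for each $i\in T$, so every summand $\alpha_ic_i\varepsilon_i$ is itself a syzygy; by the discussion following Lemma~\ref{monomialsyz}, $c_ia_i=0$ forces $\ordf(a_i)\geq 1$, say $a_i=x^{v_i}fm_i$ with $v_i\in\N_{<p}^n$, and $c_i$ is a left multiple of one of the monomials $x_1^{p-(v_i)_1},\ldots,x_n^{p-(v_i)_n}$, hence of one of the type (i) generators attached to the index $i$. Thus $c_i\varepsilon_i\in L$ and $z\in L$.

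If $\psi(s)\neq 0$, then $\psi(c_i)\psi(a_i)=\psi(s)\neq 0$ for all $i\in T$, so applying $\psi$ to $\sum_{i\in T}\alpha_ic_ia_i=0$ and cancelling the common nonzero value forces $\sum_{i\in T}\alpha_i=0$ in $\F_p$. Now induct on $|T|$; the case $|T|\leq 1$ forces $z=0$. If $|T|\geq 2$, pick distinct $i,j\in T$. Since $c_ia_i=c_ja_j=s$ is a nonzero common left multiple of $a_i$ and $a_j$, Lemma~\ref{monomialsyz} applies to the pair $(a_i,a_j)$: if $\ordf(a_i)\neq\ordf(a_j)$ then whichever of $a_i,a_j$ has smaller $f$-order is a right factor of the other, say (after relabelling) $a_j=ca_i$ with $\ordf(c)>0$, and $(c_i,-c_j)$ is an $\M_n$-multiple of $(c,-1)$, a type (iii) generator; if $\ordf(a_i)=\ordf(a_j)$ then $a_i=x^{v_i}m$ and $a_j=x^{v_j}m$ share a tail $m$, and $(c_i,-c_j)$ is an $\M_n$-multiple of $(x^{\ell-v_i},-x^{\ell-v_j})$ with $x^\ell=\lcm(x^{v_i},x^{v_j})$, a type (ii) generator. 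In either case there is a listed generator $w$ and a monomial $\mu$ with $\mu w=c_i\varepsilon_i-c_j\varepsilon_j$ (after absorbing an $\F_p^\times$ scalar if necessary). Then $z-\alpha_i\mu w$ is a Schreyer-homogeneous syzygy of degree $s$ whose $\varepsilon_i$-component has vanished, so it is supported on $T\setminus\{i\}$; by the inductive hypothesis it lies in $L$, and therefore so does $z$. This completes the induction and the proof.

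I do not expect a genuine obstacle here: the structural work is already done in Lemma~\ref{monomialsyz} (a single monomial is annihilated from the left only by left multiples of the $x_k^{p-v_k}$; two monomials with a common left multiple have a completely explicit syzygy module) and in the case $t=2$ of Lemma~\ref{trumonsyz}. The one point needing care is the bookkeeping of the inductive step --- checking that Lemma~\ref{monomialsyz} always returns a relation in exactly one of the three listed families and in the orientation used in the statement (for family (iii) this amounts to choosing which of $i,j$ plays the role of the lower-$f$-order index), and that passing from $z$ to $z-\alpha_i\mu w$ truly decreases $|T|$ while preserving Schreyer homogeneity. This is the standard Schreyer ``peel one syzygy at a time'' argument, and the only real novelty over Lemma~\ref{trumonsyz} is carrying it out for $t$ monomials rather than two.
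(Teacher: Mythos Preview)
Your proof is correct and follows essentially the same route as the paper's: reduce to Schreyer-homogeneous syzygies, peel off the annihilator-type syzygies (your $\psi(s)=0$ case, the paper's ``once the syzygies (i) have been accounted for''), observe $\sum_i\alpha_i=0$ on what remains, and then decompose into pairwise syzygies handled by the two-monomial classification. The only cosmetic differences are that you write out the decomposition as an explicit induction on the support and cite Lemma~\ref{monomialsyz} directly, whereas the paper says ``decomposes into syzygies of the form $c_i\varepsilon_i-c_j\varepsilon_j$'' and defers to Lemma~\ref{trumonsyz}.
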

\begin{proof}
  As before, $\Syz(a_1,\ldots,a_t)$ is generated by Schreyer homogeneous elements. One the syzygies (\ref{syzlong0}) have been accounted for, the Schreyer homogeneous syzygies of degree $s$ that remain have the form $\sum_i \alpha_i c_i a_i=0$ where $c_ia_i=s$ for all $i$ and $s\neq 0$ in $\A$. But then $(\sum_i\alpha_i)s=0$. Since $(\alpha_1,\ldots,\alpha_t)$ sums to $0$, our syzygy $\sum_i \alpha_i c_i \varepsilon_i$ decomposes into syzygies of the form $c_i\varepsilon_i-c_j\varepsilon_j=0$. The result is now immediate from Lemma \ref{trumonsyz}.
\end{proof}

\def\rb#1{\text{rb}(#1)}
\begin{dfn}
  Let $R=\F_p[x_1,\ldots,x_n]$ be regarded as a graded subalgebra of $\A=R\{f\}/(x_i^pf\,|\,1\leq i\leq n)$. The \textit{robustness} of an element $g\in\A$, denoted $\rb{g}$, is the smallest $d\in \N$ such that $ag\in R$ for every monomial $a$ in $x_1,\ldots,x_n$ of degree $\geq d$. If $g=(g_i)_{i\in S}$ is a vector in a free module $\A^S$, define $\rb{g}$ to be $\sup_{i\in S}\{\rb{g_i}\}$.
\end{dfn}

For the reader's convenience, we list some basic properties of robustness below.

\begin{lem}
  Let $R=\F_p[x_1,\ldots,x_n]$ and let $\A=R\{f\}/(x_i^pf\,|\,1\leq i\leq n)$. Let $g$ and $h$ be elements of the ring $\A$.
  \begin{enumerate}[(i)]
  \item $\rb{g}=0$ if and only if $g\in R$.
  \item $\rb{g}\leq p(n-1)+1$.
  \item The robustness of $g$ is the maximum robustness of any monomial in the support of $g$.
  \item $\rb{g+h}\leq \max\{\rb{g},\rb{h}\}$.
  \item For any $v\in \N_{<p}^n$, $\rb{x^vfg}=n(p-1)+1-|v|$.  
  \item If $c$ is a monomial with $\ordf(c)>0$, then $\rb{cg}=\rb{c}$.
  \item If $c$ is a monomial with $\ordf(c)=0$, then $\rb{cg}=\max\{0,\rb{g}-|c|\}$.
    \end{enumerate}
\end{lem}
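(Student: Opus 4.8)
\emph{Proof plan.} The plan is to reduce the whole statement to one computation---the robustness of a single monomial---together with an elementary additivity principle over monomial supports; parts (v)--(vii) then fall out by tracking how left multiplication by a monomial transforms a monomial's initial $x$-block and its $f$-order. I expect the reverse inequality in (vii) to be the only genuinely delicate point.

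\emph{The robustness of a monomial.} First I would record that a nonzero monomial of $\A$ has the form $m=x^{w_0}fx^{w_1}f\cdots fx^{w_e}$ with $e=\ordf(m)$, where the relation $x_i^pf=0$ forces $w_0,\dots,w_{e-1}\in\N_{<p}^n$ (and $w_e$ arbitrary). If $e=0$ then $m=x^{w_0}\in R$ and $x^\alpha m=x^{\alpha+w_0}\in R$ for every exponent vector $\alpha$, so $\rb{m}=0$. If $e\geq 1$ then $x^\alpha m=x^{\alpha+w_0}fx^{w_1}f\cdots fx^{w_e}$ vanishes in $\A$ exactly when some $\alpha_i+(w_0)_i\geq p$ (by the relation $x_i^pf=0$), and is otherwise a nonzero monomial of $f$-order $\geq 1$, hence not in $R$. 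Thus the exponent vectors $\alpha$ with $x^\alpha m\notin R$ are exactly those with $\alpha_i\leq p-1-(w_0)_i$ for all $i$, the largest of which has degree $\sum_i\bigl(p-1-(w_0)_i\bigr)=n(p-1)-|w_0|$. Hence
\[
\rb{m}=\begin{cases}0 & \text{if }\ordf(m)=0,\\ n(p-1)+1-|w_0| & \text{if }\ordf(m)\geq 1.\end{cases}
\]

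\emph{Additivity and parts (i)--(iv).} Next I would write $g$ as an $\F_p$-combination of the distinct monomials in its support and observe that, for a fixed exponent vector $\alpha$, each product $x^\alpha m$ is either $0$ or a nonzero monomial and that distinct support monomials are carried to distinct nonzero monomials or to $0$: if $x^\alpha m=x^\alpha m'\neq 0$, lift to the cancellative monoid $\M_n$ through $\psi$ and cancel $x^\alpha$. Consequently $x^\alpha g\in R$ if and only if $x^\alpha m\in R$ for every $m$ in the support of $g$, so $\rb{g}=\max\{\rb{m}:m\text{ in the support of }g\}$, which is (iii). Then (i) is immediate ($\rb{g}=0$ iff every support monomial has $f$-order $0$ iff $g\in R$), (iv) follows because the support of $g+h$ lies in the union of the supports of $g$ and $h$, and (ii) follows from $0\leq|w_0|\leq n(p-1)$, which forces $\rb{m}\leq n(p-1)+1$ for every monomial.

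\emph{Parts (v)--(vii) and the main obstacle.} Finally I would use that left multiplication by a monomial $c$ prepends $c$: when $cm\neq 0$ one has $\ordf(cm)=\ordf(c)+\ordf(m)$, and the initial $x$-block of $cm$ equals that of $c$ when $\ordf(c)\geq 1$ and equals $u+w_0$ when $c=x^u$ and $m$ has initial block $w_0$. For (v), $c=x^vf$ with $v\in\N_{<p}^n$ never annihilates a support monomial of $g$ (the blocks $v,w_0,\dots,w_{e-1}$ all lie in $\N_{<p}^n$) and every image $x^vfm$ has $f$-order $\geq 1$ and initial block $v$, so by the monomial formula each has robustness $n(p-1)+1-|v|$, and hence so does $x^vfg$. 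For (vi), any nonzero $cm$ has initial block that of $c$ and $f$-order $\geq 1$, so $\rb{cm}=\rb{c}$, whence $\rb{cg}=\rb{c}$. For (vii) with $c=x^u$, any nonzero $cm$ has robustness $0$ if $\ordf(m)=0$ and equals $n(p-1)+1-|u+w_0|=\rb{m}-|u|$ otherwise, so in all cases $\rb{cm}\leq\max\{0,\rb{g}-|c|\}$, which yields the inequality $\rb{cg}\leq\max\{0,\rb{g}-|c|\}$. The main obstacle is the reverse inequality in (vii): to obtain $\rb{cg}\geq\rb{g}-|c|$ when $\rb{g}\geq|c|$ one must exhibit a support monomial of $g$ of maximal robustness that survives left multiplication by $c=x^u$, i.e.\ one whose initial block $w_0$ satisfies $u_i+(w_0)_i\leq p-1$ for all $i$. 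This is immediate when $g$ is a single monomial, and in general is the place where the relations $x_i^pf=0$ and the identification of maximal-robustness monomials with minimal-$|w_0|$ monomials must be handled with care.
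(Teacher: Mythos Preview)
Your monomial formula together with the no-cancellation observation (left cancellativity in $\M_n$) is precisely how one would unpack the paper's ``straightforward from the definition,'' and it correctly yields (i)--(v), the case $cg\neq 0$ of (vi), and the inequality $\rb{cg}\le\max\{0,\rb{g}-|c|\}$ in (vii). (Parts (v) and (vi) tacitly need $cg\neq 0$; if $cg=0$ then $\rb{cg}=0$ while the asserted value is positive.)

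Your instinct that the reverse inequality in (vii) is the only delicate point is exactly right---in fact the equality is \emph{false} as stated. In $\F_3[x,y]$ take $g=x^2f+yf$ and $c=y^2$: then $\rb{yf}=4$ and $\rb{x^2f}=3$, so $\rb{g}=4$, but $cg=x^2y^2f$ (the maximal-robustness monomial $yf$ is killed by $c$ while a lower-robustness one survives) and $\rb{cg}=1\neq 2=\rb{g}-|c|$. Even a single monomial fails: in $\F_2[x,y]$, $g=xf$ and $c=x$ give $cg=0$ but $\rb{g}-|c|=1$. The point is that only the inequality direction is ever used downstream---in the proof of Theorem~\ref{INfg} each case concludes with a bound of the form $\rb{h_z}+\deg(h_z)\le\rb{g_j}+\deg(g_j)$, for which your upper bound on $\rb{cg}$ is exactly what is needed. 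So there is no missing idea in your argument; the obstruction you flagged is a misstatement in the lemma rather than a gap in your proof. (As an aside, (ii) should read $n(p-1)+1$, consistent with (v), not $p(n-1)+1$.)
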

\begin{proof}
Straightforward from the definition.
\end{proof}

\begin{theorem}\label{INfg}
  If $N$ is a finitely generated graded left submodule of a finite rank free module $\A^t$, then $\inn(N)$ is finitely generated.
\end{theorem}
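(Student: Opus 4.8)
The plan is to run Buchberger's algorithm (Corollary \ref{buchalg}) on a finite homogeneous generating set of $N$ and to show that, although the algorithm produces an infinite Gr\"obner basis $\G_\infty$, only finitely many of the resulting initial terms are needed to generate $\inn(N)$. Two structural features of $\A$ make this work: the Noetherianity of $R$ controls the part of $\inn(N)$ of $f$-order $0$, while the uniform bound on robustness controls the part of $f$-order $\geq 1$ once the degree is large.

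First I would isolate the $f$-order-$0$ part. By Definition \ref{termorder}, among monomials of a fixed degree the ones of $f$-order $0$ are precisely the smallest; hence if a homogeneous element $g\in N$ has $\inn(g)\in R^t$, then every monomial in its support has $f$-order $0$, so $g\in N_0:=N\cap R^t$. Combined with Lemma \ref{innkeeper}, this gives $\inn(N)\cap R^t=\inn(N_0)$, the initial submodule of $N_0$ inside $R^t$ for the induced grevlex order. Since $R$ is Noetherian, $N_0$ is finitely generated, and by standard Gr\"obner basis theory over $R$ (Theorem \ref{MBThilbert}) the monomial submodule $\inn(N_0)$ has a finite monomial generating set $P_0$. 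Because $\inn(N)$ is a left submodule containing $P_0$, we have $\A P_0\subseteq\inn(N)$ and $\A P_0\cap R^t=\inn(N_0)$.

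Next comes the robustness estimate. Every monomial of $\A$ has robustness at most $D:=n(p-1)+1$, so for any $g\in\A^t$ and any monomial $x^w$ with $|w|\geq D$ one has $x^w g = x^w g_{(0)}\in R^t$, where $g_{(0)}$ is the $f$-order-$0$ component of $g$; indeed $x^w$ annihilates every monomial of $f$-order $\geq 1$ as soon as $|w|\geq D$. In particular $\mf{m}^D N\subseteq N_0$. I would then establish the following module-level analogue of the prefix-capture Theorem \ref{prefixcapture}: there is a constant $k$, depending only on $N$, such that every monomial $m\varepsilon_i\in\inn(N)$ with $\ordf(m)\geq 1$ and $\deg(m)\geq k$ is a proper $\A$-multiple of some (necessarily lower-degree) monomial of $\inn(N)$. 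Granting this, $\inn(N)$ is generated by $P_0$ together with the finitely many monomials of $\inn(N)$ of $f$-order $\geq 1$ and degree below $k$: any monomial of $\inn(N)$ of $f$-order $0$ lies in $\inn(N_0)=\A P_0\cap R^t$, and any monomial of $f$-order $\geq 1$ is either one of the finitely many of degree $<k$ or, by the capture statement, a multiple of a smaller-degree monomial of $\inn(N)$, so we may descend on degree.

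The main obstacle is precisely this capture statement. Unlike Theorem \ref{prefixcapture}, we may not assume that $\inn(N)$ is finitely generated, so the bound $k$ must be produced directly from a finite generating set of $N$ together with the syzygy bookkeeping of Buchberger's algorithm: when a new element $h_\zeta=\sum z_g g$ is formed by lifting a monomial syzygy $\zeta$, one must control the $f$-order and the robustness of $\inn(h_\zeta)$. Using the explicit classification of monomial syzygies (Lemma \ref{fgmonsyz}, with the annihilator syzygies being item (i) there and the $f$-order-raising syzygies item (iii)), one shows that the annihilator relations drive the robustness of the relevant elements down and the degree up, forcing newly produced initial terms of high degree to be multiples of initial terms already present, so that the set of initial monomials needed from $\G_\infty$ is bounded in degree. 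Making this bookkeeping precise — and in particular ruling out that the $f$-order-raising syzygies can manufacture genuinely new high-degree initial terms — is the technical heart of the argument.
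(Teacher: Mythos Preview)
Your outline is essentially the paper's approach: run Buchberger's algorithm on a finite homogeneous generating set, use the explicit monomial syzygy classification of Lemma~\ref{fgmonsyz} to control new elements, handle the $f$-order~$0$ part by Noetherianity of $R$, and bound the positive-$f$-order initial terms via robustness. Your treatment of the $f$-order~$0$ part through $N_0=N\cap R^t$ and $\inn(N)\cap R^t=\inn(N_0)$ is in fact slightly cleaner than the paper's appeal to Dickson's lemma on the outputs of $\G_\infty$, but equivalent.

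The gap is that you never name the invariant that makes the ``bookkeeping'' close. Saying that annihilator syzygies ``drive robustness down and degree up'' is true but not enough; the point is that they move these quantities by the \emph{same} amount, and the other two syzygy types do no worse. The precise statement the paper proves, by case analysis on the three syzygy types, is: for every new element $h_z$ produced at stage $k$ with $\rb{h_z}>0$, there is some $g\in\G_k$ with
\[
\rb{h_z}+\deg(h_z)\ \le\ \rb{g}+\deg(g).
\]
By induction this gives a uniform bound $C=\max\{\rb{g}+\deg(g)\,:\,g\in\G_0,\ \rb{g}>0\}$ on $\rb{g}+\deg(g)$ for every $g\in\G_\infty$ with $\rb{g}>0$. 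Since only finitely many monomials of $\A^t$ satisfy $\rb{m}>0$ and $\rb{m}+\deg(m)\le C$, the positive-$f$-order initial terms appearing in $\inn(\G_\infty)$ are finite in number, and you are done. Your ``capture statement'' (high-degree positive-$f$-order monomials of $\inn(N)$ are proper multiples) is a consequence of this bound, but framing it as the target is a detour: Buchberger gives you direct control over $\G_\infty$, not over arbitrary monomials of $\inn(N)$, so you would end up proving the $\rb+\deg$ invariant anyway in order to establish it.
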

\begin{proof} 
  Take a finite sequence of homogeneous generators $\G_0=(g_1,\ldots,g_r)$ for $N$, and begin carrying out Buchberger's algorithm, producing progressively larger sequences $\G_0\subseteq \G_1\subseteq \G_2\subseteq\G_3\subseteq\cdots$. 

  Given $\G_k$, consider the set of points $\cS_k=\{(\rb{g},\deg(g))\,|\,g\in\G_k\}\subseteq\N^2$. As we construct $\G_{k+1}$, we will be interested in how the points of $\cS_{k+1}$ compare to those in $\cS_k$. So, let us take a finite homogeneous generating set $\mathcal{Z}_k$ for $\Syz(\inn(\G_k))$ as in Lemma \ref{fgmonsyz}, and form the elements $h_z=\sum_{g\in\G_k} z_g g$ for each generating syzygy $z=(z_g)_{g\in\G_k}\in\mathcal{Z}_k$. There are three basic types of syzygies on $\inn(\G_{k})$ to consider.
  \begin{enumerate}[(i)]
  \item $z=x_i^e\varepsilon_j$. Clearly $\deg(h_z)\leq \deg(g_j)+e$, and either $\rb{h_z}=0$ or $\rb{h_z}=\rb{g_j}-e$. 
  \item $z=x^u\varepsilon_i-x^w\varepsilon_j$. Here, either $\inn(g_i)=x^{v_i}$ and $\inn(g_j)=x^{v_j}$ or $\inn(g_i)=x^{v_i}fm$ and $\inn(g_j)=x^{v_j}fm$ for some monomial $m$. In either case, let $x^\ell=\lcm(x^{v_i},x^{v_j})$, and we have $u=\ell-v_i$ and $w=\ell-v_j$. Depending on which case we're in, the initial term of $h_z=x^ug_i-x^wg_j$ is strictly less than $x^\ell$, resp. $x^\ell fm$, and thus $\deg(h_z)\leq \ell$, resp. $\deg(h_z)\leq \ell+1+\deg(m)$. Either way, $\deg(h_z)\leq \deg(g_i)+|u|=\deg(g_j)+|w|$. Regarding robustness, either $x^ug_i\in R$ or $\rb{x^ug_i}=\rb{g_i}-|u|$. Likewise, either $x^wg_i\in R$ or $\rb{x^wg_j}=\rb{g_j}-|w|$. Without loss of generality, say $\rb{x^wg_j}\geq \rb{x^ug_i}$. If $\rb{x^wg_j}=0$, then both $x^wg_j$ and $x^ug_i$ are in $R$, giving $\rb{h_z}=0$. Otherwise, if $\rb{x^wg_j}>0$, then $\rb{x^wg_j}=\rb{g_j}-|w|$, and we get $\rb{h_z}\leq \max\{\rb{x^ug_i},\rb{x^wg_j}\}=\rb{g_j}-|w|$. In total, 
    \[
    \rb{h_z}\leq \rb{g_j}-|w|\text{ and }\deg(h_z)\leq \deg(g_j)+|w|.
    \]
    If we had assumed $\rb{x^ug_i}>\rb{x^wg_j}$, we would have instead obtained either $\rb{h_z}=0$ or
    \[
    \rb{h_z}\leq \rb{g_i}-|u|\text{ and }\deg(h_z)\leq \deg(g_i)+|u|.
    \]
  \item $z=c\varepsilon_i-\varepsilon_j$ with $\ordf(c)\geq 1$. For any monomial $m$ appearing with nonzero coefficient in $g_i$, either $cm=0$ or $\rb{cm}=\rb{c}=\rb{\inn(g_j)}$ (since $\inn(g_j)=c\inn(g_i)$ and $\ord(c)>0$). We therefore have $\rb{h_z}\leq \rb{g_j}$. Furthermore, it is clear that $\inn(h_z)<\inn(g_j)$, and in particular, $\deg(h_z)\leq \deg(g_j)$.
\end{enumerate}

In each case, we find that for every point $(\rb{h_z},\deg(h_z))\in\cS_{k+1}\setminus\cS_k$ with $\rb{h_z}\neq 0$, there exists some $(r,d)\in\cS_k$ such that $\rb{h_z}+\deg(h_z)\leq r+d$. Taking $C=\max\{\rb{g}+\deg(g)\,|\,g\in\G_0\text{ with }\rb{g}\neq 0\}$, some finite constant, we have by induction on $k$ that for all $g\in\G_\infty$, either $\rb{g}=0$ or $\rb{g}+\deg(g)\leq C$. Only finitely many monomials $m\in\A^t$ satisfy the bounds $\rb{m}>0$ and $\rb{m}+\deg(m)\leq C$. The sequence $\inn(\G_\infty)$, which, by Corollary \ref{buchalg}, generates $\inn(N)$, therefore involves only finitely of monomials of positive $f$-order, along with some possibly infinite set of monomials of $f$-order $0$. Dickson's lemma for $\F_p[x_1,\ldots,x_n]$ then allows us to generate all the $f$-order $0$ monomials belonging to $\inn(\G_\infty)$ from some finite subset.
\end{proof}

We observe a few immediate consequences of Theorem \ref{INfg}, in light of Buchberger's criterion.

\begin{cor}\label{grfgfp}
  If $N$ is a finitely generated graded left submodule of a finite rank free module $\A^t$, then $N$ is finitely presented.
\end{cor}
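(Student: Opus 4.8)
The plan is to recognize $N$ itself as an $\A$-module carrying a \emph{great} filtration and then invoke Lemma \ref{greatfp}. Concretely, restrict the monomial filtration $\mR^t$ on $\A^t$ to $N$ by setting $(\mR^t\cap N)_{a\varepsilon_i} := \mR^t_{a\varepsilon_i}\cap N$. Since this is literally the subspace filtration, the inclusion $N\hookrightarrow\A^t$ is strict, so by Lemma \ref{strictmaps} the associated graded $\gr_{\mR^t\cap N}(N)$ is identified with a graded $\A$-submodule of $\gr_{\mR^t}(\A^t)=\A^t$, and this submodule is exactly the initial submodule $\inn(N)$ by its very definition.

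With this setup the proof is a matter of checking the two defining conditions of a great filtration. First, each $\mR^t_{a\varepsilon_i}$ is the $\F_p$-span of the finitely many monomials $b\varepsilon_j\leq a\varepsilon_i$ (property (iv) of the term order of Definition \ref{termorder} passes to free modules), so $(\mR^t\cap N)_{a\varepsilon_i}$ is finite-dimensional over $\F_p$. Second, $\gr_{\mR^t\cap N}(N)=\inn(N)$ is a finitely presented $\A$-module: it is finitely generated by Theorem \ref{INfg}, and after choosing finitely many monomial generators $a_1,\ldots,a_s$ for it, Lemma \ref{fgmonsyz} writes down an explicit finite generating set for $\Syz(a_1,\ldots,a_s)$, which is precisely what finite presentation requires. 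Hence $\mR^t\cap N$ is a great filtration on $N$, and Lemma \ref{greatfp} gives at once that $N$ is finitely presented over $\A$.

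I do not expect a genuine obstacle here: all of the real difficulty is already contained in Theorem \ref{INfg}, and the corollary only packages that result together with the monomial syzygy classification of Lemma \ref{fgmonsyz} and the formal machinery of great filtrations. The only place warranting a word of care is the strictness of $N\hookrightarrow\A^t$, which is needed to identify $\gr_{\mR^t\cap N}(N)$ with $\inn(N)$; this holds on the nose because the filtration on $N$ is defined as $\mR^t_{a\varepsilon_i}\cap N$. (Alternatively, one could bypass Lemma \ref{greatfp} and argue directly: Theorem \ref{INfg} together with Lemma \ref{innkeeper} produces a finite Gr\"obner basis $\G$ for $N$; Buchberger's criterion identifies $\gr_{\Lambda\cap\Syz(\G)}(\Syz(\G))$ with the finitely generated module $\Syz(\inn(\G))$ of Lemma \ref{fgmonsyz}; so $\Syz(\G)$ carries a good filtration and is finitely generated by Lemma \ref{goodfg}, whence $\A^s\twoheadrightarrow N$ has finitely generated kernel.)
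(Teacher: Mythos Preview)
Your proposal is correct. The parenthetical alternative you give is exactly the paper's proof: use Theorem \ref{INfg} to get a finite Gr\"obner basis $\G$ for $N$, observe via Lemma \ref{fgmonsyz} that $\Syz(\inn(\G))$ is finitely generated, and then invoke Buchberger's criterion (the paper uses part (iii), you use part (ii) together with Lemma \ref{goodfg}, which amounts to the same thing) to conclude that $\Syz(\G)$ is finitely generated.

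Your primary route via Lemma \ref{greatfp} is not genuinely different: the proof of Lemma \ref{greatfp} is precisely the Buchberger-criterion argument above, so you are repackaging the same mechanism behind an abstraction rather than bringing in a new idea. That said, the repackaging is clean and emphasizes that the key point is the finite presentation of $\inn(N)$ (Theorem \ref{INfg} plus Lemma \ref{fgmonsyz}); once that is in hand, either Lemma \ref{greatfp} or the direct argument finishes. Your check that the induced filtration on $N$ is strict and that its associated graded is $\inn(N)$ is correct and handled appropriately.
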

\begin{proof}
  By Theorem \ref{INfg}, the monomial submodule $\inn(N)$ of $\A^t$ is finitely generated. Lemma \ref{fgmonsyz} shows that any finite sequence $(m_1,\ldots,m_t)$ of monomial generators for $\inn(N)$ has a finitely generated syzygy module. Let $\mathcal{G}=(g_1,\ldots,g_t)$ be a sequence of elements of $N$ with $\inn(g_i)=m_i$ for all $i$. By Theorem \ref{buchcrit}(iii), any lifts (with respect to the Schreyer filtration) of a finite homogeneous generating set for $\Syz(m_1,\ldots,m_t)$ will constitute finite a Gr\"obner basis for $\Syz(\G)$.
\end{proof}
\begin{cor}\label{subfpisfp}
  Let $M$ be a finitely presented graded $\A$-module, and let $N$ be a graded left submodule of $M$. If $N$ is finitely generated over $\A$, then it is finitely presented.
\end{cor}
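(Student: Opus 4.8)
The plan is to realize $N$ as a quotient of a finitely generated graded submodule of a \emph{finite rank free} module, so that Corollary \ref{grfgfp} applies to that submodule, and then to propagate finite presentation along a short exact sequence by a purely formal argument that works over any ring. First I would fix a surjection $\pi\colon \A^t\twoheadrightarrow M$ of graded $\A$-modules with $\A^t$ free of finite rank (possible because $M$ is finitely generated). Since $M$ is finitely presented, $K:=\Ker(\pi)$ is a finitely generated graded submodule of $\A^t$: this is the usual consequence of the graded Schanuel lemma, obtained by comparing $\pi$ to a chosen finite presentation of $M$ and using that a direct summand of a finitely generated module is finitely generated. This is the one spot where finite presentation of $M$, rather than mere finite generation, is genuinely used.

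Next I would set $L:=\pi^{-1}(N)\subseteq \A^t$. Because $N$ is graded and $\pi$ has degree $0$, the submodule $L$ is graded, and since $K\subseteq L$, restricting $\pi$ gives an exact sequence $0\to K\to L\to N\to 0$. Lifting a finite set of homogeneous generators of $N$ to $L$ and adjoining a finite set of homogeneous generators of $K$ exhibits $L$ as a finitely generated graded submodule of the finite rank free module $\A^t$. Corollary \ref{grfgfp} (which rests on Theorem \ref{INfg} and Buchberger's criterion, Theorem \ref{buchcrit}) then gives that $L$ is finitely presented, so there is an exact sequence $\A^b\xrightarrow{\psi}\A^a\xrightarrow{\phi}L\to 0$ with both free modules of finite rank.

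Finally I would compose to obtain the surjection $\phi'\colon \A^a\xrightarrow{\phi}L\twoheadrightarrow L/K=N$. Its kernel is $\phi^{-1}(K)$, and I would check that it sits in a short exact sequence $0\to \psi(\A^b)\to \phi^{-1}(K)\xrightarrow{\phi}K\to 0$: indeed $\psi(\A^b)=\Ker(\phi)\subseteq \phi^{-1}(K)$ by exactness, and $\phi$ maps $\phi^{-1}(K)$ onto $K\cap\phi(\A^a)=K$ since $K\subseteq L=\phi(\A^a)$. Both outer terms — $\psi(\A^b)$, a quotient of $\A^b$, and $K$ — are finitely generated, hence $\phi^{-1}(K)$ is finitely generated. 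Thus $N\cong \A^a/\phi^{-1}(K)$ is finitely presented.

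The main obstacle has in effect already been cleared: everything substantive is concentrated in the appeal to Corollary \ref{grfgfp}, i.e.\ ultimately in the non-Noetherian Gr\"obner theory of Section 2 and Theorem \ref{INfg}. The remainder is the standard bookkeeping with finitely presented modules and involves no special features of $\A$; the only things to be careful about are that $\Ker(\pi)$ is finitely generated (handled above via Schanuel) and that all the generating sets and syzygies can be chosen homogeneous, which is automatic since all modules and maps in sight are graded.
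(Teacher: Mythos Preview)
Your argument is correct and is precisely the ``straightforward'' reduction to Corollary~\ref{grfgfp} that the paper has in mind: pull $N$ back along a graded surjection $\A^t\twoheadrightarrow M$ to a finitely generated graded submodule $L\subseteq\A^t$, apply Corollary~\ref{grfgfp} to get $L$ finitely presented, and deduce that $N=L/K$ is finitely presented since $K$ is finitely generated. The detour through Schanuel's lemma is unnecessary if you simply take $\pi$ to be the surjection arising from a fixed finite presentation of $M$ (so $K$ is finitely generated by hypothesis), but this is a cosmetic difference.
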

\begin{proof}
  Straightforward from Corollary \ref{grfgfp}.
\end{proof}

With Theorems \ref{INfg}, \ref{MBThilbert}, and \ref{monomialhilbertseries}, we obtain a complete description of the possible Hilbert series of a finitely presented graded $\A$-module. However, as in the monomial ideal case, we can understand where the Hilbert series comes from somewhat more directly using a certain structure map. This approach will still require us to use the fact that if $\A^t/N$ is a presentation of a graded $\A$-module $M$, with $N$ finitely generated, then $\inn(N)$ is also finitely generated.

\begin{dfn}\label{grstruct}
  Let $N$ be a graded left $\A$-module. Define a map
  \[
  [N]_i \xleftarrow{\Theta_{N,i}} \textstyle\bigoplus_{v\in \N_{<p}^n} [N]_{i-|v|-1}
  \]
  as the sum over all $v\in\N_{<p}^n$ of the component maps
  \[
    [N]_i \xleftarrow{x^{v}f\cdot} [N]_{i-|v|-1}.
    \]
    The \textit{structure morphism} of $N$, as a homogeneous map of graded vector spaces, is $\Theta_N=\bigoplus_i \Theta_{N,i}$,
    \[
    N\xleftarrow{\Theta_N}\textstyle\bigoplus_{v\in\N_{<p}^n} N(-|v|-1).
    \]
\end{dfn}

A graded $\A$-module $N$ can naturally be regarded as a graded $R$-module by restriction of scalars along the homogeneous inclusion $R\hookrightarrow \A$. However, to work with the map $\Theta_N$, we will want to place a nonstandard $R$-module structure on $\bigoplus_{v\in \N_{<p}^n} N(-|v|-1)$.
\begin{dfn}\label{mfaction}
  Let $N$ be a graded $\A$-module. As a graded vector $\F_p$-vector space, we define $\mathcal{F}_\A(N)$ to be the direct sum of Serre twists $\bigoplus_{v\in \N_{<p}^n} N(-|v|-1)\varepsilon_v$, where $\varepsilon_v$ denotes a formal basis vector. We make $\mathcal{F}_\A(N)$ into an $R$-module via the action
  \[
  x_i\cdot (g\varepsilon_v) = \begin{cases}
    g\varepsilon_{v+e_i} & \text{ if $v_i<p-1$}\\
    0 & \text{ if $v_i=p-1$}\\
    \end{cases}
  \]
  where $e_i$ denotes the $i$th standard basis vector of $\N^n$. If $\varphi:M\to N$ is a homogeneous $\A$-linear map, we define $\mF_\A(\varphi):\mF_\A(M)\to\mF_\A(N)$ by
  \begin{center}
  \begin{tikzcd}
        \textstyle\bigoplus_{v\in\N_{<p}^n} M(-|v|-1)\ar[rr,"\bigoplus_{v\in\N_{<p}^n} \varphi"] && \textstyle\bigoplus_{v\in\N_{<p}^n} N(-|v|-1)
  \end{tikzcd}
  \end{center}
  which is readily seen to be $R$-linear. We write $\mF_\A(-)$ for the resulting functor from graded $\A$-modules to graded $R$-modules.
\end{dfn}
For any $\A$-linear map $\varphi:M\to N$, the map $\mF_\A(\varphi)$ is, at the level of graded vector spaces, just a direct sum of Serre twists of $\varphi$. It is therefore obvious that $\mF_\A(-)$ is exact.

\begin{lem}\label{thetafacts}
  Let $N$ be a graded left $\A$-module, regarded as a graded $R$-module via restriction of scalars along $R\hookrightarrow\A$.
  \begin{enumerate}[(i)]
      \item The structure map $\Theta_N:\mathcal{F}_\A(N)\to N$ is $R$-linear.
  \item If $M$ is another graded left $\A$-module, and $\varphi:M\to N$ is a homogeneous $\A$-linear map, then the following diagram of graded $R$-modules commutes.
    \begin{center}
      \begin{tikzcd}
  \mF_\A(M)\ar[r,"\mF_\A(\varphi)"]\ar[d,"\Theta_M"'] & \mF_\A(N)\ar[d,"\Theta_N"]\\
        M\ar[r,"\varphi"] & N
        \end{tikzcd}
    \end{center}
    That is to say, $\Theta_{(-)}$ defines a natural transformation from $\mF_\A(-)$ to the forgetful functor, as functors from graded $\A$-modules to graded $R$-modules.
  \end{enumerate}
\end{lem}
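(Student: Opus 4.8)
The plan is to verify both statements by a direct computation on generators, reducing everything to the single defining relation $x_i^p f = 0$ of $\A$; neither part presents a genuine obstacle, and the only point requiring care is checking that the nonstandard $R$-action on $\mF_\A(N)$ is exactly the one forced by that relation.

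For (i): since $\Theta_N$ is by construction $\F_p$-linear (it is a sum of the left-multiplication maps $x^v f\cdot$), it suffices to check compatibility with multiplication by each variable $x_i$ on a homogeneous generator $g\varepsilon_v$ of $\mF_\A(N)$, where $\Theta_N(g\varepsilon_v)=x^v f g$. If $v_i<p-1$, then $x_i\cdot(g\varepsilon_v)=g\varepsilon_{v+e_i}$, and because the $x$'s commute in $\A$,
\[
\Theta_N\bigl(x_i\cdot(g\varepsilon_v)\bigr)=x^{v+e_i}fg=x_i\,(x^v f g)=x_i\cdot\Theta_N(g\varepsilon_v).
\]
If $v_i=p-1$, then $x_i\cdot(g\varepsilon_v)=0$, so the left-hand side is $0$; on the right-hand side, $x^{v+e_i}f=x_i^p\bigl(\smprod_{j\neq i}x_j^{v_j}\bigr)f=\bigl(\smprod_{j\neq i}x_j^{v_j}\bigr)x_i^p f=0$ in $\A$, so $x_i\cdot\Theta_N(g\varepsilon_v)=x^{v+e_i}fg=0$ as well. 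This matching is the whole content of part (i).

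For (ii): by Definition \ref{mfaction}, $\mF_\A(\varphi)$ is, on underlying graded vector spaces, the direct sum over $v$ of the Serre-twisted map $\varphi$, so $\mF_\A(\varphi)(g\varepsilon_v)=\varphi(g)\varepsilon_v$. Chasing a generator $g\varepsilon_v$ of $\mF_\A(M)$ both ways around the square, the composite through $\mF_\A(N)$ yields $\Theta_N(\varphi(g)\varepsilon_v)=x^v f\,\varphi(g)$, while the composite through $M$ yields $\varphi(\Theta_M(g\varepsilon_v))=\varphi(x^v f g)$, and these agree because $\varphi$ is $\A$-linear. All four maps in the square are additive and every element of $\mF_\A(M)$ is a finite sum of such generators, so commutativity follows on generators and hence everywhere. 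I expect the entire lemma to be routine bookkeeping, recording that $\Theta_{(-)}$ is compatible with $\A$-module maps; the reason to isolate it is that it will be needed later when relating the structure morphism of an $\FF$-module to that of an associated graded module.
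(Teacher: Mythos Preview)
Your proof is correct and follows essentially the same approach as the paper: both verify $R$-linearity by checking the action of each $x_i$ on generators $g\varepsilon_v$, splitting into the cases $v_i<p-1$ and $v_i=p-1$, and both reduce naturality to the $\A$-linearity of $\varphi$ applied to $x^v f g$. Your write-up is slightly more explicit in spelling out why $x^{v+e_i}f=0$ via the relation $x_i^p f=0$, but the argument is the same.
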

\begin{proof} 
  (i) $\Theta_N:\mF_\A(N)\to N$ is the map that sends $g\varepsilon_v$ to $x^vfg$ for all $v\in \N_{<p}^n$. For any $v\in\N_{<p}^n$, we have $x_i\cdot\Theta_N\left(g \varepsilon_v\right)=x^{e_i+v}fg$. If $v_i=p-1$, this vanishes, in agreement with $\Theta_N(x_i\cdot g\varepsilon_v)$. If $v_i<p-1$, then $x^{e_i+v}fg=\Theta_N(g\varepsilon_{v+e_i})$. Regarding claim (ii), it is enough to show that for each $v\in\N^n_{<p}$, the diagram
    \begin{center}
      \begin{tikzcd}
        M(-|v|-1)\ar[r,"\varphi"]\ar[d,"x^vf\cdot"'] & N(-|v|-1)\ar[d,"x^vf\cdot"']\\
        M\ar[r,"\varphi"] & N
        \end{tikzcd}
      \end{center}
commutes, but this is obvious from the $\A$-linearity of $\varphi$.
\end{proof}
\begin{dfn}
  Let $N$ be a graded $\A$-module. We define the \textit{structural kernel (resp. cokernel)} of $N$ to be the graded $R$-module $\Ker(\Theta_N)$ (resp. $\Coker(\Theta_N)$), which, for the sake of brevity, we will denote by $K_N$ (resp. $C_N$).
\end{dfn}
By Lemma \ref{thetafacts}, the assignments $N\mapsto C_N$ and $N\mapsto K_N$ are functorial.

\begin{ex}
  Considering $\A$ itself as a graded left $\A$-module, left multiplication by $x^vf$ for any $v\in\N_{<p}^n$ is injective, so $K_\A=0$. The image of $\Theta_\A$ is precisely the span of those monomials with positive $f$-order. The cokernel $C_\A$ is naturally identified with $R=\F_p[x_1,\ldots,x_n]$, giving a exact sequence of graded $R$-modules:
  \[
  0\leftarrow R \leftarrow \A\xleftarrow{\Theta_\A}\mF_\A(\A)\leftarrow 0
  \]
\end{ex}

For any $\A$-module $N$, the action of $x_i$ on $\mF_\A(N)$ is defined in such a way that $\mF_\A(N)$ is automatically an $\mf{m}$-torsion module, where $\mf{m}$ denotes the homogeneous maximal ideal $(x_1,\ldots,x_n)$ of $R$. In particular, an $R$-submodule of $\mF_\A(N)$ is finitely generated if and only if it has finite length.

\begin{theorem}\label{thetaaction}
  Let $N$ be a left $\A$-module.
  \begin{enumerate}[(i)]
  \item If $N$ is finitely generated over $\A$, then the cokernel of $\Theta_N$ is finitely generated over $R$.
  \item If $N$ is finitely presented over $\A$, then the kernel of $\Theta_N$ has finite length over $R$
    \end{enumerate}
\end{theorem}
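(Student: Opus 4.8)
The plan is to run both parts through the same machine: push a free presentation of $N$ through the exact functor $\mF_\A(-)$ and the natural transformation $\Theta_{(-)}$ of Lemma \ref{thetafacts}, then compare against the baseline values $K_\A=0$ and $C_\A=R$ recorded in the preceding example.

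For (i), I would start by choosing a graded surjection $\pi\colon\A^t\twoheadrightarrow N$, which exists since $N$ is finitely generated and graded. Applying $\mF_\A$, which is exact, produces a surjection $\mF_\A(\pi)\colon\mF_\A(\A^t)\twoheadrightarrow\mF_\A(N)$, and the commuting square of Lemma \ref{thetafacts}(ii) shows $\pi\big(\mathrm{Im}(\Theta_{\A^t})\big)\subseteq\mathrm{Im}(\Theta_N)$. Hence $\pi$ descends to a graded surjection $C_{\A^t}\twoheadrightarrow C_N$ of $R$-modules. Since $\mF_\A$ and $\Theta$ are defined componentwise, $C_{\A^t}$ is canonically a finite direct sum of Serre twists of $C_\A=R$; in particular it is finitely generated over $R$, and therefore so is its quotient $C_N$. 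This disposes of (i).

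For (ii), I would fix a finite graded presentation $\A^s\to\A^t\xrightarrow{\pi}N\to 0$ and set $L=\ker\pi$, the image of $\A^s\to\A^t$, which is a finitely generated graded $\A$-submodule of $\A^t$. Applying the exact functor $\mF_\A$ to $0\to L\to\A^t\to N\to 0$ keeps it short exact, and naturality of $\Theta$ turns this into a morphism of short exact sequences; the snake lemma then yields an exact sequence of graded $R$-modules
\[
0\to K_L\to K_{\A^t}\to K_N\to C_L\to C_{\A^t}\to C_N\to 0.
\]
Because left multiplication by each $x^vf$ is injective on $\A$, we have $K_{\A^t}=(K_\A)^t=0$, so this collapses to an isomorphism $K_N\cong\ker\big(C_L\to C_{\A^t}\big)$. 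Now part (i), applied to $L$, gives that $C_L$ is finitely generated over $R$; since $R$ is Noetherian, its submodule $K_N$ is finitely generated over $R$ as well. Finally $K_N$ is an $R$-submodule of $\mF_\A(N)$, which is $\mf{m}$-torsion by construction, and a finitely generated $\mf{m}$-torsion $R$-module has finite length — so $K_N$ has finite length.

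The one genuinely load-bearing step is the appeal to part (i) for the syzygy module $L$: this is exactly where the hypothesis must be finite \emph{presentation} of $N$ and not merely finite generation, since $C_L$ need not be finitely generated over $R$ when $L$ is not finitely generated over $\A$. Everything else is a formal diagram chase against the baseline $(K_\A, C_\A) = (0, R)$, together with the two standing facts that $\mF_\A$ is exact and that $\mF_\A(N)$ is $\mf{m}$-torsion; the only bookkeeping to watch is that all presentations and kernels involved stay graded, which is routine given Corollaries \ref{grfgfp} and \ref{subfpisfp}.
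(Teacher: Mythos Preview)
Your proof is correct, and part (ii) takes a genuinely different route from the paper's. Your argument for (i) is a functorial repackaging of what the paper does by hand (the images of a generating set span $C_N$ over $R$), so there the two are essentially the same.

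For (ii), the approaches diverge. The paper works concretely: it fixes a graded presentation $N=\bigoplus_i\A(-d_i)/H$, invokes Theorem~\ref{INfg} to know that $\inn(H)$ is finitely generated, and then applies Theorem~\ref{prefixcapture} to the monomial module $\A^t/\inn(H)$ to conclude that each $\Theta_{N,i}$ is injective for $i\gg 0$. In other words, the paper's proof of (ii) rests on the Gr\"obner machinery developed in Sections~2--3. Your snake-lemma argument sidesteps all of that: from $0\to L\to\A^t\to N\to 0$ and $K_{\A^t}=0$ you get $K_N\hookrightarrow C_L$, and then part (i) for $L$ plus Noetherianity of $R$ plus $\mf m$-torsion finishes. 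This is cleaner, and it shows that Theorem~\ref{thetaaction} does not actually depend on Theorems~\ref{prefixcapture} or \ref{INfg}. What the paper's approach buys is an explicit degree bound after which $\Theta_{N,i}$ becomes injective, tied to the generators of $\inn(H)$; your argument gives finite length abstractly without any such bound.

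One minor note: your closing reference to Corollaries~\ref{grfgfp} and \ref{subfpisfp} is not needed. You only use that $L$ is finitely \emph{generated} (to feed into part (i)), and that follows from Schanuel's lemma or directly from the chosen presentation; you never need $L$ to be finitely presented. Since those corollaries themselves depend on Theorem~\ref{INfg}, dropping the reference makes your argument genuinely independent of the Gr\"obner input.
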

\begin{proof}
  (i) Say $N$ is generated over $\A$ by $g_1,\ldots,g_t$. The module $C_N$ is spanned over $\F_p$ by the images mod $\text{Im}(\Theta_N)$ of all elements of the form $cg_1,\ldots,cg_t$ with $c$ a monomial in $\A$. However, if $c$ has positive $f$-order, then $cg_i$ belongs to the image of $\Theta_N$, so the only elements $cg_i$ with a nonzero image in $C_N$ are those in the $R$-span of $g_1,\ldots,g_t$. 
  
 (ii) Take a presentation $N=\bigoplus_{i=0}^t\A(-d_i)\varepsilon_i/H$ where $H$ is finitely generated and graded. Every monomial of $\bigoplus_{i=0}^t\A(-d_i)\varepsilon_i$ that does not belong to $\inn(H)$ must reduce modulo $H$ to a nonzero element of $N$. By Theorem \ref{prefixcapture} (note we are using the fact that $\inn(H)$ is finitely generated, by Theorem \ref{INfg}), it holds for $i\gg 0$ that if $m$ is a degree $i$ monomial not belonging to $\inn(H)$, then for any $v\in\N_{<p}^n$, the degree $i+1+|v|$ monomial $x^{v}fm$ still does not belong to $\inn(H)$, and thus, $x^{v}fm$ reduces mod $H$ to a nonzero element of $N$. That is, for $i\gg 0$, the maps
  \[
      [N]_i \xleftarrow{x^{v}f\cdot} [N]_{i-|v|-1}
  \]
  are all injective, and so too is their sum $\Theta_{N,i}$. As $[K_N]_i=\Ker(\Theta_{N,i})=0$ for $i\gg 0$, the finite length of $K_N$ is evident.
\end{proof}
There is a converse to Theorem \ref{thetaaction}, in the following sense.
\begin{theorem}\label{thetafg}
  Let $N$ be a graded left $\A$-module whose components $[N]_i$ are finite-dimensional $\F_p$-vector spaces for all $i$, with $[N]_i=0$ for $i\ll 0$.
  \begin{enumerate}[(i)]
  \item If the cokernel of $\Theta_N$ is finitely generated over $R$, then $N$ is finitely generated over $\A$.
  \item If both the cokernel and kernel of $\Theta_N$ are finitely generated over $R$, then $N$ is finitely presented over $\A$.
  \end{enumerate}
\end{theorem}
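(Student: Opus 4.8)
The plan is to prove (i) by a bounded‑below degree induction, and then to deduce (ii) from (i) by feeding a presentation of $N$ through the exact functor $\mF_\A$ and the natural transformation $\Theta$ and reading off the snake lemma. For (i), fix a finite homogeneous generating set for $C_N$ over $R$ and let $k$ be the largest of their degrees. Since $R=\F_p[x_1,\dots,x_n]$ is standard graded, for every $i>k$ we get $[C_N]_i\subseteq\mf{m}[C_N]_{i-1}$ with $\mf{m}=(x_1,\dots,x_n)$, simply because each generator sits in degree $\le k<i$ so a variable can be factored off. Lifting this relation back to $N$: a homogeneous $y\in[N]_i$ with $i>k$ can be written $y=\sum_l x_l z_l+w$ with $z_l\in[N]_{i-1}$ and $w\in[\mathrm{Im}(\Theta_N)]_i=\sum_{v\in\N_{<p}^n}x^vf\,[N]_{i-|v|-1}$, so $[N]_i\subseteq \A\cdot[N]_{<i}$. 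Because the grading on $N$ is bounded below and each $[N]_j$ is finite dimensional, induction on $i$ gives $N=\A\cdot\bigl(\bigoplus_{j\le k}[N]_j\bigr)$, and the $\A$-span of this finite-dimensional space is all of $N$; hence $N$ is finitely generated over $\A$.

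For (ii), use (i) to pick a finite homogeneous generating set $g_1,\dots,g_t$ of $N$ with $\deg g_i=d_i$, giving a degree-$0$ surjection $\pi\colon L:=\bigoplus_{i=1}^t\A(-d_i)\varepsilon_i\twoheadrightarrow N$ with graded kernel $H$. Apply $\mF_\A(-)$ (which is exact) and the structure maps $\Theta$ to $0\to H\to L\to N\to 0$; by Lemma \ref{thetafacts} this is a commuting diagram of $R$-modules with exact rows, so the snake lemma yields
\[
0\to K_H\to K_L\to K_N\to C_H\to C_L\to C_N\to 0.
\]
From the example computing $K_\A=0$ and $C_\A=R$ (left multiplication by $x^vf$ is injective on $\A$ and the cokernel is $R$), twisting and summing over the $d_i$ gives $K_L=0$ and $C_L\cong\bigoplus_i R(-d_i)$, a finitely generated free $R$-module. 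Hence $K_H=0$ and the four-term exact sequence $0\to K_N\to C_H\to C_L\to C_N\to 0$ exhibits $C_H$ as an extension of $\Ker(C_L\to C_N)$ by $K_N$. The submodule $\Ker(C_L\to C_N)\subseteq C_L$ is finitely generated because $R$ is Noetherian, and $K_N$ is finitely generated by hypothesis, so $C_H$ is finitely generated over $R$.

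Finally, $H$ is a graded submodule of $L$, so its graded pieces are finite dimensional and vanish in low degree; since $C_H$ is finitely generated, part (i) applied to $H$ shows $H$ is finitely generated over $\A$. As $L$ has finite rank and $H$ is finitely generated, $N$ is finitely presented. The main obstacle is recognizing this bootstrapping move: "finitely presented" requires the relation module $H$ to be finitely generated, and the only tool available for that is part (i), which demands $C_H$ finitely generated over $R$ — so the whole argument hinges on the snake lemma delivering exactly that, which in turn requires the (easy but essential) observation that $K_L=0$ so the six-term sequence collapses to a workable four-term one, together with Noetherianity of $R$. Everything else is routine.
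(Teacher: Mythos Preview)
Your proof is correct and follows essentially the same route as the paper. For (i) the paper lifts a homogeneous generating set of $C_N$ to elements $g_1,\dots,g_t\in N$ and shows by degree induction that these generate $N$, whereas you take the slightly larger (but still finite-dimensional) set $\bigoplus_{j\le k}[N]_j$; both arguments rest on the same observation that in degree above the generators of $C_N$ every element of $N$ decomposes as something in $\mathrm{Im}(\Theta_N)$ plus an $R$-combination coming from lower degree. For (ii) the argument is identical to the paper's: present $N$, apply $\mF_\A$ and the snake lemma to read off that $C_H$ is finitely generated, then invoke (i) on $H$. Your extra remark that $K_L=0$ forces $K_H=0$ is correct but not needed for the conclusion; the paper simply records the four-term tail $K_N\to C_H\to\bigoplus_i R(-d_i)\to C_N\to 0$ and proceeds from there.
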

\begin{proof}
 (i) Let $g_1,\ldots,g_t\in N$ be homogeneous lifts to $N$ of a minimal homogeneous $R$-module generating set for $C_N$. We claim that these elements generate $N$ over $\A$, and will prove this by induction on degree. Let $d\in \Z$ be the degree such that $[N]_d\neq 0$ and $[N]_i=0$ for all $i<d$. Then clearly $[\mF_\A(N)]_d=0$, giving an identification of $[N]_d$ with $[C_N]_d$, in which the $g_i$'s are sufficient to generate. Fix $e\geq d$, and suppose we've shown that all elements of $N$ with degree $<e$ are generated over $\A$ by $g_1,\ldots,g_t$. Let $h$ be some homogeneous element of degree $e$. Consider the exact sequence
  \[
      0\leftarrow [C_N]_e \leftarrow [N]_e\xleftarrow{\Theta_{N,e}} [\mF_\A(N)]_e\leftarrow [K_N]_e\leftarrow 0
      \]
       We can write $h=r_1g_1+\cdots+r_tg_t+\sum_{v\in\N_{<p}^n} x^vf h_v$ where $r_1,\ldots,r_t\in R$ are homogeneous elements such that $\deg(r_ig_i)=e$, and each $h_v$ is a homogeneous element of $N$ with degree $e-|v|-1$. By induction, we can write $h_v=a_{v,1}g_1+\cdots+a_{v,t}g_t$ for various $a_{v,i}\in \A$ of appropriate degrees, giving
      \[
      h=\textstyle\sum_{i=1}^t\left(r_i+\textstyle\sum_{v\in\N_{<p}^n} x^vfa_{v,i}\right) g_i
      \]
      so that $h$ is in the $\A$-span of $g_1,\ldots,g_t$.

      (ii) By part (i), $N$ is finitely generated over $\A$. This gives us a homogeneous short exact sequence of $\A$-modules
      \[
0\to H\to \textstyle\bigoplus_{i=1}^t \A(-d_i) \to N \to 0
\]
By Lemma \ref{thetafacts}, we have the following commutative diagram of homogeneous maps between graded $R$-modules, with exact rows:
  \begin{center}{
    \begin{tikzcd}
      0\ar[r] & \mF_\A(H)\ar[r]\ar[d,"\Theta_G"]& \bigoplus_{i=1}^t\mF_\A(\A)(-d_i)\ar[r]\ar[d,"\bigoplus_{i=1}^t\Theta_\A(-d_i)"]& \mF_\A(N)\ar[r]\ar[d,"\Theta_N"]& 0\\
      0\ar[r] & G\ar[r]& \bigoplus_{i=1}^t\A(-d_i)\ar[r]& N\ar[r]& 0
      \end{tikzcd}}
  \end{center}
and an accompanying exact sequence:
  \[
K_N\to C_H\to \textstyle\bigoplus_{i=1}^t R(-d_i)\to C_N\to 0
  \]
  The finite generation over $R$ of $K_N$ implies the finite generation of $C_H$ over $R$, which by the result of part (i), implies the finite generation of $H$ itself over $\A$. It follows at once that $N$ is finitely presented over $\A$.
\end{proof}
\begin{theorem}\label{amodhilbert}
  Let $N$ be a finitely presented graded $\A$-module. The Hilbert series of $N$ is a rational function of the form
  \[
  \text{HS}_N(t) = \frac{a(t)}{(1-t)^dg_{p,n}(t)}  
  \]
  where $g_{p,n}(t)$ is the polynomial of Definition \ref{gpndef} and $d$ is the Krull dimension of $C_N$ (in particular, $0\leq d\leq n$). If $d>0$, then $a(1)>0$.
\end{theorem}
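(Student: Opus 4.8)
The plan is to read off everything from the four-term exact sequence of graded $R$-modules attached to the structure morphism. Since $N$ is finitely presented, Theorem~\ref{thetaaction} gives that $C_N=\Coker(\Theta_N)$ is a finitely generated $R$-module and $K_N=\Ker(\Theta_N)$ has finite length over $R$; finite presentation also forces each $[N]_i$ to be finite-dimensional over $\F_p$ and $[N]_i=0$ for $i\ll 0$ (as $N$ is a quotient of a finite sum of shifts of $\A$), so that all Hilbert series below are honest rational functions.

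First I would write the exact sequence of graded $\F_p$-vector spaces
\[
0\leftarrow C_N\leftarrow N\xleftarrow{\Theta_N}\mF_\A(N)\leftarrow K_N\leftarrow 0,
\]
which holds degreewise by the very definitions of $K_N$ and $C_N$ (Definitions~\ref{grstruct} and~\ref{mfaction} and Lemma~\ref{thetafacts}). As a graded vector space $\mF_\A(N)=\bigoplus_{v\in\N_{<p}^n}N(-|v|-1)$, so $\HS_{\mF_\A(N)}(t)=\bigl(\sum_{v\in\N_{<p}^n}t^{|v|+1}\bigr)\HS_N(t)$. Additivity of Hilbert series along the exact sequence then yields
\[
\HS_{C_N}(t)-\HS_N(t)+\Bigl(\textstyle\sum_{v\in\N_{<p}^n}t^{|v|+1}\Bigr)\HS_N(t)-\HS_{K_N}(t)=0,
\]
and since $g_{p,n}(t)=1-\sum_{v\in\N_{<p}^n}t^{|v|+1}$ this rearranges to the key identity $g_{p,n}(t)\,\HS_N(t)=\HS_{C_N}(t)-\HS_{K_N}(t)$.

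It then remains to pin down the right-hand side. Because $K_N$ has finite length, $\HS_{K_N}(t)=b(t)$ is a polynomial with $b(1)=\mathrm{length}_R(K_N)\geq 0$. Because $C_N$ is a finitely generated graded module over the $n$-variable polynomial ring $R$, the Hilbert--Serre theorem gives $\HS_{C_N}(t)=c(t)/(1-t)^d$ with $c(t)\in\Z[t]$ and $d=\dim C_N$ satisfying $0\leq d\leq n$; moreover $c(1)>0$ when $d>0$, while if $d=0$ then $\HS_{C_N}(t)=c(t)$ is already a polynomial. Substituting,
\[
\HS_N(t)=\frac{\HS_{C_N}(t)-\HS_{K_N}(t)}{g_{p,n}(t)}=\frac{c(t)-b(t)(1-t)^d}{(1-t)^d g_{p,n}(t)},
\]
so setting $a(t)=c(t)-b(t)(1-t)^d$ gives the claimed form, and when $d>0$ we read off $a(1)=c(1)>0$.

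There is no real obstacle here: the statement is a bookkeeping consequence of Theorem~\ref{thetaaction} together with the identification of $\mF_\A(N)$ as a direct sum of shifts of $N$. The only step warranting a sentence of care is the appeal to Hilbert--Serre for $C_N$ --- in particular the positivity $c(1)>0$ in positive dimension and the bound $d\leq n$ --- but these are classical facts about finitely generated graded modules over a polynomial ring and use nothing special about $\A$. When $d=0$ no sign control on $a(1)$ is available, consistent with Example~\ref{negmult}.
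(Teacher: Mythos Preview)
Your argument is correct and essentially identical to the paper's: both extract the identity $g_{p,n}(t)\,\HS_N(t)=\HS_{C_N}(t)-\HS_{K_N}(t)$ from the degreewise four-term exact sequence attached to $\Theta_N$, then invoke Theorem~\ref{thetaaction} and classical Hilbert--Serre for $C_N$ and $K_N$. The only difference is cosmetic---you are slightly more explicit about why the graded pieces of $N$ are finite-dimensional and vanish in low degree.
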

\begin{proof}
  For each $i$, we have an exact sequence
      \[
    0\leftarrow [C_N]_i \leftarrow [N]_i\xleftarrow{\Theta_{N,i}} \textstyle\bigoplus_{v\in\N_{<p}^n} [N]_{i-|v|-1}\leftarrow [K_N]_i\leftarrow 0
    \]
    and thus, 
    \[
    \HS_N(t) -\textstyle\sum_{v\in\N_{<p}^n} t^{|v|+1}\HS_N(t) = \HS_{C_N}(t)-\HS_{K_N}(t).
    \]
    Since $K_N$ has finite length, $\HS_{K_N}(t)$ is a polynomial $b(t)$ with nonnegative coefficients, while $\HS_{C_N}(t)$ takes the form $c(t)/(1-t)^d$ where $d=\dim(C_N)$ and $c(1)>0$. Let $g_{p,n}(t)=1-\sum_{v\in\N_{<p}^n}t^{|v|+1}$, and we get
    \[
    \HS_N(t)=\frac{c(t)-(1-t)^d b(t)}{(1-t)^dg_{p,n}(t)}
    \]
    Let $a(t)=c(t)-(1-t)^db(t)$. If $d>0$, then $a(1)=c(1)>0$. Otherwise $a(1)=c(1)-b(1)$.
\end{proof}
It is not hard to see that the Krull dimension of $C_N$ can be recovered purely from the Hilbert series of $N$.
\begin{dfn}
  Let $h\in\Q(t)$ be a rational function. Let $\delta_1(h)$ denote the smallest nonnegative integer $d$ such that $(1-t)^dh$ has no pole at $t=1$.
\end{dfn}
\begin{cor}\label{dimfromhs}
  If $N$ is a finitely presented graded $\A$-module, then $\delta_1(\HS_N)=\dim(C_N)$.
\end{cor}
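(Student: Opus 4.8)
The plan is to reduce everything to Theorem \ref{amodhilbert} together with the explicit shape of the polynomial $g_{p,n}(t)$. Theorem \ref{amodhilbert} already hands us $\HS_N(t) = a(t)/\big((1-t)^dg_{p,n}(t)\big)$ with $d=\dim(C_N)$, and, crucially, the positivity statement $a(1)>0$ whenever $d>0$. So the corollary becomes the purely elementary assertion that $\delta_1$ of a rational function of this specific form equals the exponent $d$.

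First I would record that $g_{p,n}(1)\neq 0$: substituting $t=1$ into $g_{p,n}(t)=1-t(1+t+\cdots+t^{p-1})^n$ gives $1-p^n$, which is nonzero since $p\geq 2$ and $n\geq 1$. Consequently $(1-t)^d\HS_N(t)=a(t)/g_{p,n}(t)$ is a rational function whose denominator does not vanish at $t=1$, hence has no pole there; this already gives $\delta_1(\HS_N)\leq d$.

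For the reverse inequality, the case $d=0$ is trivial, so suppose $d\geq 1$. Then $(1-t)^{d-1}\HS_N(t)=a(t)/\big((1-t)g_{p,n}(t)\big)$, whose numerator satisfies $a(1)>0\neq 0$ by Theorem \ref{amodhilbert} (this is exactly where the positivity clause is used), while the denominator $(1-t)g_{p,n}(t)$ has a simple zero at $t=1$ because $g_{p,n}(1)\neq 0$. Thus $(1-t)^{d-1}\HS_N(t)$ genuinely has a pole at $t=1$, giving $\delta_1(\HS_N)\geq d$. Combining the two bounds yields $\delta_1(\HS_N)=d=\dim(C_N)$.

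There is no real obstacle: every ingredient is already available, and the only subtlety worth flagging is that it is precisely the ``$a(1)>0$ when $d>0$'' part of Theorem \ref{amodhilbert} that prevents a spurious cancellation of the factor $(1-t)$ from numerator and denominator, which is why that theorem was stated with the positivity conclusion included.
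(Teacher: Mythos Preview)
Your argument is correct and follows essentially the same approach as the paper: both compute $g_{p,n}(1)=1-p^n\neq 0$, invoke the form of $\HS_N$ from Theorem \ref{amodhilbert}, and use the positivity $a(1)>0$ in the case $d>0$ to rule out cancellation. Your write-up is slightly more explicit in separating the two inequalities $\delta_1(\HS_N)\leq d$ and $\delta_1(\HS_N)\geq d$, but the content is the same.
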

\begin{proof}
For any prime $p$ and any $n\geq 1$, we have $g_{p,n}(1)=1-p^n\neq 0$, so $g_{p,n}(t)$ has no effect on the local behavior of $\HS_N(t)$ at $t=1$. Write $\HS_N(t) = a(t)/(1-t)^{\dim(C_N)}g_{p,n}(t)$. If $\dim(C_n)>0$, then $a(1)> 0$ and it is clear that $\delta_1(\HS_N)=\dim(C_N)$. If $\dim(C_N)=0$, it may be possible to have $a(1)=0$, but regardless of whether this is the case, we clearly have $\delta_1(\HS_N)=0$.
\end{proof}

\begin{dfn}\label{eddef}
  Let $N$ be a finitely presented graded left $\A$-module. We define the \textit{Bernstein dimension} of $N$ to be $\delta(N):=\delta_1(\HS_N)$. Letting $a(t) = (1-t)^{\delta(N)}g_{p,n}(t)\HS_N(t)$, we define the \textit{multiplicity} of $N$ to be $e(N):=a(1)/\delta(N)!$.
\end{dfn}
\begin{remark}
  If $\delta(N)>0$, Theorem \ref{amodhilbert} shows that the multiplicity of $N$ coincides exactly with the multiplicity of $C_N$ in the usual sense. This is not the case when $\delta(N)=0$. Example \ref{negmult} gives a module $\A/I$ with $\delta(\A/I)=0$ and $e(\A/I)=1-k$ for any choice of $k\geq 1$.
\end{remark}

It is clear that the dimension and multiplicity are both invariant under Serre twist, i.e., $e(N(t))=e(N)$ and $\delta(N(t))=\delta(N)$ for any $t\in\Z$.

\begin{theorem}\label{edcalc}
  Let
  \[
  0\to N\to M\to Q\to 0
  \]
  be an $\A$-linear homogeneous short exact sequence of finitely presented graded $\A$-modules. Then $\delta(M)=\max\{\delta(N),\delta(Q)\}$, and
  \[
  e(M)=  \begin{cases}
    e(N) & \delta(N)>\delta(Q)\\
    e(N)+e(Q) & \delta(N)=\delta(Q)\\
    e(Q) & \delta(N)<\delta(Q)
    \end{cases}
  \]
  In particular, the dimension and multiplicity of $M$ are determined by the dimensions and multiplicities of $N$ and $Q$. 
\end{theorem}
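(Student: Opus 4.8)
The whole statement will follow from the additivity of Hilbert series on short exact sequences combined with the structure theorem for their denominators. First I would note that taking $\F_p$-dimensions of graded components is additive on the sequence $0\to N\to M\to Q\to 0$, so $\HS_M(t)=\HS_N(t)+\HS_Q(t)$. Since $N$, $M$, $Q$ are all finitely presented by hypothesis, Theorem \ref{amodhilbert} applies to each: write $\HS_N(t)=a_N(t)/\big((1-t)^{\delta(N)}g_{p,n}(t)\big)$ and $\HS_Q(t)=a_Q(t)/\big((1-t)^{\delta(Q)}g_{p,n}(t)\big)$, where the crucial extra information is that $a_N(1)>0$ whenever $\delta(N)>0$ and $a_Q(1)>0$ whenever $\delta(Q)>0$. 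Because $g_{p,n}(1)=1-p^n\neq 0$, the factor $g_{p,n}$ is irrelevant to the behavior at $t=1$, so $\delta(M)=\delta_1(\HS_M)$ and the multiplicity are determined by the numerator of $\HS_M$ after clearing a common denominator.

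Then I would split into cases on whether $\delta(N)=\delta(Q)$. If they are unequal, say $\delta(N)>\delta(Q)$, put both fractions over $(1-t)^{\delta(N)}g_{p,n}(t)$; the numerator becomes $a_N(t)+(1-t)^{\delta(N)-\delta(Q)}a_Q(t)$, which at $t=1$ equals $a_N(1)$, and this is nonzero since $\delta(N)>\delta(Q)\ge 0$ forces $\delta(N)>0$. Hence $\delta(M)=\delta(N)=\max\{\delta(N),\delta(Q)\}$ and $e(M)=a_N(1)/\delta(N)!=e(N)$, as claimed. If instead $\delta(N)=\delta(Q)=d$, the common numerator is $a_N(t)+a_Q(t)$. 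For $d>0$, positivity gives $a_N(1)+a_Q(1)>0$, so the pole order at $t=1$ is exactly $d$, giving $\delta(M)=d$ and $e(M)=\big(a_N(1)+a_Q(1)\big)/d!=e(N)+e(Q)$. For $d=0$, both $\HS_N$ and $\HS_Q$ are regular at $t=1$, so their sum is too; thus $\delta(M)=0$, and with the convention $0!=1$ we get $e(M)=a_N(1)+a_Q(1)=e(N)+e(Q)$.

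The one delicate point — and the reason the assertion is not simply inherited from multiplicities of the structural cokernels $C_{(-)}$ — is ruling out cancellation of the top-order term at $t=1$. This is precisely where the positivity clause ``$a(1)>0$ if $d>0$'' of Theorem \ref{amodhilbert} is used: in the equal-positive-dimension case it prevents $a_N(1)+a_Q(1)$ from vanishing. In Bernstein dimension $0$ there is no such guarantee (cf.\ Example \ref{negmult}), but there the additivity of multiplicity is automatic, since $\delta=0$ means $e$ is just the numerator evaluated at $t=1$ after clearing $g_{p,n}$, and that evaluation is additive by additivity of Hilbert series. I expect this bookkeeping around the $t=1$ behavior to be the only real content; everything else is formal.
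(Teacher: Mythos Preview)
Your argument is correct, and it is actually a bit more streamlined than the paper's. The paper first establishes $\delta(M)=\max\{\delta(N),\delta(Q)\}$ by applying $\mF_\A(-)$ and the snake lemma to obtain the exact sequence $K_Q\to C_N\to C_M\to C_Q\to 0$; since $K_Q$ has finite length (Theorem \ref{thetaaction}(ii)) and $\delta=\dim C$ (Corollary \ref{dimfromhs}), the dimension claim follows from standard Krull-dimension behavior on that sequence. Only then does it write $\HS_M=\HS_N+\HS_Q$ over the common denominator $(1-t)^{\delta(M)}g_{p,n}(t)$ and read off $e(M)$. You bypass the structural argument entirely: you take $D=\max\{\delta(N),\delta(Q)\}$ as a candidate, clear denominators, and use the positivity clause $a(1)>0$ for $d>0$ in Theorem \ref{amodhilbert} to certify that the numerator does not vanish at $t=1$, so $\delta(M)=D$ and $e(M)$ falls out simultaneously. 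Your route is more self-contained and makes explicit that the whole theorem is a formal consequence of Theorem \ref{amodhilbert}; the paper's route, while slightly redundant, keeps the link to $C_{(-)}$ visible, which matters for the later uses of this result in Section 5.
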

\begin{proof}
By Lemma \ref{thetafacts}, we have a commutative diagram of graded $R$-modules with exact rows
  \begin{center}
    \begin{tikzcd}
      0\ar[r] & \mF_\A(N)\ar[r]\ar[d,"\Theta_N"]&\mF_\A(M)\ar[r]\ar[d,"\Theta_M"]& \mF_\A(Q)\ar[r]\ar[d,"\Theta_Q"]& 0\\
      0\ar[r] & N\ar[r]& M\ar[r]& Q\ar[r]& 0
      \end{tikzcd}
  \end{center}
  and an homogeneous exact sequence of $R$-linear maps
  \[
  K_Q\to  C_N\to  C_M\to C_Q\to 0.
  \]
  Since $K_Q$ has finite length, the claim about the dimension follows at once:
  \[
  \dim(C_M)=\max(\dim(C_N),\dim(C_Q))
  \]  
  Let $\HS_N(t)=a(t)/(1-t)^{\delta(N)}g_{p,n}(t)$ and $\HS_Q(t)=c(t)/(1-t)^{\delta(Q)}g_{p,n}(t)$, with $\delta(M)=\max\{\delta(N),\delta(Q)\}$. Then
  \[
  \HS_M(t)=\frac{a(t)(1-t)^{\delta(M)-\delta(N)}+c(t)(1-t)^{\delta(M)-\delta(Q)}}{(1-t)^{\delta(M)}g_{p,n}(t)}
  \]
  Let $b(t)=a(t)(1-t)^{\delta(M)-\delta(N)}+c(t)(1-t)^{\delta(M)-\delta(C)}$. Then either $b(1)=a(1)$ (if $\delta(M)=\delta(N)$ and $\delta(M)>\delta(Q)$), $b(1)=c(1)$ (if $\delta(M)>\delta(N)$) and $\delta(M)=\delta(Q)$), or $b(1)=a(1)+c(1)$ (if $\delta(M)=\delta(N)=\delta(Q)$).
\end{proof}





\section{Comparing filtrations}
\begin{remark}
  Throughout this section, we will assume that all filtrations are indexed by $\N$. The crucial property we require is that for fixed $w$, the number of elements between $i$ and $i+w$ is constant for all $i$, and those elements are nothing more than shifts by $i$ of the elements between $0$ and $w$. The analogous statement for monoids of monomials is generally false: consider that there are no elements strictly between $y$ and $z$ in the graded reverse lexicographic order on $K[x,y,z]$ (taking $x<y<z$, say), but we have $xy<y^2<xz$.
  \end{remark}

Let $R=\F_p[x_1,\ldots,x_n]$, let $\FF$ be the ring of Frobenius operators over $R$, and let $\B$ be the Bernstein $\N$-filtration on $\FF$. If $M$ is an $\FF$-module that admits a great filtration $\Omega$, we have defined a Bernstein dimension and multiplicity for the finitely presented graded $\A$-module $\gr_\Omega(M)$. It is quite reasonable to ask whether these values depend on the choice of filtration on $M$. Our goal in this section is to prove that they do not.

The following is a standard result in the theory of filtered modules. See, e.g., Bj\"ork\footnote{The cautious reader will observe that most of Bj\"ork's results on filtered rings are stated with the hypothesis that $\gr_\W(\RR)$ is commutative and Noetherian. However, not all arguments actually require this hypothesis, and the Lemma cited here is one of them.} \cite[Chapter 1, Lemma 3.4]{bjork}.
\begin{lem}\label{goodfiltsep}
  Let $(\RR,\W)$ be an $\N$-filtered ring, and let $M$ be a left $\RR$-module. If $A$ and $B$ are two good $\W$-compatible $\N$-filtrations on $M$, then there exist integers $u$ and $v$ such that, for all $i\in \N$,
  \[
  A_{i-v}\subseteq B_i\subseteq A_{i+u}.
  \]
\end{lem}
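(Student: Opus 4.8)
The plan is to deduce the sandwich inclusion from the structural fact that a good filtration is, relative to the filtration $\W$ of $\RR$, generated by finitely many elements. Indeed, if $\Omega$ is a good $\W$-compatible $\N$-filtration on $M$, then by Lemma~\ref{goodfg} the module $M$ admits a \emph{finite} $\Omega$-Gr\"obner basis $g_1,\dots,g_t$, say with $\Omega$-degrees $d_1,\dots,d_t$, and by the strictness statement of Lemma~\ref{strictsymbol} the induced map of filtered modules $\G\colon\bigl(\bigoplus_{j}\RR\varepsilon_j,\Lambda\bigr)\to(M,\Omega)$ satisfies $\G(\Lambda_i)=\Omega_i$ for all $i$. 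For $\N$-indexed filtrations the Schreyer pullback filtration unwinds to $\Lambda_i=\bigoplus_j\W(\div d_j)_i\varepsilon_j$ with $\W(\div d_j)_i=\W_{i-d_j}$ (reading $\W_m=0$ for $m<0$), so that
\[
\Omega_i=\sum_{j=1}^{t}\W_{i-d_j}\,g_j\qquad\text{for all }i\in\N .
\]

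Applying this to both filtrations, choose a finite Gr\"obner basis $a_1,\dots,a_s$ for $(M,A)$ with $A$-degrees $\alpha_1,\dots,\alpha_s$, and a finite Gr\"obner basis $b_1,\dots,b_t$ for $(M,B)$ with $B$-degrees $\beta_1,\dots,\beta_t$, so that $A_i=\sum_k\W_{i-\alpha_k}a_k$ and $B_i=\sum_l\W_{i-\beta_l}b_l$ for all $i$. Since $M=\bigcup_i B_i$, each $a_k$ lies in some $B_{N_k}$; set $v:=\max_k\,(N_k-\alpha_k)$. Using only the $\W$-compatibility $\W_cB_m\subseteq B_{c+m}$ and the monotonicity of $B$, for every $i$ we get
\[
A_i=\sum_k\W_{i-\alpha_k}a_k\subseteq\sum_k\W_{i-\alpha_k}B_{N_k}\subseteq\sum_k B_{\,i-\alpha_k+N_k}\subseteq B_{i+v},
\]
where the $k$-th term is $0$ when $i<\alpha_k$. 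Symmetrically, each $b_l$ lies in some $A_{M_l}$, and with $u:=\max_l\,(M_l-\beta_l)$ the same computation gives $B_i\subseteq A_{i+u}$ for all $i$. Substituting $i-v$ for $i$ in the first chain yields $A_{i-v}\subseteq B_i$, and combining with the second gives $A_{i-v}\subseteq B_i\subseteq A_{i+u}$ for all $i$, as desired; one may of course enlarge $u$ and $v$ so that both are nonnegative.

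The only real content is identifying the right input: everything rests on the representation $\Omega_i=\sum_j\W_{i-d_j}g_j$ of a good filtration in terms of a finite Gr\"obner basis, which is exactly what Lemmas~\ref{goodfg} and~\ref{strictsymbol} provide (alternatively, one proves this formula directly by a short induction on $i$, lifting a finite homogeneous generating set of $\gr_\Omega(M)$ to $M$). After that the argument is the bookkeeping above, and it is worth noting that it uses nothing about $\gr_\W(\RR)$ being commutative or Noetherian, in keeping with the remark preceding the statement.
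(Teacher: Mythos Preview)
Your argument is correct. The paper does not actually supply a proof of this lemma: it simply cites Bj\"ork \cite[Chapter~1, Lemma~3.4]{bjork} and remarks that the cited argument does not require $\gr_\W(\RR)$ to be commutative or Noetherian. What you have written is precisely the standard proof behind that citation, phrased in terms of the paper's own machinery: Lemma~\ref{goodfg} supplies a finite $\Omega$-Gr\"obner basis, and Lemma~\ref{strictsymbol} gives the key identity $\Omega_i=\sum_j\W_{i-d_j}\,g_j$, after which the comparison is the elementary bookkeeping you carried out. Your closing remark that nothing about commutativity or Noetherianity of $\gr_\W(\RR)$ is used matches the paper's footnote exactly.
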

If we reindex via $A^\prime_i=A_{i-v}$ (taking the convention that $A_i=0$ for $i<0$), then setting $w=u+v$, we have ${A}^\prime_i\subseteq{B}_i\subseteq{A}^\prime_{i+w}$ for all $i$. Henceforth, we will assume that such reindexing has been done from the start.

\begin{dfn}
Suppose that $A$ and $B$ are two $\N$-filtrations on an $\RR$-module $M$ such that $A_i\subseteq B_i$ for all $i$. The \textit{linear gap} between $A$ and $B$ is the infimum of the set of nonnegative integers $w$ such that $A_i\subseteq B_i\subseteq A_{i+w}$ for all $i\in \N$.
\end{dfn}
Lemma \ref{goodfiltsep} is the statement that, up to reindexing in order to assume $A_i\subseteq B_i$ in the first place, two good filtrations on the same module are always separated by a finite linear gap.

If the functions $h_A:i\mapsto \dim(A_i)$ an $h_B:i\mapsto \dim(B_i)$ grow (eventually) polynomially in $i$, then Lemma \ref{goodfiltsep} is be sufficient to prove that the (eventual) polynomials describing $h_A$ and $h_B$ have the same degree and the same leading term. For our applications, however the growth of $h_A$ and $h_B$ will satisfy a linear recurrence relation with non-polynomial growth, and a linear comparison $h_A(i-v)\leq h_B(i)\leq h_A(i+u)$ is not sufficient to imply that the recurrence relations are the same.
\begin{ex}
  Consider the sequences of integers
  \[
  \begin{matrix}
    a: & 1 & 2 & 3 & 5 & 8 & 13 & 21 & 34 & \cdots\\
    b: & 1 & 2 & 4 & 7 & 12 & 20 & 33 & 54 & \cdots\\
    a(1): & 2 & 3 & 5 & 8 & 13 & 21 & 34 & 55&\cdots
    \end{matrix}
  \]
  with $a_i\leq b_i\leq a(1)_i=a_{i+1}$ for all $i$. The sequences $a$ and $a(1)$ satisfy the eventual linear recurrence relation $1-t-t^2$, but the sequence $b$ does not.    
\end{ex}

Let $(\RR,\W)$ be an $\N$-filtered ring, and let $\TT=\gr_\W(R)$. Let $M$ be an $\RR$-module equipped with two good filtrations $A$ and $B$ such that $A_i\subseteq B_i\subseteq A_{i+w}$ for all $i$. We are interested in the $\N^2$-indexed family of subspaces ${A}_i\cap {B}_j$. There is a well-known comparison between the formation of associated graded objects along rows or columns of the array of subspaces, which we will review below. Our main motivation in reviewing the construction is to establish some notation for (and some properties of) certain $\TT$-modules that come up when working with the double filtration of $A$ and $B$.

\begin{center}
  {\footnotesize
  \begin{tikzcd}[row sep = 1.0em, column sep = 0.8em]
    {A}_0\ar[r,hook]\ar[d,hook] & {A}_0\ar[r,hook]\ar[d,hook] & \cdots\ar[r,hook] & {A}_0\ar[d,hook]\ar[r,hook] &{A}_0\ar[r,hook]\ar[d,hook]&\cdots\ar[r,hook]&{A}_0\ar[d,hook]\\
    {A}_1\cap{B}_0 \ar[r,hook]\ar[d,hook] & {A}_1 \ar[r,hook]\ar[d,hook] & \cdots\ar[r,hook] & {A}_1\ar[d,hook]\ar[r,hook] &{A}_1\ar[r,hook]\ar[d,hook]&\cdots\ar[r,hook]&{A}_1\ar[d,hook]\\
    {A}_2\cap{B}_0 \ar[r,hook]\ar[d,hook] & {A}_2\cap{B}_1\ar[d,hook] \ar[r,hook] & \cdots\ar[r,hook] & {A}_2\ar[d,hook]\ar[r,hook] &{A}_2\ar[r,hook]\ar[d,hook]&\cdots\ar[r,hook]&{A}_2\ar[d,hook]\\
    \vdots\ar[d,hook] & \vdots\ar[d,hook] &\ddots &\vdots\ar[d,hook] &\vdots\ar[d,hook]&\ddots&\vdots\ar[d,hook]\\
    {A}_{w-1}\cap {B}_0\ar[r,hook]\ar[d,hook] & {A}_{w-1}\cap{B}_1\ar[d,hook]\ar[r,hook] &\cdots\ar[r,hook] &{A}_{w-1}\ar[d,hook]\ar[r,hook] &{A}_{w-1}\ar[r,hook]\ar[d,hook]&\cdots\ar[r,hook]&{A}_{w-1}\ar[d,hook]\\
    {B}_0 \ar[r,hook]\ar[d,hook] & {A}_w\cap {B}_1 \ar[r,hook]\ar[d,hook] &\cdots\ar[r,hook] & {A}_w\ar[d,hook]\ar[r,hook] &{A}_w\ar[r,hook]\ar[d,hook]&\cdots\ar[r,hook]&{A}_w\ar[d,hook]\\
    {B}_0 \ar[r,hook]\ar[d,hook] & {B}_1\ar[d,hook] \ar[r,hook]\ar[d,hook] &\cdots\ar[r,hook] & {A}_{w+1}\cap{B}_w\ar[d,hook]\ar[r,hook]\ar[d,hook] &{A}_{w+1}\ar[r,hook]\ar[d,hook]&\cdots\ar[r,hook]&{A}_{w+1}\ar[d,hook]\\
    {B}_0 \ar[r,hook]\ar[d,hook] & {B}_1 \ar[r,hook]\ar[d,hook] &\cdots\ar[r,hook] & {A}_{w+2}\cap{B}_w\ar[r,hook]\ar[d,hook] &{A}_{w+2}\cap{B}_{w+1}\ar[r,hook]\ar[d,hook]&\cdots\ar[r,hook]&{A}_{w+2}\ar[d,hook]\\
        \vdots\ar[d,hook] & \vdots\ar[d,hook] &\ddots &\vdots\ar[d,hook] &\vdots\ar[d,hook]&\ddots&\vdots\ar[d,hook]\\
    {B}_0\ar[r,hook] & {B}_1\ar[r,hook] &\cdots\ar[r,hook] &{B}_w\ar[r,hook] &{B}_{w+1}\ar[r,hook]&\cdots\ar[r,hook]& M\\
  \end{tikzcd}
  }
\end{center}

If we form the associated graded for the filtration along each row, we obtain an array of columns, the $j$th of which is the filtration of ${B}_j/{B}_{j-1}$ by $\{{B}_j\cap{A}_i/{B}_{j-1}\cap{A}_i\}_i$.
\begin{center}
  {\footnotesize
  \begin{tikzcd}[row sep = 1.0em, column sep = 0.8em]
    {A}_0\ar[d,hook] & 0\ar[d,hook] & \cdots & 0\ar[d,hook] & 0\ar[d,hook]&\cdots&\gr_{B}({A}_0)\ar[d,hook]\\
    {A}_1\cap{B}_0 \ar[d,hook] & \frac{{A}_1}{{A}_1\cap{B}_0} \ar[d,hook] & \cdots & 0\ar[d,hook] &0 \ar[d,hook]&\cdots&\gr_{B}({A}_1)\ar[d,hook]\\
    {A}_2\cap{B}_0 \ar[d,hook] & \frac{{A}_2\cap{B}_1}{{A}_2\cap{B}_0}\ar[d,hook]  & \cdots & 0\ar[d,hook] &0\ar[d,hook]&\cdots&\gr_{B}({A}_2)\ar[d,hook]\\
    \vdots\ar[d,hook] & \vdots\ar[d,hook] &\ddots &\vdots\ar[d,hook] &\vdots\ar[d,hook]&\ddots&\vdots\ar[d,hook]\\
    {A}_{w-1}\cap {B}_0\ar[d,hook] & \frac{{A}_{w-1}\cap{B}_1}{{A}_{w-1}\cap{B}_0}\ar[d,hook] &\cdots &0\ar[d,hook] &0\ar[d,hook]&\cdots&\gr_B(A_{w-1})\ar[d,hook]\\
    {B}_0 \ar[d,hook] & \frac{{A}_w\cap {B}_1}{{B}_0} \ar[d,hook] &\cdots & \frac{{A}_w}{{A}_w\cap{B}_{w-1}}\ar[d,hook] &0\ar[d,hook]&\cdots&\gr_B({A}_w)\ar[d,hook]\\
    {B}_0 \ar[d,hook] & {B}_1/{B}_1\ar[d,hook] \ar[d,hook] &\cdots & \frac{{A}_{w+1}\cap{B}_w}{{A}_{w+1}\cap{B}_{w-1}}\ar[d,hook]\ar[d,hook] &\frac{{A}_{w+1}}{{A}_{w+1}\cap{B}_w}\ar[d,hook]&\cdots&\gr_{B}({A}_{w+1})\ar[d,hook]\\
    {B}_0 \ar[d,hook] & {B}_1/{B}_0 \ar[d,hook] &\cdots & \frac{{A}_{w+2}\cap{B}_w}{{A}_{w+2}\cap{B}_{w-1}}\ar[d,hook] &\frac{{A}_{w+2}\cap{B}_{w+1}}{{A}_{w+2}\cap{B}_{w}}\ar[d,hook]&\cdots&\gr_{B}({A}_{w+2})\ar[d,hook]\\
        \vdots\ar[d,hook] & \vdots\ar[d,hook] &\ddots &\vdots\ar[d,hook] &\vdots\ar[d,hook]&\ddots&\vdots\ar[d,hook]\\
    {B}_0 & {B}_1/{B}_0 &\cdots &{B}_w/{B}_{w-1} &{B}_{w+1}/{B}_w&\cdots& \gr_{B}(M)\\
  \end{tikzcd}
  }
\end{center}

Taking direct sums along diagonals in the above diagram, we find a filtration of $\gr_B(M)$ by a certain family of graded $\TT$-modules.
\begin{dfn}
Let $(\RR,\W)$ be an $\N$-filtered ring, and let $\TT=\gr_\W(R)$. Let $M$ be an $\RR$-module equipped with two good filtrations $A$ and $B$, and let $w\in \N$ be such that ${A}_i\subseteq{B}_i\subseteq{A}_{i+w}$ for all $i$. For $k=0,\ldots,w$, define a graded $\TT$-module
\[
V^k_{A,B}:=\bigoplus_{i=0}^\infty\frac{A_{i+k}\cap B_{i}}{A_{i+k}\cap B_{i-1}}
\]
\end{dfn}
\noindent There is a natural chain of homogeneous inclusions
\[
V^0_{A,B}\subseteq V^1_{A,B}\subseteq\cdots\subseteq V^w_{A,B}=\gr_B(M).
\]
The associated graded module with respect to the above filtration is the $\N^2$-graded $\TT$-module
\[
\gr_A\gr_B(M) = \bigoplus_{k=0}^w V^k_{A,B}/V^{k-1}_{A,B}
\]
whose $(k,i)$th graded component is
\[
[\gr_A\gr_B(M)]_{(k,i)} = [V^k_{A,B}/V^{k-1}_{A,B}]_i = \frac{A_{i+k}\cap B_i}{A_{i+k}\cap B_{i-1}+A_{i+k-1}\cap B_{i}}
\]

If we instead formed the associated graded of each column in the original diagram, then we obtain an array of row filtrations, with the $i$th row filtering ${A}_i/{A}_{i-1}$ by $\{{A}_i\cap{B}_j/{A}_{i-1}\cap{B}_j\}_j$.
\begin{center}
  {\footnotesize
  \begin{tikzcd}[row sep = 1.0em, column sep = 0.8em]
    {A}_0\ar[r,hook] & {A}_0\ar[r,hook] & \cdots\ar[r,hook] & {A}_0\ar[r,hook] &{A}_0\ar[r,hook]&\cdots\ar[r,hook]&{A}_0\\
    \frac{{A}_1\cap{B}_0}{A_0} \ar[r,hook] & A_1/A_0 \ar[r,hook] & \cdots\ar[r,hook] & {A}_1/A_0\ar[r,hook] &{A}_1/A_0\ar[r,hook]&\cdots\ar[r,hook]&{A}_1/A_0\\
    \frac{{A}_2\cap{B}_0}{A_1\cap B_0} \ar[r,hook] & \frac{{A}_2\cap{B}_1}{A_1} \ar[r,hook] & \cdots\ar[r,hook] & {A}_2/A_1\ar[r,hook] &{A}_2/A_1\ar[r,hook]&\cdots\ar[r,hook]&{A}_2/A_1\\
    \vdots & \vdots &\ddots &\vdots &\vdots&\ddots&\vdots\\
    \frac{{A}_{w-1}\cap {B}_0}{A_{w-2}\cap B_0}\ar[r,hook] & \frac{{A}_{w-1}\cap{B}_1}{A_{w-2}\cap B_1}\ar[r,hook] &\cdots\ar[r,hook] &{A}_{w-1}/A_{w-2}\ar[r,hook] &{A}_{w-1}/A_{w-2}\ar[r,hook]&\cdots\ar[r,hook]&{A}_{w-1}/A_{w-2}\\
    \frac{{B}_0}{A_{w-1}\cap B_0} \ar[r,hook] & \frac{{A}_w\cap {B}_1}{A_{w-1}\cap B_1} \ar[r,hook] &\cdots\ar[r,hook] & {A}_w/A_{w-1}\ar[r,hook] &{A}_w/A_{w-1}\ar[r,hook]&\cdots\ar[r,hook]&{A}_w/A_{w-1}\\
    0 \ar[r,hook] & \frac{{B}_1}{A_w\cap B_1} \ar[r,hook] &\cdots\ar[r,hook] & \frac{{A}_{w+1}\cap{B}_w}{A_w}\ar[r,hook] &{A}_{w+1}/A_w\ar[r,hook]&\cdots\ar[r,hook]&{A}_{w+1}/A_w\\
    0 \ar[r,hook] & 0 \ar[r,hook] &\cdots\ar[r,hook] & \frac{{A}_{w+2}\cap{B}_w}{A_{w+1\cap B_w}}\ar[r,hook] &\frac{{A}_{w+2}\cap{B}_{w+1}}{A_{w+1}}\ar[r,hook]&\cdots\ar[r,hook]&{A}_{w+2}/A_{w+1}\\
        \vdots & \vdots &\ddots &\vdots &\vdots&\ddots&\vdots\\
    \gr_A({B}_0)\ar[r,hook] & \gr_A({B}_1)\ar[r,hook] &\cdots\ar[r,hook] &\gr_A({B}_w)\ar[r,hook] &\gr_A({B}_{w+1})\ar[r,hook]&\cdots\ar[r,hook]& \gr_A(M)\\
  \end{tikzcd}
  }
\end{center}
Again taking direct sums along the diagonals of the above diagram, we obtain a filtration of $\gr_A(M)$, this time by the following family of graded $\TT$-modules, analogous to the $V^k_{A,B}$-modules.
\begin{dfn}
Let $(\RR,\W)$ be an $\N$-filtered ring, and let $\TT=\gr_\W(R)$. Let $M$ be an $\RR$-module equipped with two good filtrations $A$ and $B$, and let $w\in \N$ be such that ${A}_i\subseteq{B}_i\subseteq{A}_{i+w}$ for all $i$. For $k=0,\ldots,w$, define a graded $\TT$-module
\[
U^k_{A,B}:=\bigoplus_{i=0}^\infty\frac{A_{i}\cap B_{i+k-w}}{A_{i-1}\cap B_{i+k-w}}
\]
\end{dfn}
\noindent There is a natural chain of homogeneous inclusions
\[
U^0_{A,B}\subseteq U^{1}_{A,B}\subseteq\cdots\subseteq U^w_{A,B}=\gr_A(M)
\]
The associated graded module with respect to this filtration is the $\N^2$-graded $\TT$-module
\[
\gr_B\gr_A(M) = \bigoplus_{k=0}^w U^{k}_{A,B}/U^{k-1}_{A,B}
\]
whose $(k,i)$th graded component is
\[
[\gr_B\gr_A(M)]_{(k,i)} = [U^k_{A,B}/U^{k-1}_{A,B}]_i = \frac{A_{i}\cap B_{i+k-w}}{A_{i-1}\cap B_{i+k-w}+A_{i}\cap B_{i+k-w-1}}
\]
For all $(i,k)$, we have $[\gr_B\gr_A(M)]_{(k,i)} = [\gr_A\gr_B(M)]_{(w-k,i+k-w)}$, so that $\gr_B\gr_A(M)$ and $\gr_A\gr_B(M)$ differ only by a change of grading. 

Whether we formed the associated graded along columns first and then rows, or along rows first and then columns, we obtain the same $\N^2$-indexed family of components in the end.

\begin{center}
  {\footnotesize
  \begin{tikzcd}[row sep = 0.5em, column sep = 0.5em]
    {A}_0 & 0 & \cdots & 0 & 0&\cdots&\gr_{B}({A}_0)\\
    \frac{{A}_1\cap{B}_0}{{A}_0}  & \frac{{A}_1}{{A}_1\cap{B}_0}  & \cdots & 0 & 0&\cdots&\gr_{B}({A}_1/{A}_0)\\
    \frac{{A}_2\cap{B}_0}{{A}_1\cap{B}_0}  & \frac{{A}_2\cap{B}_1}{{A}_1+{A}_2\cap{B}_0}  & \cdots & 0 &0&\cdots&\gr_{B}({A}_2/{A}_1)\\
    \vdots & \vdots &\ddots &\vdots &\vdots&\ddots&\vdots\\
    \frac{{A}_{w-1}\cap{B}_0}{{A}_{w-2}\cap{B}_0} & \frac{{A}_{w-1}\cap{B}_1}{{A}_{w-1}\cap{B}_0+{A}_{w-2}\cap{B}_1} &\cdots &0 &0&\cdots&\gr_{B}({A}_{w-1}/{A}_{w-2})\\
    \frac{{B}_0}{{A}_{w-1}\cap{B}_0}  & \frac{{A}_w\cap {B}_1}{{B}_0+{A}_{w-1}\cap{B}_1}  &\cdots & \frac{{A}_w}{{A}_w\cap{B}_{w-1}} &0&\cdots&\gr_{B}({A}_w/{A}_{w-1})\\
    0  & \frac{{B}_1}{{A}_w\cap{B}_1}  &\cdots & \frac{{A}_{w+1}\cap{B}_w}{{A}_w+{A}_{w+1}\cap{B}_w} &\frac{{A}_{w+1}}{{A}_{w+1}\cap{B}_w}&\cdots&\gr_{B}({A}_{w+1}/{A}_w)\\
    0  & 0  &\cdots & \frac{{A}_{w+2}\cap{B}_w}{{A}_{w+1}\cap{B}_w+{A}_{w+2}\cap{B}_{w-1}} &\frac{{A}_{w+2}\cap{B}_{w+1}}{{A}_{w+1}+{A}_{w+2}\cap{B}_{w+1}}&\cdots&\gr_{B}({A}_{w+2}/{A}_{w+1})\\
        \vdots & \vdots &\ddots &\vdots &\vdots&\ddots&\vdots\\
    \gr_{A}({B}_0) & \gr_{A}({B}_1/{B}_0) &\cdots &\gr_{A}({B}_w/{B}_{w-1}) &\gr_{A}({B}_{w+1}/{B}_w)&\cdots& \gr_{{A},{B}}(M)\\
  \end{tikzcd}
  }
\end{center}
\begin{dfn}
  Let $(\RR,\W)$ be an $\N$-filtered ring, and let $\TT=\gr_\W(R)$. Let $M$ be an $\RR$-module equipped with two good filtrations $A$ and $B$ satisfying $A_i\subseteq B_i\subseteq A_{i+w}$ for all $i$. As an $\N^2$-graded vector space, we define
\[
\gr_{A,B}(M) :=\bigoplus_{(i,j)\in\N^2} \frac{A_i\cap B_j}{A_{i-1}\cap B_j+A_i\cap B_{j-1}}
\]
For $m\in A_i\cap B_j$, let $[m]^{A,B}_{(i,j)}$ denote the image of $m$ in $[\gr_{A,B}(M)]_{(i,j)}$. To make $\gr_{A,B}(M)$ into a $\TT$-module, we defined by $[t]^\W_k\cdot [m]^{{A},{B}}_{(i,j)}=[tm]^{{A},{B}}_{(i+k,j+k)}$. 
\end{dfn}
At the level of graded vector spaces, $\gr_{A,B}(M)$ agrees with $\gr_A\gr_B(M)$ and $\gr_B\gr_A(M)$ up to a change of grading, with $[\gr_{A,B}(M)]_{(i,j)} = [\gr_A\gr_B(M)]_{(i-j,j)}=[\gr_B\gr_A(M)]_{(j-i+w,i)}$. For fixed $j$, we have $\bigoplus_i[\gr_{{A},{B}}(M)]_{(i,j)}=\gr_{A}({B}_j/{B}_{j-1})$, and for fixed $i$, $\bigoplus_j [\gr_{{A},{B}}(M)]_{(i,j)}=\gr_{B}({A}_i/{A}_{i-1})$. As a $\TT$-module, $\gr_{{A},{B}}(M)$ naturally decomposes as a direct sum of $w+1$ components.

\begin{dfn}
  Let $(\RR,\W)$ be an $\N$-filtered ring, and let $\TT=\gr_\W(R)$. Let $M$ be an $\RR$-module equipped with two good filtrations $A$ and $B$, and let $w\in \N$ be such that ${A}_i\subseteq{B}_i\subseteq{A}_{i+w}$ for all $i$. For $k=0,\ldots,w$, define a graded $\TT$-module
\[
\gr^k_{{A},{B}}(M) :=\bigoplus_i [\gr_{{A},{B}}(M)]_{(i+k,i)}
\]
\end{dfn}
\begin{lem}\label{uvfilt}
  Let $(\RR,\W)$ be an $\N$-filtered ring, and let $\TT=\gr_\W(R)$. Let $M$ be an $\RR$-module equipped with two good filtrations $A$ and $B$, and let $w\in \N$ be such that ${A}_i\subseteq{B}_i\subseteq{A}_{i+w}$ for all $i$. For $k=0,\ldots,w$, there are homogeneous isomorphisms of graded $\TT$-modules
\[
\gr^k_{A,B}(M) = \frac{V^k_{A,B}}{V^{k-1}_{A,B}} = \frac{U^{w-k}_{A,B}}{U^{w-k-1}_{A,B}}(+k).
\]
In particular, we have identifications
\[
\gr^0_{A,B}(M) = V^0_{A,B}
\hspace{1.0em}
\text{and}
\hspace{1.0em}
\gr^w_{A,B}(M) = U^0_{A,B}(+w)
\]
and the following homogeneous short exact sequences of graded $\TT$-modules,
\[
0\to V^{w-1}_{A,B}\to \gr_B(M)\to \gr^w_{A,B}(M)\to 0 \hspace{1.0em}\text{and}\hspace{1.0em}
0\to U^{w-1}_{A,B}\to \gr_A(M)\to \gr^0_{A,B}(M)\to 0
\]
\end{lem}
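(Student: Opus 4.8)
The plan is to deduce both isomorphisms by unwinding definitions and invoking the three grading identities already established in the discussion preceding the statement, so that the only substantive point is recognizing that certain maps are the identity on honest subquotients of $M$. First I would record that the chains $V^0_{A,B}\subseteq\cdots\subseteq V^w_{A,B}=\gr_B(M)$ and $U^0_{A,B}\subseteq\cdots\subseteq U^w_{A,B}=\gr_A(M)$ really do consist of graded $\TT$-submodules: well-definedness of each inclusion is automatic, and injectivity of the map $A_{i+k-1}\cap B_i/(A_{i+k-1}\cap B_{i-1})\hookrightarrow A_{i+k}\cap B_i/(A_{i+k}\cap B_{i-1})$ holds because an element of $A_{i+k-1}\cap B_i$ that lies in $A_{i+k}\cap B_{i-1}$ already lies in $A_{i+k-1}\cap B_{i-1}$; the same argument handles the $U$-chain. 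By construction $\gr_A\gr_B(M)=\bigoplus_k V^k_{A,B}/V^{k-1}_{A,B}$ with $[V^k_{A,B}/V^{k-1}_{A,B}]_i=[\gr_A\gr_B(M)]_{(k,i)}$, and likewise $[U^k_{A,B}/U^{k-1}_{A,B}]_i=[\gr_B\gr_A(M)]_{(k,i)}$.

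For the first identification, apply the identity $[\gr_{A,B}(M)]_{(i,j)}=[\gr_A\gr_B(M)]_{(i-j,j)}$ with $(i,j)=(i+k,i)$, which gives $[\gr^k_{A,B}(M)]_i=[\gr_{A,B}(M)]_{(i+k,i)}=[\gr_A\gr_B(M)]_{(k,i)}=[V^k_{A,B}/V^{k-1}_{A,B}]_i$. Tracing through the associated-graded construction shows all three of these are literally the subquotient $(A_{i+k}\cap B_i)/(A_{i+k-1}\cap B_i+A_{i+k}\cap B_{i-1})$ of $M$, so the isomorphism is the identity; it is $\TT$-linear since all the $\TT$-actions in sight are induced by $m\mapsto tm$ on $M$ and raise $\N$-degree by $\deg t$. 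For the second identification, apply $[\gr_{A,B}(M)]_{(i,j)}=[\gr_B\gr_A(M)]_{(j-i+w,i)}$ with $(i,j)=(i+k,i)$ to get $[\gr^k_{A,B}(M)]_i=[\gr_B\gr_A(M)]_{(w-k,i+k)}=[U^{w-k}_{A,B}/U^{w-k-1}_{A,B}]_{i+k}=[(U^{w-k}_{A,B}/U^{w-k-1}_{A,B})(+k)]_i$, and again all of these are the same subquotient of $M$, so this is an isomorphism of graded $\TT$-modules.

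The remaining assertions are then immediate bookkeeping: $V^{-1}_{A,B}=0$ gives $\gr^0_{A,B}(M)=V^0_{A,B}$ and $U^{-1}_{A,B}=0$ gives $\gr^w_{A,B}(M)=(U^0_{A,B})(+w)$; the sequence $0\to V^{w-1}_{A,B}\to\gr_B(M)\to\gr^w_{A,B}(M)\to 0$ is the defining short exact sequence for the top layer of the $V$-filtration, using $V^w_{A,B}=\gr_B(M)$ and $\gr^w_{A,B}(M)=V^w_{A,B}/V^{w-1}_{A,B}$; and $0\to U^{w-1}_{A,B}\to\gr_A(M)\to\gr^0_{A,B}(M)\to 0$ is the analogous sequence for the $U$-filtration, using $U^w_{A,B}=\gr_A(M)$ together with the $k=0$ case $\gr^0_{A,B}(M)=U^w_{A,B}/U^{w-1}_{A,B}$. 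I do not expect a genuine obstacle; the only step requiring care is keeping the diagonal shift $k$, the total $\N^2$-degree, and the $w$-shift relating $\gr_A\gr_B(M)$ to $\gr_B\gr_A(M)$ mutually consistent, which is precisely what the three quoted grading identities (themselves consequences of the second isomorphism theorem in the lattice of subspaces $\{A_i\cap B_j\}$) are there to package.
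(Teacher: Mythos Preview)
Your proof is correct and follows essentially the same approach as the paper: both arguments reduce the identifications to the grading identities $[\gr_{A,B}(M)]_{(i,j)}=[\gr_A\gr_B(M)]_{(i-j,j)}=[\gr_B\gr_A(M)]_{(j-i+w,i)}$ established in the preceding discussion, and then read off the $V$- and $U$-quotients from the resulting equalities of graded components. You are simply more explicit than the paper about the injectivity of the $V$- and $U$-chain inclusions, the $\TT$-linearity, and the derivation of the two short exact sequences, but there is no substantive difference in strategy.
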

\begin{proof}
  We have $[\gr^k_{A,B}(M)]_i=[\gr_{A,B}(M)]_{(i+k,i)}=[\gr_A\gr_B(M)]_{(k,i)}=[V^k_{A,B}/V^{k-1}_{A,B}]_i$ for all $i$ and $k$, and likewise, $[\gr^k_{A,B}(M)]_i=[\gr_{A,B}(M)]_{(i+k,i)}=[\gr_B\gr_A(M)]_{(w-k,i+k)}=[U^{w-k}_{A,B}/U^{w-k-1}_{A,B}]_{i+k}$. These identifications of graded components straightforwardly preserve the $\TT$-action.
  \end{proof}

In the special case $w=1$, we can realize both $\gr_A(M)$ and $\gr_B(M)$ as extensions involving the same graded $\TT$-modules $\gr^0_{A,B}(M)$ and $\gr^1_{A,B}(M)$, up to Serre twists.
\[
0\to \gr^0_{A,B}(M)\to \gr_B(M)\to \gr^1_{A,B}(M)\to 0
\]\[
0\to \gr^1_{A,B}(M)(-1)\to \gr_A(M)\to \gr^0_{A,B}(M)\to 0
\]
If $w>1$, there is still a nice comparison between the two filtrations available, involving a chain of intermediate filtrations, each separated by only a linear gap of $1$. These filtrations make an appearance in the proof of \cite[Chapter 2, Lemma 6.2]{bjork}.
\begin{dfn}
  Let $A$ and $B$ be two $\N$-filtrations on an $\RR$-module $M$, and suppose $w\in\N$ is such that $A_i\subseteq B_i\subseteq A_{i+w}$ for all $i$. The \textit{interpolating filtrations} of $A$ and $B$ are the family of filtrations $T^k$ indexed by $0\leq k\leq w$, defined by $T^k_i={A}_{i+k}\cap{B}_i$.
\end{dfn}
\noindent Clearly
\[
{A}_i=T^0_i\subseteq T^1_i\subseteq\cdots\subseteq T^{w-1}_i\subseteq T^w_i={B}_i
\]
and, for any $i$ and $k$, $T^{k+1}_{i}= T^{k}_{i+1}\cap{B}_{i}$. In fact,
\[
T^a_i\cap T^b_j = (A_{a+i}\cap B_i)\cap (A_{b+j}\cap B_j) = A_{\text{min}(a+i,b+j)}\cap B_{\min(i,j)}
\]
\begin{lem}\label{filtransformations}
  Let $A$ and $B$ be two $\N$-filtrations on an $\RR$-module $M$, and suppose $w\in\N$ is such that $A_i\subseteq B_i\subseteq A_{i+w}$ for all $i$. For $0\leq k\leq w$, let $T^k_i=A_{i+k}\cap B_i$ be the $k$th interpolating filtration of $A$ and $B$. Fix two such filtrations $T^a$ and $T^b$, with $a\leq b$.
  \begin{enumerate}[(i)]
  \item For any $0\leq k\leq b-a$ and any $0\leq c\leq k$
    \[
    V^k_{T^a,T^b}=V^{k-c}_{a+c,b}
    \]
  \item For any $0\leq k < b-a$ and any $0\leq c\leq w-b$,
    \[
    V^k_{T^a,T^b} = V^k_{T^a,T^{b+c}}
    \]
  \item For any $0\leq k\leq b-a$ and any $0\leq c\leq w-b$,
    \[
    U^k_{T^a,T^b} = U^{k+c}_{T^a,T^{b+c}}
    \]
  \item For any $0\leq k < b-a$ and any $-a\leq c < b-a-k$,
    \[
    U^k_{T^a,T^b}=U^k_{T^{a+c},T^b}(-c)
    \]
  \item For any $0\leq k< b-a$,
        \[
    \gr^k_{T^a,T^b}(M)=\gr^k_{T^a,T^{b+c}}
    \]
  \item For any $-a\leq c < k\leq b-a$,
        \[
    \gr^k_{T^a,T^b}(M)=\gr^{k-c}_{T^{a+c},T^{b}}
    \]    
  \end{enumerate}
\end{lem}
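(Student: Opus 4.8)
The plan is to reduce all six identities to a single bookkeeping computation with the lattice of subspaces $A_i\cap B_j$. The essential input is the closed form already recorded above: $T^a_i\cap T^b_j = A_{\min(a+i,\,b+j)}\cap B_{\min(i,j)}$ for all $a,b,i,j$. First I would substitute this into the definitions of $V^k$, $U^k$ and $\gr^k$ for an arbitrary pair of interpolating filtrations $(T^a,T^b)$ with $a\le b$ (whose linear gap is $w'=b-a$), so as to express each of these graded $\TT$-modules, degree by degree, purely in terms of the $A_\bullet\cap B_\bullet$. The structural fact that makes everything go through is that in the \emph{stable range} $0\le k<b-a$ these descriptions do not see both endpoints: one finds
\[
[V^k_{T^a,T^b}]_i = \frac{A_{a+i+k}\cap B_i}{A_{a+i+k}\cap B_{i-1}}, \qquad [U^k_{T^a,T^b}]_i = \frac{A_{a+i}\cap B_{i+k-(b-a)}}{A_{a+i-1}\cap B_{i+k-(b-a)}},
\]
and an analogous formula for $\gr^k_{T^a,T^b}(M)$; the first depends on $b$ not at all and on the lower endpoint only through $a+k$, while the second depends on $a$ and $b$ only through $a$ itself and the gap $b-a$. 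At the single boundary value $k=b-a$ one instead recovers $V^{b-a}_{T^a,T^b}=\gr_{T^b}(M)$ and $U^{b-a}_{T^a,T^b}=\gr_{T^a}(M)$, mirroring the endpoint cases $V^w_{A,B}=\gr_B(M)$ and $U^w_{A,B}=\gr_A(M)$ from Lemma~\ref{uvfilt}.

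With these formulas in hand, each of (i)--(vi) becomes a term-by-term match. Since $V^k$ and $\gr^k$ depend on the lower endpoint only through $a+k$ and not on the upper endpoint in the stable range, enlarging $b$ to $b+c$ changes nothing, giving (ii) and (v), while replacing $a$ by $a+c$ is undone by simultaneously replacing $k$ by $k-c$, giving (i) and (vi); there is no Serre twist here because the running index $i$ in $V^k$ and $\gr^k$ is anchored on the $B$-side and is unaffected. For $U^k$, which is anchored on the $A$-side and sees the upper endpoint through the gap, enlarging $b$ to $b+c$ enlarges the gap by $c$ and is compensated by $k\mapsto k+c$, giving (iii) with no twist, whereas replacing $a$ by $a+c$ changes both $A_{a+i}$ and the gap, and the single substitution $i\mapsto i-c$ repairs both at once --- which is precisely the Serre twist $(-c)$ of (iv). In every case, once the two sides are identified as literally the same subquotient of $M$ in the appropriate degrees, equality of $\TT$-module structure is automatic, since the action $[t]^\W_\ell\cdot[m]=[tm]$ is defined by the same formula on both sides; I will only need to record the grading shifts correctly by tracking which degree-$i$ component is matched to which, exactly as in the proof of Lemma~\ref{uvfilt}.

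The one place demanding genuine care --- and the step I expect to be the main obstacle --- is the $\min$ appearing in the denominators. Terms such as $A_{\min(a+i+k,\,b+i-1)}$ (from $V^k$ and $U^k$) and the index $\min(i+k-1,i)$ (from $\gr^k$) behave one way throughout the stable range $0\le k<b-a$ and another way at the boundary $k=b-a$, and $\min(i+k-1,i)$ further degenerates at $k=0$ (where $\gr^0=V^0$). So before comparing, I would split each identity's stated range of $k$ accordingly, verify the stable-range formulas above on $0\le k<b-a$, handle the boundary value $k=b-a$ separately wherever it is included, and then check that the resulting piecewise descriptions still coincide on the nose. There is no finiteness or Noetherianity hypothesis to invoke --- the lemma is purely a statement about the sublattice of subspaces $A_i\cap B_j$ of $M$ --- so beyond keeping the index arithmetic honest there is no conceptual obstruction.
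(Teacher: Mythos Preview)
Your proposal is correct and follows essentially the same route as the paper: both arguments substitute the closed form $T^a_i\cap T^b_j=A_{\min(a+i,\,b+j)}\cap B_{\min(i,j)}$ into the defining subquotients of $V^k$, $U^k$, and $\gr^k$, then verify each identity degree by degree. The paper carries out the computations directly in the $T^\bullet$ notation without first extracting your ``stable range'' formulas, but the content is the same; your more explicit separation of the stable range from the boundary $k=b-a$ is, if anything, a slight expository improvement over the paper's more compressed chain of equalities.
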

\begin{proof}
  For $0\leq k\leq b-a$ and $0\leq c\leq k$, we have
\begin{align*}
  V^k_{T^a,T^b} &=\bigoplus_{i=0}^\infty \frac{T^a_{i+k}\cap T^b_i}{T^a_{i+k}\cap T^b_{i-1}}\\
  &=\bigoplus_{i=0}^\infty \frac{T^{a+c}_{i+k-c}\cap T^b_i}{T^{a+c}_{i+k-c}\cap T^b_{i-1}}  \\
  &=V^{k-c}_{T^{a+c},T^b}
\end{align*}
and for $k<b-a$, $0\leq c\leq w-b$,
\begin{align*}
  V^k_{T^a,T^b} &=\bigoplus_{i=0}^\infty \frac{T^a_{i+k}\cap T^b_i}{T^a_{i+k}\cap T^b_{i-1}}\\
  &=\bigoplus_{i=0}^\infty \frac{T^a_{i+k}\cap T^{b+c}_i}{T^a_{i+k}\cap T^{b+c}_{i-1}}\\
  &=V^k_{T^a,T^{b+c}}
\end{align*}
Likewise, for $0\leq k\leq (b-a)$ and $0\leq c\leq w-b$,
\begin{align*}
  U^k_{T^a,T^b} &=\bigoplus_{i=0}^\infty \frac{T^a_{i}\cap T^b_{i+k-(b-a)}}{T^a_{i-1}\cap T^b_{i+k-(b-a)}}\\
  &=\bigoplus_{i=0}^\infty \frac{T^a_{i}\cap T^{b+c}_{i+k-(b-a)}}{T^a_{i-1}\cap T^{b+c}_{i+k-(b-a)}}\\
  &=\bigoplus_{i=0}^\infty \frac{T^a_{i}\cap T^{b+c}_{i+k+c-((b+c)-a)}}{T^a_{i-1}\cap T^{b+c}_{i+k+c-((b+c)-a)}}\\
  &=U^{k+c}_{T^a,T^{b+c}}
\end{align*}
and if $k+c< b-a$
\begin{align*}
  U^k_{T^a,T^b} &=\bigoplus_{i=0}^\infty \frac{T^a_{i}\cap T^b_{i+k-(b-a)}}{T^a_{i-1}\cap T^b_{i+k-(b-a)}}\\
 &=\bigoplus_{i=0}^\infty \frac{T^{a+c}_{i-c}\cap T^b_{i+k-(b-a)}}{T^{a+c}_{i-c-1}\cap T^b_{i+k-(b-a)}}\\
  &=\bigoplus_{i=0}^\infty \frac{T^{a+c}_{i-c}\cap T^b_{i-c+k-(b-a-c)}}{T^{a+c}_{i-c-1}\cap T^b_{i-c+k-(b-a-c)}}\\
  &=U^{k}_{T^{a+c},T^b}(-c)
\end{align*}
As a consequence of the identifies on the $V^k_{T^a,T^b}$ modules (a completely analogous argument exists using the transformations of the $U^k_{T^a,T^b}$ modules instead), we get for $c<k$ that
\[
\gr^k_{T^a,T^b}(M) = \frac{V^k_{T^a,T^b}}{V^{k-1}_{T^a,T^b}}=\frac{V^{k-c}_{T^{a+c},T^b}}{V^{k-c}_{T^{a+c},T^b}}=\gr^{k-c}_{T^{a+c},T^b}(M)
  \]
and for $k<b-a$
  \[
\gr^k_{T^a,T^b}(M) = \frac{V^k_{T^a,T^b}}{V^{k-1}_{T^a,T^b}}=\frac{V^{k}_{T^{a},T^{b+c}}}{V^{k-1}_{T^{a},T^{b+c}}}=\gr^{k}_{T^{a},T^{b+c}}(M)
  \]
\end{proof}

We are now ready to prove that the Bernstein dimension and multiplicity of an $\FF$-module are filtration invariant. For the remainder of this section, $R=\F_p[x_1,\ldots,x_n]$, $\FF$ is the ring of Frobenius operators over $R$, $\B$ is the Bernstein filtration on $\FF$, and $\A=\gr_\B(\FF)$.

\begin{theorem}\label{intergreat}
Let $M$ be an $\FF$-module, with $\Sigma$ and $\Omega$ two great $\B$-compatible filtrations on $M$. We assume, possibly after reindexing, that there is some $w\in \N$ such that $\Sigma_i\subseteq\Omega_i\subseteq\Sigma_{i+w}$ for all $i$. Let $T^a_i=\Sigma_{i+a}\cap\Omega_i$ be the family of interpolating filtrations between $\Sigma$ and $\Omega$. Each $T^a$ is a great filtration, and the modules $\gr^k_{T^a,T^b}(M)$, $U^k_{T^a,T^b}$, and $V^k_{T^a,T^b}$ are finitely presented over $\A$.
\end{theorem}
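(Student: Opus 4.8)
The plan is to induct on the linear gap $w$, establishing simultaneously the stronger assertion that (a) every interpolating filtration $T^0=\Sigma, T^1,\dots,T^w=\Omega$ is great and (b) for all $0\le a\le b\le w$ the modules $V^k_{T^a,T^b}$, $U^k_{T^a,T^b}$, and $\gr^k_{T^a,T^b}(M)$ are finitely presented over $\A$. The only non-formal ingredient is the combination of Lemma \ref{uvfilt} with Corollary \ref{subfpisfp}: a graded $\A$-module that is at once a quotient of a finitely presented module and isomorphic to a graded submodule of a finitely presented module is itself finitely presented. Apart from this I will use only the standard homological algebra of finitely presented modules over an arbitrary ring: a quotient of a finitely presented module by a finitely generated submodule is finitely presented; the kernel of a surjection from a finitely generated module onto a finitely presented module is finitely generated (Schanuel's lemma); and an extension of two finitely presented modules is finitely presented.

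The base case $w\le 1$ carries the essential point. Writing $A=\Sigma$ and $B=\Omega$ with $A_i\subseteq B_i\subseteq A_{i+1}$, Lemma \ref{uvfilt} supplies a short exact sequence $0\to U^0_{A,B}\to\gr_A(M)\to\gr^0_{A,B}(M)\to 0$ in which $\gr^0_{A,B}(M)=V^0_{A,B}$ is a graded submodule of $\gr_B(M)$. Since $\gr_A(M)$ and $\gr_B(M)$ are finitely presented, $\gr^0_{A,B}(M)$ is finitely generated as a quotient of $\gr_A(M)$, hence finitely presented as a finitely generated submodule of $\gr_B(M)$; then $U^0_{A,B}$, the kernel of a surjection from $\gr_A(M)$ onto a finitely presented module, is finitely generated, hence finitely presented as a submodule of $\gr_A(M)$. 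As $\gr^1_{A,B}(M)=U^0_{A,B}(+1)$, $V^1_{A,B}=\gr_B(M)$, and $U^1_{A,B}=\gr_A(M)$, assertion (b) follows, and (a) is the hypothesis.

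For $w\ge 2$, assume the assertion for all smaller gaps. First, running the base-case argument on the sequence $0\to U^{w-1}_{\Sigma,\Omega}\to\gr_\Sigma(M)\to\gr^0_{\Sigma,\Omega}(M)\to 0$ of Lemma \ref{uvfilt} shows that $V^0_{\Sigma,\Omega}=\gr^0_{\Sigma,\Omega}(M)$ and $U^{w-1}_{\Sigma,\Omega}$ are finitely presented. Next I bootstrap $T^1$ to be great using only that $\Sigma$ and $\Omega$ are great: the pair $(\Sigma,T^1)$ has linear gap at most $1$, and Lemma \ref{filtransformations}(ii) and (iii) identify $V^0_{\Sigma,T^1}=V^0_{\Sigma,\Omega}$ and $U^0_{\Sigma,T^1}=U^{w-1}_{\Sigma,\Omega}$, both finitely presented; hence $\gr^1_{\Sigma,T^1}(M)=U^0_{\Sigma,T^1}(+1)$ is finitely presented and, by the extension sequence $0\to V^0_{\Sigma,T^1}\to\gr_{T^1}(M)\to\gr^1_{\Sigma,T^1}(M)\to 0$ of Lemma \ref{uvfilt}, so is $\gr_{T^1}(M)$. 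Thus $T^1$ is great. Now the pair $(T^1,\Omega)$ consists of two great filtrations of linear gap at most $w-1$, and one checks directly that its interpolating filtrations are exactly $T^1,\dots,T^w$; the inductive hypothesis therefore makes every $T^a$ great and establishes (b) for all pairs $(T^a,T^b)$ with $1\le a\le b\le w$, and — applied once more to each pair $(T^0,T^b)$ with $b<w$ — also for those. Finally, for the pair $(\Sigma,\Omega)=(T^0,T^w)$, Lemma \ref{filtransformations}(i) gives $V^k_{\Sigma,\Omega}=V^{k-1}_{T^1,\Omega}$ for $1\le k\le w$, so, with $V^0_{\Sigma,\Omega}$ already in hand, all $V^k_{\Sigma,\Omega}$ and hence all $\gr^k_{\Sigma,\Omega}(M)=V^k_{\Sigma,\Omega}/V^{k-1}_{\Sigma,\Omega}$ are finitely presented; dually Lemma \ref{filtransformations}(iv) gives $U^k_{\Sigma,\Omega}=U^k_{T^1,\Omega}(-1)$ for $0\le k\le w-2$, while $U^{w-1}_{\Sigma,\Omega}$ and $U^w_{\Sigma,\Omega}=\gr_\Sigma(M)$ are already known finitely presented. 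This closes the induction.

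The main obstacle is not any single computation but keeping the induction honest: the interpolating filtrations cannot be assumed great in advance, so the decisive step is the bootstrap of $T^1$ out of $\Sigma$ and $\Omega$ alone, which lowers the gap and lets induction finish the job. Everything else is bookkeeping with the index constraints in Lemma \ref{filtransformations}, and the sole module-theoretic input is Corollary \ref{subfpisfp}.
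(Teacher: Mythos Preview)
Your proof is correct and takes a genuinely different, more elementary route than the paper's. Both arguments induct on the gap $w$ and use Lemma~\ref{uvfilt} and Lemma~\ref{filtransformations} for the formal bookkeeping, but the engines differ. The paper runs both the base case and the bootstrap step through the structure-map machinery (the functors $\mF_\A$, the maps $\Theta_{(-)}$, and Theorems~\ref{thetaaction} and~\ref{thetafg}): it shows that the structural cokernels $C_{\gr^k}$ are finitely generated over $R$, deduces finite generation over $\A$, and only then invokes Corollary~\ref{subfpisfp}. You instead observe that Lemma~\ref{uvfilt} already exhibits $\gr^0_{A,B}(M)$ simultaneously as a quotient of $\gr_A(M)$ and a graded submodule of $\gr_B(M)$, so Corollary~\ref{subfpisfp} applies immediately and Schanuel finishes the base case with no appeal to $\Theta$ at all. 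The inductive bootstraps are symmetric (you make $T^1$ great, the paper makes $T^{w-1}$ great), but yours is again shorter: the identifications $V^0_{\Sigma,T^1}=V^0_{\Sigma,\Omega}$ and $U^0_{\Sigma,T^1}=U^{w-1}_{\Sigma,\Omega}$ from Lemma~\ref{filtransformations} let you assemble $\gr_{T^1}(M)$ directly as an extension of two modules you have already shown to be finitely presented, whereas the paper threads a longer argument through finite-length bounds on structural kernels before reaching the same conclusion. What your approach buys is that the filtration-comparison theorem becomes independent of the $\Theta$-calculus specific to $\A$; what the paper's approach buys is a uniform template that also drives the later characterization of great modules via $\theta_M$.
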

\begin{proof}
    We will prove these claims by induction on the linear gap $w$, beginning with the case $w=1$. As the only interpolating filtrations in this case are $T^0=\Sigma$ and $T^1=\Omega$ themselves, the claim about the greatness of interpolating filtrations is automatic. We have $U^0_{T^0,T^1}=\gr^1_{T^0,T^1}(M)(-1)$, $U^1_{T^0,T^1}=\gr_{T^0}(M)$ (which is finitely presented by hypothesis), $V^0_{T^0,T^1}=\gr^0_{T^0,T^1}(M)$, and $V^1_{T^0,T^1}=\gr_{T^1}(M)$ (also assumed to be finitely presented). So, we only need to show that $\gr^0_{T^0,T^1}(M)$ and $\gr^1_{T^0,T^1}(M)$ are finitely presented. We have homogeneous short exact sequences of graded $\A$-modules
\[
0\to \gr^0_{T^0,T^1}(M)\to \gr_{T^1}(M)\to \gr^1_{T^0,T^1}(M)\to 0
\]
\[
0\to \gr^1_{T^0,T^1}(M)(-1)\to \gr_{T^0}(M)\to \gr^0_{T^0,T^1}(M)\to 0
\]
By Lemma \ref{thetafacts}, these induce the following commutative diagrams (with exact rows) of homogeneous maps between graded $R$-modules:
\begin{center}{
  \begin{tikzcd}
    0\ar[r]& \displaystyle\mF_\A(\gr^0_{T^0,T^1}(M))\ar[r]\ar[d,"\Theta_{\gr^0_{T^0,T^1}(M)}"]& \mF_\A(\gr_{T^1}(M))\ar[d,"\Theta_{\gr_{T^1}(M)}"]\ar[r]& \mF_\A(\gr^1_{T^0,T^1}(M))\ar[d,"\Theta_{\gr^1_{T^0,T^1}(M)}"]\ar[r]& 0\\
    0\ar[r]& \gr^0_{T^0,T^1}(M)\ar[r]& \gr_{T^1}(M)\ar[r]& \gr^1_{T^0,T^1}(M)\ar[r]& 0
  \end{tikzcd}}
\end{center}
and
\begin{center}{
  \begin{tikzcd}
    0\ar[r]& \mF_\A(\gr^1_{T^0,T^1}(M))\ar[r]\ar[d,"\Theta_{\gr^1_{T^0,T^1}(M)}"]& \mF_\A(\gr_{T^0}(M))\ar[d,"\Theta_{\gr_{T^0}(M)}"]\ar[r]& \mF_\A(\gr^0_{T^0,T^1}(M))\ar[d,"\Theta_{\gr^0_{T^0,T^1}(M)}"]\ar[r]& 0\\
    0\ar[r]& \gr^1_{T^0,T^1}(M)\ar[r]& \gr_{T^0}(M)\ar[r]& \gr^0_{T^0,T^1}(M)\ar[r]& 0
  \end{tikzcd}}
\end{center}
Since $T^0$ and $T^1$ are great, Theorem \ref{thetaaction} implies that $C_{\gr_{T^0}(M)}$ and $C_{\gr_{T^1}(M)}$ are finitely generated $R$-modules, and thus, by the above diagrams, $C_{\gr^0_{T^0,T^1}(M)}$ and $C_{\gr^1_{T^0,T^1}(M)}$ are as well. By Theorem \ref{thetafg}\footnote{Since $\Sigma_i$ and $\Omega_j$ are always finite dimensional, and the $T^k$ filtrations are formed by intersections $\Sigma_{i+k}\cap\Omega_i$, with all $V^k_{T^a,T^b}$, $U^k_{T^a,T^b}$, and $\gr^k_{T^a,T^b}$ built from subquotients of these intersections, the hypotheses about all graded components being finite dimensional and vanishing in sufficiently large negative degrees are readily seen to hold for any of these modules.}, it follows that $\gr^0_{T^0,T^1}(M)$ and $\gr^1_{T^0,T^1}(M)$ are finitely generated over $\A$. The short exact sequences show them both embedding as graded submodules of finitely presented $\A$-modules, so Theorem \ref{subfpisfp} implies that they are themselves finitely presented. This completes the base case of the induction.

Fix $w\ge 2$, and suppose we already know that the claimed result of the theorem holds for any two great filtrations $A$ and $B$ on $M$ such that $A_i\subseteq B_i\subseteq A_{i+u}$ for all $i\in \N$, with $u$ fixed and strictly less than $w$. We are now given two great filtrations $\Sigma$ and $\Omega$ with $\Sigma_i\subseteq \Omega_i\subseteq \Sigma_{i+w}$, with various interpolating filtrations $T^k_i=\Sigma_{i+k}\cap \Omega_i$ for $0\leq k\leq w$. We claim that all finite presentation statements will follow if we can show that each $T^k$ for $0\leq k\leq w$ is a great filtration. Certainly, if each $T^k$ is assumed to be great, then for any $0\leq a<b\leq w$ with $b-a<w$, and any $0\leq k\leq b-a$, the induction hypothesis immediately applies to the modules $U^k_{T^a,T^b}$, $V^k_{T^a,T^b}$, and $\gr^k_{T^a,T^b}(M)$, since $T^a_i\subseteq T^b_i\subseteq T^a_{i+(b-a)}$ for all $i$ with $(b-a)<w$. The only modules to which our induction hypothesis does not obviously apply right away are $U^k_{T^0,T^w}$, $V^k_{T^0,T^w}$, and $\gr^k_{T^0,T^w}(M)$ for $0\leq k\leq w$. Under the transformations of Lemma \ref{filtransformations}, however, if $k\geq 1$, then $V^k_{T^0,T^w}=V^{k-1}_{T^{1},T^w}$ with $V^0_{T^0,T^w}=V^0_{T^0,T^{w-1}}$ while $U^k_{T^0,T^w}=U^{k-1}_{T^0,T^{w-1}}$ and $U^0_{T^0,T^w}=U^0_{T^1,T^w}(-1)$. All of these modules are finitely presented by induction. For $0\leq k<w$, we additionally have $\gr^k_{T^0,T^w}(M)=\gr^k_{T^0,T^{w-1}}(M)$ and $\gr^w_{T^0,T^w}(M)=U^0_{T^0,T^w}(+w)=U^0_{T^1,T^w}(+w-1)=\gr^{w-1}_{T^1,T^w}(M)$, and these too are finitely presented by induction.

So, it remains to show that each interpolating filtration $T^k$ is great. By induction, it is enough to show that $T^{w-1}$ is great, since we could then apply the inductive hypothesis to $T^0$ and $T^{w-1}$, which are separated by a strictly shorter linear gap ($T^0_i\subseteq T^{w-1}_i\subseteq T^0_{i+(w-1)}$ for all $i$) and whose interpolating filtrations are $T^0_{i+k}\cap T^{w-1}_i= T^0_{i+k}\cap T^w_i=T^{k}_i$ for $0\leq k\leq w-1$.

We have chains of homogeneous inclusions
\[
\gr^0_{T^0,T^w}=V^0_{T^0,T^w}\subseteq V^1_{T^0,T^w}\subseteq\cdots\subseteq V^w_{T^0,T^w}=\gr_{T^w}(M)
\]
and
\[
\gr^w_{T^0,T^w}(-w)=U^0_{T^0,T^w}\subseteq U^1_{T^0,T^w}\subseteq\cdots\subseteq U^w_{T^0,T^w}=\gr_{T^0}(M).
\]
The greatness of $T^0$ and $T^w$ imply that all the structural kernels $K_{U^k_{T^0,T^w}}$ and $K_{V^k_{T^0,T^w}}$ have finite length, including $K_{\gr^w_{T^0,T^w}(M)}$ and $K_{\gr^0_{T^0,T^w}(M)}$. The modules $\gr^0_{T^0,T^w}(M)$ and $\gr^w_{T^0,T^w}(M)$ are homomorphic images of $\gr_{T^0}(M)$ and $\gr_{T^w}(M)$, respectively, and hence, are finitely generated. But they are also submodules of the finitely presented modules $\gr_{T^w}(M)$ and $\gr_{T^0}(M)$, respectively, so finite generation implies their finite presentation, by Theorem \ref{subfpisfp}.

By Lemma \ref{filtransformations}, we have $V^{w-2}_{T^0,T^w}=V^{w-2}_{T^0,T^{w-1}}$, and by Lemma \ref{uvfilt}, there is a short exact sequence
\[
0\to V^{w-2}_{T^0,T^{w-1}}\to \gr_{T^{w-1}}(M) \to \gr^{w-1}_{T^0,T^{w-1}}(M)\to 0
\]
The module $\gr^{w-1}_{T^0,T^{w-1}}(M)=U^0_{T^0,T^{w-1}}(+w-1)$ is a graded submodule of $\gr_{T^0}(M)(+w-1)$, which is finitely presented. It follows that $K_{\gr^{w-1}_{T^0,T^{w-1}}(M)}$ has finite length, and since $V^{w-2}_{T^0,T^{w-1}}=V^{w-2}_{T^0,T^w}$ is a graded submodule of $\gr_{T^w}(M)$, we know that $K_{V^{w-2}_{T^0,T^{w-1}}}$ has finite length. The above exact sequnce implies that $K_{\gr_{T^{w-1}}(M)}$ has finite length as well. We now consider the short exact sequences
\[
0\to \gr^0_{T^{w-1},T^w}(M)\to \gr_{T^w}(M)\to \gr^1_{T^{w-1},T^w}(M)\to 0
\]
\[
0\to \gr^1_{T^{w-1},T^w}(M)(-1)\to \gr_{T^{w-1}}(M)\to \gr^0_{T^{w-1},T^w}(M)\to 0
\]
Since $K_{\gr_{T^{w-1}}(M)}$ and $K_{\gr_{T^{w}}(M)}$ have finite length, we see that $K_{\gr^0_{T^{w-1},T^w}(M)}$ and $K_{\gr^1_{T^{w-1},T^w}(M)}$ also have finite length. Using the finite generation of $C_{\gr_{T^w}(M)}$ along with the exact sequence
\[
K_{\gr^1_{T^{w-1},T^w}(M)}\to C_{\gr^0_{T^{w-1},T^w}(M)}\to C_{\gr_{T^w}(M)}\to C_{\gr^1_{T^{w-1},T^w}(M)}\to 0
\]
we conclude that $C_{\gr^0_{T^{w-1},T^w}(M)}$ and $C_{\gr^1_{T^w}(M)}$ are finitely generated over $R$. By Theorem \ref{thetafg}, it follows that $\gr^0_{T^{w-1},T^w}(M)$ and $\gr^1_{T^{w-1},T^w}(M)$ are finitely generated over $\A$. The module $\gr^1_{T^{w-1},T^w}(M)$ is a quotient of the finitely presented module $\gr_{T^w}(M)$ by the finitely generated submodule $\gr^0_{T^{w-1},T^w}(M)$, so it must itself be finitely presented. The module $\gr^0_{T^{w-1},T^w}(M)$ is a finitely generated submodule of the finitely presented module $\gr_{T^w}(M)$, so it is finitely presented by Theorem \ref{subfpisfp}. But then the second of our short exact sequences places $\gr_{T^{w-1}}(M)$ between two finitely presented $\A$-modules, $\gr^0_{T^{w-1},T^w}(M)$ and $\gr^1_{T^{w-1},T^w}(M)$, so $\gr_{T^{w-1}}(M)$ is itself finitely presented, i.e., $T^{w-1}$ is a great filtration, as desired.
\end{proof}

\begin{dfn}
    Let $\FF=\F_p[x_1,\ldots,x_n]\langle F\rangle$, let $\B_i$ be the Bernstein filtration on $\FF$, and let $M$ be a left $\FF$-module. Call $M$ \textit{great} if there exists a great $\B$-compatible filtration on $M$.
  \end{dfn}

By Lemma \ref{greatfp}, a great $\FF$-module is automatically finitely presented over $\FF$. We will study great $\FF$-modules in more detail in the next section. For now, we observe the following consequence of Theorem \ref{intergreat}.

\begin{theorem}\label{welldefined}
Let $M$ be a great $\FF$-module, with $\Sigma$ and $\Omega$ two great filtrations on $M$. Then $\delta(\gr_\Sigma(M))=\delta(\gr_\Omega(M))$ and $e(\gr_\Sigma(M))=e(\gr_\Omega(M))$.
\end{theorem}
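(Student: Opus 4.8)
The plan is to reduce everything to the machinery already established in Theorem \ref{intergreat}, together with the additivity of dimension and multiplicity under short exact sequences (Theorem \ref{edcalc}). First I would apply Lemma \ref{goodfiltsep} to the two good filtrations $\Sigma$ and $\Omega$ on $M$: after reindexing one of them (which affects neither $\delta$ nor $e$ of the associated graded, since these are Serre-twist invariant), I may assume $\Sigma_i \subseteq \Omega_i \subseteq \Sigma_{i+w}$ for some $w \in \N$. Now the whole double-filtration apparatus of Section 4 is available: the interpolating filtrations $T^k_i = \Sigma_{i+k} \cap \Omega_i$ for $0 \leq k \leq w$, the modules $V^k_{T^a,T^b}$, $U^k_{T^a,T^b}$, $\gr^k_{T^a,T^b}(M)$, and crucially Theorem \ref{intergreat}, which tells us every $T^k$ is a great filtration and all of these auxiliary $\A$-modules are finitely presented, so that $\delta$ and $e$ are defined for each of them.

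The core of the argument is then a telescoping/induction on $w$. The base case $w = 1$: Lemma \ref{uvfilt} gives the two homogeneous short exact sequences
\[
0 \to \gr^0_{\Sigma,\Omega}(M) \to \gr_\Omega(M) \to \gr^1_{\Sigma,\Omega}(M) \to 0,
\qquad
0 \to \gr^1_{\Sigma,\Omega}(M)(-1) \to \gr_\Sigma(M) \to \gr^0_{\Sigma,\Omega}(M) \to 0.
\]
Applying Theorem \ref{edcalc} to each (and using twist-invariance of $\delta$ and $e$), both $\delta(\gr_\Omega(M))$ and $\delta(\gr_\Sigma(M))$ equal $\max\{\delta(\gr^0_{\Sigma,\Omega}(M)), \delta(\gr^1_{\Sigma,\Omega}(M))\}$, and in every one of the three cases of the multiplicity formula one checks that both $e(\gr_\Omega(M))$ and $e(\gr_\Sigma(M))$ equal $e(\gr^0_{\Sigma,\Omega}(M)) + e(\gr^1_{\Sigma,\Omega}(M))$ — here one must be slightly careful: if $\delta(\gr^0) \neq \delta(\gr^1)$ then the larger one contributes to both sides, and if they are equal both contribute to both sides; either way the two expressions agree. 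Thus $\delta(\gr_\Sigma(M)) = \delta(\gr_\Omega(M))$ and $e(\gr_\Sigma(M)) = e(\gr_\Omega(M))$ when $w = 1$.

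For the inductive step with $w \geq 2$, I would compare $\Sigma = T^0$ and $\Omega = T^w$ through the chain $T^0, T^1, \ldots, T^w$. Each consecutive pair $T^{k}, T^{k+1}$ is separated by linear gap $1$ (indeed $T^k_i \subseteq T^{k+1}_i \subseteq T^k_{i+1}$), and both are great by Theorem \ref{intergreat}, so the $w = 1$ case gives $\delta(\gr_{T^k}(M)) = \delta(\gr_{T^{k+1}}(M))$ and $e(\gr_{T^k}(M)) = e(\gr_{T^{k+1}}(M))$ for each $k$. Chaining these equalities from $k = 0$ to $k = w-1$ yields $\delta(\gr_{T^0}(M)) = \delta(\gr_{T^w}(M))$ and $e(\gr_{T^0}(M)) = e(\gr_{T^w}(M))$, i.e. the desired statement for $\Sigma$ and $\Omega$. (So in fact no separate induction on $w$ inside this proof is needed — the induction was already discharged inside Theorem \ref{intergreat}; here one just needs that all the $T^k$ are great, which that theorem provides, plus the telescoping of the gap-$1$ case.)

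The main obstacle I anticipate is purely bookkeeping: verifying that the multiplicity formula of Theorem \ref{edcalc}, applied to the two short exact sequences in the gap-$1$ case, really does yield the \emph{same} value $e(\gr^0) + e(\gr^1)$ on both sides in all the sub-cases determined by the comparison between $\delta(\gr^0)$ and $\delta(\gr^1)$ — together with tracking the harmless Serre twists $(-1)$ and $(+w)$ appearing in Lemma \ref{uvfilt}, which one must confirm affect neither $\delta$ nor $e$ (this is the remark immediately after Corollary \ref{dimfromhs} / the line noting twist-invariance before Theorem \ref{edcalc}). There is no serious conceptual difficulty remaining once Theorem \ref{intergreat} is in hand; the content is entirely in the finite-presentation statements already proved there.
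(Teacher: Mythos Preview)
Your proposal is correct and follows essentially the same route as the paper's own proof: reindex via Lemma \ref{goodfiltsep}, invoke Theorem \ref{intergreat} to make all interpolating filtrations $T^k$ great, reduce to consecutive pairs $T^k,T^{k+1}$ with linear gap $1$, and settle that case via the two short exact sequences of Lemma \ref{uvfilt} together with Theorem \ref{edcalc} and twist-invariance. Your parenthetical observation that no separate induction on $w$ is needed here (it having been absorbed into Theorem \ref{intergreat}) is exactly how the paper organizes it as well.
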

\begin{proof}
  Reindex as necessary in order to assume that there is a $w\in \N$ such that $\Sigma_i\subseteq \Omega_i\subseteq \Sigma_{i+w}$ for all $i$. By Theorem \ref{intergreat}, each interpolating filtration $T^k_i=\Sigma_{i+k}\cap \Omega_i$ for $0\leq k\leq w$ is great, so it is enough to show that if $A$ and $B$ are great filtrations satisfying $A_i\subseteq B_i\subseteq A_{i+1}$ for all $i$, then $\delta(\gr_A(M))=\delta(\gr_B(M))$ and $e(\gr_A(M))=e(\gr_B(M))$. Given two such great filtrations $A$ and $B$, we have the following homogeneous short exact sequences of finitely presented graded $\A$-modules
  \[
0\to \gr^0_{A,B}(M)\to \gr_{B}(M)\to \gr^1_{A,B}(M)\to 0
\]
\[
0\to \gr^1_{A,B}(M)(-1)\to \gr_{A}(M)\to \gr^0_{A,B}(M)\to 0
\]
By Theorem \ref{edcalc}, 
\[
\delta(\gr_B(M))=\max\{\delta(\gr^0_{A,B}(M),\gr^1_{A,B}(M)\}=\delta(\gr_A(M))
\]
and
  \[
e(\gr_A(M))=e(\gr_B(M))=\begin{cases}
    e(\gr^0_{A,B}(M)) & \delta(\gr^0_{A,B}(M))>\delta(\gr^1_{A,B}(M))\\
    e(\gr^0_{A,B}(M))+e(\gr^1_{A,B}(M)) & \delta(\gr^0_{A,B}(M))=\delta(\gr^1_{A,B}(M))\\
    e(\gr^1_{A,B}(M)) & \delta(\gr^0_{A,B}(M))<\delta(\gr^1_{A,B}(M))
\end{cases}
\]
\end{proof}

\begin{dfn}
Let $M$ be a great $\FF$-module. The \textit{Bernstein dimension} and \textit{multiplicity} of $M$ are defined to be the values $\delta(\gr_\Omega(M))$ and $e(\gr_\Omega(M))$, respectively, obtained from some (equivalently, any) great filtration $\Omega$ on $M$
\end{dfn}





\section{The dimension and multiplicity of an $\FF$-module}

Throughout this section, $R=\F_p[x_1,\ldots,x_n]$, $\FF$ is the ring of Frobenius operators over $R$, $\B$ is the Bernstein $\N$-filtration on $\FF$, and all filtrations on $\FF$-modules are assumed to be $\B$-compatible and indexed by $\N$.

Let $M$ be an $R$-module, and let $\mathcal{F}_R(M)$ denote the base change of $M$ along the Frobenius homomorphism $F:R\to R$. As an abelian group $\mathcal{F}_R(M)$ can be described as a direct sum of copies of $M$, namely, $\mathcal{F}_R(M)=\bigoplus_{v\in\N_{<p}^n} M\varepsilon_v$ where $\varepsilon_v$ denotes a formal basis vector. The structure of $\mathcal{F}_R(M)$ as an $R$-module is given by
\[
x_i\cdot (m\varepsilon_v):=
\begin{cases}
  m\varepsilon_{v+e_i} & \text{ if } v_i<p-1\\
  x_im\varepsilon_{v-(p-1)e_i}& \text{ if } v_i=p-1
\end{cases}
\]
where $e_i$ denote the $i$th standard basis vector of $\N^n$. Since $R$ is a polynomial ring, we are, of course, in the setting where $\mF_R(-)$ is an exact functor.
\begin{dfn}
  Let $M$ be an $\FF$-module. The map $\theta_M:\mathcal{F}_R(M)\to M$ defined by $\theta_M(m\varepsilon_v)=x^vF(m)$ for all $v\in\N_{<p}^n$ is called the \textit{structure morphism} of $M$. If $\theta_M$ is an isomorphism, $M$ is called \textit{unit} (see \cite{lyufmod}, \cite{emertonkisin}, \cite{blicklethesis}).
\end{dfn}
It is not hard to see that $\theta_M$ is $R$-linear. In fact, if $\theta:\mathcal{F}_R(M)\to M$ is any $R$-linear map whatsoever, then the action $F(m):=\theta(m\varepsilon)$ makes $M$ into an $\FF$-module with structure map $\theta_M=\theta$.

The proof of the following straightforward lemma is essentially identical to the analogous statement in Theorem \ref{thetaaction}(i).
\begin{lem}
If $M$ is a finitely generated $\FF$-module, then $\Coker(\theta_M)$ is a finitely generated $R$-module. In fact, the images of any $\FF$-module generating set for $M$ form an $R$-module generating set for $\Coker(\theta_M)$.
\end{lem}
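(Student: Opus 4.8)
The statement to prove is that if $M$ is a finitely generated $\FF$-module, then $\Coker(\theta_M)$ is a finitely generated $R$-module, and in fact the images of any $\FF$-generating set of $M$ furnish an $R$-generating set of $\Coker(\theta_M)$. This is the module-theoretic analogue of Theorem \ref{thetaaction}(i), and the plan is to imitate that argument verbatim, replacing the graded ring $\A$ with $\FF$ and the structure map $\Theta_N$ with $\theta_M$. The key algebraic fact driving everything is that $\text{Im}(\theta_M)$ contains $x^v F(m)$ for every $v\in\N^n_{<p}$ and every $m\in M$, hence contains $w\cdot m'$ for \emph{every} monomial $w$ in the $x$'s and $F$ of positive $F$-order applied to any $m'\in M$: indeed any such monomial can be written (using $x_i^pF=Fx_i$ in $\FF$) in the form $x^v F(m')$ with $v\in\N^n_{<p}$, after absorbing the excess powers of each $x_i$ past $p-1$ through $F$ into $m'$.

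\textbf{Steps.} First I would fix an $\FF$-module generating set $g_1,\dots,g_t$ of $M$, so that every element of $M$ is an $\FF$-linear combination $\sum_j c_j g_j$ with $c_j\in\FF$. Writing each $c_j$ as an $\F_p$-linear combination of Frobenius monomials, I split $c_j = r_j + c_j'$ where $r_j\in R$ collects the $F$-order-zero part and $c_j'$ collects the terms of positive $F$-order. Second, I observe that $c_j' g_j\in\text{Im}(\theta_M)$: each monomial appearing in $c_j'$ has the form (monomial in $x$)$\cdot F\cdot$(word), and since $x_i^pF = Fx_i$ in $\FF$, it can be rewritten as $x^{v}F(m)$ for some $v\in\N^n_{<p}$ and some $m\in M$ obtained by acting on $g_j$, so it lies in $\text{Im}(\theta_M)$ by the very definition of $\theta_M$. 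Therefore modulo $\text{Im}(\theta_M)$, any element of $M$ is congruent to $\sum_j r_j g_j$ with $r_j\in R$. Third, this says precisely that the images $\overline{g_1},\dots,\overline{g_t}$ of $g_1,\dots,g_t$ in $\Coker(\theta_M)$ generate it as an $R$-module, which gives both assertions at once.

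\textbf{Main obstacle.} There is essentially no obstacle here beyond bookkeeping: the only point that needs a moment of care is the rewriting of an arbitrary Frobenius monomial of positive $F$-order as $x^v F(m)$ with $v\in\N^n_{<p}$, i.e.\ that $\text{Im}(\theta_M)$ is closed under left multiplication by $\FF$ in the relevant sense — but this is immediate from the relations $x_i^pF=Fx_i$ defining $\FF$, exactly as in the $\A$ case where one uses $x_i^pf=0$ to reduce to $v\in\N^n_{<p}$. The argument is word-for-word parallel to the proof of Theorem \ref{thetaaction}(i), which is why the lemma is stated without an independent proof. I would simply remark that the proof is identical to that of Theorem \ref{thetaaction}(i), reading $\FF$ for $\A$ and $\theta_M$ for $\Theta_N$.
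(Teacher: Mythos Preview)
Your proposal is correct and essentially identical to the paper's own proof: both argue that for any monomial $c\in\FF$ of positive $F$-order and any generator $g_j$, the element $cg_j$ lies in $\text{Im}(\theta_M)$, so modulo $\text{Im}(\theta_M)$ only the $R$-span of the generators survives. Your explicit rewriting of $c$ as $x^vF(\text{rest})$ with $v\in\N_{<p}^n$ via the relation $x_i^pF=Fx_i$ makes one step more visible than the paper does, but the argument is the same.
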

\begin{proof}
Let $m_1,\ldots,m_t$ be $\FF$-module generators for $R$. The cokernel $\Coker(\theta_M)$ is spanned by the images mod $\text{Im}(\theta_M)$ of elements of the form $cm_1,\ldots,cm_t$ where $c$ ranges over all monomials of $\FF$. However, if $c$ has positive $F$-order, then $cm_i$ is in the image of $\theta_M$, so the only elements $cm_i$ with nonzero image in $\Coker(\theta_M)$ are those in the span of $m_1,\ldots,m_t$.
\end{proof}

If $M$ is an $\FF$-module, we can make $\mathcal{F}_R(M)$ into an $\FF$-module as well, though $\theta_M:\mF_R(M)\to M$ is not $\FF$-linear. The action of $F$ on $\mathcal{F}_R(M)$ is given by
\[
F\cdot (m\varepsilon_v):=(x^vFm)\varepsilon_0
\]
If $\Omega$ is a $\B$-compatible filtration on $M$, then the Schreyer pullback filtration $\Omega^*_i:=\bigoplus_{v\in \N_{<p}^n}\Omega_{i-|v|-1}\varepsilon_v$ is a $\B$-compatible filtration with respect to the $\FF$-module structure we've given $\mathcal{F}_R(M)$, and the map $\theta_M$ clearly satisfies $\theta_M(\Omega^*_i)\subseteq \Omega_i$. 
\begin{prop}\label{grfrobcomp}
  Let $\Omega$ be a $\B$-compatible filtration on an $\FF$-module $M$, and let $\Omega^*$ be the Schreyer pullback filtration on the $\FF$-module $\mF_R(M)$.
  \begin{enumerate}
    \item There is an isomorphism of graded $R$-modules
\[
\gr_{\Omega^*}(\mF_R(M))\xrightarrow{\sim}\mF_\A(\gr_\Omega(M))
\]
where $\mF_\A(\gr_\Omega(M))$ is the module introduced in Definition \ref{mfaction}. 
\item Under the identification $\gr_{\Omega^*}(\mF_R(M))=\mF_\A(\gr_\Omega(M))$, it holds that $\gr(\theta_M)=\Theta_{\gr_\Omega(M)}$, where $\Theta_{\gr_\Omega(M)}$ is the structure map of Definition \ref{grstruct}. That is, the diagram below commutes.
  \begin{center}
    \begin{tikzcd}
      \gr_{\Omega^*}(\mF_R(M))\ar[d,"\text{\rotatebox{90}{$\sim$}}"']\ar[r,"\gr(\theta_M)"]&\gr_\Omega(M)\ar[d,equals]\\
\mF_\A(\gr_\Omega(M))\ar[r,"\Theta_{\gr_\Omega(M)}"]&\gr_\Omega(M)
\end{tikzcd}
\end{center}
\end{enumerate}
\end{prop}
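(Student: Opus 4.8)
The plan is to prove the two statements together by building the graded isomorphism in (1) concretely and then checking that it intertwines $\gr(\theta_M)$ with $\Theta_{\gr_\Omega(M)}$, which makes (2) essentially a bookkeeping verification once (1) is set up correctly. The main work is in (1); part (2) will follow almost immediately from the explicit description of the map.

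\textbf{Step 1: Identify the graded pieces of $\gr_{\Omega^*}(\mF_R(M))$.} Recall $\Omega^*_i = \bigoplus_{v\in\N_{<p}^n}\Omega_{i-|v|-1}\varepsilon_v$. Since this is a direct sum over $v$ of (Serre-twisted) copies of the filtration $\Omega$, the filtration quotients split accordingly:
\[
\Omega^*_i/\Omega^*_{i-1} \;=\; \bigoplus_{v\in\N_{<p}^n} \bigl(\Omega_{i-|v|-1}/\Omega_{i-|v|-2}\bigr)\varepsilon_v.
\]
Summing over $i$ gives a graded $\F_p$-vector space isomorphism $\gr_{\Omega^*}(\mF_R(M)) \cong \bigoplus_{v\in\N_{<p}^n}\gr_\Omega(M)(-|v|-1)\varepsilon_v$, which is precisely the underlying graded vector space of $\mF_\A(\gr_\Omega(M))$ from Definition \ref{mfaction}. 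Concretely, the map sends the symbol $[m\varepsilon_v]^{\Omega^*}_i$ (for $m\in\Omega_{i-|v|-1}$) to $[m]^\Omega_{i-|v|-1}\varepsilon_v$; one checks this is well-defined because $\Omega^*_{i-1}\cap(\Omega_{i-|v|-1}\varepsilon_v) = \Omega_{i-|v|-2}\varepsilon_v$, so the symbol of $m\varepsilon_v$ in the $\Omega^*$ filtration records exactly the symbol of $m$ in $\Omega$ on the $v$-th summand.

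\textbf{Step 2: Check $R$-linearity.} One must verify the displayed $\F_p$-linear isomorphism respects the $R$-actions. On $\mF_R(M)$, the $\FF$-module structure has $x_i\cdot(m\varepsilon_v) = m\varepsilon_{v+e_i}$ when $v_i<p-1$ and $x_i\cdot(m\varepsilon_v)=(x_im)\varepsilon_{v-(p-1)e_i}$ when $v_i=p-1$. Passing to symbols: $x_i$ has $\B$-complexity $1$, so it raises $\Omega^*$-degree by $1$, and in $\gr_{\Omega^*}(\mF_R(M))$ the induced action sends $[m\varepsilon_v]$ to $[m\varepsilon_{v+e_i}]$ if $v_i<p-1$. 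Crucially, when $v_i=p-1$ the element $x_i\cdot(m\varepsilon_v)=(x_im)\varepsilon_{v-(p-1)e_i}$ lies in $\Omega^*_{i+1}$ but in fact already in $\Omega^*_{i}$ — because $m\varepsilon_v\in\Omega^*_i$ means $m\in\Omega_{i-|v|-1}$, hence $x_im\in\Omega_{i-|v|}$, and $|v-(p-1)e_i| = |v|-(p-1)$, so $(x_im)\varepsilon_{v-(p-1)e_i}\in\Omega^*_{(i-|v|)+|v-(p-1)e_i|+1} = \Omega^*_{i-(p-1)+1}\subseteq\Omega^*_i$ (using $p\geq 2$) — and therefore its $\Omega^*$-degree is strictly less than $i+1$, so it maps to $0$ in degree $i+1$ of the associated graded. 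This matches exactly the $R$-action on $\mF_\A(\gr_\Omega(M))$ in Definition \ref{mfaction}, where $x_i\cdot(g\varepsilon_v)=g\varepsilon_{v+e_i}$ for $v_i<p-1$ and $=0$ for $v_i=p-1$. This vanishing phenomenon — that raising an exponent to $p$ drops complexity in $\A$ — is the one place where the Frobenius structure genuinely enters, and I expect it to be the subtle point of the argument; everything else is formal.

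\textbf{Step 3: Identify $\gr(\theta_M)$ with $\Theta_{\gr_\Omega(M)}$.} Since $\theta_M(\Omega^*_i)\subseteq\Omega_i$, the map $\theta_M$ induces a graded map $\gr(\theta_M):\gr_{\Omega^*}(\mF_R(M))\to\gr_\Omega(M)$. On a symbol $[m\varepsilon_v]^{\Omega^*}_i$ with $m\in\Omega_{i-|v|-1}$, we have $\theta_M(m\varepsilon_v) = x^vF(m)$, and since $x^vF$ has $\B$-complexity $|v|+1$ we get $x^vF(m)\in\Omega_i$, with $\gr(\theta_M)$ sending $[m\varepsilon_v]^{\Omega^*}_i$ to $[x^vF(m)]^\Omega_i = [x^vf]^\A_{|v|+1}\cdot[m]^\Omega_{i-|v|-1}$. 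Under the identification of Step 1, this is exactly $\Theta_{\gr_\Omega(M)}$ evaluated on $[m]^\Omega_{i-|v|-1}\varepsilon_v$, by the defining formula $\Theta_{N}(g\varepsilon_v)=x^vfg$ from Definition \ref{grstruct}. Hence the square commutes, proving (2). I would present Steps 1–2 as the proof of part (1) and Step 3 as the proof of part (2), noting that the well-definedness checks in Steps 1 and 2 are where all the content lies and the rest is formal.
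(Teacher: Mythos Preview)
Your proposal is correct and follows essentially the same approach as the paper's proof: both define the isomorphism explicitly on symbols $[m\varepsilon_v]^{\Omega^*}_i\mapsto [m]^\Omega_{i-|v|-1}\varepsilon_v$, verify $R$-linearity by the same case split on $v_i<p-1$ versus $v_i=p-1$ (with the key degree-drop computation in the latter case), and then compute $\gr(\theta_M)$ directly on symbols to identify it with $\Theta_{\gr_\Omega(M)}$. Your Step~2 is in fact a bit more carefully written than the paper's version of the same computation.
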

\begin{proof}
  (i) For $m\in M$ of $\Omega$-degree $d$ and any $v\in\N_{<p}^n$, we have $\deg_{\Omega^*}(m\varepsilon_v)=d+|v|+1$. The map $\gr_{\Omega^*}(\mF_R(M))\to \mF_\A(\gr_\Omega(M))$ defined on each component by $[m\varepsilon_v]^{\Omega^*}_{d+|v|+1}\to [m]^\Omega_d\varepsilon_v$ is easily seen to be an isomorphism at the level of graded vector spaces. Concerning $R$-actions, if $v_i=p-1$, then
  \[
  \deg_{\Omega^*}(x_i\cdot m\varepsilon_v)=\deg_{\Omega^*}(x_im\varepsilon_{v-(p-1)e_i})=(d+1)-p
  \]
  so that $[x_i]^\B_1\cdot [m\varepsilon_v]^{\Omega^*}_d=0$, in exact agreement the action of $x_i$ on $\sigma^\Omega(m)\varepsilon_v$ in $\mF_\A(M)$. If $v_i<p-1$, on the other hand, then
  \[
  [x_i]^{\B}_1 [m\varepsilon_v]^{\Omega^*}_d=[m\varepsilon_{v+e_i}]^{\Omega^*}_{d+1}=\sigma^\Omega(m)\varepsilon_{v+e_i}
  \]
  which again agrees with the action of $x_i$ on $\sigma^\Omega(m)\varepsilon_v$ in $\mF_\A(M)$.

  (ii) The map induced by $\theta_M$ taking $\gr_{\Omega^*}(\mF_R(M))\to \gr_\Omega(M)$ is
\[
\gr(\theta_M)([m\varepsilon_v]^{\Omega^*}_i)=[\theta_M(m\varepsilon_v)]^\Omega_i = [x^vFm]^\Omega_i=x^vf[m]^\Omega_{i-|v|-1}
\]
Under the identification $\gr_{\Omega^*}(\mathcal{F}_R(M))=\mF_\A(\gr_\Omega(M))$, we recognize the above map at once as the structure morphism of $\gr_\Omega(M)$
  \end{proof}

It will be useful to understand how the $\Omega$ and $\Omega^*$ filtrations interact with the standard degree filtration $\mD$ on $R=\F_p[x_1,\ldots,x_n]$. It is certainly true that $\mD_d\cdot \Omega^*_i\subseteq \Omega^*_{i+d}$, but for $d\gg 0$, we can say something stronger.

\begin{lem}\label{domegastar}
Let $\mD$ be the standard $\N$-filtration on $R=\F_p[x_1,\ldots,x_n]$ by degree. Let $\Omega$ be a $\B$-compatible filtration on an $\FF$-module $M$, and let $\Omega^*$ be the Schreyer pullback filtration on $\mF_R(M)$. For all $d\in \N$, it holds that
  \[
    \mD_d\cdot \Omega^*_i\subseteq \Omega^*_{i+\lfloor d/p\rfloor + n(p+1)}
  \]
\end{lem}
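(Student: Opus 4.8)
The plan is to reduce the statement to a single monomial acting on a single ``pure'' element $m\varepsilon_v$ and then carry out a base-$p$ carrying computation. By $\F_p$-bilinearity it is enough to show $x^a\cdot(m\varepsilon_v)\in\Omega^*_{i+\lfloor d/p\rfloor+n(p+1)}$ whenever $x^a$ is a monomial of degree $|a|\le d$ and $m\in\Omega_{i-|v|-1}$ (with the convention $\Omega_j=0$ for $j<0$), since such $x^a$ span $\mD_d$ and such $m\varepsilon_v$ span $\Omega^*_i$. The main step is to establish the carry formula: applying $x_i$ to $m\varepsilon_v$ increments the $i$th coordinate of $v$, except that when $v_i=p-1$ it resets that coordinate to $0$ and multiplies $m$ by $x_i$; iterating this one variable at a time (by induction on $|a|$) gives
\[
x^a\cdot(m\varepsilon_v)=(x^c m)\,\varepsilon_w,\qquad c_i=\lfloor (v_i+a_i)/p\rfloor,\quad w_i=(v_i+a_i)\bmod p,
\]
so that $w\in\N_{<p}^n$ and $|w|=|v|+|a|-p|c|$.

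The next step is to estimate the $\Omega^*$-degree of the right-hand side. The monomial $x^c$ has complexity $|c|$, hence lies in $\B_{|c|}$, so $\B$-compatibility of $\Omega$ gives $x^c m\in\B_{|c|}\Omega_{i-|v|-1}\subseteq\Omega_{i-|v|-1+|c|}$. By the definition of the Schreyer pullback filtration $\Omega^*$, this means $(x^c m)\varepsilon_w\in\Omega^*_j$ for $j=(i-|v|-1+|c|)+|w|+1$, which, after substituting $|w|=|v|+|a|-p|c|$, simplifies to $j=i+|c|+|w|-|v|=i+|a|-(p-1)|c|$.

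Finally comes the arithmetic: one writes $|a|-(p-1)|c|=\sum_i\big(\lfloor(v_i+a_i)/p\rfloor+((v_i+a_i)\bmod p)-v_i\big)$ and bounds each summand by $\tfrac{p-1+a_i}{p}+(p-1)$, obtaining $|a|-(p-1)|c|\le\tfrac{n(p-1)+|a|}{p}+n(p-1)$; plugging in $|a|\le d$ and using $d/p<\lfloor d/p\rfloor+1$ together with $n(p-1)/p<n$ and $n\ge1$ shows this real quantity is strictly less than $\lfloor d/p\rfloor+n(p+1)$, so the integer $|a|-(p-1)|c|$ is at most $\lfloor d/p\rfloor+n(p+1)$. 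This gives $x^a\cdot(m\varepsilon_v)\in\Omega^*_{i+\lfloor d/p\rfloor+n(p+1)}$, and hence the lemma. I do not expect a serious obstacle: the only genuine content is the carry formula, after which the work is careful bookkeeping of the $\pm1$ in the Schreyer shift $|w|+1$, and the constant $n(p+1)$ is deliberately wasteful (the estimates actually yield $\lfloor d/p\rfloor+n(p-1)+1$), leaving slack in the final inequality.
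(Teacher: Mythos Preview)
Your proof is correct and follows essentially the same route as the paper: reduce to a monomial $x^a$ acting on a generator $m\varepsilon_v$, compute the result via the base-$p$ carry formula, and then bound the resulting $\Omega^*$-degree. The paper writes the carry as $x^{c(\alpha)+c(r(\alpha)+v)}m\,\varepsilon_{r(\alpha+v)}$ and bounds the three pieces $|c(\alpha)|\le\lfloor|\alpha|/p\rfloor$, $|c(r(\alpha)+v)|\le 2n$, $|r(\alpha+v)|\le n(p-1)$ separately; your packaging via $c_i=\lfloor(v_i+a_i)/p\rfloor$, $w_i=(v_i+a_i)\bmod p$ is the same computation written more compactly, and your final arithmetic (noting the bound is actually closer to $\lfloor d/p\rfloor+n(p-1)+1$) is slightly sharper but leads to the same conclusion.
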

\begin{proof}
 Given a vector $v\in \N^n$, let $r(v)\in \N_{<p}^n$ and $c(v)\in\N^n$ be defined by $v=r(v)+pc(v)$ with $0\leq r(v)_i<p$ for all $1\leq i\leq n$. Let $x^\alpha$ be a monomial of degree at most $d$. For $m\in M$ and $v\in\N_{<p}^n$, we have
  \[
  x^\alpha\cdot m\varepsilon_v = (x^{p c(\alpha)}x^{r(\alpha)} x^v)\cdot (m\varepsilon_0) = (x^{c(\alpha)+c(r(\alpha)+v)}m)\varepsilon_{r(\alpha+v)}
  \]
  so that
  \begin{align*}
    \deg_{\Omega^*}(x^\alpha\cdot m\varepsilon_v)&=|c(\alpha)|+|c(r(\alpha)+v)|+\deg_\Omega(m)+|r(\alpha+v)|+1\\
    &=\deg_{\Omega^*}(m) -|v|+|c(\alpha)|+|c(r(\alpha)+v)|+|r(\alpha+v)|\\
    &\leq \deg_{\Omega^*}(m) +\lfloor\,|\alpha|/p\,\rfloor+|c(r(\alpha)+v)|+|r(\alpha+v)|
  \end{align*}
  The result follows at once upon bounding  $|r(\alpha+v)|\leq n(p-1)$ and
  \[
  |c(r(\alpha)+v)|\leq \lfloor |r(\alpha)+v|/p\,\rfloor\leq \lfloor (2n(p-1))/p\,\rfloor\leq 2n
  \]
\end{proof}
If $\Omega^*$ is the Schreyer pullback filtration on $\mF_R(M)$, then the image of $\Omega$ defines a $\mD$-compatible filtration $\Omega_1:=\Omega/\Omega\cap\text{Im}(\theta_M)$ on $\Coker(\theta_M)$, while the restriction of $\Omega^*$ defines a $\mD$-compatible filtration $\Omega_0:=\Omega^*\cap \Ker(\theta_M)$ on $\Ker(\theta_M)$. We would like to compare the associated graded $R$-modules $\gr_{\Omega_1}(\Ker(\theta_M))$ and $\gr_{\Omega_1}(\Coker(\theta_M))$ to the graded $R$-modules $\Ker(\Theta_{\gr_\Omega(M)})$ and $\Coker(\Theta_{\gr_\Omega(M)})$. Let $\mf{m}=(x_1,\ldots,x_n)$ denote the homogeneous maximal ideal of $R$.
\begin{lem}\label{cokercompare}
Let $\Omega$ be a $\B$-compatible filtration on an $\FF$-module $M$, and let $\Omega^*$ be the Schreyer pullback filtration on the $\FF$-module $\mF_R(M)$. Let $\Omega_0=\Omega^*\cap\Ker(\theta_M)$ and $\Omega_1=\Omega/\Omega\cap\text{Im}(\theta_M)$ denote the $\mD$-compatible filtrations induced by $\Omega^*$ and $\Omega$ on $\Ker(\theta_M)$ and $\Coker(\theta_M)$, respectively. There are homogeneous short exact sequences of graded $R$-modules
\[
0\to N \to \text{Coker}(\Theta_{\gr_\Omega(M)})\to \gr_{\Omega_1}(\text{Coker}(\theta_M))\to 0
\]
and 
\[
0\to \gr_{\Omega_0}(\Ker(\theta_M)) \to \Ker(\Theta_{\gr_\Omega(M)})\to Q\to 0
\]
where $N$ and $Q$ are graded $\mf{m}$-torsion modules. In particular, if $\Omega$ is a good filtration, then $\text{Coker}(\theta_M)$ is a finitely generated $R$-module of the same Krull dimension as $\Coker(\Theta_{\gr_\Omega(M)})$. If $\Omega$ is a great filtration, then $\Ker(\theta_M)$ is a finite length $R$-module. 
\end{lem}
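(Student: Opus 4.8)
The plan is to obtain both short exact sequences as instances of the strict‑morphism lemma (Lemma \ref{strictmaps}) applied to the factorisation of $\theta_M$ through its image, and then to control the two defect modules. Write $I=\mathrm{Im}(\theta_M)$, an $R$‑submodule of $M$, and equip $I$ with two $\mD$‑compatible $\N$‑filtrations: the image filtration $I^\circ_i:=\theta_M(\Omega^*_i)$ and the subspace filtration $\widetilde I_i:=\Omega_i\cap I$; since $\theta_M(\Omega^*_i)\subseteq\Omega_i$ we have $I^\circ_i\subseteq\widetilde I_i$. By construction the surjection $\theta_M\colon(\mF_R(M),\Omega^*)\to(I,I^\circ)$ and the inclusion $(I,\widetilde I)\hookrightarrow(M,\Omega)$ are strict, with kernel $(\Ker\theta_M,\Omega_0)$ and cokernel $(\Coker\theta_M,\Omega_1)$ respectively, so Lemma \ref{strictmaps} yields
\[
0\to\gr_{\Omega_0}(\Ker\theta_M)\to\gr_{\Omega^*}(\mF_R(M))\to\gr_{I^\circ}(I)\to0
\quad\text{and}\quad
0\to\gr_{\widetilde I}(I)\to\gr_\Omega(M)\to\gr_{\Omega_1}(\Coker\theta_M)\to0 .
\]
The identity map $(I,I^\circ)\to(I,\widetilde I)$ induces $\phi\colon\gr_{I^\circ}(I)\to\gr_{\widetilde I}(I)$, and the composite of the surjection onto $\gr_{I^\circ}(I)$, then $\phi$, then the injection of $\gr_{\widetilde I}(I)$ into $\gr_\Omega(M)$, is exactly $\gr(\theta_M)$, which by Proposition \ref{grfrobcomp} equals $\Theta_{\gr_\Omega(M)}$ under the identification $\gr_{\Omega^*}(\mF_R(M))\cong\mF_\A(\gr_\Omega(M))$. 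A short diagram chase then produces the desired sequences with $Q:=\Ker\phi$ and $N:=\Coker\phi$. That $Q$ is $\mf m$‑torsion is automatic: $Q$ is a submodule of $\gr_{I^\circ}(I)$, a quotient of $\gr_{\Omega^*}(\mF_R(M))\cong\mF_\A(\gr_\Omega(M))$, and by the definition of the $R$‑action in Definition \ref{mfaction} every module $\mF_\A(-)$ is annihilated by $(x_1^p,\dots,x_n^p)$.

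The main obstacle is to show that $N$ is $\mf m$‑torsion, and this is precisely what Lemma \ref{domegastar} is for. A homogeneous element of $N$ of degree $i$ is represented by some $y\in\widetilde I_i=\Omega_i\cap I$; choose $z\in\mF_R(M)$ with $\theta_M(z)=y$ and fix $j$ with $z\in\Omega^*_j$. For any monomial $x^\alpha$ of degree $d$, $R$‑linearity of $\theta_M$ gives $x^\alpha y=\theta_M(x^\alpha z)$, while Lemma \ref{domegastar} gives $x^\alpha z\in\mD_d\cdot\Omega^*_j\subseteq\Omega^*_{\,j+\lfloor d/p\rfloor+n(p+1)}$, hence $x^\alpha y\in I^\circ_{\,j+\lfloor d/p\rfloor+n(p+1)}$. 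Since $x^\alpha$ acts in degree $d$ on the $\N$‑graded module $N$, the class of $y$ is killed by $x^\alpha$ as soon as $j+\lfloor d/p\rfloor+n(p+1)\le i+d$, and this holds for all $d\ge\tfrac{p}{p-1}\bigl(j-i+n(p+1)\bigr)$. Thus a fixed power of $\mf m$ annihilates the class of $y$, so $N$ is $\mf m$‑torsion. (The point is that multiplying by a degree‑$d$ polynomial raises $\Omega^*$‑degree by only about $d/p$, so iterating $\mf m$ eventually pushes any element of $\widetilde I$ down into $I^\circ$.)

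Finally I would read off the last assertions from the two sequences. If $\Omega$ is good, then $\gr_\Omega(M)$ is finitely generated over $\A$, so $\Coker(\Theta_{\gr_\Omega(M)})$ is finitely generated over $R$ by Theorem \ref{thetaaction}(i); the second sequence then makes $\gr_{\Omega_1}(\Coker\theta_M)$ finitely generated, i.e.\ $\Omega_1$ is a good $\mD$‑filtration on $\Coker\theta_M$, so $\Coker\theta_M$ is finitely generated over $R$ by Lemma \ref{goodfg}. In this case $N$ is finitely generated and $\mf m$‑torsion, hence of finite length, so $\Coker(\Theta_{\gr_\Omega(M)})$ and $\gr_{\Omega_1}(\Coker\theta_M)$ have the same Krull dimension; since the Krull dimension of an $R$‑module carrying a good $\mD$‑filtration equals that of its associated graded (the standard fact that Gelfand–Kirillov and Krull dimension agree over an affine algebra), $\dim\Coker\theta_M=\dim\Coker(\Theta_{\gr_\Omega(M)})$. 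If $\Omega$ is great, then $\Ker(\Theta_{\gr_\Omega(M)})$ has finite length over $R$ by Theorem \ref{thetaaction}(ii), so its submodule $\gr_{\Omega_0}(\Ker\theta_M)$ has finite length; a finite‑length $\N$‑graded module is concentrated in finitely many degrees, so the exhaustive filtration $\Omega_0$ is eventually constant, and since each $\Omega_{0,i}\subseteq\Omega^*_i$ is finite‑dimensional over $\F_p$ this forces $\Ker\theta_M$ to be finite‑dimensional over $\F_p$, hence of finite length over $R$.
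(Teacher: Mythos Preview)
Your proof is correct and follows essentially the same approach as the paper. The paper writes down the component-level short exact sequences directly, while you obtain them by factoring $\theta_M$ through its image and applying Lemma \ref{strictmaps} to the two strict maps, but the resulting modules $N$ and $Q$ are identical, and the $\mf m$-torsion argument for $N$ via Lemma \ref{domegastar} (with the same degree bound $d\ge\tfrac{p}{p-1}(j-i+n(p+1))$) and the final deductions about $\Coker(\theta_M)$ and $\Ker(\theta_M)$ match the paper's reasoning.
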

\begin{proof}
These short exact sequences arise at once from the following short exact sequences defined below on each graded component of the modules in question.
  \[
  0\to \frac{\Omega_i\cap\text{Im}(\theta_M)}{\theta_M(\Omega^*_i)+\Omega_{i-1}\cap\text{Im}(\theta_M)}\to\frac{\Omega_i}{\theta_M(\Omega^*_i)+\Omega_{i-1}}\to\frac{\Omega_i}{\Omega_i\cap\text{Im}(\theta_M)+\Omega_{i-1}}\to 0
  \]
  and
    \[
  0\to \frac{\Omega^*_i\cap\text{Ker}(\theta_M)}{\Omega^*_{i-1}\cap\Ker(\theta_M)}\to\frac{\Omega^*_i\cap\theta_M^{-1}(\Omega_{i-1})}{\Omega^*_{i-1}\cap\theta^{-1}_M(\Omega_{i-1})}\to\frac{\Omega^*_i\cap\theta_M^{-1}(\Omega_{i-1})}{\Omega^*_i\cap\text{Ker}(\theta_M)+\Omega^*_{i-1}\cap\theta^{-1}_M(\Omega_{i-1})}\to 0
  \]
  Since $\Ker(\Theta_{\gr_\Omega(M)})$ is a submodule of the $\mf{m}$-torsion module $\mF_\A(\gr_\Omega(M))$, the fact that $Q$ is $\mf{m}$-torsion is obvious. To show that $N$ is $\mf{m}$-torsion, we must show that if $m\in \Omega_i\cap\text{Im}(\theta_M)$, then there is some $d\gg 0$ such that for any monomial $x^\alpha$ of degree $d$, we have $x^\alpha\cdot m\in \theta_M(\Omega^*_{i+d}+\Omega_{i+d-1}\cap\text{Im}(\theta_M)$. Since $m\in \text{Im}(\theta_M)$, we can find some $\xi\in \mF_R(M)$, say of $\Omega^*$-degree $j$, with $m=\theta_M(\xi)$. Now, $x^\alpha\cdot m = \theta_M(x^\alpha\cdot \xi)$. By Lemma \ref{domegastar}, we have $\deg_{\Omega_*}(x^\alpha\cdot \xi)\leq j+\lfloor d/p\rfloor + n(p+1)$. We would be finished if $j+\lfloor d/p\rfloor + n(p+1) \leq i+d$, as this would give $x^{\alpha}\cdot m\in\theta_M(\Omega^*_{i+d})$. It is enough to choose $d$ large enough to satisfy $j-1+n(p+1)\leq i+d-d/p$, i.e., $d\geq \frac{p}{p-1}(j-i+n(p+1)-1)$.

  Regarding the claims about $\Coker(\theta_M)$ and $\Ker(\theta_M)$, by Lemma \ref{goodfg}, the finite generation of their associated graded modules or $\gr_\D(R)=R$ immediately implies their finite generation over $R$. That the Krull dimension of a finitely generated $R$-module is equal to the Krull dimension of its associated graded module with respect to a good $\mD$-compatible filtration is well known. So, we see that when $\Omega$ is good, $\Coker(\theta_M)$ has the same dimension as $\Coker(\Theta_{\gr_{\Omega}(M)})$, and that when $\Omega$ is great, $\Ker(\theta_M)$ has finite length.
\end{proof}

\begin{theorem}\label{greatconverse}
If $M$ is a finitely generated $\FF$-module such that $\Ker(\theta_M)$ is a finite length $R$-module, then $M$ admits a great filtration.
\end{theorem}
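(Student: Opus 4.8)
The plan is to produce a great $\B$-compatible filtration on $M$ by hand. Fix a finite $\FF$-module generating set $m_1,\dots ,m_t$ of $M$ and let $\Omega$ be the induced pushforward filtration, $\Omega_i=\B_i\{m_1,\dots ,m_t\}$ (equivalently, the smallest $\B$-compatible filtration with each $m_k\in\Omega_0$). Then $\gr_\Omega(M)$ is a graded quotient of $\A^t$, hence finitely generated over $\A$, so by Theorem~\ref{thetafg}(ii) it is enough to check that the structural cokernel $C_{\gr_\Omega(M)}$ and the structural kernel $K_{\gr_\Omega(M)}$ are finitely generated over $R$. The cokernel is finitely generated by Theorem~\ref{thetaaction}(i), so the whole problem reduces to the kernel.

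For the kernel I would use Proposition~\ref{grfrobcomp} to identify $K_{\gr_\Omega(M)}$ with $\Ker\bigl(\gr(\theta_M)\colon\gr_{\Omega^*}(\mF_R(M))\to\gr_\Omega(M)\bigr)$, where $\Omega^*$ is the Schreyer pullback filtration on $\mF_R(M)$, and then Lemma~\ref{cokercompare}, which gives a short exact sequence of graded $R$-modules
\[
0\longrightarrow\gr_{\Omega_0}(\Ker\theta_M)\longrightarrow K_{\gr_\Omega(M)}\longrightarrow Q\longrightarrow 0
\]
with $\Omega_0=\Omega^*\cap\Ker\theta_M$ and $Q$ an $\mf{m}$-torsion module. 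Since $\Ker\theta_M$ has finite length over $R$, the filtered vector space $\gr_{\Omega_0}(\Ker\theta_M)$ has finite total $\F_p$-dimension and hence finite length over $R$; thus it suffices to show that $Q$ is finitely generated over $R$, i.e.\ of finite length. Tracing the proof of Lemma~\ref{cokercompare} yields $Q_i\cong\bigl(\theta_M(\Omega^*_i)\cap\Omega_{i-1}\bigr)/\theta_M(\Omega^*_{i-1})$, and since $\theta_M(\Omega^*_i)\subseteq\mathrm{Im}(\theta_M)$ this gives $\dim_{\F_p}Q_i\le\dim_{\F_p}\bigl((\mathrm{Im}(\theta_M)\cap\Omega_{i-1})/\theta_M(\Omega^*_{i-1})\bigr)$, so everything reduces to the strictness-type statement $\theta_M(\Omega^*_i)=\mathrm{Im}(\theta_M)\cap\Omega_i$ for all $i\gg 0$.

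Proving this last identity is the heart of the matter, and I expect it to be the main obstacle. The inclusion $\subseteq$ is immediate. For $\supseteq$, write an element of $\mathrm{Im}(\theta_M)\cap\Omega_i$ as an $\F_p$-combination of monomials $b\,m_k$ with $\mathrm{cx}(b)\le i$ and split off the part on the $bm_k$ with $\ordF(b)\ge 1$ from the part on those with $\ordF(b)=0$. For the first part one uses the complexity recurrence $\mathrm{cx}(x^{r+pc}F^e)=|r|+1+\mathrm{cx}(x^cF^{e-1})$ for $e\ge 1$ and $r\in\N_{<p}^n$ (which, as in the Dills--Enescu analysis, holds because prepending a variable changes complexity by at most $1<p$): it expresses each such $bm_k$ as $\theta_M\bigl((x^cF^{e-1}m_k)\,\varepsilon_r\bigr)$ with the preimage already lying in $\Omega^*_i$, so this part is in $\theta_M(\Omega^*_i)$ for every $i$. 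The remaining part lies in $\mathrm{Im}(\theta_M)=\Ker\bigl(M\twoheadrightarrow\Coker\theta_M\bigr)$, so it equals $\sum_k p_k m_k$ with $(p_k)$ an $R$-syzygy on the images of the $m_k$ in $\Coker\theta_M$ and $\deg p_k\le i$; fixing a Macaulay basis of the (finitely generated, $R$ being Noetherian) syzygy module rewrites it as $\sum_j q_j w_j$ with finitely many fixed $w_j\in\mathrm{Im}(\theta_M)$ and $\deg q_j\le i-\delta_j$, and $R$-linearity of $\theta_M$ together with $\B$-compatibility of $\Omega^*$ then places $q_jw_j$ in $\theta_M(\Omega^*_{i-\delta_j+\deg_{\Omega^*}(\xi_j)})$ for any chosen preimage $\xi_j$ of $w_j$. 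The delicate point — and the place where the finite length of $\Ker\theta_M$ is essential — is to upgrade this bounded-gap estimate to the exact equality for $i\gg 0$; I would do this by a bootstrapping induction on $i$ (each $w_j$ already lies in $\mathrm{Im}(\theta_M)\cap\Omega_{\delta_j}$, so the inductive hypothesis makes $\deg_{\Omega^*}(\xi_j)\le\delta_j$ for all but the finitely many degrees $i\in\{\delta_j\}$), using that $\theta_M$ is injective on $\Omega^*_i$ once $i$ exceeds a fixed bound to compare the two good filtrations $\{\theta_M(\Omega^*_i)\}$ and $\{\mathrm{Im}(\theta_M)\cap\Omega_i\}$ on the $\FF$-submodule $\mathrm{Im}(\theta_M)$. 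Once the identity is established, $Q$ has finite length, $K_{\gr_\Omega(M)}$ is finitely generated over $R$, and Theorem~\ref{thetafg}(ii) finishes the proof.
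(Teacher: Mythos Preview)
Your overall strategy coincides with the paper's: take $\Omega_i=\B_i\{m_1,\dots,m_t\}$, reduce via Theorem~\ref{thetafg}(ii) to showing that $K_{\gr_\Omega(M)}$ has finite length, and establish this by proving an asymptotic strictness statement for $\theta_M$. Your reformulation through Lemma~\ref{cokercompare} and the module $Q$ is correct, and your splitting of an element of $\mathrm{Im}(\theta_M)\cap\Omega_i$ into an $F$-order~$\ge 1$ part (which lands in $\theta_M(\Omega^*_i)$ by the complexity recurrence) and an $R$-linear part $\sum_k p_k m_k$ is exactly the paper's decomposition $H_i=A_i(x)+\sum_v x^vFB_{i,v}$; the paper likewise compares two good $\mD$-filtrations on $M_0\cap\mathrm{Im}(\theta_M)$ to control the $R$-linear part up to a bounded shift.

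The genuine gap is in your last paragraph. The ``bootstrapping induction'' cannot close the fixed additive discrepancy between $\theta_M(\Omega^*_i)$ and $\mathrm{Im}(\theta_M)\cap\Omega_i$ if you only use the naive $\B$-compatibility $\mD_d\cdot\Omega^*_i\subseteq\Omega^*_{i+d}$: after writing $m_0=\sum_j q_j w_j$ with $\deg q_j\le i+c$, you obtain $q_j\xi_j\in\Omega^*_{i+c'}$ for some constant $c'$ independent of $i$, and no induction on $i$ eliminates $c'$. (Your injectivity claim is also backwards: the finite-length $\Ker\theta_M$ is \emph{contained in} $\Omega^*_i$ for all large $i$, so $\theta_M|_{\Omega^*_i}$ is never injective once $i$ is large, unless the kernel is zero.) What is missing is precisely Lemma~\ref{domegastar}: the contraction estimate $\mD_d\cdot\Omega^*_i\subseteq\Omega^*_{i+\lfloor d/p\rfloor+n(p+1)}$, reflecting that the Frobenius twist in $\mF_R(M)$ makes high-degree $R$-multiplication raise $\Omega^*$-degree only sublinearly. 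With it, $\deg_{\Omega^*}(q_j\xi_j)\le\deg_{\Omega^*}(\xi_j)+\lfloor(i+c)/p\rfloor+n(p+1)$, which is $\le i$ for $i\gg 0$, and strictness follows. Once you insert Lemma~\ref{domegastar} at that step, your argument is the paper's argument.
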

\begin{proof}
  Let $m_1,\ldots,m_t$ be a generating set for $M$ over $\FF$, and let us consider the (obviously good) filtration $\Omega_i = \B_i\cdot\{\xi_1,\ldots,\xi_t\}$. Let $N=\gr_\Omega(M)$. By Theorem \ref{thetaaction}, $\Coker(\Theta_N)$ is finitely generated over $\A$, and by Theorem \ref{thetafg}, to show that $N$ is finitely presented, it is enough to show that $\Ker(\Theta_N)$ is a finite length $R$-module.
  
  Let $g=\sum_{v\in\N_{<p}^n} g_v\varepsilon_v\in\mF_\A(M)$ be a homogeneous element of $\Ker(\Theta_N)$ with degree $d$. We must show that $g=0$ when $d\gg 0$. For each $v$, let $G_v$ be a lift of $g_v$ to $\B_{i-|v|-1}$, and let $G=\sum_{v\in\N_{<p}^n} G_v\varepsilon_v\in \mF_R(M)$. By Lemma \ref{grfrobcomp}(ii), to say that $\Theta_N(g)=0$ is precisely to say that $\theta_M(G)\in \Omega_{d-1}$, say $\theta_M(G)=\sum_{i=1}^t H_i m_i$ with $H_i\in\B_{d-1}$. For each $i$, we may write $H_i=A_i(x)+\sum_{v\in\N_{<p}^n} x^vFB_{i,v}$ with $A_i(x)\in R$ of degree $<d$, and $B_{i,v}\in\B_{d-|v|-2}$ for all $v$, breaking up $\theta_M(G)$ into two different sums as
  \[
  \theta_M(G)=\sum_{i=1}^tA_i(x)m_i +\textstyle\sum_{i=1}^t\sum_{v\in\N_{<p}^n} x^vFB_{i,v}
  \]
  The element $B=\sum_{i=1}^tB_{i,v}m_i\varepsilon_v$ in $\Omega^*_{d-1}$ maps under $\theta_M$ to the second of these, giving
\[
\theta_M(G-B)=\textstyle\sum_{i=1}^t A_i(x)m_i
\]
Let $M_0$ be the $R$-submodule of $M$ generated by $m_1,\ldots,m_t$, and consider the $R$-module $M_1=M_0\cap \text{Im}(\theta_M)$, to which our sum $\sum_{i=1}^t A_i(x)m_i$ belongs. Letting $\mD$ denote the standard degree filtration on $R$, we will compare two $\mD$-compatible good filtrations on $M_1$. First, pick a set of $R$-module generators $y_1,\ldots,y_s$ for $M_1$ and let $\mathcal{Y}_i=\mD_i\cdot \{y_1,\ldots,y_s\}$. Second, filter $M_0$ by $\Sigma_i=\mD_i\cdot\{m_1,\ldots,m_t\}$, and consider the filtration $\Sigma\cap M_1$. Since $R=\gr_{\mD}(R)$ is Noetherian and $\gr_\Sigma(M_0)$ is finitely generated, it is clear that $\Sigma\cap M_1$ is a good filtration, and hence, is linearly comparable to $\mathcal{Y}$. Let $w\in \N$ be such that $\Sigma_{i}\cap M_1\subseteq \mathcal{Y}_{i+w}$ for all $i$. Our sum $\sum_{i=1}^t A_i(x)m_i$ evidently belongs to $\Sigma_{d-1}\cap M_1$, and hence, is also contained in $\mathcal{Y}_{d+w-1}$, say $\sum_{i=1}^tA_i(x)m_i=\sum_{i=1}^s C_i(x)y_i$ with $\deg(C_i(x))\leq d+w-1$ for all $i$.

For each $i$, write $y_i=\theta_M(z_i)$ for some $z_i\in\mathcal{F}_R(M)$, and we obtain
\[
\theta_M(G-B -\textstyle\sum_{i=1}^s C_i(x) z_i)=0
\]
Let $\zeta_1,\ldots,\zeta_r$ be an $\F_p$-vector space basis for the finite-length $R$-module $\Ker(\theta_M)$, so that
\[
G-B-\textstyle\sum_{i=1}^s C_i(x) z_i = \sum_{i=1}^r \beta_i \zeta_i
\]
for some $\beta_i\in\F_p$. Since the $R$-span of $\zeta_1,\ldots,\zeta_r$ is finite-dimensional, we automatically have $\deg_{\Omega^*}(\sum_{i=1}^r \beta_i \zeta_i)<d$ when $d$ is sufficiently large. Letting $j$ be the maximum of the $\Omega^*$-degrees of the elements $z_1,\ldots,z_s$, we have by Lemma \ref{domegastar},
\[
\deg_{\Omega^*}(\textstyle\sum_{i=1}^s C_i(x) z_i)\leq j+\lfloor\, (d+w-1)/p\,\rfloor+n(p+1)
\]
If $p(j+n(p+1))<pd-d-w+1$, i.e., $d>\frac{p}{p-1}(j+n(p+1))+\frac{w-1}{p-1}$, then the $\Omega^*$-degree of $\sum_{i=1}^s C_i(x)z_i$ will be strictly less than $d$. Finally, as we remarked earlier, the $\Omega^*$-degree of $B$ is at most $d-1$. Taken together, if $d$ is sufficiently large, we must have
\[
\deg_{\Omega^*}(G)=\deg_{\Omega^*}(B+\textstyle\sum_{i=1}^sC_i(x)z_i+\sum_{i=1}^r\beta_i\zeta_i)<d
\]
It follows that $g=[G]^{\Omega^*}_d=0$ for $d\gg 0$, as desired.
\end{proof}

The following is now apparent.

\begin{theorem}\label{greatclass}
  Let $M$ be a finitely generated $\FF$-module with structure morphism $\theta_M$. Then $M$ is a great $\FF$-module if and only if $\Ker(\theta_M)$ is finite length over $R$. If this is the case, then for any great $\B$-filtration $\Omega$ on $M$, we have $\delta_1(\HS_{\gr_\Omega(M)})=\dim\Coker(\theta_M)$.
\end{theorem}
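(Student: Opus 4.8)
The plan is to obtain this statement as a synthesis of results already established, with essentially no new computation. First I would dispose of the equivalence. For the ``if'' direction: assuming $\Ker(\theta_M)$ has finite length over $R$ and $M$ is finitely generated over $\FF$, Theorem \ref{greatconverse} directly furnishes a great $\B$-compatible filtration on $M$, so $M$ is a great $\FF$-module by definition. For the ``only if'' direction: given a great filtration $\Omega$ on $M$, the module $\gr_\Omega(M)$ is finitely presented over $\A$, and the final assertion of Lemma \ref{cokercompare} immediately yields that $\Ker(\theta_M)$ has finite length over $R$. (One could equally run this through Theorem \ref{thetaaction}(ii) applied to $N=\gr_\Omega(M)$, using Proposition \ref{grfrobcomp}(ii) to identify $\gr(\theta_M)$ with $\Theta_{\gr_\Omega(M)}$ and then the ``$\Ker$'' short exact sequence of Lemma \ref{cokercompare} to descend from $\Ker(\Theta_{\gr_\Omega(M)})$ to $\Ker(\theta_M)$ up to an $\mf{m}$-torsion term.) The finite generation of $\Coker(\theta_M)$ over $R$ requires nothing beyond finite generation of $M$ over $\FF$ and is the unnamed lemma opening this section.

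Next I would prove the dimension formula. Fix any great filtration $\Omega$ on $M$. Such a filtration is in particular good --- finite presentation of $\gr_\Omega(M)$ forces finite generation, and finite-dimensionality of the pieces $\Omega_i$ is built into both notions --- so the ``good'' half of Lemma \ref{cokercompare} applies and tells us that $\Coker(\theta_M)$ is a finitely generated $R$-module with the same Krull dimension as $\Coker(\Theta_{\gr_\Omega(M)})=C_{\gr_\Omega(M)}$. Since $\gr_\Omega(M)$ is a finitely presented graded $\A$-module, Corollary \ref{dimfromhs} gives $\delta_1(\HS_{\gr_\Omega(M)})=\dim C_{\gr_\Omega(M)}$. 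Chaining these two equalities yields $\delta_1(\HS_{\gr_\Omega(M)})=\dim\Coker(\theta_M)$; in particular this re-derives the $\Omega$-independence of the left-hand side, consistent with Theorem \ref{welldefined}.

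I do not expect a genuine obstacle at this stage: all of the difficulty has already been absorbed into the earlier results. The ``if'' direction rests on Theorem \ref{greatconverse}, whose proof is the real work (the degree bookkeeping of Lemma \ref{domegastar} together with a comparison of two good $\mD$-filtrations on the $R$-module $M_0\cap\text{Im}(\theta_M)$), while the ``only if'' direction and the dimension formula rest on Lemma \ref{cokercompare}, whose $\mf{m}$-torsion error terms are exactly what lets one pass between $\theta_M$ and its symbol $\Theta_{\gr_\Omega(M)}$ without disturbing finite length or Krull dimension. The only point that needs a moment's care is confirming that the finiteness hypotheses of Lemma \ref{cokercompare} (finite-dimensional graded components, vanishing in sufficiently negative degree) hold for the modules at hand, and this is automatic because everything in sight is assembled from the finite-dimensional spaces $\Omega_i$ and $\Omega^*_i$.
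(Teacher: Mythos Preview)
Your proposal is correct and follows essentially the same route as the paper, which simply cites Corollary \ref{dimfromhs}, Theorem \ref{welldefined}, Lemma \ref{cokercompare}, and Theorem \ref{greatconverse}. Your observation that Theorem \ref{welldefined} is not strictly needed here---since Lemma \ref{cokercompare} and Corollary \ref{dimfromhs} already pin $\delta_1(\HS_{\gr_\Omega(M)})$ to the filtration-independent quantity $\dim\Coker(\theta_M)$---is a nice sharpening.
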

\begin{proof}
Corollary \ref{dimfromhs}, Theorem \ref{welldefined}, Lemma \ref{cokercompare}, and Theorem \ref{greatconverse}.
\end{proof}
\begin{cor}\label{hololist}
  Let $M$ be a finitely generated $\FF$-module with structure morphism $\theta_M$. The following are equivalent.
  \begin{enumerate}[(i)]
  \item $M$ is a great $\FF$-module of Bernstein dimension $0$.
  \item Both $\Ker(\theta_M)$ and $\Coker(\theta_M)$ are finite length $R$-modules.
  \item There exists a $\B$-filtration $\Omega$ on $M$ such that both $\Ker(\Theta_{\gr_{\Omega}(M)})$ and $\Coker(\Theta_{\gr_{\Omega}(M)})$ are finite length $R$-modules.
  \item There exists a $\B$-compatible filtration $\Omega$ on $M$ such that $M$ admits a finite $\Omega$-Gr\"obner basis $\G=(g_1,\ldots,g_t)$ such that $\inn(\Syz(\G))=I_1\oplus\cdots\oplus I_t$ is finitely generated, and each contraction $I_i\cap R$ contains a power of $\mf{m}$.
  \item There exists a great $\B$-compatible filtration $\Omega$ on $M$ such that the Hilbert series of $\gr_\Omega(M)$ has the form $a(t)/g_{p,n}(t)$, for some polynomial $a(t)$.
  \end{enumerate}
  In every instance above, ``there exists a [...] filtration'' can be replaced with ``for all [...] filtrations, of which there is at least one.''
\end{cor}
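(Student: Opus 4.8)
The plan is to take (ii) as a hub and prove (ii)$\iff$(i), (ii)$\iff$(v), (ii)$\iff$(iii), and (ii)$\iff$(iv), in each case assembling results already established rather than doing fresh work. The essential inputs are Theorem~\ref{greatclass} (the module $M$ is great exactly when $\Ker(\theta_M)$ has finite length, in which case $\delta_1(\HS_{\gr_\Omega(M)})=\dim\Coker(\theta_M)$ for every great $\Omega$) and the comparison results Proposition~\ref{grfrobcomp} and Lemma~\ref{cokercompare}, which identify the degeneration of $\theta_M$ with the graded structure map $\Theta_{\gr_\Omega(M)}$ and match the two kernels and cokernels up to $\mf m$-torsion. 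The easy equivalences come first. If $M$ is great then $\Coker(\theta_M)$ is finitely generated over $R$ (Lemma~\ref{cokercompare}), so it has finite length iff it has Krull dimension $0$; by Theorem~\ref{greatclass} this last condition is precisely ``$M$ great of Bernstein dimension $0$'', which is (i). For (v): for any great $\Omega$, Theorem~\ref{amodhilbert} writes $\HS_{\gr_\Omega(M)}=a(t)/(1-t)^{\delta(\gr_\Omega(M))}g_{p,n}(t)$, and since $g_{p,n}(1)=1-p^n\neq 0$ this has the shape $a(t)/g_{p,n}(t)$ iff $\delta(\gr_\Omega(M))=\delta_1(\HS_{\gr_\Omega(M)})$ (Corollary~\ref{dimfromhs}) equals $0$; so (v)$\iff$(i).

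For (ii)$\iff$(iii) one applies Theorems~\ref{thetaaction} and~\ref{thetafg}, whose hypotheses hold because every $\gr_\Omega(M)$ in play has finite-dimensional graded components vanishing in negative degrees. If (ii) holds, take any great $\Omega$: then $\gr_\Omega(M)$ is finitely presented, so $\Ker(\Theta_{\gr_\Omega(M)})$ has finite length (Theorem~\ref{thetaaction}(ii)) while $\Coker(\Theta_{\gr_\Omega(M)})$ is finitely generated (Theorem~\ref{thetaaction}(i)) of Krull dimension $\dim\Coker(\theta_M)=0$ (Lemma~\ref{cokercompare}), hence also finite length; this is (iii). Conversely, if (iii) holds for some $\Omega$, then $\Coker(\Theta_{\gr_\Omega(M)})$ finitely generated forces $\gr_\Omega(M)$ finitely generated (Theorem~\ref{thetafg}(i)), whereupon $\Ker(\Theta_{\gr_\Omega(M)})$ finitely generated forces $\gr_\Omega(M)$ finitely presented (Theorem~\ref{thetafg}(ii)), so $\Omega$ is great; Lemma~\ref{cokercompare} then yields directly that $\Ker(\theta_M)$ has finite length and that $\Coker(\theta_M)$, being finitely generated of the same dimension as the finite-length module $\Coker(\Theta_{\gr_\Omega(M)})$, has finite length. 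That is (ii).

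For (ii)$\iff$(iv): given (i), fix a great $\Omega$ and, by Lemma~\ref{goodfg}, a finite $\Omega$-Gr\"obner basis $\G=(g_1,\dots,g_t)$ with $\Omega$-degrees $r=(r_j)$. By Buchberger's criterion~\ref{buchcrit}, the degeneration $\gr_{\Lambda\cap\Syz(\G)}(\Syz(\G))$ equals $\Syz(\sigma^\Omega(\G))$ inside the free graded $\A$-module $\bigoplus_j\A(-r_j)\varepsilon_j$; passing to its monomial initial submodule and using the classification of monomial syzygies (Lemma~\ref{fgmonsyz}), $\inn(\Syz(\G))$ is a monomial submodule of this direct sum of cyclic free modules, hence splits as $I_1\oplus\cdots\oplus I_t$ with each $I_j\subseteq\A$ a monomial ideal, and it is finitely generated by Theorem~\ref{INfg} (using $\Syz(\G)$ finitely generated, Lemma~\ref{greatfp}). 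By Macaulay's basis theorem~\ref{MBThilbert}, $\HS_{\gr_\Omega(M)}$ equals the Hilbert series of $\bigoplus_j(\A/I_j)(-r_j)$, and Theorem~\ref{monomialhilbertseries} gives each $\HS_{\A/I_j}$ the denominator $(1-t)^{\dim R/(I_j\cap R)}g_{p,n}(t)$; hence $\delta(\gr_\Omega(M))=\max_j\dim R/(I_j\cap R)$, and Bernstein dimension $0$ is equivalent to every $I_j\cap R$ being $\mf m$-primary, i.e.\ containing a power of $\mf m$. This proves (i)$\Rightarrow$(iv). For the converse, if (iv) holds for some $\Omega,\G$, then finiteness of $\inn(\Syz(\G))$ forces, by lifting monomial generators, $\Syz(\sigma^\Omega(\G))$, hence $\gr_\Omega(M)$, finitely presented, so $\Omega$ is great, and the same Hilbert-series computation run backwards gives $\delta(\gr_\Omega(M))=0$; thus (i), hence (ii).

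For the ``for all'' reformulations: by Theorem~\ref{welldefined} the Bernstein dimension is a single number across all great filtrations on $M$, so once (i) is known every great filtration witnesses (iii), (iv), (v) by the forward arguments above; and in each of (iii), (iv), (v) the stated condition forces its filtration to be great (as the converse implications show), so the quantifier effectively ranges over great filtrations, of which there is at least one. I expect the only real friction to be in the (iv) step: making precise that ``$\inn(\Syz(\G))$'' is obtained by two successive degenerations — first to $\gr_\B$, then to the monomial term order on $\A$ — so that it genuinely is a monomial submodule splitting into the $I_j$, and tracking the Serre twists carefully enough that ``each $I_j\cap R$ contains a power of $\mf m$'' lines up exactly with $\delta(\gr_\Omega(M))=0$. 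Everything else is a direct invocation of Sections~2–5.
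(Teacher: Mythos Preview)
Your equivalences (i)$\iff$(ii)$\iff$(iii)$\iff$(v) are correct and assembled exactly as intended; the paper states this corollary without proof, so you are filling in the natural expansion. The (iv) equivalence is also correct under the interpretation $\inn(\Syz(\G)):=\inn(\Syz(\sigma^\Omega(\G)))$ via Buchberger's criterion (Theorem~\ref{buchcrit}), and your key computation $\delta_1(\HS_{\gr_\Omega(M)})=\max_j\dim R/(I_j\cap R)$ goes through because when this maximum is positive, the summands achieving it each have $a_j(1)>0$ by Theorem~\ref{monomialhilbertseries}, so no cancellation at $t=1$ can occur.

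The one place to sharpen is your final paragraph on the ``for all'' reformulation. What you actually prove is: (a) every \emph{great} filtration witnesses (iii)/(iv)/(v), and (b) any filtration witnessing (iii)/(iv)/(v) is forced to be great. This is the meaningful content, but it does not yield the literal claim that \emph{every} $\B$-compatible $\Omega$ satisfies (iii) or (iv). Indeed that literal reading is false: on any infinite-dimensional great $M$ generated by a finite-dimensional subspace $V$, the filtration $\Omega_i=\B_{i^2}\cdot V$ is $\B$-compatible, yet for each fixed $d\geq 1$ and all $i\gg 0$ one has $[\A]_d\cdot[\gr_\Omega(M)]_i=0$ in $[\gr_\Omega(M)]_{i+d}$ (since $d+i^2\leq (i+d-1)^2$ eventually), so $\gr_\Omega(M)$ is not finitely generated over $\A$ and (iii) fails for this $\Omega$. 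Your parenthetical ``the quantifier effectively ranges over great filtrations'' is exactly the right resolution---just state it as the conclusion rather than as a justification, since the paper's phrasing is informal here and should be read as ``for all great filtrations''.
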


\begin{dfn}
Call a great $\FF$-module \textit{$F$-holonomic} if it satisfies any one of the conditions of the above corollary.
\end{dfn}
Using condition (ii), the following is apparent.
\begin{cor}
  Finitely generated unit $\FF$-modules are $F$-holonomic. In particular, for any ideal $I$ and any $i\geq 0$, the local cohomology module $H^i_I(R)$ is $F$-holonomic.
\end{cor}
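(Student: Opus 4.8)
The plan is to apply characterization (ii) of Corollary~\ref{hololist}: a finitely generated $\FF$-module $M$ is $F$-holonomic precisely when both $\Ker(\theta_M)$ and $\Coker(\theta_M)$ are finite length $R$-modules. So the whole task reduces to checking this pair of finiteness conditions for the two families of modules in question. First I would dispatch the case of a finitely generated unit $\FF$-module $M$: by definition its structure morphism $\theta_M:\mF_R(M)\to M$ is an isomorphism, so $\Ker(\theta_M)=0$ and $\Coker(\theta_M)=0$, both trivially of finite length. Since a unit $\FF$-module that is finitely generated as an $\FF$-module certainly admits a finite generating set, condition (ii) is immediate, and such $M$ is $F$-holonomic.

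Next I would handle the local cohomology modules. The point is that $H^i_I(R)$ is a \emph{finitely generated unit} $\FF$-module — this is the content of Lyubeznik's theory of $F$-modules (\cite{lyufmod}; see also \cite{emertonkisin}, \cite{blicklethesis}). Concretely, one realizes $H^i_I(R)$ as the cohomology of a \v{C}ech-type complex built from the localizations $R_{f}$ (each of which is a unit $\FF$-module with structure map induced by Frobenius), and shows that the resulting $F$-module is generated over $\FF$ by finitely many elements, e.g.\ coming from the generators of $I$ and the structure of the \v{C}ech complex; it is one of the foundational results of \cite{lyufmod} that each $H^i_I(R)$ is a finitely generated unit $\FF$-module. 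Granting this, the previous paragraph applies verbatim and $H^i_I(R)$ is $F$-holonomic.

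The only genuine subtlety — and really the only step that is not a one-line invocation of a definition — is the appeal to Lyubeznik's result that $H^i_I(R)$ is a finitely generated unit $\FF$-module; everything on our side of the argument is formal, since condition (ii) of Corollary~\ref{hololist} was already set up precisely so that unit modules satisfy it trivially. I would therefore keep the proof short: state that unit modules have $\Ker(\theta_M)=\Coker(\theta_M)=0$, cite \cite{lyufmod} for the finite generation of $H^i_I(R)$ as a unit $\FF$-module, and conclude via Corollary~\ref{hololist}(ii). One could optionally remark that iterated local cohomology modules $H^{i_1}_{I_1}\cdots H^{i_k}_{I_k}(R)$, and more generally any module built from $R$ by the operations preserving the category of finitely generated unit $\FF$-modules, are likewise $F$-holonomic by the same argument.
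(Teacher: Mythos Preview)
Your proposal is correct and follows exactly the approach the paper takes: the paper's entire proof is the one-line remark ``Using condition (ii), the following is apparent,'' which unpacks to precisely the observation that a unit structure morphism has zero kernel and cokernel, together with the citation to \cite{lyufmod} for the fact that $H^i_I(R)$ is a finitely generated unit $\FF$-module. Your write-up is a faithful (and more detailed) elaboration of the same argument.
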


\begin{theorem}\label{extcat}
Let $M$ and $N$ be $F$-holonomic $\FF$-modules. For any $\FF$-linear map $\varphi:M\to N$, the kernel, cokernel, and image of $\varphi$ are $F$-holonomic. An extension of two $F$-holonomic $\FF$-modules is $F$-holonomic. Multiplicity $M\mapsto e(M)$ is an additive function on the category of $F$-holonomic $\FF$-modules.
\end{theorem}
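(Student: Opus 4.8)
The plan is to reduce everything to the structure-morphism criterion of Theorem \ref{greatclass} together with the characterization of $F$-holonomicity in condition (ii) of Corollary \ref{hololist}: an $\FF$-module is $F$-holonomic iff it is finitely generated and both $\Ker(\theta_M)$ and $\Coker(\theta_M)$ have finite length over $R$. The key observation is that the structure morphism is functorial and that $\mF_R(-)$ is exact, so a short exact sequence $0\to N\to M\to Q\to 0$ of $\FF$-modules induces a commutative diagram with exact rows
\[
\begin{tikzcd}
0\ar[r]&\mF_R(N)\ar[r]\ar[d,"\theta_N"]&\mF_R(M)\ar[r]\ar[d,"\theta_M"]&\mF_R(Q)\ar[r]\ar[d,"\theta_Q"]&0\\
0\ar[r]&N\ar[r]&M\ar[r]&Q\ar[r]&0
\end{tikzcd}
\]
from which the snake lemma yields a six-term exact sequence $0\to\Ker\theta_N\to\Ker\theta_M\to\Ker\theta_Q\to\Coker\theta_N\to\Coker\theta_M\to\Coker\theta_Q\to 0$ of $R$-modules. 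Since finite length is preserved under submodules, quotients, and extensions of $R$-modules, this sequence immediately gives: if $N$ and $Q$ are $F$-holonomic then $\Ker\theta_M$ and $\Coker\theta_M$ have finite length, and $M$ is finitely generated (extension of finitely generated modules), so $M$ is $F$-holonomic --- this is the extension-closure statement. Conversely, if $M$ is $F$-holonomic and $N\subseteq M$ is an $\FF$-submodule with quotient $Q$, one needs that $N$ is finitely generated: this follows because $N$ is a submodule of the finitely presented $\FF$-module $M$, and by Lemma \ref{greatfp} $F$-holonomic modules are finitely presented over $\FF$, so one must invoke some Noetherian-type control --- but in fact the cleanest route is to note $\Ker\theta_N\hookrightarrow\Ker\theta_M$ has finite length, $Q$ is finitely generated (quotient of $M$), hence $\Coker\theta_Q$ has finite length; then the six-term sequence forces $\Coker\theta_N$ to have finite length up to the image of $\Ker\theta_Q$, and once $\Coker\theta_N$ is finitely generated over $R$, Theorem \ref{thetafg}-type reasoning (applied to $N$) gives that $N$ is finitely generated over $\FF$, completing the argument that both $N$ and $Q$ are $F$-holonomic.

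For the statement about an arbitrary $\FF$-linear map $\varphi:M\to N$ between $F$-holonomic modules, I would first factor $\varphi$ as $M\twoheadrightarrow\operatorname{Im}\varphi\hookrightarrow N$. The image $\operatorname{Im}\varphi$ is an $\FF$-submodule of the $F$-holonomic module $N$, hence $F$-holonomic by the submodule case just established; applying the quotient case to $0\to\Ker\varphi\to M\to\operatorname{Im}\varphi\to 0$ shows $\Ker\varphi$ is $F$-holonomic; and applying the quotient case to $0\to\operatorname{Im}\varphi\to N\to\Coker\varphi\to 0$ shows $\Coker\varphi$ is $F$-holonomic. This shows the $F$-holonomic modules form a full abelian subcategory of $\mathrm{Mod}_\FF$ closed under extensions.

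Finally, for additivity of multiplicity: given a short exact sequence $0\to N\to M\to Q\to 0$ of $F$-holonomic $\FF$-modules, choose a great filtration $\Omega$ on $M$ (which exists by Theorem \ref{greatclass}). Then $\Omega\cap N$ and the induced quotient filtration $\overline{\Omega}$ on $Q$ are $\B$-compatible, and one gets a short exact sequence of associated graded $\A$-modules $0\to\gr_{\Omega\cap N}(N)\to\gr_\Omega(M)\to\gr_{\overline{\Omega}}(Q)\to 0$ --- provided the inclusion $N\hookrightarrow M$ is strict, which it is by the standard argument for induced filtrations. One must check these induced filtrations are again great; since $N$ and $Q$ are $F$-holonomic they admit great filtrations, and by Theorem \ref{welldefined} the dimension and multiplicity are filtration-independent, so it suffices that $\gr_{\Omega\cap N}(N)$ and $\gr_{\overline\Omega}(Q)$ are finitely presented over $\A$: finite generation comes from $\gr_\Omega(M)$ being finitely presented (the quotient is a quotient of a finitely presented module, the sub is a finitely generated submodule since its cokernel map has finite length), and finite presentation then follows from Corollary \ref{subfpisfp}. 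Now Theorem \ref{edcalc} gives $\delta(M)=\max\{\delta(N),\delta(Q)\}=0$ (all three are $0$ by $F$-holonomicity) and $e(M)=e(N)+e(Q)$ in the equal-dimension case, which is exactly our situation. Hence $e$ is additive. The main obstacle I anticipate is the bookkeeping in the submodule/quotient step --- specifically verifying that a submodule of an $F$-holonomic module is finitely generated over $\FF$ without left-Noetherianity, which requires carefully chasing the six-term snake sequence and reapplying Theorem \ref{thetafg}, rather than anything involving the Hilbert series machinery directly.
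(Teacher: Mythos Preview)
Your extension-closure argument and the overall snake-lemma framework match the paper. The genuine gap is in your treatment of submodules and of the image. You attempt to show that if $M$ alone is $F$-holonomic, then any $\FF$-submodule $N\subseteq M$ (with quotient $Q$) is again $F$-holonomic; but this is false. For $n\ge 2$, take $R=\F_p[x_1,\ldots,x_n]$ and the $\FF$-submodule $N=(x_1)\subseteq R$: one computes $\text{Im}(\theta_N)=(x_1^p)$, so $\Coker\theta_N\cong (x_1)/(x_1^p)\cong R/(x_1^{p-1})$, which has Krull dimension $n-1>0$ and is not of finite length. Your snake-lemma bound ``$\Coker\theta_N$ has finite length up to the image of $\Ker\theta_Q$'' cannot be closed, because $\Ker\theta_Q$ is not controlled by the hypothesis on $M$ alone (indeed in this example $\Ker\theta_{R/(x_1)}$ is not of finite length either). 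Your subsequent argument for $\operatorname{Im}\varphi$, which invokes only that it is ``a submodule of the $F$-holonomic module $N$'', therefore rests on an unavailable lemma.

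The paper's route is to treat the image $I=\operatorname{Im}\varphi$ \emph{first} and to use \emph{both} hypotheses simultaneously: since $I$ is a quotient of $M$, it is finitely generated and $\Coker\theta_I$ (a quotient of $\Coker\theta_M$) has finite length; since $I$ is a submodule of $N$, $\Ker\theta_I\hookrightarrow\Ker\theta_N$ has finite length. Hence $I$ is $F$-holonomic by criterion~(ii) of Corollary~\ref{hololist}. Only then does the two-out-of-three property get applied, to $0\to\Ker\varphi\to M\to I\to 0$ and to $0\to I\to N\to\Coker\varphi\to 0$, where in each sequence two of the three terms are already known to be $F$-holonomic. Your additivity argument via induced filtrations is more explicit than the paper's one-line citation of Theorem~\ref{edcalc} and is along the right lines once the abelian-subcategory statement is established, though the paper does not spell out those filtration details.
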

\begin{proof}
  If $0\to M\to N\to Q\to 0$ is an exact sequence of $\FF$-modules, the commutative diagram with exact rows
  \begin{center}{
    \begin{tikzcd}
      0\ar[r] & \mF_R(N)\ar[r]\ar[d,"\theta_N"]& \mF_R(M)\ar[r]\ar[d,"\theta_M"]& \mR_R(Q)\ar[r]\ar[d,"\theta_Q"]& 0\\
      0\ar[r] & N\ar[r]& M\ar[r]& Q\ar[r]& 0
      \end{tikzcd}}
  \end{center}
  makes it clear that if any two of the modules $M, N, Q$ are $F$-holonomic, then so is the third. If $\varphi:M\to N$ is any $\FF$-linear, let $I$ denote its image. The $F$-holonomicity of $M$ implies that $\Coker(\theta_I)$ has finite length, and the $F$-holonomicity of $N$ implies that $\Ker(\theta_I)$ has finite length. The result for $\Ker(\varphi)$ and $\Coker(\varphi)$ now follows from the two-out-of-three property on exact sequences. The additivity of multiplicity is apparent from Theorem \ref{edcalc}.
\end{proof}

\begin{ex}
Let $R=\F_2[x]$. As an $\FF$-module, $R$ has the presentation $R = \FF/\FF(F-1)$. Buchberger's algorithm shows that the associated graded module $\A/J$ has $\text{in}(J)=\A(f,fx,x^4)$, and the ideal $\inn(J)$ has a graded free resolution of the form
    \[
    0\to \A(-3)\oplus \A(-4)\to \A(-1)\oplus \A(-2)\oplus \A(-4)\to \A\to \A/\inn(J)\to 0
    \]
          so that
      \[
      \HS_{\gr(R)}(t) = \frac{1-t-t^2+t^3}{(1-t)(1-t-t^2)}=\frac{1-t^2}{(1-t-t^2)}=1+t+t^2+2t^3+3t^4+5t^6+8t^7+\cdots
      \]
      We see that $R$ has Bernstein dimension $0$ (so it is $F$-holonomic) and multiplicity $0$.
      \vspace{0.5em}
    \begin{center}
      
    \begin{tikzpicture}[scale=0.9, every node/.style={scale=1.0}]
\makeatletter
      \draw[step=1.0,thin,white,opacity=0.4] (-7,-7) grid (9,-1);
      \node at (-7+0.5,-7+0.5) {\footnotesize$1$};
            \foreach \p in {      (0,5),(1,5),(2,5),(3,5),(4,5),(5,5),(6,5),(7,5),(8,5),(9,5),(10,5),(11,5),(12,5),(13,5),(14,5),(15,5),(16,5),(0,4),(1,4),(2,4),(3,4),(4,4),(5,4),(6,4),(7,4),(8,4),(9,4),(10,4),(11,4),(12,4),(13,4),(14,4),(15,4),(16,4),(0,3),(1,3),(2,3),(3,3),(4,3),(5,3),(6,3),(7,3),(0,3),(1,3),(2,3),(3,3),(4,3),(5,3),(6,3),(7,3),(8,3),(9,3),(10,3),(11,3),(12,3),(13,3),(14,3),(15,3),(0,2),(1,2),(2,2),(3,2),(4,2),(5,2),(6,2),(7,2),(0,1),(1,1),(2,1),(3,1),(4,0),(5,0),(6,0),(7,0),(8,0),(9,0),(10,0),(11,0),(12,0),(13,0),(14,0),(15,0),(16,0),(8,1),(9,1),(10,1),(11,1),(12,1),(13,1),(14,1),(15,1),(16,1),(16,2)}{
        \parsept{\x}{\y}{\p};
        \fill[black,opacity=1.0] (-7+\x,-7+\y)--(-6+\x,-7+\y)--(-6+\x,-6+\y)--(-7+\x,-6+\y)--(-7+\x,-7+\y);
      };

      \foreach \p in {(0,0)}{
        \parsept{\x}{\y}{\p};
        \fill[green,opacity=0.4] (-7+\x,-7+\y)--(-6+\x,-7+\y)--(-6+\x,-6+\y)--(-7+\x,-6+\y)--(-7+\x,-7+\y);
        };
      \foreach \p in {(0,1),(1,0)}{
        \parsept{\x}{\y}{\p};
        \fill[teal,opacity=0.4] (-7+\x,-7+\y)--(-6+\x,-7+\y)--(-6+\x,-6+\y)--(-7+\x,-6+\y)--(-7+\x,-7+\y);
        };
      \foreach \p in {(0,2),(1,1),(2,1),(2,0)}{
        \parsept{\x}{\y}{\p};
        \fill[red,opacity=0.4] (-7+\x,-7+\y)--(-6+\x,-7+\y)--(-6+\x,-6+\y)--(-7+\x,-6+\y)--(-7+\x,-7+\y);
        };
      \foreach \p in {(0,3),(1,2),(2,2),(4,2),(3,1),(4,1),(3,0)}{
        \parsept{\x}{\y}{\p};
        \fill[magenta,opacity=0.4] (-7+\x,-7+\y)--(-6+\x,-7+\y)--(-6+\x,-6+\y)--(-7+\x,-6+\y)--(-7+\x,-7+\y);
        };
      \foreach \p in {(0,4),(1,3),(2,3),(4,3),(8,3),(3,2),(5,2),(6,2),(8,2),(5,1),(6,1),(4,0)}{
        \parsept{\x}{\y}{\p};
        \fill[blue,opacity=0.4] (-7+\x,-7+\y)--(-6+\x,-7+\y)--(-6+\x,-6+\y)--(-7+\x,-6+\y)--(-7+\x,-7+\y);
        };
      \foreach \p in {(0,5),(1,4),(2,4),(4,4),(8,4),(16,4),(3,3),(5,3),(6,3),(9,3),(10,3),(12,3),(16,3),(7,2),(9,2),(10,2),(12,2),(7,1),(8,1),(5,0)}{
        \parsept{\x}{\y}{\p};
        \fill[Dandelion,opacity=0.4] (-7+\x,-7+\y)--(-6+\x,-7+\y)--(-6+\x,-6+\y)--(-7+\x,-6+\y)--(-7+\x,-7+\y);
      };

            \foreach \x in {1,2,...,16}{
        \foreach \y in {1,2,...,5}{
          \node at (-7+\x+0.5,-7+\y+0.5) {\footnotesize$x^{\x}F^{\y}$};
        };
      };

      \foreach \x in {1,2,...,16}{
        \node at (-7+\x+0.5,-7+0.5) {\footnotesize$x^{\x}$};
      };
      \foreach \x in {1,2,...,5}{
        \node at (-7+0.5,-7+\x+0.5) {\footnotesize$F^{\x}$};
      };     
      \foreach \x in {0,1,...,6}{
        \draw[thick] (-7,-7+\x)--(10,-7+\x);
      };
      \foreach \x in {0,1,...,17}{
        \draw[thick] (-7+\x,-7)--(-7+\x,-1);
      };
    \end{tikzpicture}    
  \end{center}
\end{ex}

\begin{ex}
  Let $R=\F_2[x]$. The local cohomology module $H^1_x(R)$ admits a presentation over $\FF$ as $\FF/\FF(x,xF-1)$. A quick application of Buchberger's algorithm confirms that the associated graded $\A/I$ has $\text{in}(I)=\A(x,xf)$. This ideal has a graded free resolution over $\A$ of the form
  \[
      0\to \A(-3)\to \A(-1)\oplus \A(-2)\to \A\to \A/\inn(I)\to 0
      \]
      so that
      \[
      \HS_{\gr\left(H^1_x(R)\right)}(t) = \frac{1-t-t^2+t^3}{(1-t)(1-t-t^2)}=\frac{1-t^2}{(1-t-t^2)}=1+t+t^2+2t^3+3t^4+5t^6+8t^7+\cdots
      \]
      We see that $H^1_x(R)$ has Bernstein dimension $0$ (so it is $F$-holonomic) and multiplicity $0$.
      \vspace{0.5em}
      
  \begin{center}
    \begin{tikzpicture}[scale=0.9, every node/.style={scale=1.0}]
\makeatletter
      \draw[step=1.0,thin,white,opacity=0.4] (-7,-7) grid (9,-1);
      \node at (-7+0.5,-7+0.5) {\footnotesize$1$};
     \foreach \p in {(1,0),(2,0),(3,0),(4,0),(5,0),(6,0),(7,0),(8,0),(9,0),(10,0),(11,0),(12,0),(13,0),(14,0),(15,0),(16,0),(2,1),(3,1),(4,1),(5,1),(6,1),(7,1),(8,1),(9,1),(10,1),(11,1),(12,1),(13,1),(14,1),(15,1),(16,1),(4,2),(5,2),(6,2),(7,2),(8,2),(9,2),(10,2),(11,2),(12,2),(13,2),(14,2),(15,2),(16,2),(8,3),(9,3),(10,3),(11,3),(12,3),(13,3),(14,3),(15,3),(16,3),(16,4)}{
        \parsept{\x}{\y}{\p};
        \fill[black,opacity=1.0] (-7+\x,-7+\y)--(-6+\x,-7+\y)--(-6+\x,-6+\y)--(-7+\x,-6+\y)--(-7+\x,-7+\y);
      };
      \foreach \p in {(1,1),(2,2),(3,2),(4,3),(5,3),(6,3),(7,3),(8,4),(9,4),(10,4),(11,4),(12,4),(13,4),(14,4),(15,4),(16,5)}{
        \parsept{\x}{\y}{\p};
        \fill[black,opacity=1.0] (-7+\x,-7+\y)--(-6+\x,-7+\y)--(-6+\x,-6+\y)--(-7+\x,-6+\y)--(-7+\x,-7+\y);
      };

      \foreach \p in {(0,0)}{
        \parsept{\x}{\y}{\p};
        \fill[green,opacity=0.4] (-7+\x,-7+\y)--(-6+\x,-7+\y)--(-6+\x,-6+\y)--(-7+\x,-6+\y)--(-7+\x,-7+\y);
        };
      \foreach \p in {(0,1),(1,0)}{
        \parsept{\x}{\y}{\p};
        \fill[teal,opacity=0.4] (-7+\x,-7+\y)--(-6+\x,-7+\y)--(-6+\x,-6+\y)--(-7+\x,-6+\y)--(-7+\x,-7+\y);
        };
      \foreach \p in {(0,2),(1,1),(2,1),(2,0)}{
        \parsept{\x}{\y}{\p};
        \fill[red,opacity=0.4] (-7+\x,-7+\y)--(-6+\x,-7+\y)--(-6+\x,-6+\y)--(-7+\x,-6+\y)--(-7+\x,-7+\y);
        };
      \foreach \p in {(0,3),(1,2),(2,2),(4,2),(3,1),(4,1),(3,0)}{
        \parsept{\x}{\y}{\p};
        \fill[magenta,opacity=0.4] (-7+\x,-7+\y)--(-6+\x,-7+\y)--(-6+\x,-6+\y)--(-7+\x,-6+\y)--(-7+\x,-7+\y);
        };
      \foreach \p in {(0,4),(1,3),(2,3),(4,3),(8,3),(3,2),(5,2),(6,2),(8,2),(5,1),(6,1),(4,0)}{
        \parsept{\x}{\y}{\p};
        \fill[blue,opacity=0.4] (-7+\x,-7+\y)--(-6+\x,-7+\y)--(-6+\x,-6+\y)--(-7+\x,-6+\y)--(-7+\x,-7+\y);
        };
      \foreach \p in {(0,5),(1,4),(2,4),(4,4),(8,4),(16,4),(3,3),(5,3),(6,3),(9,3),(10,3),(12,3),(16,3),(7,2),(9,2),(10,2),(12,2),(7,1),(8,1),(5,0)}{
        \parsept{\x}{\y}{\p};
        \fill[Dandelion,opacity=0.4] (-7+\x,-7+\y)--(-6+\x,-7+\y)--(-6+\x,-6+\y)--(-7+\x,-6+\y)--(-7+\x,-7+\y);
      };

            \foreach \x in {1,2,...,16}{
        \foreach \y in {1,2,...,5}{
          \node at (-7+\x+0.5,-7+\y+0.5) {\footnotesize$x^{\x}F^{\y}$};
        };
      };

      \foreach \x in {1,2,...,16}{
        \node at (-7+\x+0.5,-7+0.5) {\footnotesize$x^{\x}$};
      };
      \foreach \x in {1,2,...,5}{
        \node at (-7+0.5,-7+\x+0.5) {\footnotesize$F^{\x}$};
      };     
      \foreach \x in {0,1,...,6}{
        \draw[thick] (-7,-7+\x)--(10,-7+\x);
      };
      \foreach \x in {0,1,...,17}{
        \draw[thick] (-7+\x,-7)--(-7+\x,-1);
      };
    \end{tikzpicture}    
  \end{center}
\end{ex}
\vspace{0.5em}
\begin{ex}
Let $R=\F_2[x]$. As an $\FF$-module, the localization $R_x$ has the presentation $R_x = \FF/\FF(xF-1)$. Buchberger's algorithm shows that the associated graded $\A/K$ with $\text{in}(K)=\A(xf,fx,fx^2,x^5)$. The ideal $\inn(K)$ has a graded free resolution of the form
\[
      0\to \A(-3)\oplus\A(-4)\oplus \A(-5)\to \A(-2)\oplus \A(-2)\oplus\A(-3)\oplus \A(-5)\to \A\to \A/\inn(K)\to 0
      \]
      giving
      \[
      \HS_{\gr(R_x)}(t) = \frac{1-2t^2+t^4}{(1-t)(1-t-t^2)}=\frac{1+t-t^2-t^3}{(1-t-t^2)}=1+2t+2t^2+3t^3+5t^4+8t^5+13t^6+\cdots
      \]
      We see that $R_x$ has Bernstein dimension $0$ (so it is $F$-holonomic) and multiplicity $0$. We also see that, since $R\hookrightarrow R_x$ maps the generator of $R$ onto the element $x\cdot (1/x)$ of degree $1$ in the filtration $\Omega_i=\B_i\cdot \{1/x\}$ on $R_x$, we have
      \[
      \HS_{\gr(R_x)}(t)-t\cdot \HS_{\gr(R)}(t)=\HS_{\gr\left(H^1_x(R)\right)}(t)
      \]
      \vspace{0.5em}
\begin{center}
    \begin{tikzpicture}[scale=0.9, every node/.style={scale=1.0}]
\makeatletter

      \draw[step=1.0,thin,white,opacity=0.4] (-7,-7) grid (9,-1);
      \node at (-7+0.5,-7+0.5) {\footnotesize$1$};
                  \foreach \p in {      (16,5),
                    (8,4),(9,4),(10,4),(11,4),(12,4),(13,4),(14,4),(15,4),(16,4),
                    (4,3),(5,3),(6,3),(7,3),(8,3),(9,3),(10,3),(11,3),(12,3),(13,3),(14,3),(15,3),(16,3),
                    (2,2),(3,2),(4,2),(5,2),(6,2),(7,2),(8,2),(9,2),(10,2),(11,2),
                    (1,1),(2,1),(3,1),(4,1),(5,1),(10,1),(11,1),(12,1),(13,1),(14,1),(15,1),(16,1),
                    (5,0),(6,0),(7,0),(8,0),(9,0),(10,0),(11,0),(12,0),(13,0),(14,0),(15,0),(16,0)}
 {
        \parsept{\x}{\y}{\p};
        \fill[black,opacity=1.0] (-7+\x,-7+\y)--(-6+\x,-7+\y)--(-6+\x,-6+\y)--(-7+\x,-6+\y)--(-7+\x,-7+\y);
      };

      \foreach \p in {(0,0)}{
        \parsept{\x}{\y}{\p};
        \fill[green,opacity=0.4] (-7+\x,-7+\y)--(-6+\x,-7+\y)--(-6+\x,-6+\y)--(-7+\x,-6+\y)--(-7+\x,-7+\y);
        };
      \foreach \p in {(0,1),(1,0)}{
        \parsept{\x}{\y}{\p};
        \fill[teal,opacity=0.4] (-7+\x,-7+\y)--(-6+\x,-7+\y)--(-6+\x,-6+\y)--(-7+\x,-6+\y)--(-7+\x,-7+\y);
        };
      \foreach \p in {(0,2),(1,1),(2,1),(2,0)}{
        \parsept{\x}{\y}{\p};
        \fill[red,opacity=0.4] (-7+\x,-7+\y)--(-6+\x,-7+\y)--(-6+\x,-6+\y)--(-7+\x,-6+\y)--(-7+\x,-7+\y);
        };
      \foreach \p in {(0,3),(1,2),(2,2),(4,2),(3,1),(4,1),(3,0)}{
        \parsept{\x}{\y}{\p};
        \fill[magenta,opacity=0.4] (-7+\x,-7+\y)--(-6+\x,-7+\y)--(-6+\x,-6+\y)--(-7+\x,-6+\y)--(-7+\x,-7+\y);
        };
      \foreach \p in {(0,4),(1,3),(2,3),(4,3),(8,3),(3,2),(5,2),(6,2),(8,2),(5,1),(6,1),(4,0)}{
        \parsept{\x}{\y}{\p};
        \fill[blue,opacity=0.4] (-7+\x,-7+\y)--(-6+\x,-7+\y)--(-6+\x,-6+\y)--(-7+\x,-6+\y)--(-7+\x,-7+\y);
        };
      \foreach \p in {(0,5),(1,4),(2,4),(4,4),(8,4),(16,4),(3,3),(5,3),(6,3),(9,3),(10,3),(12,3),(16,3),(7,2),(9,2),(10,2),(12,2),(7,1),(8,1),(5,0)}{
        \parsept{\x}{\y}{\p};
        \fill[Dandelion,opacity=0.4] (-7+\x,-7+\y)--(-6+\x,-7+\y)--(-6+\x,-6+\y)--(-7+\x,-6+\y)--(-7+\x,-7+\y);
      };

            \foreach \x in {1,2,...,16}{
        \foreach \y in {1,2,...,5}{
          \node at (-7+\x+0.5,-7+\y+0.5) {\footnotesize$x^{\x}F^{\y}$};
        };
      };

      \foreach \x in {1,2,...,16}{
        \node at (-7+\x+0.5,-7+0.5) {\footnotesize$x^{\x}$};
      };
      \foreach \x in {1,2,...,5}{
        \node at (-7+0.5,-7+\x+0.5) {\footnotesize$F^{\x}$};
      };     
      \foreach \x in {0,1,...,6}{
        \draw[thick] (-7,-7+\x)--(10,-7+\x);
      };
      \foreach \x in {0,1,...,17}{
        \draw[thick] (-7+\x,-7)--(-7+\x,-1);
      };
    \end{tikzpicture}
    \end{center}
\end{ex}

\printbibliography

@article{blicklethesis,
  title={The intersection homology ${D}$-modules in finite characteristic},
  author={Blickle, M.},
  journal={Doctoral Thesis, University of Michigan},
  year={2001},
}

@article{lyufmod,
	Author = {Lyubeznik, G.},
	Coden = {JRMAA8},
	Fjournal = {Journal f\"ur die Reine und Angewandte Mathematik},
	Journal = {J. Reine Angew. Math.},
	Mrclass = {13A35 (13D45)},
	Mrnumber = {MR1476089 (99c:13005)},
	Mrreviewer = {Rodney Y. Sharp},
	Pages = {65--130},
	Title = {{$F$}-modules: applications to local cohomology and {$D$}-modules in characteristic {$p>0$}},
	Volume = {491},
	Year = {1997}}

@article{emertonkisin,
	Author = {Emerton, M. and Kisin, M.},
	Fjournal = {Ast\'erisque},
	Journal = {Ast\'erisque},
	Mrclass = {14F30 (14F10 14F20)},
	Mrnumber = {2071510 (2005e:14027)},
	Mrreviewer = {Martin C. Olsson},
	Number = {293},
	Title = {The {R}iemann-{H}ilbert correspondence for unit {$F$}-crystals},
	Year = {2004}}

@article{bhattlurie,
   title={A Riemann--Hilbert correspondence in positive characteristic},
  author={Bhatt, B. and Lurie, J.},
  journal={Cambridge Journal of Mathematics},
  volume={7},
  number={1-2},
  pages={71--217},
  year={2019},
  publisher={International Press of Boston}
}

@book{mcconnellrobson,
   title={Noncommutative noetherian rings},
  author={McConnell, J. and Robson, J.},
  volume={30},
  year={2001},
  publisher={American Mathematical Soc.}
}

@book{bjork,
  title={Rings of differential operators},
  author={Bj{\"o}rk, J.-E.},
      publisher={Springer},
  year={1979}
}

@book{kreuzrob,
  title={Computational commutative algebra},
  author={Kreuzer, M. and Robbiano, L.},
  volume={1},
  year={2000},
  publisher={Springer}
}

@article{dillsenescu,
  title={Counting skew monomials in the Frobenius skew polynomial ring},
  author={Dills, A. and Enescu, F.},
  journal={Communications in Algebra},
  volume={51},
  number={5},
  pages={1955--1968},
  year={2023},
  publisher={Taylor \& Francis}
}

\end{document}